\def\spine{1.1in}
\def\uline#1{\underline{#1\mkern-6mu}\mkern6mu}
\def\oline#1{\mkern10mu\overline{\mkern-10mu#1}}
\def\epsilon{\varepsilon}
\def\ext{{\xi}}
\def\ker{{g}}
\def\m{\mathcal M} % Minkowski content
\def\restr#1{\lower4pt\hbox{$\Big|$}\lower9pt\hbox{${\scriptstyle #1}$}\kern-1pt} %\kern-5pt
\def\pphi{\varphi}
\def\q{\bs{{q}}}
\def\Q{{\bf Q}}
\def\cvt{E_{\text{\tiny Q}}}
\def\geq{\geqslant}
\def\leq{\leqslant}
\def\t{\textswab{t}}
\def\w{\textswab{w}}
\def\l{\textswab{L}}
\def\f{\textswab{F}}
\def\h{{\eta}}
\def\e{\textswab{e}}
\def\E{{\hat{\textswab{e}}}}
\def\s{\textswab{S}}
\def\ee{{\tilde{\e}}}
\def\weakto{{\scalebox{1.4}[1.0]{$ \rightharpoonup $}}}% {\stackrel{*}{\scalebox{1.8}[1.0]{$ \rightharpoonup $}} } \def\gammato{\stackrel{\Gamma}{\longrightarrow}}
\DeclareMathOperator*{\glim}{ \Gamma-lim}
\DeclareMathOperator*{\sgn}{sgn}
\DeclareMathOperator*{\dist}{dist}
\newcommand{\R}{\mathbb{R}}
\newcommand{\bs}[1]{\boldsymbol{#1}} 
\newcommand{\diam}[1]{\text{\rm diam}(#1)}
\newcommand{\diag}{\text{\rm diag\,}}
\newcommand{\supp}{\text{\rm supp\,}}
\newcommand{\Hd}{\mathcal{H}_d}
\renewcommand{\c}{C_{s,d}}
\renewcommand{\d}{\,{d}}
\renewenvironment{proof}[1][\proofname]{\par
  \pushQED{\qed}%
  \normalfont \topsep6\p@\@plus6\p@\relax
  \trivlist
  \item[\hskip\labelsep
        \itshape
    \textbf{\textit{#1}}\@addpunct{.}]\ignorespaces
}{%
  \popQED\endtrivlist\@endpefalse
}
\providecommand{\proofname}{Proof}
\let\oldfootnote\footnote
\def\footnote{\@ifstar\footnote@star\footnote@nostar}
\def\footnote@star#1{{\let\thefootnote\relax\footnotetext{#1}}}
\def\footnote@nostar{\oldfootnote}
\newtheorem{theorem}{Theorem}
\newtheorem*{theorem*}{Theorem}
\newtheorem{lemma}[theorem]{Lemma}
\newtheorem{proposition}[theorem]{Proposition}
\newtheorem{corollary}[theorem]{Corollary}
\theoremstyle{definition} 
\newtheorem{definition}[theorem]{Definition} 
\newtheorem{remark}[theorem]{Remark}
\numberwithin{theorem}{section}
\numberwithin{equation}{section}
\newlist{gammalist}{enumerate}{1} 
\def\gammalisttag{$  ^{\Gamma} $}
\setlist[gammalist, 1]{label = \textbf{\arabic*\gammalisttag.}, ref = \textbf{\arabic*\gammalisttag}, start=1} 
\newlist{elist}{enumerate}{1} 
\def\elisttag{$  ^{E} $}
\setlist[elist, 1]{label = \textbf{\arabic*\elisttag.}, ref = \textbf{\arabic*\elisttag}, start=1} 
\title{Asymptotic properties of short-range interaction functionals}
\author{Douglas Hardin \and Edward B. Saff \and Oleksandr Vlasiuk}
\date{}
\begin{document}
\maketitle

\begin{abstract}
    We describe a framework for extending the asymptotic behavior of a short-range interaction from the unit cube to general compact subsets of $ \mathbb R^d $. This framework allows us to give a unified treatment of asymptotics of hypersingular Riesz energies and optimal quantizers. We further obtain new results about the scale-invariant nearest neighbor interactions, such as the $ k $-nearest neighbor truncated Riesz energy. Our generalized approach has applications to methods for generating distributions with prescribed density: strongly-repulsive Riesz energies, centroidal Voronoi tessellations, and a popular meshing algorithm due to Persson and Strang.

    % Our approach gives a unified view of the short-range methods applied to distribute points according to a prescribed density: strongly-repulsive Riesz energies, centroidal Voronoi tessellations, and some popular meshing algorithms.
\end{abstract}

\footnote*{{\it Date:}\ \today}
\footnote*{2000 {\it Mathematics Subject Classification.}\ {Primary, 31C20, 28A78; Secondary, 52A40}.}
\footnote*{{\it Key words and phrases.}\ {Riesz energy, optimal quantizers, centroidal Voronoi tessellations, equilibrium configurations, covering radius, separation distance, meshing algorithms}.}

\tableofcontents
% \vspace*{2cm}

% \textswab {a b c d e f g h i j k l m n o p q r s t u v w x y z}
                                            
% \textswab {A B C D E F G H I J K L M N O P Q R S T U V W X Y Z}
                                            
% \textfrak {a b c d e f g h i j k l m n o p q r s t u v w x y z}
                                            
% \textfrak {A B C D E F G H I J K L M N O P Q R S T U V W X Y Z}

% $
% \mathcal {A B C D E F G H I J K L M N O P Q R S T U V W X Y Z}
% $ 

% $
% \mathscr {A B C D E F G H I J K L M N O P Q R S T U V W X Y Z}
% $
% \vspace*{2cm}

% \begin{enumerate}
%     \item Introduction, overview of the paper
%         \begin{enumerate}
%             \item Repulsive energies and their applications
%         \end{enumerate}
%     \item Main results
%     \item Examples and computations
%     \item Proofs of the main results
%         \begin{enumerate}
%             \item Results for general interactions, in particular, Lemma: cube asymptotics + being short-range + geometric measure theory = asymptotics and weak$ ^* $ convergence on $ d $-rectifiable sets.
%             \item Cube proof for k-truncated, non-isotropic norms, etc. (Caution: should not work for, say, causal variation principles — why?)
%             \item Gamma-convergence results as corollaries.
%         \end{enumerate}
% \end{enumerate}
\section{Introduction} 
\subsection{Discrete energies and their applications}

Behavior of discrete subsets of the Euclidean space $ \mathbb R^d $ is of interest in diverse areas such as information theory, numerical analysis, and discrete geometry, and arises in questions related to optimal quantizers, cubature formulas, meshing algorithms, and packing and covering problems. In many of these areas, a configuration best suited for a given purpose can be found as an extremizer of a certain functional. This characterization allows one to invoke optimization techniques, and is often the preferred approach.

Of the functionals employed in the aforementioned areas, the most amenable to analytical and numerical treatment are those emphasizing interactions between the nearby elements of a discrete set. Thus, for functionals in which short-range interactions dominate, the shape of support can be computed explicitly, which has made them a popular choice for meshing and point distribution, and other numerical modelling applications. In contrast, finding the support of an equilibrium measure of a long-range interaction, for example the harmonic measure, is a notably difficult problem of classical potential theory and PDEs. 

In the present paper we abstract the essential features of short-range interactions, which allow to determine the distribution of minimizers and the asymptotics of minimal values. 
We study interaction functionals $ \e $, depending on a multiset of $ N $ vectors $ \omega_N := \{  x_1, \ldots,  x_N \} \subset \mathbb R^d  $ (we allow repeating vectors), and possibly on a compact set $ A\subset \Omega $, where $ \Omega \subset \mathbb R^d $ is open and fixed. 
We are interested in the minimal value taken by 
\[
    \e(\omega_N, A)
\]
for a set $ A $ and $ \omega_N\subset A $, as well as the configuration $ \omega_N^*(A) $ on which it is attained:
\[
    \e(\omega_N^*(A),\ A) = \min \{ \e(\omega_N, A) : \omega_N \subset A \}.
\]
In particular, $ \e $ is assumed to be lower semicontinuous and $ A $ compact, so that the above minimum exist.

The precise meaning of being a short-range interaction is given in Section~\ref{sec:short_range} in terms of asymptotics of $ \e(\omega_N,A) $ when $ N\to\infty $. 
As an example, one could take $ \e $ to be the potential energy of the configuration $ \omega_N $ by endowing each pair $ x_i, x_j $, $ i\neq j $, with an interaction energy $ g(x_i, x_j)  $, so that $ \e $ is the total energy of interactions between the particles,
$$ \e(\omega_N\!) = \sum_{i\neq j} g(x_i, x_j). $$ 
For short-range interactions that we discuss here, either I) remote particles do not interact, or II) remote particles interact much weaker than those located nearby. Thus, an especially straightforward way to introduce a short-range interaction is to perform the above summation only over the pairs $ x_i, x_j $, for which $ x_j $ is among the nearest neighbors to $ x_i $ in $ \omega_N $. In Sections~\ref{sec:nearest_neighbor} and \ref{sec:CVT} we discuss two examples of interactions that follow this principle: Riesz $ k $-nearest neighbor functionals and quantization errors. These functionals illustrate category I); an instance of a category II) interaction is the hypersingular Riesz energy, given by
$$ \e(\omega_N\!) = \sum_{i\neq j} \|x_i - x_j\|^{-s}, \qquad \omega_N\subset \mathbb R^d,\ s > d. $$ 
Its asymptotic properties have been considered in a series of studies starting with the paper~\cite{hardinMinimal2005} of the first two authors.

\begin{figure}[t]
    \centering
    \includegraphics[width=0.4\linewidth]{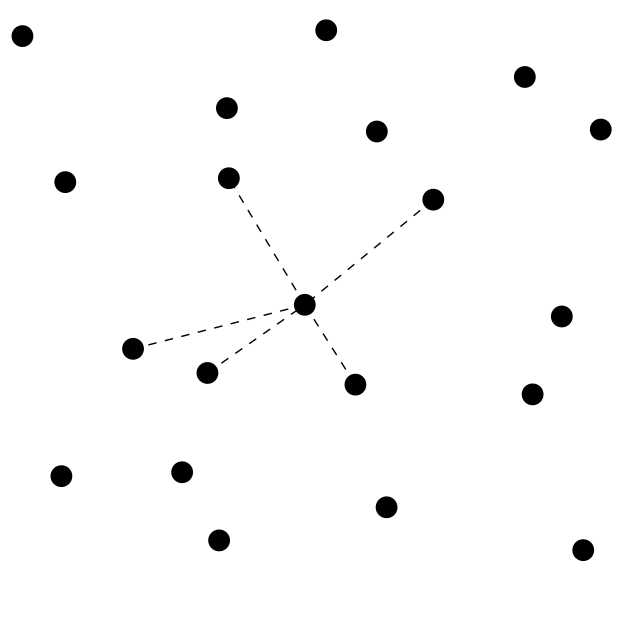}
    \hspace{1cm}
    \includegraphics[width=0.4\linewidth]{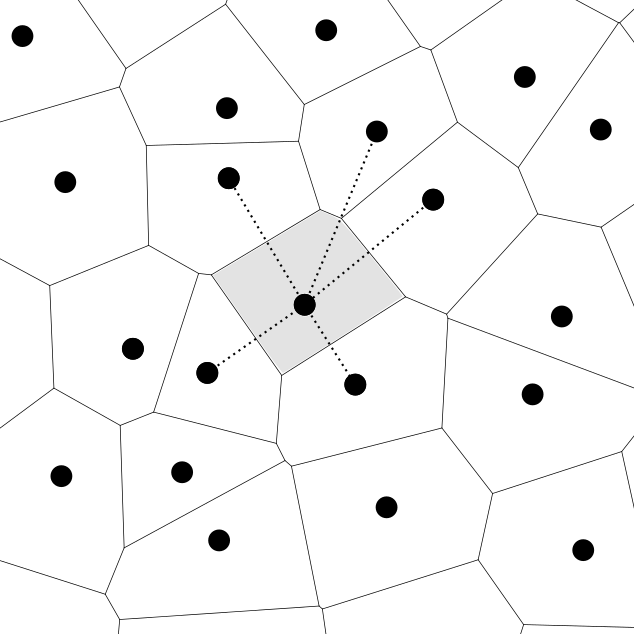}
    \caption{Left: truncated Riesz energy only includes terms for $ k $ nearest neighbors of every $ x\in \omega_N $. Right: quadratic quantization error depends on the second moment of Voronoi cells of such $ x $, the shape of which depends on the position of points with adjacent cells only. Both interactions are short-range.}%
    \label{fig:short_range}
\end{figure}

In our unified context, we deduce the asymptotics and limiting weak$ ^* $ distribution of the minimizers. These results apply in particular to the two aforementioned examples of short-range interactions. This may appear somewhat surprising because the Riesz energy and optimal Voronoi tessellations (the special case of optimal quantizers when the second moments are used) have been until now viewed as unrelated approaches to distributing discrete configurations according to a given density.  We show, in fact, that they can be treated in a similar fashion. 

It should be noted that the fundamental step of our approach, Theorem~\ref{thm:cube}, consists in a scaling argument, which has already been used in the context of optimal quantizers \cite{zadorAsymptotic1982, gruberOptimum2004}, and the context of discrete energy \cite{hardinDiscretizing2004,hardinMinimal2005}. The goal of our paper is to explore the limits of applicability of such scaling approach. To that end we study the short-range property of the relevant interactions in the most general context. We show that the existence of asymptotics on a single cube, which follows by the (sub-)scaling argument, is combined to give the asymptotics on $ d $-rectifiable sets $ A $. Apart from the results for the nearest neighbor Riesz interactions, which are completely new, we generalize the known results of Gruber \cite{gruberOptimum2004} about optimal quantizers by weakening the smoothness assumptions to $ d $-rectifiability.

We will draw parallels with some commonly used meshing algorithms, showing that they also illustrate the short-range interaction paradigm. It is therefore not surprising that the many practical algorithms which yield a discrete sequence with a known distribution are of the short-range type. Indeed, one of the primary motivations for the present study is to achieve better understanding of the point distribution algorithms. It is well-known that Riesz energies have been used for generating well-behaved point distributions, see for example the monograph of Borodachov and the first two authors on this topic \cite{borodachovDiscrete2019}. The same can be said about optimal quantizers, which, in the case of quadratic moments being used, have long being applied for discretization through the use of the famous Lloyd's algorithm; see the review by Du, Faber, and Gunzburger \cite{duCentroidal1999}.

% Suppose from now on that the particles are confined to a set $\Omega \subset \mathbb R^p $; the smoothness assumptions on $ \Omega $ are that it be $ d $-rectifiable  and $ d $-regular set (see the following section for the precise definitions).
In addition to the sum of pairwise interactions discussed above, $ \e $ can be made dependent on the positions of points in $ \omega_N $:
\begin{equation}
    \label{eq:energy_functional}
    \e(\omega_N\!) = \sum_{i\neq j}  \ker ( x_i,  x_j) + \tau(N) \sum_i \ext( x_i).
\end{equation}
Both $ \ker $ and $ \ext $ must be lower semicontinuous for the minimum to exist. The scaling factor $ \tau(N) $ depends on the growth of $ \ker $ towards the diagonal, and is chosen so that the two sums in the above expression have the same growth rate. 

Finally, it is instructive to mention that functional~\eqref{eq:energy_functional} can be viewed as a discrete analog of the classical variational energy:
\begin{equation}
    \label{eq:integral_operator}
    I(\mu; \ker, \ext):= \iint\limits_{A\times A} \ker(x,  y) d\mu( x)d\mu( y) + \int\limits_A \ext( x) d\mu( x), \quad  \mu \in \mathcal P(A),
\end{equation} 
where $ \mathcal P(A) $ stands for the set of probability measures on $ A $. When $ \ker $ is the harmonic kernel, the properties of such energies and properties of their minimizers have been the focus of studies of a number of authors, in many cases motivated by the connections to approximation theory and theory of analytic functions; examples are the classical works of Frostman, Riesz, Deny \cite{frostman1935potentiel, denyPotentiels1950}, and the later investigations of the locally compact spaces by Fuglede and Ohtsuka \cite{fugledeTheory1960, ohtsukaPotentials1961}. More recently, there has been a revival of interest to the quadratic functionals on probability measures of the above form, in particular due to the connections to optimal transport, information theory and PDEs \cite{chafaiConcentration2016, carrilloGlobal2014a,carrilloRegularity2016, carrilloGeometry2017,limIsodiametry2019,finsterSupport2013}. In several studies the setting of the problem of minimization of \eqref{eq:integral_operator} has been extended from optimization over a single supporting set $ A $ to several, possibly infinitely many, ``condenser plates", each containing a prescribed fraction of the probability measure \cite{harbrechtRiesz2012,zoriiEquilibrium2013,zoriiNecessary2014, fugledeAlternative2018}.  
In the aforementioned literature, kernel $ \ker $ is assumed to be integrable with respect to the Hausdorff measure on $ A $. 

A separate line of research deals with the behavior of the discrete energy~\eqref{eq:energy_functional} when the kernel $ \ker $ is not integrable on $ A $ \cite{hardinMinimal2005, borodachovAsymptotics2008,Hardin2012,borodachovLow2014}. Since there is no corresponding operator of the form~\eqref{eq:integral_operator} in this case, the relevant tools to study the asymptotics of \eqref{eq:energy_functional} are no longer variational, but rather coming from geometric measure theory. 
In Section~\ref{sec:gamma}, we show that a continuous analog of a short-range interaction can be introduced, using the notion of $ \Gamma $-convergence.

\subsection{Notation and basic definitions}
By $ \|\cdot \| $ we usually denote the Euclidean norm in $ \mathbb R^d $, but the following arguments apply to any fixed norm.
Weak$ ^* $ convergence of a sequence of measures $ \mu_n$, $ n\geq 1 $, to $ \mu $ is denoted by $ \mu_n \weakto \mu $, $ n\to \infty $. A ``cube'' always refers to a cube with sides parallel to the coordinate axes.
The unit cube in $ \mathbb R^d $, centered at the origin, is denoted by $ \q_d $. The class of finite unions of cubes with disjoint interiors, contained in $ \Omega $, is called $ \mathscr Q $.
The kernel of our interaction, as well as the confining potential are assumed to be defined on an open $ \Omega \subset \mathbb R^d $, with $ \Omega $ fixed. The $ d $-dimensional Lebesgue measure is denoted by $ \lambda_d $.

It is assumed that $ d' $ is integer with $ d' \geq d $.
A set $ A \subset \mathbb R^{d'} $ is called $ d $-rectifiable if for some compact $ A_0 \subset \mathbb R^d $ and a Lipschitz map $ \mathcal F $ there holds $ A = \mathcal F(A_0)  $. The $ d $-dimensional Hausdorff measure is denoted by $ \mathcal H_d $; is it normalized so as to coincide with $ \mathcal \lambda_d $ on isometric embeddings from $ \mathbb R^d $ to $ \mathbb R^{d'} $. Notation $ \mathcal M_d $ stands for the $ d $-dimensional Minkowski content in $ \mathbb R^{d'} $. A {$d$-regular set} $ A $ satisfies $ cr^d \leq  \mathcal H_d (A\cap B(x,r)) \leq Cr^d $ for every $ x\in A $, and $ 0\leq r \leq \diam A $.

\subsection{Overview of results}
\subsubsection{General short-range functionals}
We shall always assume that functionals $ \e(\omega_N,A) $ are lower semicontinuous in the configuration $ \omega_N\subset \Omega $ for a fixed compact $ A\subset \Omega $ (we use the product topology on $ \Omega^N \subset (\mathbb R^d)^N $). They also must satisfy certain monotonicity properties in the set $ A $, namely, for  compact $ A,B \subset \Omega $ there must hold
\[
        \liminf_{N\to\infty} \sgn\varsigma \cdot \frac{\e(\omega_N,A)}{\t(N)}   \geq
        \limsup_{N\to\infty} \sgn\varsigma \cdot\frac{\e(\omega_N,B)}{\t(N)}      \qquad\text{for}\quad  A\subset B,
\]
where $ \varsigma \neq 0 $ is a constant associated to $ \e $, prescribing the direction of monotonicity.
 We refer to such $ \e $  as set-monotonic.

We are interested in the asymptotics of $ \e(\omega_N^*(A)) $ when $ N\to\infty $, specifically, 
\[
    \l_\e(A) := \lim_{N\to\infty} \frac{\e(\omega_N^*(A))}{\t(N)}, \qquad A\subset \mathbb R^d, \text{ compact. }
\]
Note that all the limits mentioned in the present discussion will be shown to exist.
The main result of this paper for general $ \e $ is that, assuming $ \e $ is short-ranged and the asymptotics of $ \e(\omega_N^*(A)) $ are stable under small perturbation of $ A $, existence of the asymptotics of $ \e $ on cubes in $ \mathbb R^d $ implies their existence on all compact sets $ A\subset \mathbb R^d $. The assumptions on $ \e $ are made precise in Section~\ref{sec:short_range}, see Theorems~\ref{thm:general_uniform} and~\ref{thm:weak_star_limit}; here we give an outline of the obtained statement. 
\begin{theorem}
    \label{thm:main1}
    Suppose $ \e $ is a lower semicontinuous set-monotonic functional, and $ \t(N)  $ is a monotonic function, such that $ \lim_{N\to\infty} \t(tN)/ \t(N) = \w(t) $ for a strictly convex $ \w $, and any $ t>0 $ from a set of positive Lebesgue measure. If\\
    (a) $ 0 < \l_\e(x+a\q_d) < +\infty $  exists for any $ x+a\q_d \subset \Omega $ and depends on $ a $, but not x;\\
    (b) $\e$ is short-range;\\
    (c) $ \l_\e $ has a continuity property on cubes,

    \noindent then\\
    (A) $\w = t^{1+\sigma}$ where $ \sgn \sigma \cdot \t(N) $ is increasing,  and $ \sigma \in (-\infty, -1)\cup (0,\infty) $;\\
    (B) $ \l_\e(A) = \l_\e(\q_d) \cdot (\lambda_d(A))^{-\sigma} $ for any compact $ A \subset \Omega $;\\
    (C) the counting measures of minimizers $ \omega_N^*(A) $ converge weak$ ^* $ to the probability measure proportional to $ \lambda_d $ on $ A $, for any compact  $ A\subset \Omega $ with $ \lambda_d(A) > 0 $.
\end{theorem}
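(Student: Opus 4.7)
The plan is to establish (A) first, then (B) on cubes, then extend (B) to arbitrary compact $ A $, and finally deduce (C). For (A), the key observation is that $ \w $ satisfies the multiplicative Cauchy equation: writing
\[
    \frac{\t(stN)}{\t(N)} = \frac{\t(stN)}{\t(tN)} \cdot \frac{\t(tN)}{\t(N)}
\]
and using monotonicity of $ \t $ to interpolate the first factor through the non-integer sequence $ tN $, one obtains $ \w(st) = \w(s)\w(t) $ wherever $ \w $ is defined. The existence set of $ \w $ is then closed under multiplication and, having positive Lebesgue measure, fills out $ (0,\infty) $ by a Steinhaus-type argument. A measurable multiplicative function on $ (0,\infty) $ must be a power $ \w(t) = t^{1+\sigma} $; strict convexity excludes $ 1+\sigma \in [0,1] $, and the sign of $ \sigma $ is fixed by comparing $ \w(2) $ to $ 1 $ in conjunction with the direction of monotonicity of $ \sgn\varsigma \cdot \t $.

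For (B), set $ h(a):=\l_\e(a\q_d) $, well-defined by (a). Partition $ a\q_d $ into $ k^d $ translates of $ (a/k)\q_d $. A trial configuration with $ N/k^d $ optimal points in each subcube yields one side of the estimate $ \e(\omega_N^*(a\q_d)) \approx k^d \e(\omega_{N/k^d}^*((a/k)\q_d)) $; the matching direction follows from the short-range decomposition of $ \e(\omega_N^*(a\q_d)) $ into per-subcube contributions, each controlled by $ \e(\omega_{N_i}^*((a/k)\q_d)) $ with $ N_i $ the number of points of $ \omega_N^* $ in the $ i $-th subcube, after optimizing over the allocation $ (N_i) $. Dividing by $ \t(N) $ and using $ \t(N/k^d)/\t(N) \to k^{-d(1+\sigma)} $ gives $ h(a) = k^{d\sigma} h(a/k) $ for integer $ k $, hence $ h(a) = h(1)\,a^{-d\sigma} $ on the rationals and, by (c), on $ (0,\infty) $. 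For general compact $ A\subset\Omega $, approximate $ A $ from inside and outside by $ \epsilon $-cube unions $ \uline A_\epsilon \subset A \subset \oline A_\epsilon $ in $ \mathscr Q $ with $ \lambda_d(\oline A_\epsilon \setminus \uline A_\epsilon) \to 0 $. Set-monotonicity sandwiches $ \l_\e(A) $ between $ \l_\e(\oline A_\epsilon) $ and $ \l_\e(\uline A_\epsilon) $, and on each cube union the short-range decomposition combined with the Jensen/Lagrange minimization of $ \sum_i \lambda_d(Q_i)^{-\sigma}(N_i/N)^{1+\sigma} $ at the allocation $ N_i \propto \lambda_d(Q_i) $ gives $ \l_\e(\uline A_\epsilon) = \l_\e(\q_d)\lambda_d(\uline A_\epsilon)^{-\sigma} $ and likewise for $ \oline A_\epsilon $. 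Letting $ \epsilon\to 0 $ and applying (c) completes (B).

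For (C), let $ \mu $ be a weak$ ^* $ subsequential limit of the normalized counting measures of $ \omega_N^*(A) $; partition $ A $ into cubes $ \{Q_i\} $ of side $ \epsilon $ and set $ N_i:=\#(\omega_N^*(A)\cap Q_i) $, $ \nu_i:=\mu(Q_i) $. The short-range decomposition produces
\[
    \l_\e(A) \ \geq\ \l_\e(\q_d)\sum_i \lambda_d(Q_i)^{-\sigma}\,\nu_i^{1+\sigma},
\]
while the left-hand side equals $ \l_\e(\q_d)\lambda_d(A)^{-\sigma} $ by (B). Strict convexity of $ t\mapsto t^{1+\sigma} $ together with the constraint $ \sum_i\nu_i=1 $ forces equality only at $ \nu_i = \lambda_d(Q_i)/\lambda_d(A) $ for every $ i $; since this must hold at every partition scale, $ \mu $ is normalized Lebesgue measure on $ A $, and uniqueness of the weak$ ^* $ limit upgrades subsequential to full convergence.

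The principal obstacle throughout is the careful bookkeeping of the short-range error: quantitative control on the defect $ \e(\omega_N, A\cup B) - [\e(\omega_N\cap A, A) + \e(\omega_N\cap B, B)] $, proved negligible relative to $ \t(N) $, powers every decomposition step and must come from the precise definition of short-range in Section~\ref{sec:short_range}. A secondary point is to verify that the case $ \sigma < -1 $ is handled uniformly through the $ \sgn\varsigma $ convention, which reverses the direction of the inequalities; the Jensen/Lagrange analysis still applies because $ t^{1+\sigma} $ remains strictly convex on $ (0,\infty) $ for $ 1+\sigma < 0 $ as well.
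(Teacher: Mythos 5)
Your high-level plan matches the paper's: derive the power law for $\w$ and $\f$ from functional equations, establish the asymptotics on cube unions, approximate general compacts by cube unions, and use strict convexity to pin down the limiting measure. Parts (A) and the cube-union piece of (B) are essentially the paper's argument, though you rederive the Karamata/Acz\'el--Dar\'oczy facts directly from the multiplicative Cauchy equation and Steinhaus rather than citing them. (Small slip: the recursion should read $ h(a) = k^{-d\sigma}\,h(a/k) $, not $ k^{d\sigma}h(a/k) $, to be consistent with $ h(a)=h(1)\,a^{-d\sigma} $.)

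The genuine gap is the inner approximation in (B). You want unions of cubes $ \uline A_\epsilon \subset A $ with $ \lambda_d(A\setminus\uline A_\epsilon)\to 0 $, but such inscribed cube unions need not exist: a compact set of positive measure with empty interior (a fat Cantor set, or its $d$-dimensional analogue) contains no cube at all, so $ \uline A_\epsilon = \emptyset $ for every $ \epsilon $ and the sandwich collapses. This is exactly where the continuity-on-cubes hypothesis (c) earns its keep, and your plan does not use it in the way that is actually needed. The paper's Theorem~\ref{thm:general_uniform} instead builds the ``inner'' side from a Vitali cover by cubes $ Q_m $ with $ \lambda_d(A\cap Q_m) \geq (1-\delta)\lambda_d(Q_m) $ (such cubes exist at Lebesgue density points, which form a full-measure subset of $ A $), and then uses property (c) to relate $ \e $ restricted to $ A\cap Q_m $ to $ \e $ on the full cube $ Q_m $. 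So (c) is not merely ``$ \f $ is continuous in the volume''; it is the stability under measure-small excisions that lets you replace $ A\cap Q_m $ by $ Q_m $. Your proof should be repaired along those lines. A secondary point: in (C) the inequality $ \lambda_d(A)^{-\sigma} \geq \sum_i \nu_i^{1+\sigma}\lambda_d(Q_i)^{-\sigma} $ does not force $ \nu_i \propto \lambda_d(Q_i) $ exactly, because the outer cover has $ \lambda_d(\Q) > \lambda_d(A) $ and the Jensen minimum is only $ \lambda_d(\Q)^{-\sigma} $; you need to pass to the limit over covers with $ \lambda_d(\Q)\to\lambda_d(A) $ and invoke uniform strict convexity (or argue via pairs of small cubes as the paper does in Theorem~\ref{thm:weighted_dist}) to conclude.
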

Note that the value of $ \l_\e(A) $ is determined only by $ \lambda_d (A) $, in particular it is translation-invariant. It is often of interest to consider $ \e $ for which this value depends on the position of $ A $. Such effect can be achieved either by adding a weight $ \h $ or a external field $ \xi $ to $ \e $, so that for the resulting $ \e_{\h,\xi} $, one has a mean value-type property that
\[
    \lim_{N\to\infty} \frac{\e_{\h,\xi}(\omega_N^*(x+a\q_d))}{\t(N)} =  \h(y)\frac{\l_\e(\q_d)}{a^{d\cdot \sigma}} + \xi(z), \qquad \text{for some } y,z\in x+a\q_d.
\]
We discuss this generalization in Section~\ref{sec:translation_dependent}, in particular obtaining the expression of the limiting density for the minimizers of the weighted interaction. In the following section we give explicit examples of how the weight and confining potential can be introduced, with $ \e $ given by the nearest neighbor Riesz energies.

Another direction in which Theorem~\ref{thm:main1} can be extended is exploring the asymptotics of $ \e(\omega_N^*(A)) $ when the set $ A $ of Hausdorff dimension $ d $ is embedded in $ \mathbb R^{d'} $ with $ d' > d $. With some straightforward modifications, results similar to Theorem~\ref{thm:main1} can be obtained. We discuss this direction in Section~\ref{sec:embedded}.

\subsubsection{Riesz nearest neighbor energies}
In the present paper we extend the approach used previously for the non-integrable {\it hypersingular} Riesz kernels, and use the general Riesz kernel
$$ \ker_s( x,  y) := \| x -  y \|^{-s}, \qquad s > 0, $$
with the nearest neighbor truncation, which ensures that the resulting energy functional only takes into account short-range interactions when $ N\to \infty $. 
In Section~\ref{sec:nearest_neighbor}, we define the $ k $-nearest neighbor truncated energy as follows:
\begin{equation*}
    E^k_s(\omega_N; \kappa, \ext) = \sum_{i=1}^N \sum_{j\in I_{i,k} } \kappa( x_i,  x_j) \|{x}_i - {x}_j \|^{-s} +  N^{s/d} \sum_{i=1}^N \ext( x_i).
\end{equation*} 
Here $ I_{i,k} $ stands for the set of indices of $ k $ nearest neighbors of $ x_i $ in $ \omega_N $, ordered by nondecreasing distance to $ x_i $. The motivation for this functional is similar to that of using the hypersingular Riesz kernels with $ s >  d = \dim_H A $. Minimizers of such energies have been previously used to discretize continuous distributions with good practical results \cite{vlasiukFast2018}. We establish the following analog of the Poppy-seed bagel theorem \cite[Thms. 2.1--2.2]{hardinMinimal2005} as a consequence of the general results in the previous section.  
\begin{theorem}
    \label{thm:k_asympt}
    Suppose $ A \subset \mathbb R^{d'} $ is a compact $ d $-rectifiable set.
    For a fixed $ k\geq 1 $ and $ s > 0 $, any sequence of minimizers of the $ k $-truncated Riesz $ s $-energy with kernel $ \kappa $ and external field $ \ext $ has the energy asymptotics given by
    \[
        \begin{aligned}
            \lim_{N\to\infty} \min_{\omega_N\subset A} \frac{E^k_s(\omega_N; \kappa, \ext)}{N^{1+s/d}}  
            &= \c^k\int_A \kappa(x,x) \pphi( x)^{1+s/d}\d\Hd(x) + \int_A \ext( x) \pphi( x) \d\Hd(x)\\
            &=: \mathfrak S(s,k,\kappa,\ext),
    \end{aligned}
    \]
    where
    \begin{equation*}
        \pphi( x) = \left(\frac{L_1-\ext( x)}{\c^k(1+s/d)\kappa( x,  x)}\right)^{d/s}_+,
    \end{equation*} 
    and the constant $ L_1 $ is chosen so that $ \pphi \d\Hd $ is a probability measure.
    Furthermore, if $ \mathcal H_d(A) > 0 $, for any sequence of configurations $ \{ \omega_N \} $ achieving the energy asymptotics of  $ \mathfrak S(s,k,\kappa,\ext) $, the weak$ ^* $-limit of the corresponding counting measures is $ \pphi \d\Hd $.
\end{theorem}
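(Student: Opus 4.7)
The strategy is to deduce Theorem~\ref{thm:k_asympt} from the general framework of Theorem~\ref{thm:main1}, together with its extensions to embedded $d$-rectifiable sets (Section~\ref{sec:embedded}) and to translation-dependent functionals (Section~\ref{sec:translation_dependent}). I would first treat the unweighted case $\kappa \equiv 1$, $\ext \equiv 0$ with scale $\t(N) = N^{1+s/d}$, and only afterward reintroduce the weight and external field through a localization argument.

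For the unweighted $k$-nearest-neighbor energy, lower semicontinuity on $(\R^d)^N$ is immediate (with value $+\infty$ at coincidences), and set-monotonicity holds with $\varsigma = +1$, since enlarging the admissible set cannot raise the minimum. The core structural property to establish is that $E^k_s$ is short-range: because each $x_i$ interacts only with its $k$ closest companions, whose distance is comparable to the covering radius $\sim N^{-1/d}$ of a near-minimizer, the interaction between two subsets of $\omega_N$ separated by a macroscopic distance $\delta > 0$ is supported on at most $O(N^{1-1/d})$ indices lying within $O(N^{-1/d})$ of the separating hypersurface, and is therefore negligible relative to $N^{1+s/d}$. Existence of the limit on a single cube $\q_d$, together with the values $\l_{E^k_s}(x + a\q_d) = \c^k a^{-s}$, then follows from scale and translation invariance of the kernel combined with a standard subadditive tiling argument: partitioning $\q_d$ into $m^d$ congruent subcubes and splicing near-optimal configurations yields matching upper and lower bounds, identifying the exponent $\sigma = s/d$ of Theorem~\ref{thm:main1}(A).

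With hypotheses (a)--(c) of Theorem~\ref{thm:main1} verified, its $d$-rectifiable version gives, for any compact $d$-rectifiable $A\subset\R^{d'}$,
\[
\lim_{N\to\infty} \frac{\min_{\omega_N\subset A} E^k_s(\omega_N)}{N^{1+s/d}} = \c^k\, \Hd(A)^{-s/d},
\]
and weak$^*$ convergence of the counting measures of minimizers to $\Hd\restr{A}/\Hd(A)$ when $\Hd(A) > 0$. To reintroduce $\kappa$ and $\ext$, I would invoke the translation-dependent extension of Section~\ref{sec:translation_dependent}: partition $A$ into small pieces on which $\kappa(\cdot,\cdot)$ and $\ext$ are nearly constant, apply the unweighted asymptotics locally with candidate density $\pphi$ on each piece (so the local contribution to the kernel energy is $\c^k \kappa(x,x)\pphi(x)^{1+s/d}$ and that of the field is $\ext(x)\pphi(x)$), and pass to the limit. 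Minimization of the resulting functional $\c^k\int_A \kappa(x,x)\pphi^{1+s/d}\d\Hd + \int_A \ext\,\pphi\d\Hd$ over probability densities $\pphi$ via a Lagrange multiplier argument produces the stated formula for $\pphi$, with $L_1$ the multiplier; the weak$^*$ statement then follows from uniqueness of this minimizer under the given hypotheses.

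The principal technical obstacle will be the short-range property. Unlike the hypersingular case $s > d$, where decay of the kernel off the diagonal helps automatically, here the truncation to $k$ neighbors must be controlled via uniform geometric estimates on minimizers. I expect this to rest on two ingredients: a separation bound $\min_{i\ne j}\|x_i - x_j\| \gtrsim N^{-1/d}$ arising from the local repulsion inherited from the Riesz kernel restricted to $k$ terms, and a complementary covering estimate ensuring that $k$-th nearest-neighbor distances are uniformly $O(N^{-1/d})$. Once these a priori bounds are in hand, decoupling across macroscopic scales and the tiling argument on the cube proceed routinely.
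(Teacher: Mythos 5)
Your overall route is the same as the paper's: verify that $E^k_s$ is an admissible short-range functional in the abstract framework (cube asymptotics via scaling, short-range decoupling, continuity under small perturbations of the set, compatibility with a weight and external field, embedded/$d$-rectifiable extension), then invoke the general asymptotics-and-distribution results of Sections~\ref{sec:translation_dependent} and~\ref{sec:embedded}. But there is a genuine gap in how you treat the separation bound, which is specific to $k$-truncated energies. You assert that $\min_{i\ne j}\|x_i - x_j\| \gtrsim N^{-1/d}$ ``aris[es] from the local repulsion inherited from the Riesz kernel restricted to $k$ terms,'' but the obvious point-replacement argument (move a too-close $x_1$ to a vacant site $y$, compare energies) does not close: replacing $x_1$ by $y$ alters the $k$-nearest-neighbor structure of every other point that gains $y$ (or loses $x_1$) in its nearest-neighbor list, so the energy difference contains a sum over $\{i : y \in I_{i,k}\}$ that a priori could involve unboundedly many terms. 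One must prove that the number of points that can have $y$ among their $k$ nearest neighbors (at distance $\geq r$) is bounded by a constant $n(k,d)$; the paper isolates this into Lemma~\ref{lem:few_nns}, proved by a spherical-cap packing argument on the radial projection onto $\partial B(x_i,r)$, and only then combines it with Frostman's lemma to produce the vacant site $y$ and close the argument in Lemma~\ref{lem:separation}. Without this geometric input, your separation step would not go through.

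A secondary omission: you do not indicate how you would verify the cube-continuity property of Definition~\ref{def:cube_conts}, which you need to pass from cubes to general compact (and then $d$-rectifiable) sets. For $k$-truncated energies the paper handles this with a retraction argument, Lemma~\ref{lem:upper_reg}, projecting a minimizer on $A$ onto a compact $D \subset A$ of slightly smaller Minkowski content while controlling the resulting energy increase, and that control again relies on the separation bound. The remaining steps of your sketch — lower semicontinuity, set-monotonicity, the short-range decoupling using separation/covering near tile boundaries, the subadditive tiling argument on the cube yielding $\sigma = s/d$, the Riemann-sum localization to reintroduce $\kappa(x,x)$ and $\ext$, and the Lagrange-multiplier derivation of $\pphi$ — all match the paper's development and are sound.
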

This result has the rather remarkable consequence that in order to recover a given distribution, it suffices to minimize an energy functional with the Riesz kernel, that only takes into account interactions with the nearest neighbor! Of course, this requires the knowledge of the $ C_{s,d}^k $ constant, but its value can easily be approximated numerically, and is stable with respect to the computation error. We thus justify the application of gradient flow to nearest neighbor truncation of the Riesz energy as a means to obtain a prescribed distribution \cite{vlasiukFast2018}. An interesting question is how the choice of $ k $ influences the speed of convergence of the gradient flow; it is reasonable to expect that very small values of $ k $ will negatively impact convergence, whereas for large $ k $ the problem becomes almost as computationally expensive as the complete $ N $-body interaction, and so a certain intermediate $ k $ must be optimal. We currently don't know a way to determine such an optimal $ k $.

% TODO: the value of $ \c^k $ can be taken from the universally optimal lattice, if there is one in $ \mathbb R^d $. (?)
% TODO: mention the 1d case
\subsubsection{Optimal quantizers} 
The so-called optimal quantizers of a fixed measure $ \mu $ can be viewed as the best approximation to it among the measures, supported on at most $ N $ points.
To define the appropriate notion of proximity, recall the $ L_p $-Wasserstein metric. For Borel probability measures $ \mu_1, \mu_2 $, the $ L_p $-Wasserstein distance between them is the optimal transport distance with cost $ \|x-y\|^p $, that is,
\[
    \rho_p(\mu_1, \mu_2) := \inf_{\pi\in\Pi} \left(\int_{\mathbb R^d} \|x-y\|^p \, d\pi(x,y) \right),
\]
where $ \Pi $ is the set of probability measures $ \pi $ on $ \mathbb R^d\times \mathbb R^d $, such that their marginals are $ \mu_1 $ and $ \mu_2 $, respectively. An optimal quantizer of $ \mu $ is a discrete measure $ \nu $, supported on at most $ N $ points, minimizing $ \rho_p(\mu,\nu) $.

While discussing the properties of such $ \nu $ is Section~\ref{sec:CVT}, we will however use an equivalent characterization.
It can be shown \cite[Lemma 3.4]{grafFoundations2000} that the solution to the problem of approximating $ \mu $ with discrete measures supported on a collection of points $ \omega_N  $ coincides with the solution to the problem of minimizing the quantization error of $ \mu $, given by
\[
    \cvt(\omega_N) = \int_{\mathbb R^d} \min_i \|y-x_i\|^p \d\mu(y),\qquad \omega_N = \{ x_1,\ldots, x_N \},
\]
over all $ N $-point configurations in $ \mathbb R^d $. 
We consider a slightly different formulation of the optimal quantization problem in the case of $ \mu = \mathbb 1_A \cdot \lambda_d $, assuming additionally that optimization is performed over $ \omega_N \subset A $. Asymptotically the two formulations coincide. We also discuss weighted quantization with $ \mu = \h\, \mathbb 1_A \cdot \lambda_d $ for a weight $ \h $.

Our methods recover the known results on optimal quantizers, such as the asymptotics of the minimal values of the quantization error, and weak$ ^* $ distribution of optimal quantizers. Furthermore, we improve the results of Gruber~\cite{gruberOptimum2004} in the embedded case, generalizing them to $ d $-rectifiable sets from smooth manifolds in $ \mathbb R^{d'} $. Our result below also includes the external field term, which previously has not been considered for optimal quantizers.
Remarkably, the problem of optimal quantization is quite similar to the problem of optimization of the hypersingular Riesz energy for $ s>d $, or to that of optimization of nearest neighbor-truncated Riesz energies. These problems arise for different ranges of the scaling power $ \sigma $ in Theorem~\ref{thm:main1}, positive for Riesz energies and negative for optimal quantizers, but can be handled by the same methods. The underlying properties in both ranges however are those of short-range functionals. It appears to be the first time this has been observed; the two types of problems previously were considered separately.

The statement of the main theorem of Section~\ref{sec:CVT} is as follows.
\begin{theorem}
    Suppose $ p >0 $, $ A \subset \mathbb R^{d'} $ is a $ d $-rectifiable $ d $-regular set, and $ \cvt $ is the quantization error with continuous weight $ \h\geq h_0 > 0 $, and continuous external field $ \xi\geq0 $:
    \[
        \cvt(\omega_N; \h,\xi) = \int_{A} \h(y) \min_{i} \|y-x_i\|^p \,d\mathcal H_d(y) + N^{p/d-1} \sum_{i=1}^N \xi(x_i),\qquad A\subset \mathbb R^{d'},\ d'\geq d.
    \]
    Then 
    \[
        \lim_{N\to\infty} {N^{p/d}}{{\e_{\h,\xi}}(\omega_N^*( A))} =
        c_{p,d} \int_{A} \h(x)\,\pphi(x)^{-p/d}\, d\mathcal H_d(x) + \int_{ A} \xi(x) \, \pphi(x)\, d\mathcal H_d(x),
    \]
    where $ \pphi $ is the density of a probability measure $ \mu $ supported on $  A $, and is given by    
    \[
        \pphi(x) = \frac{d\mu}{d\mathcal H_d}(x) = \left(\frac pd \cdot\frac{c_{p,d}\h(x)}{\xi(x)-L_1}\right)^{\frac{d}{p+d}}_+
    \]
    for a normalizing constant $ L_1 $. Furthermore, if $ \mathcal H_d(A) > 0 $, the weak$ ^* $-limit of the counting measures for any sequence of configurations attaining the optimal asymptotics is $ \pphi \d\Hd $
\end{theorem}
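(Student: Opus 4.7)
The plan is to recognize this theorem as a direct specialization of the general framework developed in the preceding sections: Theorem~\ref{thm:main1}, together with the weighted/external-field extension of Section~\ref{sec:translation_dependent} and the embedded variant of Section~\ref{sec:embedded}, applied to the quantization error functional $ \cvt $. The appropriate normalization is $ \t(N) = N^{-p/d} $, which is decreasing and satisfies $ \t(tN)/\t(N) = t^{-p/d} $, a strictly convex function of $ t $ for $ p > 0 $. The relation $ 1+\sigma = -p/d $ forces $ \sigma = -(1+p/d)\in(-\infty,-1) $, which is in the permitted branch of Theorem~\ref{thm:main1}(A). Set-monotonicity (with $ \varsigma < 0 $) is immediate, since enlarging $ A $ only relaxes the admissibility constraint $ \omega_N \subset A $; lower semicontinuity of $ \omega_N \mapsto \cvt(\omega_N) $ follows from Fatou applied to $ \min_i \|y-x_i\|^p $.

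The cube asymptotics are the classical Zador--Gruber baseline: the affine change of variables $ x\mapsto(x-c)/a $ identifies the optimum on $ c+a\q_d $ (with $ \h\equiv 1,\ \xi\equiv 0 $) with the optimum on $ \q_d $ up to the geometric factor $ a^{d+p} $, giving $ \l_\cvt(c+a\q_d) = c_{p,d}\,a^{d+p} $, independent of the center, which is hypothesis (a) of Theorem~\ref{thm:main1}. The short-range hypothesis (b) is the structural crux and rests on the fact that each $ y $ contributes to $ \cvt $ only through its single nearest $ x_i $: the integral decouples over Voronoi cells, and on finite unions of cubes from $ \mathscr Q $ a sub-/super-additivity argument combined with routine boundary control yields both the short-range property and the cube-continuity (c). Theorem~\ref{thm:main1} then delivers $ \l_\cvt(A) = c_{p,d}\,\lambda_d(A)^{1+p/d} $ on compact $ A\subset\Omega $ and weak$ ^* $ convergence of counting measures to the uniform probability.

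To upgrade to a $ d $-rectifiable $ A\subset\mathbb R^{d'} $ carrying continuous $ \h\geq h_0>0 $ and $ \xi\geq 0 $, I would pull $ \cvt $ back through the Lipschitz parametrization of $ A $, reducing to the flat case on small patches where $ \h,\xi $ are essentially constant; rectifiability ensures that tangent-plane approximation errors are negligible at scale $ \t(N) $. The mean-value property of Section~\ref{sec:translation_dependent} then identifies the asymptotic contribution of a patch of $ \Hd $-mass $ m $ supporting $ \approx m\pphi(x)N $ points as $ c_{p,d}\h(x)\pphi(x)^{-p/d}m + \xi(x)\pphi(x)m $, and Riemann summation over a fine partition of $ A $ gives the asymptotic functional
\[
    F(\pphi) \;=\; c_{p,d}\int_A \h\,\pphi^{-p/d}\d\Hd + \int_A \xi\,\pphi\d\Hd.
\]
Minimizing $ F $ over probability densities on $ A $ by Lagrange multipliers gives the stationarity condition $ -\tfrac{p}{d}c_{p,d}\h\,\pphi^{-(p+d)/d} + \xi = L_1 $, and solving for $ \pphi $ produces the displayed formula with $ L_1 $ fixing $ \int\pphi\d\Hd = 1 $. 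Strict convexity of the pointwise integrand $ z \mapsto c_{p,d}\h(x)z^{-p/d}+\xi(x)z $ for $ z>0 $ forces uniqueness of the minimizer, and the short-range localization then upgrades the energy asymptotic to weak$ ^* $-convergence of counting measures for every asymptotically optimal sequence.

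The hard part will be the rigorous verification of the short-range and local mean-value properties for $ \cvt $ when $ A $ is merely $ d $-rectifiable and embedded in higher ambient dimension: the Voronoi cells of $ \omega_N\subset A\subset\mathbb R^{d'} $ are intrinsically $ d' $-dimensional and need not respect the geometry of $ A $, so one must carefully match cube-based upper bounds (from efficient coverings) with lower bounds that discard configurations producing excessive overlap across patch boundaries of the parametrization. Gruber's treatment sidestepped this by assuming $ A $ a smooth manifold; the central benefit of packaging the problem into the short-range framework is that the weakening to $ d $-rectifiability drops out of the abstract machinery rather than demanding a separate geometric argument.
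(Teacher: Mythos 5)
Your proposal follows essentially the same route as the paper: identify $\cvt$ as an admissible functional with $\sigma=-(1+p/d)$, get cube asymptotics from $(-p-d)$-scale invariance and locality (Theorem~\ref{thm:cube}), verify the short-range and continuity properties, and then invoke the embedded machinery (Theorem~\ref{thm:embedded}, built on Federer's decomposition) plus the Lagrange-multiplier computation of the optimal density. The places you label ``routine'' are exactly where the paper does real work — the strictly positive lower bound for $c_{p,d}$ comes from Graf--Luschgy's isoperimetric theorem, the short-range property requires Gruber's covering-radius estimate \cite[Stmt.~(2.19)]{gruberOptimum2004} to bound the measure of the boundary strips $Z_m$, and the cube-continuity verification inserts a maximal packing of balls of radius $\asymp N^{-1/d}$ into $A\setminus D$ and needs $d$-regularity of $A$ — but the architecture you describe is the paper's.
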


\subsubsection{A meshing algorithm as a short-range interaction}
As an illustration of the wide applicability of our approach, Section~\ref{sec:meshing} discusses a way to characterize the distribution of the equilibrium configurations, obtained using a meshing algorithm of Persson and Strang \cite{Persson2004a}. This algorithm consists in generating a configuration as a stable point of the Euler algorithm for a certain ODE, and then obtaining a mesh by computing the Delaunay triangulation of this configuration. The relevant ODE arises from modeling the edges of the Delaunay triangulation graph as elastic springs, obeying a modified Hooke's law, see Figure~\ref{fig:persson_strang}, where a weighted distribution is demonstrated. Somewhat counterintuitively, as we discuss in Section~\ref{sec:meshing}, the equilibrium configuration corresponds to a maximum, not minimum, of a certain functional. To define this functional, let $ T_i $ be the indices of points connected to $ x_i $ by the edges of the Delaunay graph $ (V,E) = (\omega_N, E) $:
\[
    T_i = \{ j: (x_i, x_j) \in E \},
\]
and denote the average squared edge length of this graph by
\[
    m_2 (\omega_N) =  \frac1{2\#E} {\sum_{i=1}^N\sum_{j\in T_i} \|x_j-x_i\|^2}.
\]
Clearly, $ 2\# E = \sum_{i=1}^N \#T_i $.
\begin{figure}[t]
    \centering
    \includegraphics[width=0.3\linewidth]{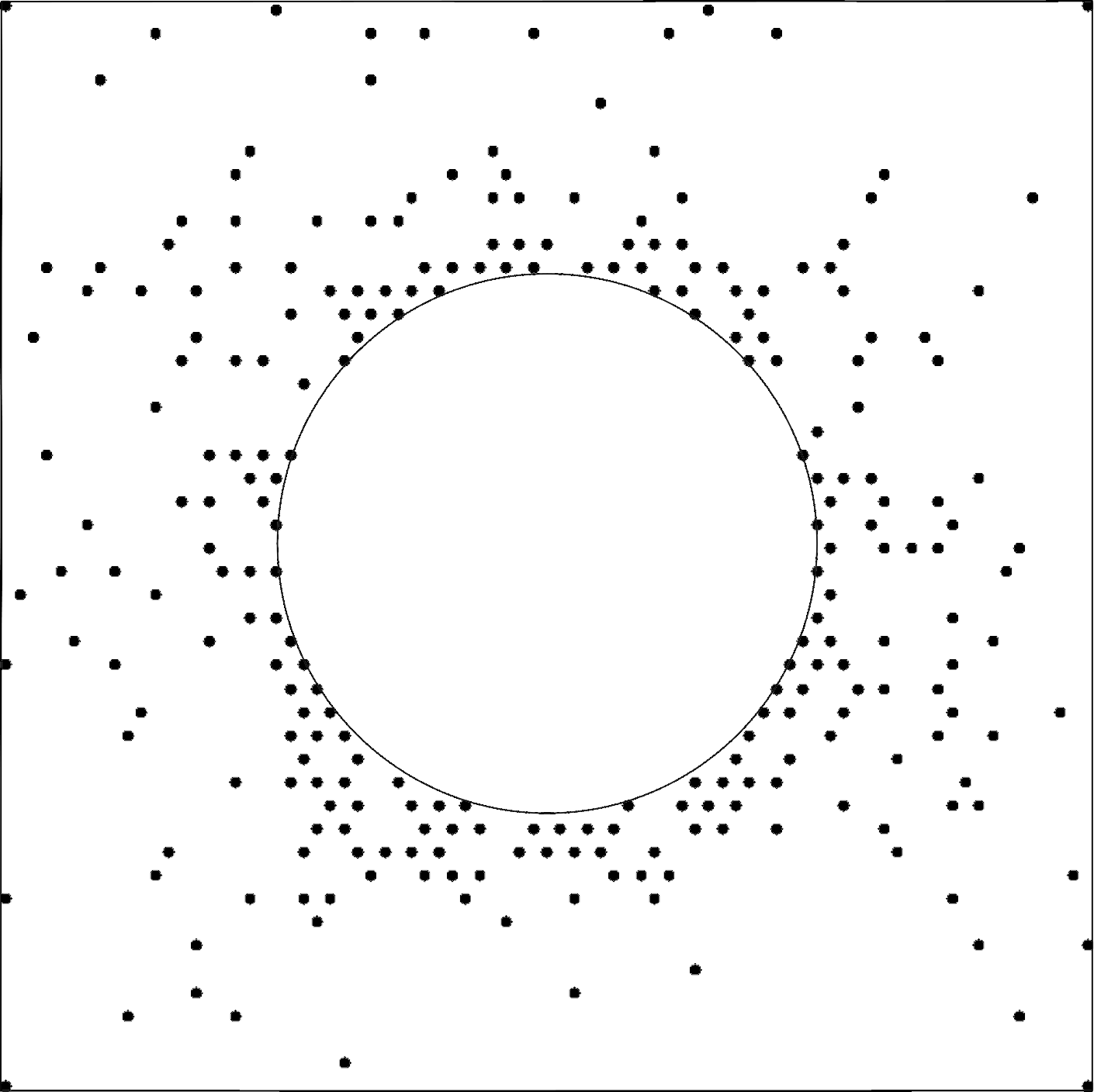}
    ~
    \includegraphics[width=0.3\linewidth]{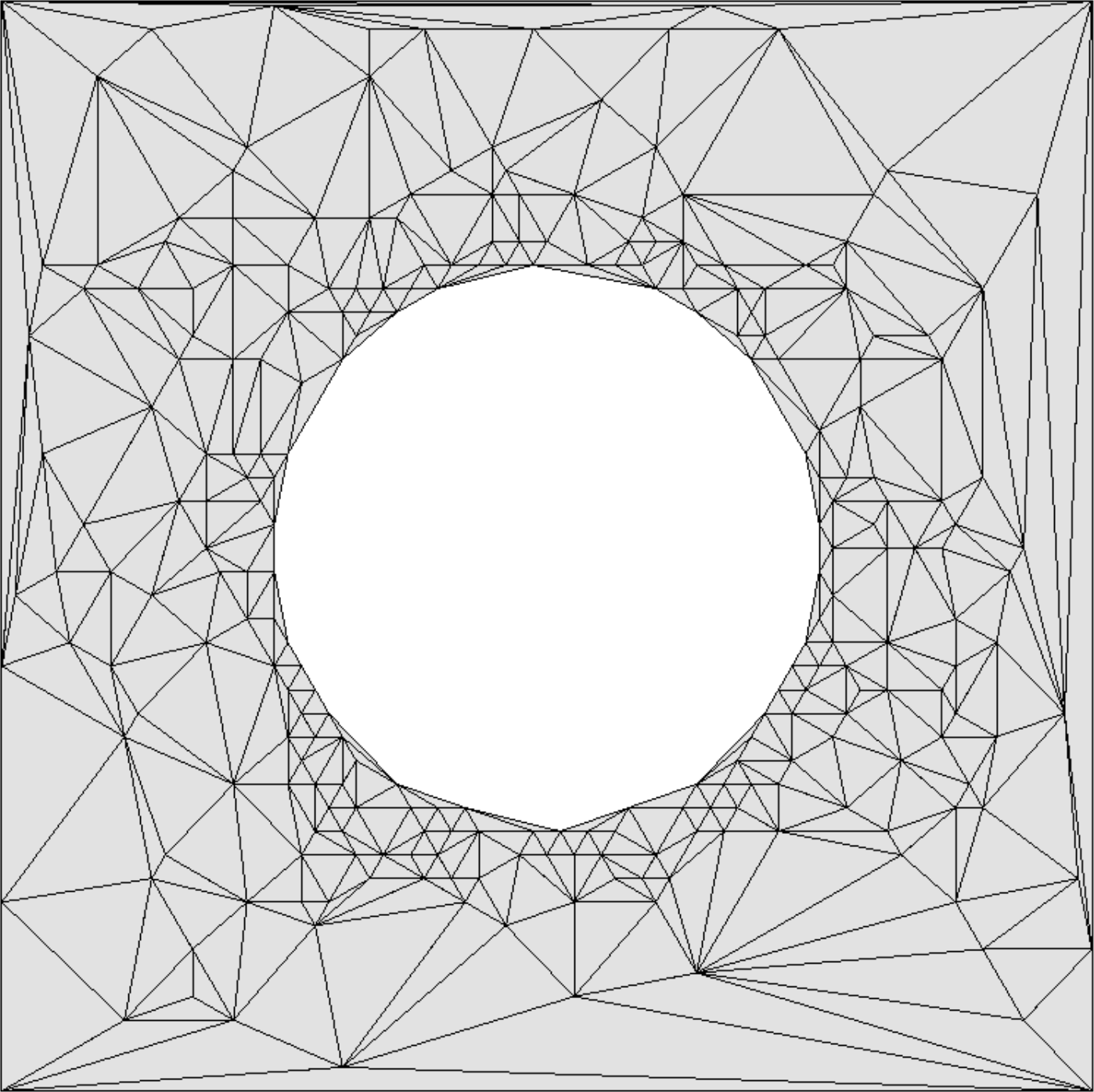}
    ~
    \includegraphics[width=0.3\linewidth]{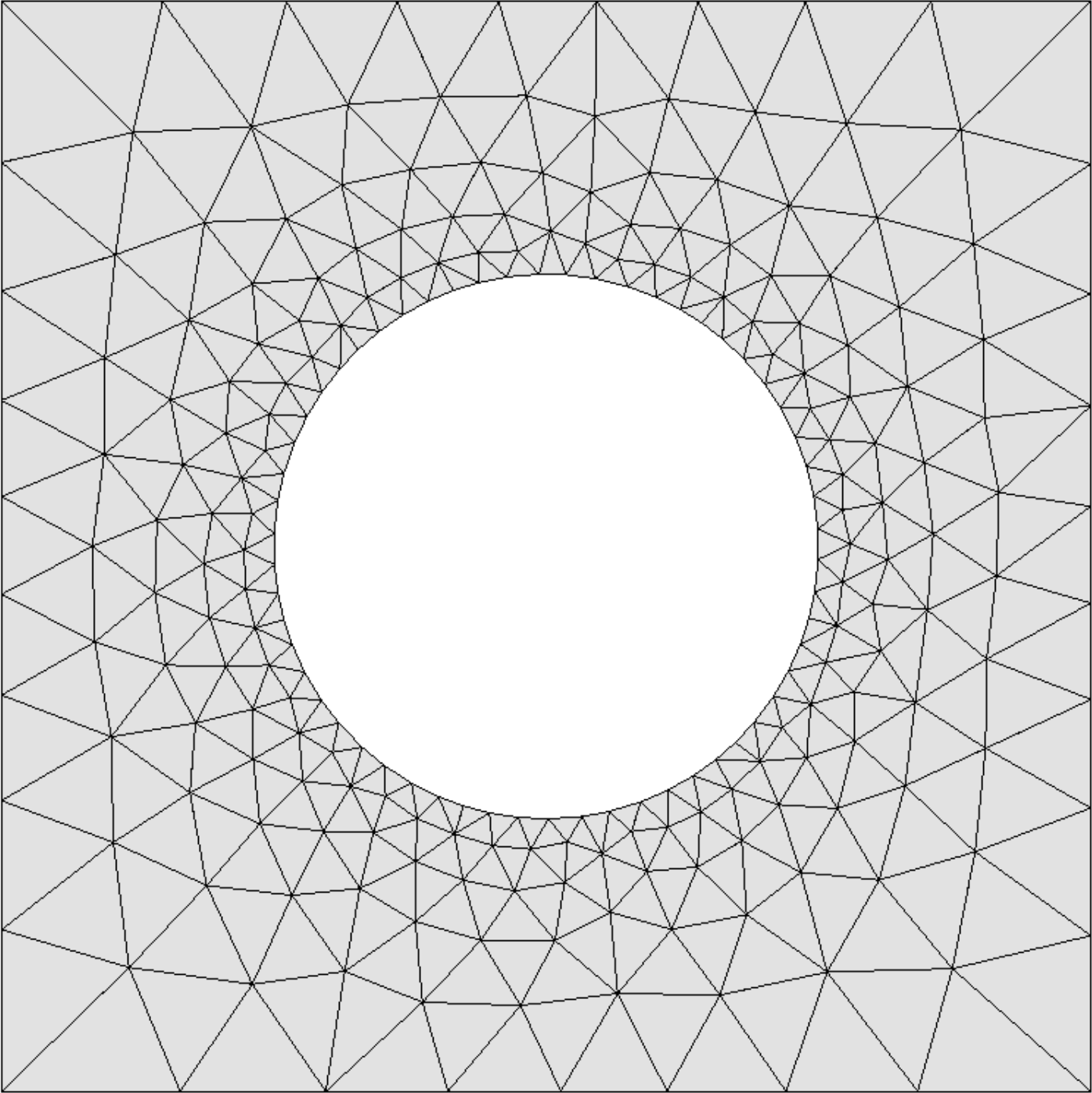}
    \caption{Major steps of the Persson-Strang meshing algorithm, left to right: (i)~distribute points, (ii)~triangulate, (iii)~force equilibrium. Figures produced with the Matlab implementation from \cite{Persson2004a}.}%
    \label{fig:persson_strang}
\end{figure}
With the above notation, we will consider the optimization problem
\begin{equation*}
    \hat\e(\omega_N) = \sum_{i=1}^N\sum_{j\in T_i} \frac12 \left( (1+P) \cdot m_2(\omega_N) - \|x_j-x_i\|^2 \right)_+ \longrightarrow \max,
\end{equation*}
where $ P > 0 $ is a fixed constant. We write $ \E $, as opposed to $ \e $, to indicate that this functional will be maximized.

It is not hard to see that the minimal value of $ \E = 0 $ is attained for the configuration consisting of a single point: $ x_1 = \ldots = x_N $. Another reason why this optimization problem requires dealing with maximizers, not minimizers, can be seen from the scaling rate of the above functional (quadratic), which causes the corresponding rate function to be concave: 
\[
    \t(N) = N^{1-2/d}.
\]
Here $ \sigma = -2/d $, which corresponds to the range $(-1,0)$, excluded from Theorem~\ref{thm:main1}. In Section~\ref{sec:generalizations}, we discuss how Theorem~\ref{thm:main1} can be naturally generalized to this range of $ \sigma $ by replacing the minimizers with maximizers. 

Before we proceed to state the results about maximizers of $ \E $, note that as given above, $ \E $ is not well-defined for collections $ \omega_N $ containing a subset of $ d+2 $ points that lie on a sphere in $ \mathbb R^d $, since Delaunay triangulation of such a subset is not well-defined. This issue can be resolved by the usual approach of small perturbations of the configuration. Indeed, configurations $ \omega_N' $ with spherical subsets form a subset of $ A^N $ of $ (\lambda_d)^N $-measure zero, and so can obtained as limits of $ \omega_N $ that do not contain such subsets. Defining 
\[
    \E(\omega_N') := \limsup_{\omega_N \to \omega_N'} \E(\omega_N),
\]
we ensure that $ \E $ is upper semicontinuous on $ A^N $, and thus maximizers are well-defined. Equivalently, one can think of $ \E $ as being the upper semicontinuous envelope of the values it takes on collections in general position.

The results of Section~\ref{sec:meshing} yield the following statement for the asymptotics of $ \E $. In it, we will assume that for $ j\in T_i $, $ x_j $ is among the nearest neighbors to $ i $, that is, $ T_i\subset I_{i,k} $ for some large enough $ k = k(d) $. This assumption corresponds to choosing a suitable starting configuration in the Persson-Strang procedure, by drawing it from the uniform distribution on $ A $ \cite{Persson2004a}. Indeed, for uniformly distributed configurations the expected number of edges in the Delaunay graph is $ k(d)n $ \cite{dwyerExpected1993},  which suggests that $ T_i\subset I_{i,k} $ holds on average. The question of whether this inclusion holds for Delaunay triangulation of an arbitrary $ \omega_N \subset A $ remains open. For practical purposes, it suffices to prune the edges in the Delaunay graph that violate this inclusion; the following theorem then shows that maximizing the functional associated with the modified graph leads to uniformly distributed configurations.
\begin{theorem}
    Suppose $ d\geq 3 $ and the compact set $ A\subset \mathbb R^d $ satisfies $ \lambda_d(\partial A) =0 $. If $ T_i \subset I_{i,k} $ for $1\leq i \leq N$ and a fixed $ k=k(d) $, there holds
    \[
        \lim_{N\to \infty} \frac{\sup_{\omega_N\subset A} \hat\e(\omega_N)}{N^{1-2/d}} = C_\e(P)\, \lambda_d(A)^{2/d}.
    \]
    If additionally $ \lambda_d(A)>0 $, the maximizers of the upper semicontinuous envelope of $ \hat\e $ converge to the uniform measure on $ A $.
\end{theorem}
Non-uniform distributions can be considered as well, by applying the discussion in Section~\ref{sec:translation_dependent}.
Note that the case $ d = 2 $ cannot currently be treated in the same way; this appears to be only a deficiency of our approach, as in practice the algorithm performs equally well for this value of $ d $.  

\subsubsection{\texorpdfstring{$\Gamma$}{Gamma}-convergence of the short-range functionals}

In Section~\ref{sec:gamma} we discuss $ \Gamma $-convergence of short-range energies.  The results of this section can be viewed as an extension of the results about the large deviation principles for the hypersingular Riesz energy from a joint work of Leblé and Serfaty with the first two authors \cite{hardinLarge2018}. The present extension considers more general short-range interactions $ \e $, but only for the case of zero temperature.

Let us start by recalling some classical results about convergence of a discrete energy functional to its continuous counterpart. 
Suppose $ \mathcal P_N(A) $ is the class of counting measures of $ N $-point subsets of $ A \subset \mathbb R^d $:
\[
    \mathcal P_N(A) = \left\{ \frac1N \sum_i \delta_{x_i} : x_i \in \omega_N \subset A \right\}.
\]
Instead of defining $ \e(\omega_N, A) $ on the collections of points in $ A $, we can instead define it on the counting measures of such collections, and extend to the entire $ \mathcal P(A) $ by setting it equal to $ +\infty $ elsewhere.

It is well known \cite[Prop. 11.1]{sandierVortices2007}, \cite{serfatyCoulomb2015} that when $ \ker \geq 0 $ is a radial, monotonic kernel integrable with respect to $ \lambda_d $, the functionals $ e_N $ that consist of sums of pairwise interactions through $ \ker $ will $ \Gamma $-converge to their continuous double integral counterpart. More precisely, setting
\[
    \e_N(\mu, A) = 
    \begin{cases}
        \frac1{N^2}\sum_{i\neq j} \ker(x_i, x_j), & \mu \in \mathcal P_N(A),\ \supp(\mu) =\omega_N;\\
        +\infty,                                   & \text{otherwise.}
    \end{cases}
\]
and
\[
    \e(\mu, A) =  \iint_A \ker(x,y) \, d\mu(x) d\mu(y),
\]
we have 
\[
    \glim_{N\to \infty} \e_N(\mu, A) =  \e(\mu, A).
\]

In the case of a non-integrable kernel, such as $ \ker(x,y) = \|x-y\|^{-s} $, $ s>d $, both the normalization and the limiting continuous functional are different from the above. For a general short-range interaction $ \e $, the normalization is given by the rate function $ \t $. We have the following result about $ \Gamma$-convergence of such $ \e $.
\begin{theorem}
    \label{thm:gamma}
    Suppose $ \e $ is a short-range interaction as in Theorem~\ref{thm:main1}, $ \sigma > 0 $, and let
    \[
        \e_N(\mu, A) = 
        \begin{cases}
            \e(\omega_N, A), & \mu \in \mathcal P_N(A),\ \supp(\mu) =\omega_N;\\
            +\infty,                                   & \text{otherwise,}
        \end{cases}
    \]
    and 
    \begin{equation}
        \label{eq:gamma_limit} 
        \s(\mu, A) := 
        \begin{cases}
            C_\e \int_A  \left(\frac{d\mu}{d\lambda_d}\right)^{1+\sigma}\,d\lambda_d(x), & \mu \ll \lambda_d, \\ 
            +\infty, & \text{otherwise},
        \end{cases}
    \end{equation}
    where $ C_\e $ does not depend on $ A $, but only on $ \e $ itself.
    Then
    \[
        \glim_{N\to\infty}\frac{\e(\cdot,\, A)}{\t(N)} = \s(\cdot,\,A)
    \]
    on $ \mathcal P( \Omega) $ equipped  with the weak$ ^* $ topology.
\end{theorem}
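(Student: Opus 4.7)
The plan is to establish the two standard $\Gamma$-convergence inequalities---the liminf estimate $\liminf_N \e_N(\mu_N,A)/\t(N)\geq \s(\mu,A)$ along every weak$^*$ convergent sequence $\mu_N\weakto \mu$, together with existence of a recovery sequence realizing equality---by reducing to a partition of $A$ into small cubes and invoking Theorem~\ref{thm:main1} cube by cube. The conclusion $\l_\e(Q)=\l_\e(\q_d)\,\lambda_d(Q)^{-\sigma}$ on cubes $Q$, combined with $\w(t)=t^{1+\sigma}$ (so that $\t$ is regularly varying of index $1+\sigma$), pins down the universal constant $C_\e=\l_\e(\q_d)$; this identification is forced by evaluating $\s$ at the uniform probability measure on a cube and comparing with Theorem~\ref{thm:main1}(B).

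For the recovery sequence, I would first treat $\mu\ll\lambda_d$ with $\pphi:=d\mu/d\lambda_d$ bounded, continuous, and compactly supported in the interior of $A$. Partition a neighborhood of $\supp\mu$ by a fine grid of axis-aligned cubes $Q_1,\ldots,Q_k$ of common side $a$, choose representatives $x_i\in Q_i$, and assign $N_i := \lfloor N\,\pphi(x_i)\lambda_d(Q_i)\rfloor$ points to $Q_i$, placed as near-minimizers of $\e(\cdot,Q_i)$, with one cube absorbing the remainder so that $\sum_i N_i=N$. By Theorem~\ref{thm:main1}(B), $\e(\omega_{N_i}^*(Q_i),Q_i)/\t(N_i)\to \l_\e(\q_d)\lambda_d(Q_i)^{-\sigma}$, and by regular variation $\t(N_i)/\t(N)\to \bigl(\pphi(x_i)\lambda_d(Q_i)\bigr)^{1+\sigma}$; multiplying and summing yields a Riemann sum converging to $C_\e\int \pphi^{1+\sigma}\d\lambda_d=\s(\mu,A)$. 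The short-range property on $\e$ is exactly what is needed to bound the total energy of the assembled configuration by the sum of per-cube energies plus an $o(\t(N))$ remainder accounting for cross-cube interactions. Theorem~\ref{thm:main1}(C) then guarantees that the assembled counting measures converge weak$^*$ to $\mu$. A standard density/mollification argument extends the construction to arbitrary $\mu\ll\lambda_d$ with $\s(\mu,A)<\infty$; for the remaining $\mu$ the recovery condition is vacuous.

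For the liminf estimate, take counting measures $\mu_N=\tfrac1N\sum_i\delta_{x_i}\weakto\mu$. Choose a cubical grid $\{Q_i\}_{i=1}^k$ with $\mu(\partial Q_i)=0$ for every $i$, set $N_i:=\#(\omega_N\cap Q_i)$, and combine set-monotonicity with the short-range property to write
\[
\e(\omega_N,A)\geq \sum_i \e(\omega_N\cap Q_i,Q_i) - o(\t(N)) \geq \sum_i \min_{\#\eta=N_i,\,\eta\subset Q_i}\!\e(\eta,Q_i) - o(\t(N)).
\]
Applying Theorem~\ref{thm:main1}(B) on each cube and using $\t(N_i)/\t(N)\sim(N_i/N)^{1+\sigma}$, the right-hand side is asymptotically bounded below by
\[
C_\e \sum_i \lambda_d(Q_i)\left(\frac{\mu_N(Q_i)}{\lambda_d(Q_i)}\right)^{\!1+\sigma}.
\]
Since $\mu_N(Q_i)\to\mu(Q_i)$ by weak$^*$ convergence, refining the grid converts the right-hand side, via Lebesgue differentiation and Fatou, into the lower bound $C_\e\int (d\mu/d\lambda_d)^{1+\sigma}\d\lambda_d=\s(\mu,A)$; convexity of $x\mapsto x^{1+\sigma}$, which requires $\sigma>0$, is crucial. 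If $\mu$ has a singular part, then along a sequence of cubes shrinking onto a concentration set the ratio $\mu(Q_i)/\lambda_d(Q_i)$ diverges and the liminf is forced to $+\infty=\s(\mu,A)$.

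The main technical obstacle I expect is the uniform control, across all cubes of the partition, of the error from decoupling $\e$ across different cubes. Because $\t$ is only regularly varying (not a pure power) and the per-cube rate depends on the number of points assigned there, the cross-cube error must be shown to be $o(\t(N))$ after summation; this is precisely what the quantitative short-range hypothesis of Section~\ref{sec:short_range} is meant to deliver, and matching the form of that remainder against the grid refinement is the delicate step. Once this decoupling is in place, the identification of $C_\e$, the passage to Riemann sums, the Jensen/Fatou step in the lower bound, and the density/mollification extension in the upper bound are routine.
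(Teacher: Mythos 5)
Your proposal takes a genuinely different route from the paper's, and the lower-bound half contains a real gap. The paper's proof avoids any grid construction: for $\mu$ with continuous density, Theorem~\ref{thm:ext_dist} produces an external field $\xi^*$ whose energy minimizers $\mu_N^*$ converge weak$^*$ to $\mu$; the recovery sequence is $\mu_N^*$ itself, and the liminf estimate comes from the identity $\e_N(\mu_N;\h,\xi)/\t(N)=\e_N(\mu_N;\h,\xi^*)/\t(N)+\int(\xi-\xi^*)\,d\mu_N$ together with the minimality of $\mu_N^*$ for the $\xi^*$-modified functional and the weak$^*$ convergence of the external-field integral. General absolutely continuous $\mu$ are then handled by approximation with continuous densities.

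The gap in your liminf argument is the step
\[
\e(\omega_N,A)\;\geq\;\sum_i\e(\omega_N\cap Q_i,Q_i)-o(\t(N))
\]
for an \emph{arbitrary} sequence $\omega_N$ with $\mu_N\weakto\mu$. You attribute this to the quantitative short-range hypothesis, but Definition~\ref{def:short_range} only states the decoupling estimate for minimizers $\omega_N^*(A)$ and piecewise minimizers $\bigcup_m\omega_{N_m}^*(A\cap Q_m)$; nothing in the abstract framework supplies it for a generic admissible configuration. For the concrete $\sigma>0$ examples one can check the inequality by inspection (restricting candidate neighbors to one cube can only increase nearest-neighbor distances, hence decrease each kernel term), but that is a structural fact about Riesz-type kernels, not a consequence of the hypotheses of Theorem~\ref{thm:main1}. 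The difficulty you flag as ``uniform control of the decoupling error'' is actually a scope problem with the short-range hypothesis, and the paper's external-field comparison sidesteps it entirely by replacing per-cube decomposition of an arbitrary configuration with a single global minimality inequality against the $\xi^*$-modified functional. Your recovery construction is workable in principle, but it also requires a diagonal argument over grid refinements that you leave implicit (a fixed grid only reconstructs a piecewise-constant approximation of the density, not $\mu$ itself), whereas the paper's choice of $\mu_N^*$ delivers convergence to $\mu$ for free.
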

When $ \sigma < -1 $, the above theorem holds with minor modifications. Thus, the functional $ \s $ is given by $ C_\e \int_A  \left(\frac{d\mu}{d\lambda_d}\right)^{1+\sigma}\,d\lambda_d(x) $ for all $ \mu $, not necessarily absolutely continuous with respect to $ \lambda_d $. This corresponds to the fact that when $ \sigma > 0 $, the asymptotics of minimizers on a set of $ \lambda_d $-measure 0 are infinite, an effect that does not apply to $ \sigma < -1 $.

Comparison of the proof of Theorem~\ref{thm:gamma} above and that of the corresponding result for 2-point interactions with integrable kernel reveals the difference in the asymptotic structures of energies for the long-range and short-range energies: constructing a recovery sequence for the long-range case requires a delicate argument, whereas a short-range recovery sequence is produced by simply taking the minimizers of an appropriate functional with external field. This distinction illustrates certain rigidity that the short-range minimizers possess, which makes them especially appealing for various applications.

\section{Translation invariant short-range functionals}
\label{sec:short_range}

In this section we discuss the properties of general asymptotically short-range functionals $ \e $, acting on discrete subsets of an open bounded set $ \Omega \subset \mathbb R^d $, which is assumed fixed throughout the section. The functionals we will deal with need not be defined outside $ \Omega $. Whenever we discuss compact subsets of $ \mathbb R^d $, they are assumed to be contained in $ \Omega $. 

We use $ \lambda_d $ for the full-dimensional Lebesgue measure in $ \mathbb R^d $.
Expression 
$$ \q_d := [-1/2,1/2]^d $$
stands for the unit cube centered at the origin; accordingly, $ x + a\q_d $ is a cube with side length $ a $ centered at a point $ x \in \mathbb R^d $. Note that until explicitly indicated otherwise, all the cubes considered in $ \mathbb R^d $ have sides parallel to the coordinate axes. A class compact subsets of $ \Omega $ of special interest to us consists of unions of cubes with disjoint interiors:
\[
    \Q = \bigcup_{m=1}^M (x + a_m\q_d).
\]
We denote the class of such sets by $ \mathscr Q $.

\subsection{General properties of short-range interactions}
\label{sec:general} 
Let $ \e $ be defined on all finite collections of points $ \omega_N = \{ x_1,\ldots,x_N \} \subset \Omega \subset \mathbb R^d $, except perhaps those of cardinality $ N \leq N_0(\e) $ for a fixed $ N_0(\e) $:
\[
    \e:\omega_N  \to [E_0,\infty],  \qquad \omega_N\subset \Omega,\ N\geq N_0(\e).
\]
Let further $ \e $ be lower semicontinuous on $ \Omega^N $ in the product topology for every fixed $ N \geq N_0(\e) $; such an $ \e $ will be called an {\it interaction functional}. In what follows $ \Omega\subset \mathbb R^d $ is usually equipped with Euclidean $ l_2 $ distance, but we also consider topologies induced by other norms. 
Functionals $ \e $ that depend on the underlying set will also be of interest: for example, the functionals corresponding to the quantization problem (see Section~\ref{sec:CVT}) are of the form
\[
    \e(\cdot, A):\omega_N  \to [0,\infty],  \qquad \omega_N\subset \Omega,\ N\geq N_0(\e),
\]
where $ A \subset \Omega $. The function $ \e(\cdot, A) $ is assumed to be lower semicontinuous on $ \Omega^N $ for any fixed $ A $.

In the present section we consider the functionals $ \e $ with position-independent asymptotics;  Section~\ref{sec:translation_dependent} will be dedicated to modifying such functionals with a (position-dependent) weight.
We first need to introduce the notion of the {\it translation invariant} interaction functional $ \e $; for such $ \e $, with every translation $ \mathcal T:\mathbb R^d \to \mathbb R^d $ and collection $ \omega_N\subset \mathbb R^d $ there must hold
\[
    \e( \mathcal T(\omega_N), \mathcal T(A))  = \e(\omega_N, A).
\]

The interaction functionals $ \e $ to which the discussion in this section applies are:
\begin{enumerate}
    \item Riesz functionals \cite{hardinMinimal2005}:
        \[
            E(\omega_N) = \sum_{i\neq j} \|x_i - x_j\|^{-s}, \qquad \omega_N= \{ x_1,\ldots, x_N \} \subset \mathbb R^d,
        \]
        for the hypersingular case $ s> d $.
    \item Riesz hypersingular functionals for non-isotropic norms
        \[
            E^{\mathcal V}(\omega_N) = \sum_{i\neq j} \|x_i - x_j\|^{-s}_{\mathcal V}, \qquad \omega_N= \{ x_1,\ldots, x_N \} \subset \mathbb R^d,
        \]
        where the convex body $ \mathcal V $ is the unit ball of the $ \|\cdot\|_{\mathcal V} $ norm.
    \item Truncated Riesz energies (Section~\ref{sec:nearest_neighbor}):
        \[
            E^k(\omega_N) = \sum_{i=1}^N \sum_{j\in I_{i,k}} \|x_i - x_j\|^{-s}, \qquad \omega_N= \{ x_1,\ldots, x_N \} \subset \mathbb R^d,
        \]
        for $ s>0 $, where $ I_{i,k} $ is the set of indices of the $ k $ nearest neighbors to $ x_i $ in $ \omega_N $.

    \item The optimal quantization functional on a compact set $ A $, defined as
        \[
            \cvt(\omega_N, A) = \sum_{i=1}^N \int_A \mathbb{1}_{V_i}\,\|y-x_i\|^p \d\lambda_d(y),
        \]
        where $ p > 0 $, $ V_i $ is the Voronoi cell of $ x_i $, and $ \mathbb{1}_{V_i} $ is its indicator function.
\end{enumerate} 
All of the above examples are translation- and scale invariant functionals, that is,  their minimizing configurations are preserved under translation and rescaling.

By lower semicontinuity with respect to configuration, global minimizers of $ \e(\cdot\,,A) $ are defined for any $ N $ and on any compact $ A\subset \Omega $; we denote them by $ \omega_N^*(A) $:
\begin{equation}
    \label{eq:omitting_set}
    \e(\omega_N^*(A)) := \min \{ \e(\omega_N, A) : \omega_N \subset A \subset \Omega \}.
\end{equation}
In order to deal with the asymptotic properties of a functional $ \e $, we will need to associate every such $ \e $ with a triple of functions 
$$ (\f, \t, \w):[0,\infty)\to[0,\infty]^3, $$
which will be referred to as the {\it cube limit function}, {\it rate}, and {\it fractional rate} of the functional $ \e $, respectively. The necessary properties of these functions are given in Definition~\ref{def:short_range}.
In subsequent discussion we write 
\[ 
    \l_\e(A) = \lim_{N\to\infty} \frac{\e(\omega_N^*(A))}{\t(N)},
\]
for a compact $ A \subset \Omega $, and say that $ A $  belongs to the class $ \mathscr L_\e $, if the above limit exists. We will also assume that the rate function $ \t $  is positive, measurable, and satisfies
\begin{equation}
    \label{eq:rate}
    \lim_{u\to \infty}\frac{\t(t u)}{\t(u)} = \w(t),
\end{equation}
for $0\leq t\leq 1$ and a strictly convex $ \w$. We refer to the class of functions $ \t $ for which \eqref{eq:rate} holds as {\it rate functions}. Note that by definition, such $ \t $ are regularly varying functions, which in turn implies that the limit $ \w $ must be a power law; this is further discussed in Section~\ref{sec:regular_variation}.

Finally, we will need certain monotonicity properties of the interaction functionals.  
Observe that if $ \e $ depends only on the configuration $ \omega_N $, the inclusion $ A\subset B $ for any two compact sets $ A,B \subset \Omega $ implies $ \e(\omega_N^*(A)) \geq \e(\omega_N^*(B)) $, since the second minimum is taken over a larger set. 
On the other hand, in our go-to example of a functional with set-dependence (4), this inequality is reversed. Motivated by these specific short-range interactions, we introduce the notion of set monotonicity.

Let the constant $ \varsigma \neq 0 $ be fixed. The functional $ \e $ will be called {\it set-monotonic}, or simply {\it monotonic}, if there holds
\begin{equation}
    \label{eq:monotonicity}
        \liminf_{N\to\infty} \sgn\varsigma \cdot \frac{\e(\omega_N^*(A))}{\t(N)}   \geq
         \limsup_{N\to\infty} \sgn\varsigma \cdot\frac{\e(\omega_N^*(B))}{\t(N)}      \qquad\text{for}\quad  A\subset B
\end{equation}
for any compact $ A\subset B\subset \Omega $.

In the following definitions, as well as in the rest of the present paper, all the discussed functionals are lower semicontinuous and monotonic. We assume throughout the paper that  $ \t $ is a rate function satisfying~\eqref{eq:rate}. The overall idea of this paper is to study when existence of the asymptotics of a functional on cubes can be extended to a general compact set in $ \mathbb R^d $. The following definition states the existence of asymptotics on cubes.
\begin{definition}
    \label{def:cube_asymp}
    An functional $ \e $ with triple $(\f, \t, \w)$ is said to have the {\it  translation-invariant asymptotics on cubes}, if for all  $a>0$ and $ x\in\mathbb R^d $,
    \begin{equation}
        \label{eq:cube_asymp}
        \lim_{N\to\infty} \frac{\e(\omega_N^*(x+a\,\q_d))}{\t(N)} = \f(a^d) > 0
    \end{equation}
    for a function $ \f $, finite on $ (0,\infty) $.
\end{definition}

Recall that $ \mathscr Q $ stands for the class of finite unions of cubes with disjoint interiors, contained in $ \Omega $.
\begin{definition}
    \label{def:short_range}
    A functional $ \e $ is said to have the {\it short-range interaction property}:
    for union of any $ M $ cubes with disjoint interiors, $ \Q = \bigcup_1^M Q_m $, and a  compact $ A $ there holds
    \begin{equation}
        \label{eq:short_range}
        \lim_{N\to\infty} \frac{\sum_{m=1}^M \e(\omega_N\cap Q_m, A\cap Q_m)}{\e(\omega_N\cap \Q, A\cap \Q)} = 1,
        % \lim_{N\to\infty} \frac{\e(\omega_N\cap \Q_1, A\cap \Q_1)+\e(\omega_N\cap \Q_2,A\cap \Q_2)}{\e\left(\omega_N \cap [\Q_1\cup \Q_2],\ A\cap [\Q_1\cup \Q_2]\right)} = 1
    \end{equation}
    where the sequence $ \omega_N $ has one of the two forms:

    \noindent{\bf (i)}
    $ \omega_N  = \omega_N^*(A)  $;

    \noindent{\bf (ii)}
    $ \omega_N = \bigcup_m \omega_{N_m}^*(A\cap Q_m) $, with $ \sum_m N_m = N $, $ N_m\to \infty $.
    % with $ N_m / N \to \beta_m \geq 0 $, $ N\to \infty $, such that $ \beta_m= 0 \iff \lambda_d(A_m\cap \Q_m) = 0 $.
    When $ \varsigma > 0 $, the cubes $ Q_m $ are additionally assumed to be positive distance apart for such sequences.
\end{definition}
\begin{remark}
    \label{rem:one_sided_short_range}
    As will transpire from the proofs in this section, it suffices to show \eqref{eq:short_range} with ``$ \geq $'' in place of equality and $ \liminf $ in place of the limit for $ \omega_N^*(A) $. Similarly, for sequences of piecewise minimizers described in (ii), it suffices to show that the $\limsup$ of the ratio in \eqref{eq:short_range} is less than or equal to 1.

    The explanation for this is that Definition~\ref{def:short_range} implies a form of $ \Gamma $-convergence for restrictions of $ \e $ to subsets of a compact set $ A $. See Section~\ref{sec:gamma} for the discussion of this convergence.
\end{remark}
% \begin{remark}
%     The proofs concerning the case $ \sigma < -1 $ below employ a stronger version of~\eqref{eq:short_range}, in which the sets $ \Q_m $ only need to have disjoint interiors. This stronger short-range property follows from the one stated above and the continuity property in Definition~\ref{def:cube_conts}. For the specific case of optimal quantizers considered in Section~\ref{sec:CVT}, we verify it directly.
% \end{remark}
The argument extending existence of asymptotics from Definition~\ref{def:cube_asymp} to general compact subsets of $ \Omega $ involves approximation in the symmetric difference metric on measurable sets, given by $\rho_\triangle(A,B) := \lambda_d(A\triangle B)$, which results in sets with close values of the asymptotics. To justify the possibility of such approximation, we need the following
\begin{definition}
    \label{def:cube_conts}
    Let the constant $ \varsigma \neq 0 $ be the same as in \eqref{eq:monotonicity}.
    The functional $ \e $ will be said to have the {\it continuity property on cubes}, if
    for every cube $ Q = (x+a\,\q_d) \subset \Omega $ and any $ \epsilon \in (0,1) $ there holds
    % for every finite collection of cubes with disjoint interiors $ \Q = \bigcup_m  Q_m \subset \Omega  $ there holds
    \begin{equation}
        \label{eq:cube_conts}
        \lim_{N\to\infty}   \sgn \varsigma \cdot  \frac{\e(\omega_N^*(Q))}{\t(N)} \geq  
         \limsup_{N\to\infty}\, ( \sgn \varsigma - \epsilon) \cdot  \frac{\e(\omega_N^*(D))}{\t(N)} 
    \end{equation}
    whenever the compact $ D \subset Q $ satisfies $ \lambda_d(D) \geq (1 - \delta( \epsilon) )\,\lambda_d(Q)$.
\end{definition}
Before we proceed, let us emphasize again that all the interaction functionals discussed below are assumed to have the properties from Definitions~\ref{def:cube_asymp}--\ref{def:short_range}~and~\ref{def:cube_conts}, as well as be lower semicontinuous and satisfy the monotonicity equation \eqref{eq:monotonicity}. For brevity, we refer to such functionals as {\it admissible interaction functionals}.
% TODO: add the notion of localized functional for the case when optimization is performed over the entire R^d

\medskip
As already mentioned, equation~\eqref{eq:rate} for the rate function implies $ \w $ must be a power law. It will be shown in Section~\ref{sec:regular_variation}, as well as that the assumptions on $ (\f,\t,\w) $ together with Definitions~\ref{def:cube_asymp}--\ref{def:cube_conts} imply $ \f $ is also a power law, and is related to $ \w $ by
\[
    \f(t) = \f(1)t^{-\sigma}, \qquad \w(t) = t^{1+\sigma},
\]
for a constant $ \sigma\in(-\infty,-1)\cup(0,+\infty) $ such that $ \sgn \sigma = \sgn \varsigma $. Equation \eqref{eq:rate} thus means that the rate $ \t $ consists of the fractional rate $ \w $ given by a power function, and a slowly varying remainder.

Note that when $ \varsigma > 0 $, $ \lim_{t\downarrow0} \f(t) = +\infty $. This case corresponds to energy functionals with singularity on the diagonal: $ \e(\omega_N) \to +\infty $ when $ \omega_N\to x_0 \in \mathbb R^d $, with the latter convergence is understood in the product topology of $ \Omega^N $. In the cases relevant to the present paper, the singularity arises when any two points of the configuration are close. It is in order to avoid $ \e $ from being singular on unions of minimizers in Definition~\ref{def:short_range} that we consider sets positive distance apart.

We have so far omitted the case $ -1 < \sigma < 0 $, corresponding to the strictly concave fractional rate function $ \w $. For such $ \w $, of course, we don't expect the minimizers of $ \e $ to be uniformly distributed. It is possible, however, to obtain an analogous theory for maximizers of upper semicontinuous functionals, if the counterparts of Definitions~\ref{def:cube_asymp}, \ref{def:short_range}, \ref{def:cube_conts} hold. We will discuss such functionals and the asymptotics of their maximizers in Section~\ref{sec:generalizations}, for now just note that the corresponding results are very similar.

Let us now give an overview of the properties in Definitions~\ref{def:cube_asymp}--\ref{def:short_range} and their consequences to better motivate them. First, using \eqref{eq:cube_asymp} and monotonicity of the interaction functionals we bound the ratio of $ \e(\omega_N^*(A)) $ to $ \t(N) $ for a general compact $ A $.
\begin{lemma}
    For a compact $ A \subset \Omega $ with $ \lambda_d(A)>0 $ and an admissible interaction functional $ \e $, one has
    \[
        0 <  \liminf_{N\to\infty} \frac{\e(\omega_N^*(A))}{\t(N)} \leq \limsup_{N\to\infty} \frac{\e(\omega_N^*(A))}{\t(N)} < \infty.
    \]
\end{lemma}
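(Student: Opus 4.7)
The plan is to sandwich $A$ between two cube-like sets and invoke the three structural hypotheses of an admissible functional (cube asymptotics, set monotonicity, continuity on cubes). Two geometric ingredients will be produced at the outset. First, since $A$ is compact and $\Omega$ is open, there exists a cube $Q_{\text{big}} \subset \Omega$ with $A \subset Q_{\text{big}}$. Second, since $\lambda_d(A) > 0$, the Lebesgue density theorem provides a density point $x_0 \in A$; for any prescribed $\delta > 0$, all sufficiently small cubes $Q = x_0 + a\,\q_d$ satisfy $Q \subset \Omega$ and $\lambda_d(A \cap Q) \geq (1-\delta)\lambda_d(Q)$.

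With these two enclosures in hand, I apply set-monotonicity~\eqref{eq:monotonicity} twice: once with $A \subset Q_{\text{big}}$, and once with $A \cap Q \subset A$. The cube asymptotics~\eqref{eq:cube_asymp} give
\[
    \lim_{N\to\infty} \frac{\e(\omega_N^*(Q_{\text{big}}))}{\t(N)} = \f(\lambda_d(Q_{\text{big}})) \in (0,\infty),
\]
and similarly for $Q$. The continuity property on cubes~\eqref{eq:cube_conts}, applied to $D = A \cap Q \subset Q$, relates the asymptotics of $\e(\omega_N^*(A \cap Q))$ to $\f(\lambda_d(Q))$ up to a multiplicative factor $1 \pm \epsilon$ that can be made arbitrarily small.

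Chaining the inequalities now gives both bounds of the lemma. For instance, in the case $\varsigma > 0$, monotonicity with $A \subset Q_{\text{big}}$ yields $\liminf_N \e(\omega_N^*(A))/\t(N) \geq \f(\lambda_d(Q_{\text{big}})) > 0$, while monotonicity with $A \cap Q \subset A$ combined with the continuity property yields $\limsup_N \e(\omega_N^*(A))/\t(N) \leq \liminf_N \e(\omega_N^*(A \cap Q))/\t(N) \leq \f(\lambda_d(Q))/(1-\epsilon) < \infty$. When $\varsigma < 0$ the roles of the two enclosures swap, but the same ingredients deliver the matching bounds after tracking signs (using that $\liminf(-f) = -\limsup f$).

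The only mildly delicate step is the bookkeeping around the continuity property, because of the $\sgn\varsigma - \epsilon$ prefactor in~\eqref{eq:cube_conts} and the need to convert a $\limsup$ estimate into a $\liminf$ estimate in the $\varsigma < 0$ case; but this is a routine sign-chase, and no deeper obstacle arises. Positivity of $\f$ on $(0,\infty)$ (guaranteed by Definition~\ref{def:cube_asymp}) ensures the lower bound is strictly positive, and finiteness of $\f$ on $(0,\infty)$ ensures the upper bound is finite.
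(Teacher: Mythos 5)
Your proposal is correct and follows essentially the same route as the paper's own proof: in both, the upper bound comes from a small density-point cube $Q$ with $\lambda_d(A\cap Q)$ close to $\lambda_d(Q)$ together with set-monotonicity and the cube continuity property, while the lower bound comes from enclosing $A$ in a large cube and applying set-monotonicity with the cube asymptotics. The paper simply fixes $\epsilon=1/2$ (giving the explicit constant $2\f(\lambda_d(Q))$) and does the $\varsigma>0$ case with a remark that $\varsigma<0$ is analogous, exactly the sign-swap you describe.
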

\begin{proof}
    Suppose $ \varsigma > 0 $; the case of negative $ \varsigma $ is handled in the same way.
    By Lebesgue density theorem, there exists a cube $ Q $ such that $ \lambda(A\cap Q) \geq (1-\delta(1/2)) \lambda_d(Q) $, where $ \delta(\cdot) $ is the function from Definition~\ref{def:cube_conts}; by this definition and set-monotonicity of $ \e $ \eqref{eq:monotonicity}, one obtains further
    \[
        \limsup_{N\to\infty}  \frac{\e(\omega_N^*(A))}{\t(N)}
        \leq
        \liminf_{N\to\infty}   \frac{\e(\omega_N,A\cap Q)}{\t(N)} \leq   2\lim_{N\to\infty}    \frac{\e(\omega_N^*(Q))}{\t(N)} = 2\f(\lambda_d(Q)) < \infty.
    \]
    On the other hand $ A \subset Q' $ for some large cube $ Q' $.
    From the set-monotonicity of $ \e $ and \eqref{eq:cube_asymp}, we obtain:
    \[
        \liminf_{N\to\infty} \frac{\e(\omega_N^*(A))}{\t(N)} \geq \f(\lambda_d(Q')) > 0.
    \]
\end{proof}
We have therefore $ \e(\omega_N^*(A)) = O(\t(N)) $. Since Definition~\ref{def:short_range} gives for a compact $ A \subset \Omega $,
\[ 
    {\e( \omega_N^*(A)\cap \Q_1, A\cap \Q_1)+\e(\omega_N^*(A)\cap \Q_2, A\cap \Q_2)} =   {\e\left(\omega_N^*(A)\right)} + o( \e(\omega_N^*(A)) ), \qquad N \to \infty,
\]
for any pair of subsets $ \Q_1,\Q_2 \in \mathscr Q $ with disjoint interiors, and it follows
\begin{equation}
    \label{eq:splitting_o_t}
    {\e( \omega_N^*(A)\cap \Q_1, A\cap \Q_1)+\e(\omega_N^*(A)\cap \Q_2, A\cap \Q_2)} =   {\e\left(\omega_N^*(A)\right)} + o(\t(N)), \qquad N \to \infty,
\end{equation}
for such $ \Q_1,\Q_2 $, and $ A $.

To motivate the assumption of convexity of the rate function \eqref{eq:rate}, let further the compact sets $ A_1,A_2 \in \mathscr L_\e $ be positive distance apart and $ A = A_1\cup A_2 $. For some suitable $ \Q_l $, $ A_l = A\cap \Q_l $, $ l=1,2 $. Dividing equation~\eqref{eq:splitting_o_t} through by $ \t(N) $ gives
\begin{equation}
    \label{eq:lower_split}
    \liminf_{N\to\infty} \frac{\e\left(\omega_N^*(A)\right)}{\t(N)} \geq \lim_{N\to\infty} \frac{\t(N_1)}{\t(N)} \cdot \frac{\e(\omega_{N_1}^*(A_1))}{\t(N_1)} + \lim_{N\to\infty} \frac{\t(N_2)}{\t(N)}\cdot\frac{\e(\omega_{N_2}^*( A_2))}{\t(N_2)},
\end{equation}
if the limits of $ \t(N_l)/ \t(N) $ on the right exist, where we write $ N_l = \#(\omega_N^*(A) \cap A_l) $, $ l=1,2 $. As will transpire from the proof of Lemma~\ref{lem:cube_uniform}, the minimality of $\e(\omega_N^*(A_1\cup A_2))$  implies that the inequality above can be replaced by equality; in fact for such $ A_1$,  $ A_2 $, there holds $ A =  A_1\cup A_2 \in \mathscr L_\e $ and
\begin{equation}
    \label{eq:split_union}
    \l_\e(A) =  \lim_{N\to\infty} \frac{\t(N_1)}{\t(N)} \cdot \l_\e(A_1 )  + \lim_{N\to\infty} \frac{\t(N_2)}{\t(N)} \cdot \l_\e(A_2 )  , 
\end{equation}
in particular the limits on the right do exist.

Indeed, to show the existence of the limits, note that the ratios $ N_1/N $ and $ N_2/N $ are, by definition, the counting measures $ \nu^*_N(A_1) $ and $ \nu^*_N(A_2) $, where
\[
    \nu^*_N = \frac1N \sum_{i=1}^N \delta_{x_i}, \qquad \{ x_1,\ldots,x_N \} = \omega_N^*.
\]
In view of weak$ ^* $-compactness of probability measures on $ A_1\cup A_2 $, by passing to a subsequence if necessary, one can assume that $ \nu_N^* \weakto \mu $, $ N\to\infty $, for some probability measure $ \mu $, and so
\[
    N_l = \mu(A_l)N + o(N), \qquad N\to\infty, \quad l=1,2.
\]
Then by \eqref{eq:rate}, the limits in \eqref{eq:split_union} do exist:
\[
    \lim_{N\to\infty} \frac{\t(N_l)}{\t(N)} = \w(\mu (A_l)),\qquad l=1,2.
\]
Since the limits of counting ratios must add up to $ 1 = \mu(A_1\cup A_2) $, we can rewrite the right-hand side of \eqref{eq:split_union} as 
\begin{equation*}
    \w(t) \cdot \l_\e(A_1 )  + \w(1-t) \cdot \l_\e(A_2 ), \qquad t\in [0,1].
\end{equation*}
This is a strictly convex function on $ [0,1] $, and so has a unique minimizer there. Hence the distribution of the weak$ ^* $ limit of the subsequence of minimizing configurations $ \nu^*_N $ does not depend on the subsequence: 
\begin{equation}
    \label{eq:split_min}
    \begin{aligned}
    \min \{ \w(t) \cdot \l_\e(A_1 )  + \w(1-t) \cdot \l_\e(A_2 ) 
    :\ & t\in[0,1] \} \\
    &= \w(\mu(A_1)) \cdot \l_\e(A_1 )  + \w(\mu(A_2)) \cdot \l_\e(A_2 ).
\end{aligned}
\end{equation}
The convexity of $ \w $ in \eqref{eq:rate} ensures that the minimum is uniquely defined, and so are the values of $ \mu(A_l) $, $ l=1,2 $. Furthermore, in the case $ \l_\e(A_1) = \l_\e(A_2) $, convexity of $ \w $ implies there must hold
\[
    \mu(A_1) = \mu(A_2).
\]
% We will need this fact for future reference, stated in a generalized form for any finite number of cubes with pairwise disjoint interiors.

% \begin{lemma}
%     \label{lem:equal_cubes}
%     For a finite union of equal cubes with disjoint interiors,
%     $$ \Q = \bigcup_{m=1}^M Q_m := \bigcup_{m=1}^M (x_m + a\q_d), $$
%     given a sequence $ \mathscr N \subset \mathbb N $ along which the 
%     limits of counting measures exist: 
%     $$ \lim_{\mathscr N \ni N\to\infty} \#(\omega_N^*\cap Q_m)/N =: \beta_m, \qquad 1\leq m \leq M, $$
%     there holds
%     \[
%         \beta_m = 1/M, \qquad 1\leq m \leq M.
%     \] 
%     In addition, along this sequence
%     \[
%         \lim_{\mathscr N \ni N\to\infty} \frac{\e(\omega_N^*(\Q))}{\t(N)} = \sum_{m=1}^M \w(\beta_m) \f(\lambda_d (Q_m)) = M \w\left(\frac1M\right) \f(a^d).
%     \]
% \end{lemma}

Finally, let us discuss the notion of continuity on cubes from Definition~\ref{def:cube_conts}. By combining it with \eqref{eq:cube_asymp} we immediately obtain the continuity of $ \f $.
\begin{corollary}[From Definition~\ref{def:cube_conts}] 
    \label{cor:f_is_conts}
    Under the assumptions of Definition~\ref{def:short_range}, the function $ \f:[0,\infty) \to [0,\infty] $ is continuous in the extended sense. 
\end{corollary}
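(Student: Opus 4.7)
The plan is to sandwich $\f$ between two opposing bounds that together force continuity on $(0,\infty)$. The first bound is provided by set-monotonicity: applied to any two concentric nested cubes $D \subset Q$, \eqref{eq:monotonicity} together with \eqref{eq:cube_asymp} yields
$$\sgn\varsigma \cdot \f(\lambda_d(D)) \geq \sgn\varsigma \cdot \f(\lambda_d(Q)),$$
so that $\sgn\varsigma \cdot \f$ is non-increasing on $(0,\infty)$. The reverse bound comes from the continuity-on-cubes property: whenever $\lambda_d(D) \geq (1-\delta(\epsilon))\lambda_d(Q)$, Definition~\ref{def:cube_conts} combined with \eqref{eq:cube_asymp} gives
$$\sgn\varsigma \cdot \f(\lambda_d(Q)) \geq (\sgn\varsigma-\epsilon)\,\f(\lambda_d(D)).$$
Together these inequalities pin $\f(\lambda_d(D))$ inside a multiplicative window of width $O(\epsilon)$ around $\f(\lambda_d(Q))$.

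To deduce continuity at any $t_0 \in (0,\infty)$, I would fix a sequence $t_n \to t_0$ and handle one-sided convergence separately. For $t_n \uparrow t_0$, set $Q = a_0\q_d$ with $a_0^d = t_0$ and let $D_n = a_n\q_d$ be concentric with $Q$, where $a_n^d = t_n$; then $D_n \subset Q$ and $\lambda_d(D_n)/\lambda_d(Q) \to 1$, so the sandwich applies for every $\epsilon > 0$ eventually in $n$, and letting $\epsilon \downarrow 0$ yields $\f(t_n) \to \f(t_0)$. For $t_n \downarrow t_0$, I run an entirely symmetric argument with the roles of $Q$ and $D$ swapped, taking $Q_n = a_n\q_d$ and $D = a_0\q_d$; the sandwich is now applied to the varying ambient cube $Q_n$.

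Continuity in the extended sense at the endpoints $t=0$ and $t=\infty$ then follows automatically from the monotonicity of $\sgn\varsigma\cdot\f$ on $(0,\infty)$: a monotone function always possesses one-sided limits in $[0,\infty]$, so extending by these limits realizes $\f$ as an extended-real continuous function on $[0,\infty)$. The only point requiring care through the argument is the sign bookkeeping in the two regimes $\varsigma > 0$ (where $\f$ is non-increasing and the $(1-\epsilon)$ factor produces an upper bound on $\f(\lambda_d(D))$) and $\varsigma < 0$ (where $\f$ is non-decreasing and the analogous $(1+\epsilon)$ factor produces a lower bound); both regimes nonetheless yield the same two-sided squeeze. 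This sign tracking is the main --- and essentially only --- subtlety in an otherwise mechanical argument.
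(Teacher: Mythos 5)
Your proposal is correct and fills in the detail of an argument the paper leaves implicit (it states only that the corollary follows ``immediately'' by combining Definition~\ref{def:cube_conts} with~\eqref{eq:cube_asymp}). The two ingredients you isolate --- monotonicity of $\sgn\varsigma\cdot\f$ coming from set-monotonicity~\eqref{eq:monotonicity} restricted to nested concentric cubes, and the multiplicative perturbation bound coming from Definition~\ref{def:cube_conts} applied to a cube $D$ inside a cube $Q$ --- are exactly the pieces the paper is tacitly invoking (indeed, a few lines after the corollary the paper remarks that $\f$ is ``monotonic by~\eqref{eq:monotonicity}''), and the one-sided sandwich together with the symmetric argument swapping the roles of $D$ and $Q$ is the natural way to turn them into two-sided continuity; the sign bookkeeping in the two regimes $\sgn\varsigma=\pm1$ is handled correctly, and the extension to $t=0$ via the existence of one-sided limits for monotone $[0,\infty]$-valued functions is also fine.
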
 
Our primary use for Definition~\ref{def:cube_conts} is that it implies, in conjunction with Definitions~\ref{def:cube_asymp}--\ref{def:short_range}, that if a compact set $ A $ can be approximated well from above by unions of cubes with disjoint interiors $ \Q $ in the metric $ \rho_\triangle(A,\Q) = \lambda_d(A\triangle \Q)  $, then $ A $ is itself in $ \mathscr L_\e $. Indeed, for a compact $ A $, fix an $ \epsilon > 0 $ and let $ \delta = \delta(\epsilon) $, determined as in \eqref{eq:cube_conts}. As will be shown in Theorem~\ref{thm:general_uniform}, 
if for the compact set $ A $ and every $ \epsilon >0 $ there is a compact union of $ M $ cubes $ \bigcup_m Q_m = \Q\in \mathscr L_\e $ such that $ \Q\supset A $ and $ \lambda_d(A\cap Q_m)> \lambda_d (Q_m) (1 -\delta) $, $ 1\leq m \leq M $, then $ A \in \mathscr L_\e $ and 
\[
    \sgn\varsigma\cdot \l_\e(A) = \sup \{ \sgn\varsigma\cdot\l_\e(\Q) :  A \subsetneq \Q \in \mathscr L_\e,\ \Q\text{ a union of cubes with disjoint interiors }\}.
\]

In Section~\ref{sec:asymptotics_uniform} we construct such approximations of compact subsets of $ \mathbb R^d $ with finite unions of cubes $ \Q $ that have pairwise disjoint interiors. For these approximations to be useful, we must first show that the limit $ \l_\e $ exists on the unions of cubes, so that indeed $ \Q \in \mathscr L_\e $. In Lemmas~\ref{lem:cube_uniform} and \ref{lem:cube_volume} we verify that this is the case, and in addition establish a functional equation \eqref{eq:fun_equation} relating $ \f $ to $ \w $, which will allow us to characterize both of these functions, as well as $ \t $, in the following Section~\ref{sec:regular_variation}.
\begin{lemma}
    \label{lem:cube_uniform}
    Let $ \e $ be  an admissible interaction functional with the cube limit function $ \f $ and a rate $ \t $.  
    Then for any $ \Q = \bigcup_{m=1}^M Q_m := \bigcup_{m=1}^M (x_m + a_m\q_d) $, a union of cubes with disjoint interiors, there holds
    \begin{equation}
        \label{eq:cube_uniform}
        \lim_{N\to\infty} \frac{\e(\omega_N^*(\Q))}{\t(N)} = \sum_{m=1}^M \w(\beta^*_m)\f(a_m^d)
    \end{equation}
    with 
    \[
        (\beta_1^*,\ldots\beta_M^*) = \arg\min \left\{ \sum_m \w(\beta_m) \f(a_m^d) : \sum_m \beta_m = 1, \ \beta_m > 0 \right\}.
    \]
    % \begin{equation}
    %     \label{eq:local_optimizer}
    %     \lim_{N\to\infty} \frac{\e\left(\omega_N^*(\Q)\cap Q_m, Q_m\right)}{ \t(N) } = \w(\beta^*_m) \f(a_m^d), \qquad 1\leq m \leq M.
    % \end{equation}
\end{lemma}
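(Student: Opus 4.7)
The plan is to bracket $\e(\omega_N^*(\Q))/\t(N)$ between matching upper and lower bounds, both equal to $\sum_m \w(\beta_m^*)\f(a_m^d)$. Set $N_m := \#(\omega_N^*(\Q)\cap Q_m)$, so that $\sum_m N_m = N$. Since the vector $(N_m/N)_{m=1}^M$ lies in the compact unit simplex in $\mathbb R^M$, I may extract a subsequence along which $N_m/N\to\beta_m$ for some probability vector $(\beta_m)$; the rate-function assumption \eqref{eq:rate} then forces $\t(N_m)/\t(N)\to\w(\beta_m)$ for each $m$.

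For the lower bound, I would apply the short-range property of Definition~\ref{def:short_range} in its form (i) to $A=\Q$:
\[
    \sum_m \e\bigl(\omega_N^*(\Q)\cap Q_m,\,Q_m\bigr) = \e\bigl(\omega_N^*(\Q)\bigr) + o(\t(N)).
\]
Each term on the left is $\e$ evaluated at an $N_m$-point configuration in $Q_m$ and so is $\geq \e(\omega_{N_m}^*(Q_m))$ by optimality of the latter. Dividing by $\t(N)$, combining with \eqref{eq:cube_asymp} and $\t(N_m)/\t(N)\to\w(\beta_m)$, and taking the liminf along the chosen subsequence gives
\[
    \liminf_{N\to\infty}\frac{\e(\omega_N^*(\Q))}{\t(N)} \geq \sum_m \w(\beta_m)\f(a_m^d) \geq \sum_m \w(\beta_m^*)\f(a_m^d).
\]

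For the upper bound, I would pick integers $N_m = \lfloor\beta_m^* N\rfloor$, adjusted so that $\sum_m N_m = N$, and use the test configuration $\omega_N := \bigcup_m \omega_{N_m}^*(Q_m)\subset\Q$. Optimality of $\omega_N^*(\Q)$ yields $\e(\omega_N^*(\Q))\leq\e(\omega_N,\Q)$, and property (ii) of Definition~\ref{def:short_range} gives $\e(\omega_N,\Q) = \sum_m \e(\omega_{N_m}^*(Q_m)) + o(\t(N))$. Dividing by $\t(N)$ and passing to the limsup yields the matching upper bound $\sum_m \w(\beta_m^*)\f(a_m^d)$, establishing \eqref{eq:cube_uniform}. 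Strict convexity of $\w$ together with $\f(a_m^d)>0$ makes $(\beta_m^*)$ the unique minimizer on the simplex, and equality of the two bounds then forces $(\beta_m)=(\beta_m^*)$ along every extracted subsequence, so $N_m/N\to\beta_m^*$ along the full sequence.

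The main difficulty I anticipate is the case $\varsigma>0$ when neighboring cubes $Q_m$ share a face, since Definition~\ref{def:short_range} requires a positive separation of the cubes in this regime. I would first prove the lemma under the separation assumption and then treat the general case by approximating $\Q$ from inside by $\Q^{(\epsilon)}:=\bigcup_m Q_m^{(\epsilon)}$, where $Q_m^{(\epsilon)}\subsetneq Q_m$ is a concentric subcube with $\lambda_d(Q_m\setminus Q_m^{(\epsilon)})$ arbitrarily small and the shrunken cubes pairwise at positive distance. Set-monotonicity \eqref{eq:monotonicity} gives one-sided control of $\l_\e(\Q^{(\epsilon)})$ versus $\l_\e(\Q)$, and Definition~\ref{def:cube_conts} applied inside each $Q_m$, combined with the continuity of $\f$ from Corollary~\ref{cor:f_is_conts}, would close the gap as $\epsilon\downarrow 0$. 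A smaller bookkeeping issue is ruling out subsequential limits with $\beta_m=0$: when $\varsigma<0$ this is automatic because $\w(0)=+\infty$ would make the lower bound trivial yet the upper bound is finite, and when $\varsigma>0$ a vanishing $\beta_m$ would produce a strict slack in the lower-bound chain that contradicts the matching upper bound.
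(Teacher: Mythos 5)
Your proposal is correct and follows essentially the same strategy as the paper's own proof: a subsequence extraction so that $N_m/N\to\beta_m$, a lower bound from the short-range property (i) plus optimality of $\omega_{N_m}^*(Q_m)$ and the cube asymptotics, an upper bound from testing against $\bigcup_m\omega_{N_m}^*(Q_m)$ via short-range property (ii), and a concentric-shrinking argument to sidestep the positive-distance requirement when $\varsigma>0$ and cubes share faces. The only cosmetic difference is that the paper shrinks the cubes to $\gamma Q_m$ directly inside the test configuration and then sends $\gamma\uparrow1$ using continuity of $\f$, whereas you frame it as proving the separated case first and then approximating $\Q$ from within; these are the same idea.
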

\begin{proof}
    For simplicity, we give the proof with $ M=2 $; the general case follows by the same argument. Let $ \mathscr N $ be a subsequence along which the limits of counting measures of $ \omega_N^*(Q_m) $ exist:
    \begin{equation*}
            \lim_{\mathscr N \ni N \to \infty} N_m / N =: \beta^*_m, \qquad m=1,2,
    \end{equation*}
    where $ N_m = \#(\omega_N^*(\Q)\cap Q_m)  $.
    By Definition~\ref{def:short_range}, there holds
    \[
        \begin{aligned}
        \liminf_{\mathscr N \ni N \to \infty} \frac{\e(\omega_N^*(\Q))}{\t(N)} 
        =& \liminf_{\mathscr N \ni N \to \infty}
        \left(
            \frac{\e(\omega_N^*(\Q))}{ \sum_{m=1,2}\e\left(\omega_N^*(\Q)\cap Q_m, Q_m\right) } \cdot 
            \frac{ \sum_{m=1,2}\e\left(\omega_N^*(\Q)\cap Q_m, Q_m\right) }{\t(N)}
        \right) \\
        &\geq \liminf_{\mathscr N \ni N \to \infty} \frac{\e\left(\omega_N^*(\Q)\cap Q_1, Q_1\right)}{\t(N)} + \liminf_{\mathscr N \ni N \to \infty} \frac{\e\left(\omega_N^*(\Q)\cap Q_1, Q_1\right)}{\t(N)}\\
        &= \liminf_{\mathscr N \ni N \to \infty} \frac{\t(N_1)}{\t(N)} \cdot\frac{\e\left(\omega_N^*(\Q)\cap Q_1, Q_1\right)}{\t(N_1)}
        + \liminf_{\mathscr N \ni N \to \infty} \frac{\t(N_2)}{\t(N)} \cdot \frac{\e\left(\omega_N^*(\Q)\cap Q_1, Q_1\right)}{\t(N_2)}\\
        &\geq \w(\beta^*_1) \f(a_1^d) + \w(\beta^*_2) \f(a_2^d) ,
    \end{aligned}
    \]
    where it is used that the limits $ \t(N_m)/\t(N) $ exist by equation~\eqref{eq:rate} and $ \t $ being a rate function. 

    To obtain the converse estimate, note first for $ \varsigma < 0 $, that item (ii) in Definition~\ref{def:short_range} gives that for any collection of $ \beta_m > 0 $ adding up to 1, there exists a sequence $ \omega_N = \bigcup_{m=1,2} \omega_{N_m}^*( Q_m) $, where $ N_m/N\to \beta_m$ for $ \mathscr N \ni N \to \infty $, to which equation~\eqref{eq:short_range} applies. Then, Definition~\ref{def:short_range} gives for such $ \omega_N $,
    \[
        \lim_{\mathscr N \ni N \to \infty} \frac{\e(\omega_N,\Q)}{\t(N)} = \lim_{\mathscr N \ni N \to \infty} \frac{\e(\omega_{N_1}^*(Q_1)) + \e(\omega_{N_2}^*(Q_2))}{\t(N)} =  \w(\beta_1) \f( a_1^d) + \w(\beta_2) \f( a_2^d).
    \]
    By definition of the minimizer, $ \e(\omega_N^*(\Q)) \leq \e(\omega_N,\Q) $, which gives
    \begin{equation}
        \label{eq:ineqs1}
        \begin{aligned}
            &\w(\beta^*_1) \f(a_1^d) + \w(\beta^*_2) \f(a_2^d) \\
            &\leq  \limsup_{\mathscr N \ni N \to \infty} \frac{\e(\omega_N^*(\Q))}{\t(N)} 
            \leq \lim_{\mathscr N \ni N \to \infty} \frac{\e(\omega_N,\Q)}{\t(N)} \\
            & = \w(\beta_1) \f(a_1^d) + \w(\beta_2) \f(a_2^d).
    \end{aligned}
    \end{equation}
    For $ \varsigma > 0 $, the energy functional can have a singularity on the diagonal, so instead of placing minimizers on the entire cubes $ Q_m $, we need to place them on a proper subset of each cube. To that end, let $ \gamma \in (0,1) $ be constant and denote $ \gamma Q_m  =(x_m +  \gamma a_m\q_d) $. Clearly, $ \gamma Q_m $ are a positive distance apart for any $ \gamma<1 $. Using Definition~\ref{def:short_range} again for  $ \omega_N = \bigcup_{m=1,2} \omega_{N_m}^*(\gamma Q_m) $, we have
    \begin{equation}
        \label{eq:ineqs2}
        \begin{aligned}
        \limsup_{\mathscr N \ni N \to \infty} \frac{\e(\omega_N^*(\Q))}{\t(N)} 
        &\leq \limsup_{\mathscr N \ni N \to \infty} \frac{\e(\omega_N^*(\gamma\Q))}{\t(N)}\\
        &\leq \lim_{\mathscr N \ni N \to \infty} \frac{\e(\omega_{N_1}^*(\gamma Q_1)) + \e(\omega_{N_2}^*(\gamma Q_2))}{\t(N)}\\
        &=  \w(\beta_1) \f((\gamma a_1)^d) + \w(\beta_2) \f((\gamma a_2)^d),
        \end{aligned}
    \end{equation}
    where the inequality follows from monotonicity \eqref{eq:monotonicity}.

    The inequalities \eqref{eq:ineqs1} and \eqref{eq:ineqs2} hold for all $ \beta_1+\beta+2 =1 $ and a fixed $ \beta_1^*,\beta_2^* $. Using convexity of $ \w $ and continuity of $ \f $ we conclude that for both positive and negative $ \varsigma $,
    \[
        (\beta_1^*,\beta_2^*) = \arg\min \left\{ \w(\beta_1) \f(a_1^d) + \w(\beta_2) \f(a_2^d) : \beta_1 + \beta_2 = 1 \right\}.
    \]
    This shows that the value of the asymptotics of $ \e(\omega_N^*(\Q)) $ does not depend on the subsequence $ \mathscr N $, and is equal to $ \w(\beta^*_1) \f(a_1^d) + \w(\beta^*_2) \f(a_2^d) $. 
\end{proof}
The above proof shows that $ \mathscr Q \subset \mathscr L_\e $. Note that the same argument can now be repeated with $ Q_m\in \mathscr Q $ instead of $ Q_m $ being a single cube. One would then obtain an equation similar to \eqref{eq:cube_uniform} with the quantity $ \f(a_m^d) $ replaced by 
\[
    \l_\e(Q_m) = \lim_{N\to\infty} \frac{\e(\omega_N^*(Q_m))}{\t(N)}.
\]

\begin{corollary}
    \label{cor:cube_uniform}
    The statement of Lemma~\ref{lem:cube_uniform} holds with $ \Q = \bigcup_m Q_m $, where sets $ Q_m \in \mathscr Q $ and have disjoint interiors, and $ \l_\e(Q_m) $ in place of $ \f(a_m^d) $. 
\end{corollary}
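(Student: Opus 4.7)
The plan is to replay the proof of Lemma~\ref{lem:cube_uniform} verbatim, with each single cube $x_m + a_m\q_d$ replaced by an element $Q_m\in \mathscr Q$. Three observations make this substitution legitimate. First, since every $Q_m$ is itself a finite union of cubes with disjoint interiors, the set $\Q = \bigcup_m Q_m$ is again a union of cubes with disjoint interiors; hence Definition~\ref{def:short_range} applies to $\Q$ with its underlying cube decomposition, and grouping the constituent cubes according to which $Q_m$ contains them gives
\[
    \e(\omega_N^*(\Q)) = \sum_{m=1}^M \e(\omega_N^*(\Q)\cap Q_m,\, Q_m) + o(\t(N)), \qquad N\to\infty.
\]
Second, Lemma~\ref{lem:cube_uniform} itself already guarantees that each $Q_m\in \mathscr Q$ lies in $\mathscr L_\e$, so $\l_\e(Q_m)$ is well-defined, finite and positive; this is exactly the role formerly played by $\f(a_m^d)$. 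Third, the shrinking device used in the $\varsigma>0$ case extends: let $\gamma Q_m$ denote $Q_m$ with each constituent cube rescaled by $\gamma\in (0,1)$ about its own center, so that for $\gamma$ close to $1$ the sets $\gamma Q_1,\ldots, \gamma Q_M$ are mutually at positive distance, and by Lemma~\ref{lem:cube_uniform} together with continuity of $\f$ (Corollary~\ref{cor:f_is_conts}) we have $\l_\e(\gamma Q_m) \to \l_\e(Q_m)$ as $\gamma\to 1^-$.

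Given these ingredients, the argument proceeds as in the original lemma. Pass to a subsequence $\mathscr N$ along which $N_m/N\to \beta^*_m$, where $N_m := \#(\omega_N^*(\Q)\cap Q_m)$. For the lower bound, combine the first observation with the minimality inequality $\e(\omega_N^*(\Q)\cap Q_m, Q_m)\geq \e(\omega_{N_m}^*(Q_m))$, and use the defining property \eqref{eq:rate} of the rate function together with the existence of $\l_\e(Q_m)$ to obtain
\[
    \liminf_{\mathscr N\ni N\to\infty} \frac{\e(\omega_N^*(\Q))}{\t(N)} \geq \sum_{m=1}^M \w(\beta^*_m)\,\l_\e(Q_m).
\]
For the reverse inequality, fix arbitrary $\beta_m>0$ summing to $1$ and compare against the competitor $\omega_N = \bigcup_m \omega_{N_m}^*(Q_m)$ when $\varsigma<0$, or $\omega_N = \bigcup_m \omega_{N_m}^*(\gamma Q_m)$ when $\varsigma>0$, with $N_m/N\to \beta_m$; part (ii) of Definition~\ref{def:short_range}, set-monotonicity \eqref{eq:monotonicity}, and (in the second case) $\gamma\to 1^-$ together yield
\[
    \limsup_{\mathscr N\ni N\to\infty} \frac{\e(\omega_N^*(\Q))}{\t(N)} \leq \sum_{m=1}^M \w(\beta_m)\,\l_\e(Q_m).
\]
Combining the two bounds and invoking strict convexity of $\w$ identifies $(\beta^*_m)$ as the unique minimizer of $\sum_m \w(\beta_m)\,\l_\e(Q_m)$ subject to $\sum_m\beta_m = 1$, and also shows that the subsequence $\mathscr N$ was immaterial; the full sequence therefore converges to the claimed value.

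The main subtlety is the $\varsigma>0$ case: one must simultaneously separate the pieces to avoid the diagonal singularity when building the competitor, and verify that nothing is lost in the limit $\gamma\to 1^-$. Both are handled by expressing $\l_\e(\gamma Q_m)$, via Lemma~\ref{lem:cube_uniform}, as a finite combination of values $\f((\gamma a)^d)$ at rescaled cube volumes, so continuity of $\f$ delivers the required convergence. No new ideas beyond those already used for single cubes are needed.
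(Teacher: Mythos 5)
Your proposal is correct and is essentially the paper's own approach: the paper disposes of this corollary with the single remark that ``the same argument can now be repeated with $Q_m\in\mathscr Q$ instead of $Q_m$ being a single cube,'' and you have spelled out precisely the observations (each $Q_m$ already lies in $\mathscr L_\e$ by Lemma~\ref{lem:cube_uniform}, the finest cube decomposition makes Definition~\ref{def:short_range} applicable, and the shrinking $\gamma Q_m$ performed cube-by-cube preserves mutual separation while $\l_\e(\gamma Q_m)\to\l_\e(Q_m)$ by Corollary~\ref{cor:f_is_conts}) that make that substitution legitimate.
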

The proof of Lemma~\ref{lem:cube_uniform} and Corollary~\ref{cor:cube_uniform} show also that for $ \Q_1, \Q_2 $ as in Definition~\ref{def:short_range}, there holds
\begin{equation*}
    \lim_{N\to\infty} \frac{\e(\omega_{N_1}^*(\Q_1)) + \e(\omega_{N_2}^*(\Q_2))}{\e(\omega_N^*(\Q_1\cup \Q_2))} = 1,
\end{equation*}
where $ N_l = \#(\omega_N^*(\Q_1\cup \Q_2)\cap\Q_l) $, $ l=1,2 $. Here we ascribe the points in $ \Q_1\cap \Q_2 $ to one of the sets, and not the other, so that the total cardinality is correct: $ N_1 + N_2 =N $. 
\begin{lemma}
    \label{lem:cube_volume}
    Let $ \e $ be  an admissible interaction functional with the cube limit function $ \f $ and a rate $ \t $.  
    Then
    \begin{equation}
        \label{eq:limits_equal}
        \lim_{N\to\infty} \frac{\e(\omega_N^*(\Q))}{\t(N)} = \f(\lambda_d(\Q)).
    \end{equation}
    for any $ \Q = \bigcup_{m=1}^M (x_m + a_m\q_d) $, a union of cubes with disjoint interiors and $ \lambda_d(\Q) < \infty $.
\end{lemma}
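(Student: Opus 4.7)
The plan is to compare $\Q = \bigcup_{m=1}^M Q_m$ with a single cube $R \subset \Omega$ of equal Lebesgue measure $V = \lambda_d(\Q)$ by tiling both from the inside with very small congruent sub-cubes of volume $v$. Fix a small $v > 0$. In each $Q_m = x_m + a_m\q_d$, place $k_m^d$ congruent sub-cubes of side $v^{1/d}$ concentric with $Q_m$, where $k_m = \lfloor a_m v^{-1/d}\rfloor$; let $\Q^v \subset \Q$ be their union, and set $K = \sum_m k_m^d$. Similarly, tile a concentric subcube of $R$ with $K^* = (k^*)^d$ cubes of side $v^{1/d}$, where $k^* = \lfloor V^{1/d} v^{-1/d}\rfloor$, calling the result $R^v \subset R$. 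A direct count gives $K v, K^* v \to V$ as $v \to 0$, hence $K/K^* \to 1$.

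Since $\Q^v$ is a disjoint-interior union of $K$ identical cubes of volume $v$, Lemma~\ref{lem:cube_uniform} (together with strict convexity of $\w$, which uniquely places the optimum at $\beta_i \equiv 1/K$) gives
\[
\l_\e(\Q^v) = K\,\w(1/K)\,\f(v), \qquad \l_\e(R^v) = K^*\,\w(1/K^*)\,\f(v).
\]
I then pass to $v \to 0$. Applied individually to each single cube $Q_m$, the continuity on cubes from Definition~\ref{def:cube_conts}, combined with set-monotonicity \eqref{eq:monotonicity}, sandwiches $\l_\e(\Q^v \cap Q_m)$ between $\f(a_m^d)$ and $\f(a_m^d)/(1-\epsilon)$ for any fixed $\epsilon>0$ once $v$ is small enough, so $\l_\e(\Q^v \cap Q_m) \to \f(a_m^d)$; the same reasoning on $R$ yields $\l_\e(R^v) \to \f(V)$. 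Then Corollary~\ref{cor:cube_uniform} writes
\[
\l_\e(\Q^v) = \min_\beta \sum_m \w(\beta_m)\,\l_\e(\Q^v \cap Q_m), \qquad \l_\e(\Q) = \min_\beta \sum_m \w(\beta_m)\,\f(a_m^d),
\]
and continuous dependence of the minimum on its coefficients over the compact simplex forces $\l_\e(\Q^v) \to \l_\e(\Q)$.

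Combining,
\[
\l_\e(\Q) = \lim_{v \to 0} K\,\w(1/K)\,\f(v) \quad\text{and}\quad \f(V) = \lim_{v \to 0} K^*\,\w(1/K^*)\,\f(v),
\]
so $\l_\e(\Q)/\f(V) = \lim (K/K^*)\cdot \w(1/K)/\w(1/K^*)$. The first factor tends to $1$; for the second, the defining relation $\w(t) = \lim_{u\to\infty}\t(tu)/\t(u)$ yields the multiplicative law $\w(s)\w(t) = \w(st)$ by splitting $stu = s(tu)$ and letting $tu \to \infty$, so $\w(1/K)/\w(1/K^*) = \w(K^*/K) \to \w(1) = 1$ by continuity of the strictly convex $\w$ at $1$. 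Therefore $\l_\e(\Q) = \f(V)$.

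The main obstacle is transferring the continuity-on-cubes property from the single-cube pieces $Q_m$ to the union $\Q$, which itself need not be a cube; this is what necessitates the detour through Corollary~\ref{cor:cube_uniform} to express both $\l_\e(\Q^v)$ and $\l_\e(\Q)$ as the same continuous minimization problem with converging coefficients. A secondary subtlety is the multiplicativity $\w(s)\w(t)=\w(st)$, which is what removes the otherwise indeterminate ratio $\w(1/K)/\w(1/K^*)$ as both arguments collapse to $0$; fortunately this follows immediately from the very definition of $\w$, without needing the full power-function characterization of $\w$ developed in Section~\ref{sec:regular_variation}.
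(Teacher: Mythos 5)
Your proof is correct, and it takes a genuinely different route from the paper's. Both proofs tile $\Q$ and a comparison cube with a fine grid of congruent sub-cubes and use the continuity-on-cubes property (Definition~\ref{def:cube_conts}) together with set-monotonicity to let the tile volume $v\downarrow 0$. The divergence is in how the two tiled approximants are then reconciled. The paper arranges for the two approximants to be \emph{literally congruent}: it constructs $\bs D\subset\Q$ out of $\widetilde n$ copies of a fixed small cube and obtains $\bs D_0\subset Q_0$ by rearranging the very same $\widetilde n$ copies inside the reference cube, so that $\l_\e(\bs D)=\l_\e(\bs D_0)$ holds for free by translation invariance; the argument then only needs the continuity sandwich on each side. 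You instead allow the two tile counts $K$ and $K^*$ to differ, compute the asymptotics explicitly as $K\,\w(1/K)\,\f(v)$ and $K^*\,\w(1/K^*)\,\f(v)$ via Lemma~\ref{lem:cube_uniform} and strict convexity, and then need an extra ingredient to kill the ratio: the multiplicative Cauchy law $\w(s)\w(t)=\w(st)$, which you correctly extract directly from the defining relation \eqref{eq:rate} by factoring $\t(stu)/\t(u)=(\t(s(tu))/\t(tu))\cdot(\t(tu)/\t(u))$ (this does quietly use that \eqref{eq:rate} holds for all $t>0$, i.e.\ Theorem~\ref{thm:binghamRegular1987}(a), but as you note it avoids the power-law characterization and so introduces no circularity with Section~\ref{sec:regular_variation}). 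Your version trades the paper's elegant rearrangement trick for a more computational argument; the cost is needing the Cauchy law and a Berge-type continuity-of-the-minimum step to pass $\l_\e(\Q^v)\to\l_\e(\Q)$, but the benefit is that the two approximating configurations can be chosen independently of one another, which some readers may find more transparent. One small imprecision: your sandwich $\f(a_m^d)\leq\l_\e(\Q^v\cap Q_m)\leq\f(a_m^d)/(1-\epsilon)$ is only correct for $\varsigma>0$; for $\varsigma<0$ the bounds become $\f(a_m^d)/(1+\epsilon)\leq\l_\e(\Q^v\cap Q_m)\leq\f(a_m^d)$, though the conclusion $\l_\e(\Q^v\cap Q_m)\to\f(a_m^d)$ is of course the same.
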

\begin{proof}
    Let $ Q_m = (x_m + a_m\q_d) $, $ 1\leq m \leq M $. By Lemma~\ref{lem:cube_uniform}, the limit in the left-hand side of \eqref{eq:limits_equal} exists and does not change if the cubes in $ \Q $ are rearranged by translations $ \mathcal T_m  $ into another collection of cubes $ \bigcup_m \mathcal T_m (Q_m) $ with disjoint interiors.
    Let $ Q_0 $ be a fixed cube of the same measure as the collection $ \Q $, 
    $$ Q_0 =  \sqrt[d] {\lambda_d (\Q)}\,\, \q_d. $$
    It suffices to show that the asymptotics of the minimizers of $ \e $ on $ \Q $ and $ Q_0 $ coincide.

    Fix a number $ \epsilon >0 $. We will proceed using that by Definition~\ref{def:cube_conts}, an approximation of the cube collection $ \Q $ below up to a set of measure $ \delta(\epsilon) \lambda_d(\Q) $ results in approximation of asymptotics up to $ \epsilon $. To construct such approximation, first consider a subdivision of $ Q_0 $ into $ n^d $ equal cubes congruent to 
    $$ \widetilde Q_n = Q_0/n =  \frac{\sqrt[d] {\lambda_d (\Q)}}{n}\, \q_d ;$$
    by taking $ n $ sufficiently large, one can ensure that there exist integers $ l_m $, $ 1\leq m \leq M $, for which
    \begin{equation}
        \label{eq:choice_of_partition}
        0\leq \lambda_d (Q_m) - l_m^d\lambda_d(\widetilde Q_n) < \delta\left(\epsilon\right)\min_m \{ \lambda_d(Q_m) \}, \qquad 1 \leq m \leq M.
    \end{equation}
    \begin{figure}[t]
        \centering
        \includegraphics[width=0.7\linewidth]{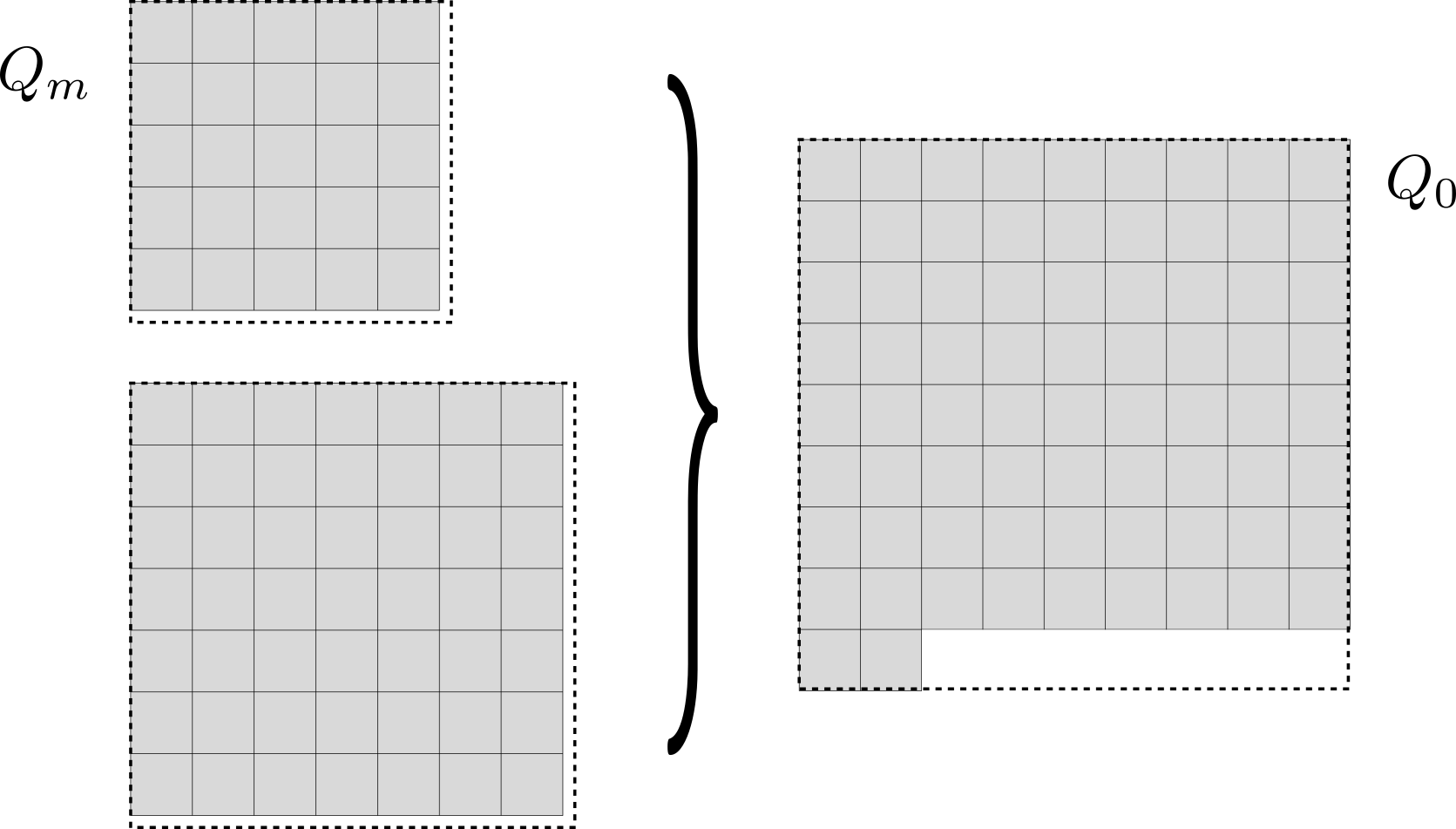}
        \caption{Shaded: collection of cubes $ \bs D $, each congruent to $ \widetilde Q_n $ (left) and its rearrangement inside $ Q_0 $, denoted by $ \bs D_0 $ (right).}%
        \label{fig:cubes}
    \end{figure}
    Let $ \widetilde n = \sum_m l_m^d $. Collection $ \Q $ can be approximated by a collection of $ \widetilde n $ cubes congruent to $ \widetilde Q_n $, which we denote by $ \bs D $, with $ l_m^d $ such cubes placed inside each of $ Q_m $, $ 1\leq m \leq M $. By Corollary~\ref{cor:cube_uniform} applied to the set $ \bs D $, 
    \[
        \lim_{N\to\infty}   \sgn \varsigma \cdot  \frac{\e(\omega_N^*(\bs D))}{\t(N)}
        =   \sgn \varsigma \cdot \sum_{m=1}^M  \w(\beta_m^{\bs D})\, \l_\e(\bs D\cap Q_m),
    \]
    where similarly to Lemma~\ref{lem:cube_volume},
    \[
        (\beta_1^{\bs D},\ldots\beta_M^{\bs D}) = \arg\min \left\{ \sum_m \w(\beta_m) \l_\e(\bs D\cap Q_m) : \sum_m \beta_m = 1, \ \beta_m > 0 \right\}.
    \]
    Applying Lemma~\ref{lem:cube_volume} to $ \Q $ gives in the same way
    \[
        \lim_{N\to\infty}   \sgn \varsigma \cdot  \frac{\e(\omega_N^*(\Q))}{\t(N)}
        =   \sgn \varsigma \cdot \sum_{m=1}^M  \w(\beta_m^{\Q})\, \f(a_m^d),
    \]
    where
    \[
        (\beta_1^{\Q},\ldots\beta_M^{\Q}) = \arg\min \left\{ \sum_m \w(\beta_m) \, \f(a_m^d) : \sum_m \beta_m = 1, \ \beta_m > 0 \right\}.
    \]
    By the choice of partition \eqref{eq:choice_of_partition}, the continuity property \eqref{eq:cube_conts}, and monotonicity~\eqref{eq:monotonicity}, it follows
    \begin{equation*}
        \sgn \varsigma \cdot  \f(a_m^d) \geq  
        (\sgn \varsigma - \epsilon) \cdot  \l_\e(\bs D\cap Q_m) \geq (\sgn \varsigma - \epsilon) \cdot \f(a_m^d).
    \end{equation*}
    Since the function
    $$ (z_1, \ldots, z_m) \mapsto \min \left\{ \sum_m \w(\beta_m) z_m : \sum_m \beta_m =1, \ \beta_m > 0 \right\} $$
    is increasing in each $ z_m $, we have for $ \sgn \varsigma = 1 $
    \[
        \begin{aligned}
            \sgn\varsigma\cdot \lim_{N\to\infty}   \frac{\e(\omega_N^*(\Q))}{\t(N)} 
            &=  \sum_{m=1}^M  \w(\beta_m^{\Q})\, \f(a_m^d) 
            \geq 
            (1 - \epsilon) \sum_{m=1}^M  \w(\beta_m^{\Q}) \l_\e(\bs D\cap Q_m) \\
            &\geq
            (1 - \epsilon) \sum_{m=1}^M  \w(\beta_m^{\bs D}) \l_\e(\bs D\cap Q_m) \\
            &= (\sgn \varsigma -\epsilon) \cdot \lim_{N\to\infty} \frac{\e(\omega_N^*(\bs D))}{\t(N)}.
        \end{aligned}
    \]
    Also, for $ \sgn \varsigma = -1 $, 
    \[
        \begin{aligned}
        \sgn\varsigma\cdot \lim_{N\to\infty} \frac{\e(\omega_N^*(\Q))}{\t(N)} 
        &=  -\sum_{m=1}^M  \w(\beta_m^{\Q})\, \f(a_m^d) 
        \geq 
        -\sum_{m=1}^M  \w(\beta_m^{\bs D})\, \f(a_m^d) \\
        &\geq 
        -(1 + \epsilon)\sum_{m=1}^M  \w(\beta_m^{\bs D})\, \l_\e(\bs D\cap Q_m) \\
        &= (\sgn \varsigma - \epsilon) \cdot \lim_{N\to\infty} \frac{\e(\omega_N^*(\bs D))}{\t(N)}.
        \end{aligned}
        % -(1 - \epsilon) \sum_{m=1}^M  \w(\beta_m^{\Q}) \l_\e(\bs D\cap Q_m) \geq (1-\epsilon) \sgn \varsigma \cdot \lim_{N\to\infty} \frac{\e(\omega_N^*(\bs D))}{\t(N)}.
    \]
    Combining the last two inequalities with the monotonicity property \eqref{eq:monotonicity}, one has an estimate for the asymptotics on $ \bs D $:
    \[
        \sgn \varsigma \cdot  \lim_{N\to\infty}   \frac{\e(\omega_N^*(\Q))}{\t(N)} 
        \geq  
        (\sgn \varsigma - \epsilon) \cdot \lim_{N\to\infty} \frac{\e(\omega_N^*(\bs D))}{\t(N)}
        \geq \sgn \varsigma \cdot \lim_{N\to\infty} \frac{\e(\omega_N^*(\Q))}{\t(N)} -\epsilon \l_\e(D).
    \]
    Note that this implies, $ \l_\e(D) $ is bounded uniformly in $ \epsilon $ when $ \epsilon \downarrow0 $.
    On the other hand, as rearrangement of cubes does not change the asymptotics, placing the $ \widetilde n $ cubes comprising the collection $ \bs D $ inside $ Q_0 $ allows to approximate the asymptotics on the latter set as well. Denote the rearrangement of $ \bs D $ placed inside $ Q_0 $ by $ \bs D_0 $. Summing up the relations \eqref{eq:choice_of_partition} over $ m $ gives
    \[
        0\leq \lambda_d (Q_0) - \lambda_d (\bs D_0) < \delta\left(\epsilon\right)\lambda_d(Q_0).
    \]
    Thus, an application of \eqref{eq:cube_conts} to $ \bs D_0 \subset Q_0 $ together with \eqref{eq:monotonicity} gives for the asymptotics on $ \bs D_0 $
    % TODO: this calculation needs to be checked
    \[
        \sgn \varsigma \cdot \lim_{N\to\infty}  \frac{\e(\omega_N^*(Q_0))}{\t(N)} \geq  
        (\sgn \varsigma - \epsilon) \cdot \lim_{N\to\infty}  \frac{\e(\omega_N^*(\bs D_0))}{\t(N)} \geq
        \sgn \varsigma \cdot \lim_{N\to\infty}  \frac{\e(\omega_N^*(Q_0))}{\t(N)} -\epsilon \l_\e(D_0).
    \]
    Since $ \l_\e(\bs D) = \l_\e(\bs D_0) $ by construction, the two estimates for $ \bs D $ and $ \bs D_0 $ yield
    \[ 
        \left|\lim_{N\to\infty} \frac{\e(\omega_N^*(\Q))}{\t(N)} - \lim_{N\to\infty} \frac{\e(\omega_N^*(Q_0))}{\t(N)}\right|  < 2 \epsilon \l_\e(\bs D).
    \]
    Since $ \epsilon $ is arbitrary and $ \l_\e(\bs D) $ is bounded uniformly in $ \epsilon $, this completes the proof.
\end{proof}
% \begin{remark}
%     In the proof of Lemma~\ref{lem:cube_uniform}, in addition to the continuity of $ \f $ we needed only one version of the inequality from (ii):
%     \[
%         \lim_{N\to\infty} \frac{\e(\omega_N^*(A))}{\t(N)} \geq  \limsup_{N\to\infty} \frac{\e(\omega_N^*(B))}{\t(N)} - \epsilon.
%     \]
%     when $ \l_\e(B) $ exists and $ B \subset A  $ with $ \lambda_d(B) > \lambda_d(A) - \delta(\lambda_d(A)/2,\epsilon) $.
% \end{remark} 

\subsection{Functions of regular variation}
\label{sec:regular_variation}
Let us show that the assumptions on $ \t $ and Definition~\ref{def:short_range} imply $ \f $ and $ \w $ must be power laws. To that end, we will use some ramifications of the Cauchy functional equation, in particular the theory of regularly varying functions \cite{binghamRegular1987}. First, we need the notion of slowly varying functions, introduced by Karamata.
\begin{definition}
    A positive measurable function $ v(t) $ defined on $ [a,\infty) $ and satisfying 
    \[
        \lim_{u\to\infty} \frac{v(t u)}{v(u)} = 1
    \]
    is said to be {\it slowly varying}.
\end{definition}
The statement of the next theorem is also due to Karamata, and it fully characterizes the fractional rate $ \w(t) $.
\begin{theorem}[{\cite[Thm. 1.4.1]{binghamRegular1987}}]
    \label{thm:binghamRegular1987}
    If a measurable $ \t > 0 $ satisfies \eqref{eq:rate} for $ t $ from a set of positive measure, then\\
    a) \eqref{eq:rate} holds for all $ t > 0 $;\\
    b) $ \w(t) = t^{1+\sigma} $ for some $ \sigma $ and all $ t > 0 $;\\
    c) $ \t(t) = t^{1+\sigma} v(t) $ with a slowly varying $ v(t) $.
\end{theorem}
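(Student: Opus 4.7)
The plan is to combine a classical functional-equation argument for the multiplicative Cauchy equation with a measure-theoretic extension via Steinhaus's lemma. Define
\[
    E := \{ t > 0 : \lim_{u\to\infty}\t(tu)/\t(u) \text{ exists in } (0,\infty) \},
\]
which by hypothesis has positive Lebesgue measure.

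First I would show that $E$ is a multiplicative subgroup of $(0,\infty)$ on which $\w$ is multiplicative. For $s,t \in E$, the factorization
\[
    \frac{\t(stu)}{\t(u)} = \frac{\t(stu)}{\t(tu)}\cdot\frac{\t(tu)}{\t(u)}
\]
together with $tu \to \infty$ gives $st \in E$ and $\w(st)=\w(s)\w(t)$; an analogous factorization yields $1/t \in E$ with $\w(1/t)=1/\w(t)$. By Steinhaus's lemma applied to $\log E \subset \R$, the difference set $\log E - \log E$ contains an open neighborhood of $0$, so $E$ contains an open neighborhood of $1$. A multiplicative subgroup of $(0,\infty)$ containing an open neighborhood of $1$ must equal all of $(0,\infty)$, since every positive real is a finite product of elements from such a neighborhood. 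This establishes (a).

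For (b), note that $\w$ is positive and measurable as a pointwise limit of measurable functions, and satisfies the multiplicative Cauchy equation $\w(st)=\w(s)\w(t)$ on $(0,\infty)$. Passing to logarithms reduces this to the additive Cauchy equation, whose measurable solutions are linear; hence $\w(t)=t^{1+\sigma}$ for some $\sigma \in \R$. For (c), set $v(t):=\t(t)\,t^{-(1+\sigma)}$, a positive measurable function. Then for every $s>0$,
\[
    \frac{v(su)}{v(u)} = \frac{\t(su)}{\t(u)}\cdot s^{-(1+\sigma)} \longrightarrow \w(s)\,s^{-(1+\sigma)} = 1
\]
as $u\to\infty$, so $v$ is slowly varying by definition, yielding the factorization $\t(t)=t^{1+\sigma}v(t)$.

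The main obstacle is the extension step upgrading ``$E$ has positive measure'' to $E=(0,\infty)$. This is precisely where the measurability of $\t$ is indispensable: without it Steinhaus's lemma fails and the multiplicative Cauchy equation admits pathological non-measurable solutions (built from a Hamel basis of $\R$ over $\Q$) that are not of power-law form. The rest of the argument is essentially bookkeeping once the domain of $\w$ has been extended.
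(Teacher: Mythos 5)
The paper does not prove this statement; it is cited verbatim from Bingham--Goldie--Teugels, and the only commentary the paper offers is the one-sentence remark immediately afterward that a multiplicative subgroup of $\mathbb R_+$ containing a set of positive measure must be all of $\mathbb R_+$. Your argument is the standard textbook proof of Karamata's characterization theorem and correctly fleshes out exactly that remark: it identifies the subgroup $E$, invokes Steinhaus to obtain a neighborhood of $1$, extends to all of $(0,\infty)$, and then solves the resulting measurable multiplicative Cauchy equation. So this is the same approach the paper has in mind, carried out in full. (Two minor points worth making explicit: that $\log E - \log E = \log E$ because $\log E$ is a subgroup, which is what lets Steinhaus give the neighborhood directly inside $\log E$; and that $\w$ is measurable because one can realize the limit $u \to \infty$ along a countable sequence.)
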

\noindent The connection of this result to Cauchy functional equation is due to Feller~\cite{feller1971introduction}. This connection explains the assumption that \eqref{eq:rate} hold on a set of positive measure, since a multiplicative subgroup of $ \mathbb R_+ $ containing a set of positive measure coincides with the entire $ \mathbb R_+ $. It is further worth noting that one has the following representation theorem by Korevaar et al.\ for the slowly varying component $ v(t) $.
\begin{theorem}[\cite{korevaarNote1949, binghamRegular1987}]
    The function $ v(t) $ is slowly varying if and only if it can be represented as
    \[
        v(t) = c(t) \exp\left(\int_{a}^t \frac{\delta(u)}{u}\, du \right), \qquad x \geq a,
    \]
    for an $ a>0 $, a measurable $ c(t) $ tending to a positive limit and a $ \delta(t) $ tending to 0 when $ t\to \infty $.
\end{theorem}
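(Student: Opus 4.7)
The plan is to prove both directions of the equivalence, with the forward (sufficiency) direction being a short computation and the converse (necessity) requiring a smoothing argument in logarithmic coordinates.

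For the \emph{if} direction, suppose $v(t) = c(t)\exp\!\left(\int_a^t \delta(u)/u\,du\right)$ with $c(t)\to c_0>0$ and $\delta(t)\to 0$. A change of variables $u = ur$ in the integral yields
\[
    \frac{v(tu)}{v(u)} = \frac{c(tu)}{c(u)} \exp\!\left(\int_1^t \frac{\delta(ur)}{r}\,dr\right).
\]
The first factor tends to $1$ since $c(tu)/c(u)\to c_0/c_0 = 1$. For the exponent, fix $\epsilon>0$: there exists $M$ so that $|\delta(s)|<\epsilon$ for $s>M$, hence for all $u>M$ the inner integral is bounded in absolute value by $\epsilon|\ln t|$, which can be made arbitrarily small. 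Thus $v(tu)/v(u)\to 1$ as $u\to\infty$.

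For the harder \emph{only if} direction, I would pass to logarithmic coordinates by setting $h(x) := \ln v(e^x)$, so the slow variation of $v$ translates into the functional relation $h(x+y) - h(x) \to 0$ as $x\to\infty$ for each fixed $y\in\mathbb{R}$. The plan is to smooth $h$ by averaging: define
\[
    \tilde h(x) := \int_0^1 h(x+t)\,dt.
\]
Since $h$ is measurable (as $v$ is), $\tilde h$ is absolutely continuous, and by Lebesgue's differentiation theorem $\tilde h'(x) = h(x+1) - h(x)$ for a.e.\ $x$, which tends to $0$ as $x\to\infty$ by slow variation. Writing $\eta(x) := h(x) - \tilde h(x)$ and exponentiating back via $t = e^x$, $u = e^r$, $du/u = dr$, one obtains
\[
    v(t) = e^{\tilde h(0)+\eta(\ln t)} \cdot \exp\!\left(\int_1^t \frac{\delta(u)}{u}\,du\right),
\]
where $\delta(u) := \tilde h'(\ln u) \to 0$ as $u\to\infty$, and the prefactor tends to $e^{\tilde h(0)}>0$ once we know $\eta(x)\to 0$.

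The main obstacle is precisely this last claim: showing $\eta(x) = \int_0^1\bigl(h(x+t)-h(x)\bigr)\,dt \to 0$. Pointwise convergence of the integrand for each $t$ is immediate, but passing the limit inside the integral requires the \emph{uniform convergence theorem} for measurable slowly varying functions, namely that $h(x+t)-h(x)\to 0$ uniformly for $t$ in compact sets. This is a classical but nontrivial consequence of measurability (it is typically obtained via a Steinhaus-type argument applied to the sets where $|h(x+t)-h(x)|<\epsilon$, exploiting that any measurable subgroup-like structure of positive density in $\mathbb{R}$ contains an interval). Once this uniform convergence is in hand, $\eta(x)\to 0$ follows by dominated (or bounded) convergence, and the representation is established with $c(t) := e^{\tilde h(0)+\eta(\ln t)}$ and $\delta$ as above. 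The constant $a>0$ in the statement can be adjusted by absorbing the corresponding portion of the exponential into $c$.
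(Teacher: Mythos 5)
The paper does not prove this statement — it is cited verbatim from Korevaar and from Bingham–Goldie–Teugels — so there is no in-paper proof to compare against. Your argument is the standard one from those references (logarithmic change of variables, mollification by a unit average, Lebesgue differentiation to produce $\delta$, and the Uniform Convergence Theorem to control the remainder $\eta$), and it correctly identifies the UCT as the one nontrivial ingredient; apart from the typo ``$u=ur$'' (meant: substitute $s=ur$ in $\int_u^{tu}\delta(s)\,ds/s$), the reasoning is sound.
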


    % The proof of this lemma will be given in two parts.
    % In the first part, we consider a collection of equal cubes with pairwise disjoint interiors
    % $$ \Q = \bigcup_{m=1}^M Q_m = \bigcup_{m=1}^M (x_m + a\q_d). $$
    % For this collection, let us show the existence of the limit
    % \begin{equation}
    %     \label{eq:limit_exists}
    %     \lim_{N\to\infty} \frac{\e(\omega_N^*(\Q))}{\t(N)}.
    % \end{equation}

    % Furthermore, since $ \w $ is convex, the minimal value of $\w(\beta_1) \f(a^d) + \w(\beta_2) \f(a^d) $  over $ \beta_1 + \beta_2 =1 $ is achieved at $ \beta_1 = \beta_2 = 1/2 $, and so optimality of $ \omega_N^*(\Q) $ implies $ \beta_m =1 /2 $, $ m=1,2 $, must be the case. This holds for any sequence $ \mathscr  N $ along which the counting measures converge on $ Q_m $, which proves the existence of the limit \eqref{eq:limit_exists}.

The above results give a full characterization of the fractional rate function $ \w $. To study the asymptotic properties of $ \e $ and distribution of its minimizers, we will need to characterize $ \f $ as well. Consider a union of $ M $ equal cubes with disjoint interiors $ \Q = \bigcup_{m=1}^M Q_m $ and combine equations \eqref{eq:cube_uniform}--\eqref{eq:limits_equal} to obtain
\[
    \f(\lambda_d(\Q))=\lim_{N\to\infty} \frac{\e(\omega_N^*(\Q))}{\t(N)} = \f(\lambda_d (Q_1)) \sum_{m=1}^M \w(\beta_m)  = M \w\left(\frac1M\right) \f(\lambda_d (Q_1)).
\]
We used here that, in view of convexity of $ \w $, the minimum of 
$$ \f(\lambda_d (Q_1)) \sum_{m=1}^M \w(\beta_m)  $$
is achieved at $ \beta_1^* = \beta_2^* = \ldots = \beta_M^* = 1/M $. Since $ \lambda_d(Q_1) $ can take any positive value, we obtain the following functional equation for $ \f $:
\begin{equation}
    \label{eq:fun_equation}
    \f(Mt) = M\w(1/M) \f(t), \qquad t>0, \quad M = 1,2,3,\ldots.
\end{equation}
Such equations and their solutions have been studied by {Acz{\'e}l and Dar{\'o}czy} \cite{aczelMeasures1975}, as a generalization of the so-called {\it completely additive number-theoretic functions} $ \phi $, satisfying the equation 
\[
    \phi(MN) = \phi(M) + \phi(N), \qquad M,N =1,2,3,\ldots.
\]
Remarkably, just as theory of regular variation discussed above, the study of such functional equations is related to the Cauchy functional equation.
The first characterization theorem for $ \phi $ was given by Erd\H os (1957), and several generalizations have been obtained later \cite{aczelMeasures1975}. Here we will need the following result to characterize the cube limit function $ \f $:
\begin{theorem}[{\cite[Thm. (0.4.43)]{aczelMeasures1975}}]
    \label{thm:aczel_daroczy}
    Suppose a continuous monotonic $ f $ satisfies
    \[
        f(Mt) = A(M)f(t), \qquad t\geq 1, \quad M= 1,2,3,\ldots.
    \]
    If $ A(M) \not\equiv 1 $, then
    \[
        f(t) =  f(1)\,t^{-\sigma}, \qquad \sigma \neq 0,
    \]
    and 
    \[
        A(M) = M^{-\sigma}.
    \]
\end{theorem}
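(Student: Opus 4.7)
The plan is to reduce the given functional equation to a multiplicative Cauchy equation on $ [1,\infty) $ and then invoke continuity. One may assume $ f(1) \neq 0 $, as otherwise iteration of $ f(Mt) = A(M) f(t) $ with $ t = 1 $ forces $ f \equiv 0 $ on $ [1,\infty) $ and the claim is vacuous. Setting $ t = 1 $ gives $ A(M) = f(M)/f(1) $, and dividing through by $ f(1) $ the relation becomes $ g(Mt) = g(M) g(t) $ for $ g := f/f(1) $, valid for $ t \geq 1 $ and $ M \in \mathbb N $; putting $ M = t = 1 $ yields $ g(1) = 1 $. One next verifies $ g > 0 $ on $ [1,\infty) $: if $ g $ vanished somewhere, the functional equation together with continuity and monotonicity would propagate zeros into arbitrarily small neighborhoods, contradicting $ g(1) = 1 $.

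Next I would extend the multiplicativity. Substituting $ t = N \in \mathbb N $ yields $ g(MN) = g(M) g(N) $, so $ g $ is completely multiplicative on $ \mathbb N $. For a rational $ r = p/q \geq 1 $ with $ p,q \in \mathbb N $, the substitution $ M = q $, $ t = r $ gives $ g(p) = g(q) g(r) $, hence $ g(r) = g(p)/g(q) $; a routine manipulation on numerators and denominators then shows $ g(r_1 r_2) = g(r_1) g(r_2) $ for all rationals $ r_1, r_2 \geq 1 $. Continuity of $ g $ (inherited from $ f $) combined with density of $ \mathbb Q \cap [1, \infty) $ in $ [1, \infty) $ upgrades this to $ g(st) = g(s) g(t) $ for all real $ s, t \geq 1 $.

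To conclude, set $ h(u) := \log g(e^u) $ for $ u \geq 0 $. The multiplicative identity becomes the additive Cauchy equation $ h(u + v) = h(u) + h(v) $ on $ [0, \infty) $, whose only continuous solutions are $ h(u) = c u $ for a constant $ c \in \mathbb R $. Writing $ c = -\sigma $ recovers $ g(t) = t^{-\sigma} $, i.e.\ $ f(t) = f(1)\, t^{-\sigma} $ and $ A(M) = M^{-\sigma} $. The hypothesis $ A(M) \not\equiv 1 $ then excludes $ \sigma = 0 $.

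The main obstacle I anticipate is the transition from integer multipliers $ M $ to arbitrary real multipliers, since the given relation only directly constrains multiplication by positive integers, and the domain constraint $ t \geq 1 $ forbids the usual shortcut of passing to reciprocals. This gap is closed by the two-step passage through rationals and then through continuity/density. A secondary, but genuine, technical point is justifying that $ g $ cannot vanish on $ [1, \infty) $, so that logarithms are well defined and the classical Cauchy theorem applies; this is where the continuity and monotonicity hypotheses on $ f $ are essential.
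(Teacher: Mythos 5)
The paper does not prove Theorem~\ref{thm:aczel_daroczy}: it is quoted from the monograph of Acz{\'e}l and Dar{\'o}czy with a reference, so there is no in-paper proof to compare with. Your overall plan — normalize to $g = f/f(1)$, extend $g(Mt)=g(M)g(t)$ from integer multipliers to rationals and then (by density and continuity) to all reals in $[1,\infty)$, take $h(u)=\log g(e^u)$ to land on the additive Cauchy equation on $[0,\infty)$, and read off $g(t)=t^{-\sigma}$ — is the standard one and is correct in outline, including the dispatch of the degenerate $f(1)=0$ case.

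The step that genuinely fails is the one you yourself single out as a "technical point": the claim that $g$ cannot vanish. Your justification, that zeros would "propagate into arbitrarily small neighborhoods of $1$," does not work, because the backward propagation $g(t_0/M) = g(t_0)/g(M)$ requires $g(M)\neq 0$, and in the problematic configuration this fails for every $M\geq 2$. Concretely, fix $1<\tau<2$ and let $g$ be any continuous nonincreasing function with $g(1)=1$, $g>0$ on $[1,\tau)$, and $g\equiv 0$ on $[\tau,\infty)$. For every integer $M\geq 2$ and every $t\geq 1$ one has $Mt\geq 2>\tau$, hence $g(Mt)=0$, so the functional equation $g(Mt)=A(M)g(t)$ is satisfied with $A(1)=1$ and $A(M)=0$ for $M\geq 2$. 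This $g$ is continuous and monotonic, $A\not\equiv 1$, yet $g$ is not a power $t^{-\sigma}$. So the theorem is false as literally quoted; the Acz{\'e}l--Dar{\'o}czy result carries an additional hypothesis (positivity of $f$, positivity of $A$, or strict monotonicity) that the quote omits. In the paper's application this is harmless — Definition~\ref{def:cube_asymp} supplies $\f>0$ — but your proof should take such positivity as a hypothesis, not try to derive it. (If you do want to extract some positivity from scratch: $g(n^2)=g(n)^2\geq 0$ together with monotonicity gives $g\geq 0$ on $[1,\infty)$, and if $g$ is strictly monotonic this upgrades to $g>0$; but weak monotonicity alone does not suffice, as the example shows.) Once $g>0$ is granted, the passage through rationals, density, and the continuous Cauchy equation is correct as you wrote it.
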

\noindent By Corollary~\ref{cor:f_is_conts}, $ \f $ is continuous; it is monotonic by~\eqref{eq:monotonicity}. Applying the above theorem, we conclude the following.
\begin{corollary}
    \label{cor:f_is_powerlaw}
    If the cube limit function $ \f $ satisfies \eqref{eq:fun_equation}, then $ \f(t) = \f(1) \,t^{-\sigma} $, $ t>0 $, $ \sigma \neq  0$.
\end{corollary}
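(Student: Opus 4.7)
I intend to invoke the Aczél–Daróczy Theorem~\ref{thm:aczel_daroczy} directly with $f = \f$: the functional equation \eqref{eq:fun_equation} is already of the required form
\[
\f(Mt) = A(M)\,\f(t), \qquad A(M) := M\,\w(1/M), \quad M=1,2,3,\ldots,\ t>0.
\]
By Theorem~\ref{thm:binghamRegular1987}(b), $\w(t) = t^{1+\sigma}$ for some real $\sigma$, and so $A(M) = M\cdot M^{-(1+\sigma)} = M^{-\sigma}$. It remains to verify the three hypotheses of Theorem~\ref{thm:aczel_daroczy}: that $\f$ is continuous, that $\f$ is monotonic, and that $A(M) \not\equiv 1$.

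Continuity of $\f$ on $[0,\infty)$ is precisely Corollary~\ref{cor:f_is_conts}. Monotonicity of $\f$ on $(0,\infty)$ follows from the set-monotonicity property \eqref{eq:monotonicity}: for concentric nested cubes $x+a\,\q_d \subset x+b\,\q_d$ with $a \leq b$, Definition~\ref{def:cube_asymp} identifies $\l_\e(x+a\,\q_d) = \f(a^d)$ and $\l_\e(x+b\,\q_d) = \f(b^d)$, and \eqref{eq:monotonicity} compares these two quantities with a definite sign determined by $\varsigma$.

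The only real obstacle is ruling out $A(M)\equiv 1$, i.e., $\sigma = 0$. If, to the contrary, $\sigma = 0$, then \eqref{eq:fun_equation} reduces to $\f(Ms) = \f(s)$, or equivalently $\f(s) = \f(s/M)$, for every positive integer $M$ and every $s > 0$. Iterating with $M = 2$ gives $\f(s) = \f(s/2^n)$ for every $n \geq 1$, so by the monotonicity of $\f$, on any interval of the form $[s/2^n, s/2^{n-1}]$ the function $\f$ is squeezed between two equal values and must equal $\f(s)$. Taking $n\to\infty$ and $s$ arbitrary, continuity propagates this to show $\f$ is constant on $(0,\infty)$. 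This contradicts the standing assumption (Theorem~\ref{thm:main1}(a), built into Definition~\ref{def:cube_asymp}) that $\l_\e(x+a\,\q_d) = \f(a^d)$ actually depends on $a$. Hence $\sigma \neq 0$ and $A(M) \not\equiv 1$.

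Theorem~\ref{thm:aczel_daroczy} then delivers $\f(t) = \f(1)\,t^{-\sigma}$ for $t \geq 1$, with $\sigma \neq 0$. To extend this identity to $t \in (0,1)$, pick any integer $M$ with $Mt \geq 1$; applying \eqref{eq:fun_equation} on the one hand and the established power-law formula at $Mt$ on the other yields
\[
M^{-\sigma}\,\f(t) = \f(Mt) = \f(1)\,(Mt)^{-\sigma} = \f(1)\,M^{-\sigma}\,t^{-\sigma},
\]
so $\f(t) = \f(1)\,t^{-\sigma}$ for all $t > 0$, as required.
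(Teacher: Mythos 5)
Your proof is correct and follows the same route as the paper's, which simply observes that $\f$ is continuous (Corollary~\ref{cor:f_is_conts}) and monotonic (from~\eqref{eq:monotonicity}) and then cites Theorem~\ref{thm:aczel_daroczy}. You go further by carefully checking the hypothesis $A(M)\not\equiv 1$ and by extending the conclusion from $t\geq 1$ to all $t>0$ — both of which the paper leaves implicit.

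One small remark on efficiency: to rule out $A(M)\equiv 1$ you argue by contradiction that $\sigma=0$ would force $\f$ constant, contradicting the dependence on $a$. That works, but there is a shorter route already baked into the standing hypotheses: the rate function $\t$ is assumed to satisfy~\eqref{eq:rate} with $\w$ strictly convex, and by Theorem~\ref{thm:binghamRegular1987}(b) one has $\w(t)=t^{1+\sigma}$, which is strictly convex only when $\sigma\notin[-1,0]$. In particular $\sigma\neq 0$ is immediate without appealing to the non-constancy of $\f$. Either argument is legitimate; yours is more self-contained in that it only uses properties of $\f$ itself, while the convexity route uses a hypothesis on $\w$ that the paper later invokes anyway (in Proposition~\ref{prop:triple}) to exclude the whole interval $[-1,0]$.
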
 
We have therefore obtained a complete characterization of both $ \f $ and $ \w $ for an admissible interaction functional $ \e $. The present section can be summarized in the next result.

\begin{proposition}
    \label{prop:triple}
    If an interaction functional $ \e $ associated with the triple $ (\f, \t, \w) $ satisfies Definitions~\ref{def:cube_asymp}--\ref{def:cube_conts}, then 
    \[
        \f(t) = \f(1)t^{-\sigma}, \qquad \t(t) = t^{1+\sigma} v(t), \qquad \w(t) = t^{1+\sigma}
    \]
    for a $ \sigma \in (-\infty, -1) \cup (0,\infty) $ and a slowly varying $ v $.
\end{proposition}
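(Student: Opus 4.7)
The plan is to assemble three ingredients that are already in place in this section, and the proposition then follows by matching exponents. First, I would invoke Karamata's theorem (Theorem~\ref{thm:binghamRegular1987}) applied to the rate function $\t$. Since \eqref{eq:rate} is assumed to hold on a set of positive Lebesgue measure, Karamata's result automatically extends it to all $t>0$, forces $\w(t)=t^{1+\sigma}$ for some real $\sigma$, and provides the decomposition $\t(t)=t^{1+\sigma}v(t)$ with $v$ slowly varying. This yields the desired form of $\w$ and $\t$ up to pinning down the range of $\sigma$.

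Second, the strict convexity of $\w$ built into \eqref{eq:rate} immediately restricts $\sigma$. Writing $\w''(t)=\sigma(1+\sigma)t^{\sigma-1}$ on $(0,1]$, strict convexity is equivalent to $\sigma(1+\sigma)>0$, which rules out $\sigma=0$ (linear case), the endpoint $\sigma=-1$ (constant case), and the entire interval $(-1,0)$ (where $\w$ is strictly concave). This leaves exactly $\sigma\in(-\infty,-1)\cup(0,\infty)$.

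Third, to identify $\f$, I would apply Lemma~\ref{lem:cube_uniform} together with Lemma~\ref{lem:cube_volume} to the special configuration $\Q=\bigcup_{m=1}^{M}Q_m$ consisting of $M$ translates of a single cube of volume $t$ with pairwise disjoint interiors. Strict convexity of $\w$ forces the minimizing weights $\beta_m^*$ to equal $1/M$ by symmetry, so the two lemmas combine into the functional equation
\[
  \f(Mt)=M\w(1/M)\f(t),\qquad t>0,\ M\in\mathbb{N}.
\]
Because $\sigma\neq 0$ from step two, the multiplier $A(M)=M\w(1/M)=M^{-\sigma}$ is not identically $1$. The function $\f$ is continuous by Corollary~\ref{cor:f_is_conts} and monotonic by the set-monotonicity \eqref{eq:monotonicity}, so Theorem~\ref{thm:aczel_daroczy} (equivalently Corollary~\ref{cor:f_is_powerlaw}) yields $\f(t)=\f(1)\,t^{-\sigma}$ first for $t\geq 1$, and then for all $t>0$ by reapplying the functional equation with $M$ chosen so that $Mt\geq 1$. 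Consistency between the two derivations is automatic: $M\cdot M^{-(1+\sigma)}=M^{-\sigma}$ agrees with the $A(M)$ produced from Karamata's form of $\w$.

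The main obstacle here is conceptual rather than computational: one must recognize that the strict convexity hypothesis on $\w$, together with Karamata's rigid classification of rate functions, already eliminates the borderline cases $\sigma\in[-1,0]$ without any additional geometric input. Once this observation is made, the remainder is essentially bookkeeping, matching the multiplicative functional equation for $\f$ to the power law produced by Karamata's theorem for $\w$.
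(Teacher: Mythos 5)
Your proof follows the same route as the paper: Karamata's theorem (Theorem~\ref{thm:binghamRegular1987}) pins down $\w$ and $\t$, the combination of Lemmas~\ref{lem:cube_uniform} and \ref{lem:cube_volume} applied to $M$ congruent cubes yields the functional equation \eqref{eq:fun_equation}, and Theorem~\ref{thm:aczel_daroczy} then forces $\f$ to be a power law, with the range restriction $\sigma\in(-\infty,-1)\cup(0,\infty)$ coming from the strict convexity hypothesis on $\w$. The only small addition is that you make explicit the extension from $t\geq 1$ to all $t>0$ via the functional equation, which the paper's Corollary~\ref{cor:f_is_powerlaw} leaves implicit; otherwise the arguments coincide.
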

Note that the range $ [-1,0] $ for $ \sigma $ has to be excluded since $ t^{1+\sigma} $ is not (strictly) convex there. The above proposition enables us to use $ \sgn \sigma $ in place of $ \sgn \varsigma $ from now on.

\subsection{Asymptotics of short-range functionals}
\label{sec:asymptotics_uniform}

The main theorem of this section extends the expression for the asymptotics of $ \e(\omega_N^*(A)) $ in \eqref{eq:cube_asymp} to general compact subsets of $ \mathbb R^d $, thereby showing that all such sets belong to $ \mathscr L_\e $. 

\begin{theorem}
    \label{thm:general_uniform}
    Let $ \e $ be  an admissible interaction functional with cube limit function $ \f $ and rate $ \t $.  
    Then 
    \[
        \lim_{N\to\infty} \frac{\e(\omega_N^*(A))}{\t(N)} = \f(\lambda_d (A))
    \]
    for any compact $ A \subset \mathbb R^d $. In particular, the above limit exists.
\end{theorem}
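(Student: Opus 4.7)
Proof plan. I would sandwich $\e(\omega_N^*(A))/\t(N)$ between two matching asymptotic bounds obtained by approximating $A$ by a finite union of cubes with disjoint interiors, each time invoking Lemma~\ref{lem:cube_volume} to recognize the target value as $\f(\lambda_d(A))$.

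For the ``outer'' direction, outer regularity of $\lambda_d$ gives, for any $\eta>0$, a finite union $\Q \in \mathscr Q$ with $A \subset \Q$ and $\lambda_d(\Q) < \lambda_d(A) + \eta$ — cover $A$ by a slightly enlarged open set of measure $< \lambda_d(A)+\eta$, write it as a countable union of dyadic cubes, and extract a finite subcover by compactness. Lemma~\ref{lem:cube_volume} yields $\l_\e(\Q) = \f(\lambda_d(\Q))$, and set-monotonicity~\eqref{eq:monotonicity} applied to $A \subset \Q$, together with continuity of $\f$ (Corollary~\ref{cor:f_is_conts}), gives
\[
\liminf_{N\to\infty} \sgn\varsigma \cdot \frac{\e(\omega_N^*(A))}{\t(N)} \geq \sgn\varsigma \cdot \f(\lambda_d(A))
\]
after $\eta \downarrow 0$. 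Since $\f(0)=+\infty$ for $\sigma>0$ and $\f(0)=0$ for $\sigma<-1$, this step alone already pins down the limit whenever $\lambda_d(A)=0$.

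For the converse direction, assuming $\lambda_d(A) > 0$, I would construct an ``inner'' cube approximation in which each cube contains a large fraction of $A$. Fix $\epsilon>0$ and $\delta=\delta(\epsilon)$ from Definition~\ref{def:cube_conts}. Enclose $A$ in a cube $V\subset \Omega$, take the dyadic subdivision of $V$ at scale $h$, and let $\Q^h$ be the union of cells that meet $A$; then $\Q^h \searrow A$ and $\lambda_d(\Q^h \setminus A) \to 0$. Call a cell $Q$ of $\Q^h$ \emph{good} if $\lambda_d(A \cap Q) \geq (1-\delta)\lambda_d(Q)$ and \emph{bad} otherwise. Each bad cell contributes $\lambda_d(Q\setminus A) > \delta\lambda_d(Q)$, so the union $\mathcal B^h$ of bad cells satisfies $\lambda_d(\mathcal B^h) \leq \delta^{-1}\lambda_d(\Q^h \setminus A) \to 0$. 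Writing $\mathcal G^h = \bigcup_{m=1}^{M} Q_m$ for the good cells and $K^h := A \cap \mathcal G^h$, one has a compact set $K^h\subset A$ with $\lambda_d(A\setminus K^h)\to 0$ and $\lambda_d(K^h \cap Q_m)/\lambda_d(Q_m) \geq 1-\delta$ for every $m$. Definition~\ref{def:cube_conts} applied cube-by-cube yields
\[
\sgn\varsigma \cdot \f(\lambda_d(Q_m)) \geq (\sgn\varsigma - \epsilon) \limsup_{N_m\to\infty} \frac{\e(\omega_{N_m}^*(K^h \cap Q_m))}{\t(N_m)},
\]
which I would combine with the appropriate form of Definition~\ref{def:short_range}: part (ii) applied to a test configuration $\omega_N = \bigcup_m \omega_{N_m}^*(K^h\cap \gamma Q_m)$ with $\gamma<1$ (guaranteeing positive separation when $\sigma>0$) and $N_m/N\to \beta_m^*$ the optimal weights from Lemma~\ref{lem:cube_uniform}, gives an \emph{upper} bound on $\limsup_N \e(\omega_N^*(K^h))/\t(N)$ via minimality $\e(\omega_N^*(K^h)) \leq \e(\omega_N, K^h)$, whereas part (i) applied to the minimizer $\omega_N^*(K^h)$ itself — whose split $N_m = \#(\omega_N^*(K^h)\cap Q_m)$ feeds the cube-by-cube minimality $\e(\omega_N^*(K^h)\cap Q_m, K^h\cap Q_m) \geq \e(\omega_{N_m}^*(K^h\cap Q_m))$ — gives a \emph{lower} bound, which is what is needed when $\sigma<-1$. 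In both situations Lemma~\ref{lem:cube_volume} collapses the optimal weighted sum $\sum_m \w(\beta_m^*)\f(\lambda_d(Q_m))$ into $\f(\lambda_d(\mathcal G^h))$ (or $\f(\gamma^d \lambda_d(\mathcal G^h))$). Set-monotonicity then transfers the bound from $K^h$ to $A$ through $K^h\subset A$, and driving $\gamma \uparrow 1$, $h\downarrow 0$, $\epsilon\downarrow 0$ in that order (together with $\lambda_d(\mathcal G^h)\to \lambda_d(A)$ and continuity of $\f$) matches the outer bound and closes the argument.

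The main obstacle is this last combination step. Pairing the cube-by-cube $\limsup$-estimate from Definition~\ref{def:cube_conts} with the correct form of the short-range property requires care, especially when $\sigma<-1$, where the desired lower bound must be read off the minimizer's own partition and hence depends on a simultaneous subsequential extraction $N_m/N \to \beta_m^*$ that may not align with the individual $\limsup$-achieving subsequences cube by cube; the minimality of $\omega_N^*(K^h)$ is what rescues this. There is additional bookkeeping from the three nested small parameters $\delta$, $1-\gamma$, $h$, which must be driven to zero in the right order, and, for set-dependent $\e$ such as the quantization error, one must also show that the discrepancy between $\e(\omega_N, K^h)$ and $\e(\omega_N, K^h \cap \bigcup_m \gamma Q_m)$ is negligible on the scale $\t(N)$. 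The underlying scheme is the same as in the proof of Lemma~\ref{lem:cube_volume}, so this is concrete but routine.
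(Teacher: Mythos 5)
Your plan is correct and matches the structure of the paper's proof: an outer bound from monotonicity applied to a union of cubes containing $A$, an inner bound from a union of cubes with high $A$-fill fraction combined with continuity on cubes and the short-range property, with the two sides of the sandwich handled asymmetrically according to $\sgn\sigma$ exactly as you describe. The one place you diverge is the construction of the inner approximation: the paper's proof applies the Lebesgue density theorem and the Vitali covering theorem to the Vitali cover $\mathscr Q_\xi = \{Q : \lambda_d(A\cap Q)/\lambda_d(Q) \geq 1-\xi\}$ of the density points of $A$, whereas you take a uniform dyadic grid at scale $h$, classify cells as good or bad by fill fraction, and discard the bad ones with the estimate $\lambda_d(\mathcal B^h)\leq \delta^{-1}\lambda_d(\Q^h\setminus A)\to 0$. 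Both constructions deliver the same object — finitely many cubes $Q_m$, each with $\lambda_d(A\cap Q_m)/\lambda_d(Q_m)\geq 1-\delta$ and with total measure close to $\lambda_d(A)$ — so this is a cosmetic difference; if anything, your grid-based version is slightly more explicit. Your warning about the $\gamma$-shrinking deserves a one-line resolution: since $\lambda_d(A\cap\gamma Q_m)\geq \gamma^d\lambda_d(Q_m)-\delta\lambda_d(Q_m)$, the fill fraction of $\gamma Q_m$ is at least $1-\delta/\gamma^d$, so choosing $\delta$ small relative to $\gamma$ and $\delta(\epsilon)$ keeps the continuity hypothesis intact; the paper has to make the same adjustment (its phrase ``by shrinking them towards center, if necessary, without violating that $Q_m\in\mathscr Q_\xi$'' is doing the same bookkeeping, only more tacitly, since disjoint-interior Vitali cubes may still share faces). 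Your separate handling of the $\lambda_d(A)=0$ case via the outer bound alone is a point the paper's proof passes over in silence — its proof implicitly takes $\lambda_d(A)>0$ — so that is a small but genuine improvement in completeness. The subsequence-extraction concern you raise for $\sigma<-1$ is resolved precisely as you anticipate, by extracting one subsequence along which all the counting ratios $N_m^*/N$ converge simultaneously and then using minimality of the pieces cube by cube; this is exactly what the paper does.
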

\begin{proof}
    Fix the compact set $ A\subset \mathbb R^d $ and a number $ \epsilon > 0 $. 
    We will approximate $ A $ with a collection of  cubes with pairwise disjoint interiors $ \Q $; by Lemma~\ref{lem:cube_volume}, the asymptotics of $ \e $ on such a collection is equal to $ \f(\lambda_d(\Q)) $, which then results in an estimate for the asymptotics on $ A $.

    Let $ \omega^{-1}(\f, \epsilon) $ be a number such that $|\f(t_1) - \f(t_2)| \leq \epsilon$,  whenever  $|t_1-t_2| \leq \omega^{-1}(\f, \epsilon)$ with $ t_1,t_2\in [\lambda_d(A)/2, 2\lambda_d(A)] $.
    Let further
    $$ \xi = \min \{ \delta(\epsilon), \ \omega^{-1}(\f, \epsilon) \}  ,$$
    where $ \delta(\epsilon) $ is taken as in \eqref{eq:cube_conts} of  Definition~\ref{def:cube_conts}.

    Suppose $ \sigma > 0 $ first. 
    For the chosen $ \epsilon $, there exists a finite cover of $ A $ with dyadic cubes, $ A \subset \Q = \bigcup_m Q_m $, such that $ \lambda_d(\Q) - \lambda_d(A) < \xi $. 
    Observe that this cover can be represented as a union of equal cubes with disjoint interiors, by re-partitioning some of the dyadic cubes, so $ \Q \in \mathscr L_\e $ by Lemma~\ref{lem:cube_uniform}.
    The inclusion $ A\subset \Q $ implies, together with monotonicity \eqref{eq:monotonicity},
    \begin{equation}
        \label{eq:lower_bd}
        \liminf_{N\to\infty} \frac{\e(\omega_N^*(A))}{\t(N)} \geq \lim_{N\to\infty} \frac{\e(\omega_N^*(\Q))}{\t(N)} = \f(\lambda_d(\Q)) \geq \f(\lambda_d(A)) -\epsilon
    \end{equation}
    where the last inequality follows from the choice of $ \xi $.  

    To obtain the upper bound, we will use Lebesgue density theorem and Besicovitch covering theorem in order to construct a collection of disjoint cubes covering a large portion of $ A $. Let 
    \[
        A' := \left\{ x\in A : \lim_{a\downarrow 0} \frac{\lambda_d( A\cap(x+a\q_d) )}{\lambda_d(x+a\q_d)} = 1 \right\}  .
    \]
    By the assumption $ \lambda_d(A)>0 $, so for $ \lambda_d $-almost every point $ x\in A $ there holds $ x\in A' $.
    The collection of cubes $ Q = (x+a\q_d) $, $ x\in A' $, with sufficiently small side lengths given by
    \[
        \mathscr Q_\xi = \left\{ Q :  \frac{\lambda_d( A\cap Q )}{\lambda_d(Q)} \geq 1-\xi \right\}
    \]
    is a Vitali cover for $ A' $. By the Vitali covering theorem \cite[\S 2.8]{federerGeometric1996a}, there is a countable subcollection $ \Q \subset \mathscr Q_\xi $ of disjoint cubes covering $ \lambda_d $-almost all of $ A' $. Finally, we can extract a finite subcollection of $ M $ cubes $ \{ Q_m \}_{m=1}^M = \Q_M \subset \Q $,  such that 
    \begin{equation}
        \label{eq:cubes_approximate}
        \lambda_d(A\setminus \Q_M) < \xi;
    \end{equation}
    assume in addition that the cubes $ Q_m $ in $ \Q_M $ are positive distance apart, by shrinking them towards center, if necessary, without violating that $ Q_m\in\mathscr Q_\xi $ and \eqref{eq:cubes_approximate}. Recall that by monotonicity~\eqref{eq:monotonicity}, 
    \[
        \limsup_{N\to \infty} \frac{\e(\omega_N^*(A))}{\t(N)} 
        \leq
        \liminf_{N\to \infty} \frac{\e(\omega_N^*({A\cap\Q_M}))}{\t(N)}.
    \]
    Consider the set $ A\cap\Q_M $. By the short-range property \eqref{eq:short_range}, for every sequence of the form $ \omega_N = \bigcup_{m=1}^M \omega_{N_m}^*({A\cap Q_m}) $ where $ N_m/N \to \beta_m >0 $ when $ N\to \infty $, $ \sum_m N_m = N $, there holds
    \[
        \lim_{N\to\infty} \frac{\sum_m \e(\omega_N\cap Q_m, A\cap Q_m)}{\e(\omega_N, A\cap \Q_M)} = 1.
    \]
    It follows that for such a sequence $ \omega_N $,
    \begin{equation*}
        \begin{aligned}
        % \limsup_{N\to\infty} \frac{\e(\omega_N^*(A))}{\t(N)} 
        % \leq
        &
        % \limsup_{N\to\infty} \frac{\e(\omega_N^*(A\cap \Q_M))}{\t(N)} \leq 
        \limsup_{N\to\infty} \frac{\e(\omega_N,A\cap \Q_M)}{\t(N)}
        \\
        &= \limsup_{N\to\infty} \frac{\sum_{m=1}^M\e(\omega_N\cap Q_m, A\cap Q_m)}{\t(N)} =\limsup_{N\to\infty} \frac{\sum_{m=1}^M\e(\omega_{N_m}^*({A\cap Q_m}))}{\t(N)} \\
        &\leq \sum_{m=1}^M \limsup_{N\to\infty} \frac{\e(\omega_{N_m}^*({A\cap Q_m}))}{\t(N)} 
    \end{aligned}
    \end{equation*}
    Since all the cubes $ Q_m \in \Q_M $, $ 1\leq m \leq M $, belong to $ \mathscr Q_\xi $, the continuity property \eqref{eq:cube_conts} can be applied to yield
    \[
         \limsup_{N\to\infty}  \frac{\e(\omega_{N_m}^*(A\cap Q_m))}{\t(N_m)} 
         \leq
         ( 1 - \epsilon)^{-1} \lim_{N\to\infty} \frac{\e(\omega_N^*(Q_m))}{\t(N)} 
         =
         ( 1 - \epsilon)^{-1}\f(a_m^d), \qquad 1\leq m \leq M.
    \]
    Combining the two previous displays gives us
    \[
        \begin{aligned}
        \limsup_{N\to\infty} \frac{\e(\omega_N^*(A\cap \Q_M))}{\t(N)}
        &\leq
        \limsup_{N\to\infty} \frac{\e(\omega_N,A\cap \Q_M)}{\t(N)}\\
        &\leq 
        \sum_{m=1}^M \limsup_{N\to\infty} \frac{\e(\omega_{N_m}^*({A\cap Q_m}))}{\t(N_m)}\cdot \frac{\t(N_m)}{\t(N)} \\
        &\leq 
        ( 1 - \epsilon)^{-1} \sum_{m=1}^M \w(\beta_m) \f(a_m^d),
    \end{aligned}
    \]
    where in the last inequality  it is used that $ N_m/N\to \beta_m $ and \eqref{eq:rate}. Note that here the values of $ \beta_m >0 $ can be chosen arbitrarily, as long as $ \sum_m \beta_m = 1 $, by construction. Since the minimum of $ \sum_m \w(\beta_m) \f(a_m^d) $ occurs for all $ \beta_m $ positive by convexity, the above inequality holds in fact for any collection $ \beta_m \geq 0 $ summing up to $ 1 $. On the other hand, applying Lemma~\ref{lem:cube_uniform} to the finite union of cubes with disjoint interiors $ \Q_M $ gives
    \[
        \lim_{N\to\infty} \frac{\e(\omega_N^*(\Q_M))}{\t(N)} = \sum_{m=1}^M \w(\beta^*_m)\f(a_m^d)
    \]
    for some  $ \beta_m^* \geq 0 $ satisfying $ \sum_m \beta_m^*=1 $, which results in
    \begin{equation}
        \label{eq:upper_bd}
        \begin{aligned}
        \limsup_{N\to\infty} \frac{\e(\omega_N^*(A\cap \Q_M))}{\t(N)} 
        &\leq
        (1-\epsilon)^{-1} \lim_{N\to\infty} \frac{\e(\omega_N^*(\Q_M))}{\t(N)} 
        = (1-\epsilon)^{-1} \f(\lambda_d(\Q_M)) \\
        &\leq (1-\epsilon)^{-1} (\f(\lambda_d(A))+\epsilon),
    \end{aligned}
    \end{equation}
    where we used that $ \lambda_d(\Q_M) \geq \lambda_d(A) - \xi $ by \eqref{eq:cubes_approximate} and the definition of $ \xi $. Since $ \epsilon $ can be taken arbitrarily small in \eqref{eq:lower_bd} and \eqref{eq:upper_bd}, these two equations complete the proof in the case $ \sigma > 0 $.

    Suppose now $ \sigma < -1 $. Using monotonicity \eqref{eq:monotonicity} in this case gives  
    \[
        \limsup_{N\to\infty} \frac{\e(\omega_N^*(A))}{\t(N)} \leq \lim_{N\to\infty} \frac{\e(\omega_N^*(\Q))}{\t(N)} = \f(\lambda_d(\Q)) \leq \f(\lambda_d(A)) +\epsilon
    \]
    in place of \eqref{eq:lower_bd}. It remains to prove the matching lower bound. It will follow from a lower bound on the asymptotics on $ A\cap \Q_M $, which by monotonicity \eqref{eq:monotonicity} satisfies
    \[
        \liminf_{N\to \infty} \frac{\e(\omega_N^*(A))}{\t(N)} 
        \geq
        \limsup_{N\to \infty} \frac{\e(\omega_N^*({A\cap\Q_M}))}{\t(N)}.
    \]
    Passing to a subsequence if necessary, we can guarantee that the $ \liminf \frac{\e(\omega_N^*({A\cap\Q_M}))}{\t(N)} $  is attained along a subsequence $ \omega_N = \omega_N^*(A\cap \Q_M)  $, $ N\in \mathscr N $, such that along this subsequence the counting measures are convergent: $ N_m^*/N := \#(\omega_N\cap Q_m)/N \to \beta_m^* $, $ N\to\infty $.
    Arguing as above in the case $ \sigma > 0 $ for this subsequence $ \mathscr N $, we obtain:
    \[
        \begin{aligned}
        &
        \liminf_{N\to\infty} \frac{\e(\omega_N,A\cap \Q_M)}{\t(N)}=  \lim_{\mathscr N \ni N\to\infty} \frac{\e(\omega_N,A\cap \Q_M)}{\t(N)}
        \\
        &
        =\lim_{\mathscr N \ni N\to\infty} \frac{\sum_{m=1}^M\e(\omega_{N}^*({A\cap \Q_M})\cap Q_m, A\cap Q_m)}{\t(N)} 
        \geq \lim_{\mathscr N \ni N\to\infty} \sum_{m=1}^M  \frac{\e(\omega_{N_m^*}^*({A\cap Q_m}))}{\t(N)}\\ 
        &
        \geq (1+\epsilon)^{-1} \sum_{m=1}^M \liminf_{N\to\infty} \frac{\e(\omega_{N_m^*}^*({Q_m}))}{\t(N)} = (1 + \epsilon)^{-1} \sum_{m=1}^M \w(\beta_m^*) \f(a_m^d).
        \end{aligned}
    \]
    From Lemma~\ref{lem:cube_uniform},
    \[
        \f(\lambda_d(\Q_M)) = \lim_{N\to\infty} \frac{\e(\omega_N^*(\Q_M))}{\t(N)} \leq \sum_{m=1}^M \w(\beta_m^*) \f(a_m^d),
    \]
    which gives finally
    \[
        \liminf_{N\to\infty} \frac{\e(\omega_N,A\cap \Q_M)}{\t(N)} \geq (1+\epsilon)^{-1} \f(\lambda_d(\Q_M)) \geq (1+\epsilon)^{-1} (\f(\lambda_d(A))-\epsilon),
    \]
    completing the proof of the lower bound for $ \sigma < -1 $.
\end{proof}

The above theorem can be summarized by saying that the value of $ \l_\e(A) $ on a compact set $ A $ depends only on $ \lambda_d(A) $.  

\begin{corollary}
    \label{cor:compare_minimizers}
    Theorem~\ref{thm:general_uniform} and equation \eqref{eq:rate} imply:
    for a sequence $ n = tN+o(N) $ with $ t > 0 $ and a compact set $ A $ with $ \lambda_d(A) >0 $ there holds
    \[
        \lim_{N\to \infty} \frac{\e(\omega_{n}^*(A))}{\e(\omega_N^*(A))} = \w(t).
    \] 
\end{corollary}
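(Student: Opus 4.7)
The plan is to reduce the ratio to two factors: one involving the convergence of the normalized energies to the same limit, and one involving the ratio of the rate function $\t$ at $n$ versus $N$. Since $\lambda_d(A) > 0$ and $t > 0$, both $N$ and $n = tN + o(N)$ tend to $+\infty$, and by Theorem~\ref{thm:general_uniform} both $\e(\omega_N^*(A))/\t(N)$ and $\e(\omega_n^*(A))/\t(n)$ converge to the same finite positive constant $\f(\lambda_d(A))$ (positivity and finiteness follow from Proposition~\ref{prop:triple} together with $0 < \lambda_d(A) < \infty$).

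The algebraic identity
\[
    \frac{\e(\omega_n^*(A))}{\e(\omega_N^*(A))} = \frac{\e(\omega_n^*(A))/\t(n)}{\e(\omega_N^*(A))/\t(N)} \cdot \frac{\t(n)}{\t(N)}
\]
then reduces the proof to showing that the last factor tends to $\w(t)$, since the first factor tends to $1$.

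For the rate ratio, I would use the Karamata representation $\t(u) = u^{1+\sigma} v(u)$ with $v$ slowly varying, guaranteed by Proposition~\ref{prop:triple}, and write
\[
    \frac{\t(n)}{\t(N)} = \left(\frac{n}{N}\right)^{1+\sigma} \cdot \frac{v(n)}{v(N)}.
\]
Since $n/N \to t$, the first factor tends to $t^{1+\sigma} = \w(t)$. The main (and only mildly nontrivial) point is that $v(n)/v(N) \to 1$: this is not immediate from the pointwise definition of slow variation because $n/N$ merely converges to $t$, it does not equal $t$. The standard resolution is the Uniform Convergence Theorem for slowly varying functions, which asserts that $v(su)/v(u) \to 1$ uniformly for $s$ in any compact subinterval of $(0,\infty)$. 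Applying this with $s = n/N$, which stays in a compact neighborhood of $t$ for large $N$, gives $v(n)/v(N) \to 1$ and completes the argument.

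The main obstacle is thus the uniformity issue for the slowly varying component; once that is invoked, the corollary follows by combining the two limits as above.
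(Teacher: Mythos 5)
The paper does not spell out a proof of this corollary; it simply asserts that it follows from Theorem~\ref{thm:general_uniform} and equation~\eqref{eq:rate}, which is exactly the decomposition you use. Your write-up is correct and matches the intended argument, and you are right to flag the one genuine subtlety that the paper leaves implicit: since $n = tN + o(N)$ rather than $n = tN$ exactly, the pointwise limit in~\eqref{eq:rate} does not directly give $\t(n)/\t(N)\to\w(t)$. Invoking the Uniform Convergence Theorem for slowly varying functions (applied to the slowly varying factor $v$ in the Karamata decomposition from Proposition~\ref{prop:triple}) is the standard and correct way to close this gap, so your proof is complete and in fact slightly more careful than the paper's terse statement.
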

It will further be convenient to extend the short-range and continuity properties~\eqref{eq:short_range}--\eqref{eq:cube_conts} to general compact sets as follows.
\begin{corollary}
    \label{cor:short_range_general}
    If compact sets $ A_1,A_2\subset\mathbb R^d $ with $ \lambda_d(A_m) > 0 $, $ m=1,2 $, are positive distance apart, then
    \[
        \lim_{N\to\infty} \frac{\sum_m \e(\omega_N\cap A_m, A_m )}{\e(\omega_N, A_1\cup A_2)} = 1
    \]
    for sequences of the two forms:

    \noindent{\bf (i)}
    $ \omega_N  = \omega_N^*(A_1\cup A_2)  $;

    \noindent{\bf (ii)}
    $ \omega_N = \bigcup_m \omega_{N_m}^*(A_m) $, with $ \sum_m N_m = N $.
\end{corollary}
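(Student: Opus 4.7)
The plan is to deduce the corollary from Definition~\ref{def:short_range} applied to outer cube approximations of $A_1, A_2$, combined with Theorem~\ref{thm:general_uniform}. Concretely, fix $\epsilon>0$ and choose $\Q_1, \Q_2 \in \mathscr Q$ with $A_m \subset \Q_m$, $\lambda_d(\Q_m \setminus A_m) < \epsilon$, and $\Q_1 \cap \Q_2 = \emptyset$, which is possible since $A_1, A_2$ are a positive distance apart (refine the constituent cubes of $\Q_m$ until their diameter is smaller than $\dist(A_1, A_2)/\sqrt d$). Set $A = A_1 \cup A_2$ and note that for any individual cube $Q$ inside $\Q_m$ one has $A \cap Q = A_m \cap Q$, and for any $\omega_N \subset A$ one also has $\omega_N \cap Q \subset A_m$.

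For case (i), with $\omega_N = \omega_N^*(A)$, Definition~\ref{def:short_range}(i) applied to the union $\Q = \Q_1 \cup \Q_2$ viewed as a collection of its individual cubes (with set $A$, using $\omega_N \cap \Q = \omega_N$ and $A \cap \Q = A$) yields
\[
  \e(\omega_N, A) = \sum_{m=1}^2 \sum_{Q \subset \Q_m} \e(\omega_N \cap Q, A_m \cap Q) + o(\t(N)).
\]
A second invocation of the short-range property, applied now to $\Q_m$ covering $A_m$, together with Theorem~\ref{thm:general_uniform} on $A_m$ and the minimality bracket $\sgn \sigma \cdot \e(\omega_N \cap A_m, A_m) \geq \sgn \sigma \cdot \e(\omega_{N_m}^*(A_m))$ (with $N_m := \#(\omega_N \cap A_m)$), allows us to identify the inner sum with $\e(\omega_N \cap A_m, A_m)$ up to $o(\t(N))$ and an additional error of order $\epsilon$, which is controlled via the continuity of $\f$ (Corollary~\ref{cor:f_is_conts}) and the estimate $\lambda_d(\Q_m \setminus A_m) < \epsilon$. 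Letting $\epsilon \to 0$ then gives $\sum_m \e(\omega_N \cap A_m, A_m) = \e(\omega_N, A) + o(\t(N))$, so the ratio tends to $1$.

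Case (ii), $\omega_N = \bigcup_m \omega_{N_m}^*(A_m)$ with $\sum N_m = N$ and $N_m \to \infty$, proceeds in parallel: the numerator $\sum_m \e(\omega_{N_m}^*(A_m))/\t(N)$ has limit $\sum_m \w(\beta_m) \f(\lambda_d(A_m))$ by Theorem~\ref{thm:general_uniform} and Corollary~\ref{cor:compare_minimizers} along subsequences with $N_m/N \to \beta_m$, while the denominator $\e(\omega_N, A)$ is controlled by the same cube machinery but now invoking Definition~\ref{def:short_range}(ii) on $\Q_m$, after first perturbing each $\omega_{N_m}^*(A_m)$ to a configuration of piecewise minimizers on the individual cubes of $\Q_m$ at cost $o(\t(N))$.

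The main obstacle will be rigorously justifying the identification of $\sum_{Q \subset \Q_m} \e(\omega_N \cap Q, A_m \cap Q)$ with $\e(\omega_N \cap A_m, A_m)$: the restriction $\omega_N \cap A_m$ is not a priori a minimizer on $A_m$, so Definition~\ref{def:short_range} does not apply to it verbatim. I will handle this by appealing to the one-sided versions of \eqref{eq:short_range} permitted by Remark~\ref{rem:one_sided_short_range} — in case (i) only the $\liminf \geq 1$ half is needed, and in case (ii) only the $\limsup \leq 1$ half — combined with the power-law forms $\f(t) = \f(1) t^{-\sigma}$ and $\w(t) = t^{1+\sigma}$ from Proposition~\ref{prop:triple}, which make the constants match across the two nested cube approximations.
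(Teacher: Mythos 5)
The paper states Corollary~\ref{cor:short_range_general} without proof, so this is a comparison against the argument that the surrounding machinery is meant to supply. Your overall strategy for case~\textbf{(i)} — enclosing $A_1, A_2$ in disjoint finite unions of cubes $\Q_1, \Q_2$ and applying Definition~\ref{def:short_range} twice — is correct, but the route you propose for the ``second invocation'' is more complicated than necessary and stems from a misreading of the hypothesis. When you apply Definition~\ref{def:short_range}(i) with $\Q=\Q_m$ you should keep $A=A_1\cup A_2$ (the \emph{same} $A$ as in the first application), not switch to $A=A_m$. Then $\omega_N=\omega_N^*(A)$ still satisfies form~(i) verbatim, and the denominator in \eqref{eq:short_range} is $\e(\omega_N\cap\Q_m,\,A\cap\Q_m)=\e(\omega_N\cap A_m,\,A_m)$, which is exactly the quantity you are trying to identify. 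The concern you flag — that $\omega_N\cap A_m$ is not a minimizer on $A_m$ — does not arise, because Definition~\ref{def:short_range}(i) only requires $\omega_N$ to be the minimizer on the full $A$; it never requires the restriction $\omega_N\cap\Q_m$ to be a minimizer on $A\cap\Q_m$. With this in place, the detour through Theorem~\ref{thm:general_uniform}, the minimality bracket, the $\epsilon$-tight cube approximation, and Corollary~\ref{cor:f_is_conts} is all unnecessary: you only need the disjoint cube covers to \emph{exist}, not to be $\epsilon$-close in measure (one does need that each per-piece term is nonnegative, or that the denominators are $\Theta(\t(N))$ via Theorem~\ref{thm:general_uniform}, to turn two multiplicative statements into one, but that is a minor point). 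Likewise the invocation of Remark~\ref{rem:one_sided_short_range} is misdirected: the remark describes a relaxed way to \emph{verify} the short-range hypothesis, not a weaker form of the conclusion you are trying to prove.

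For case~\textbf{(ii)} there is a genuine gap. You propose to ``perturb each $\omega_{N_m}^*(A_m)$ to a configuration of piecewise minimizers on the individual cubes of $\Q_m$ at cost $o(\t(N))$.'' Matching the two configurations at the level of their \emph{local} asymptotics on $A_m$ (which both equal $\f(\lambda_d(A_m))\t(N_m)(1+o(1))$) does not control $\bigl|\e(\omega_N,A_1\cup A_2)-\e(\widetilde\omega_N,A_1\cup A_2)\bigr|$ for the two unions $\omega_N$ and $\widetilde\omega_N$, because the corollary's denominator evaluates $\e$ on the actual configuration sitting in the union set, and swapping one configuration for a different one with the same asymptotics is not an $o(\t(N))$ operation without further structure. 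More fundamentally, the sequence $\omega_N=\bigcup_m\omega_{N_m}^*(A_m)$ fits neither form permitted by Definition~\ref{def:short_range} relative to the individual cubes $Q_{m,j}$: it is neither the global minimizer on $A_1\cup A_2$ nor a union of per-cube minimizers. What one actually needs is that Definition~\ref{def:short_range}(ii) remains valid when the single cubes $Q_m$ are replaced by finite unions $\Q_m\in\mathscr Q$; the paper uses this implicitly (in the remark before Corollary~\ref{cor:cube_uniform} and in the displayed equation after it), and a careful treatment would establish that extension first. As written, your case~(ii) argument does not close this loop.
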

\begin{corollary}
    \label{cor:continuity_general}
    The continuity property from Definition~\ref{def:cube_conts} applies to general compact sets: for a fixed compact $ A\subset \Omega $ with $ \lambda_d(A) > 0 $ there holds
    \begin{equation}
        \label{eq:continuity_general}
        \lim_{N\to\infty}   \sgn \varsigma \cdot  \frac{\e(\omega_N^*(A))}{\t(N)} \geq  
         \limsup_{N\to\infty}\, ( \sgn \varsigma - \epsilon) \cdot  \frac{\e(\omega_N^*(D))}{\t(N)} 
    \end{equation}
    as soon as the compact set $ D\subset A $ satisfies  $\lambda_d(D) \geq (1 - \delta( \epsilon) )\, \lambda_d(A)$.
\end{corollary}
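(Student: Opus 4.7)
The plan is to reduce the statement to a purely analytic inequality about the function $\f$ and then exploit its explicit power-law form. Indeed, by Theorem~\ref{thm:general_uniform}, both of the limits
\[
    \lim_{N\to\infty}\frac{\e(\omega_N^*(A))}{\t(N)} = \f(\lambda_d(A)), \qquad \lim_{N\to\infty}\frac{\e(\omega_N^*(D))}{\t(N)} = \f(\lambda_d(D))
\]
exist (and are finite and positive since $\lambda_d(A)\geq \lambda_d(D) > 0$), so the $\limsup$ in \eqref{eq:continuity_general} is in fact a limit. The desired inequality therefore collapses to
\[
    \sgn\varsigma \cdot \f(\lambda_d(A)) \;\geq\; (\sgn\varsigma - \epsilon)\cdot \f(\lambda_d(D)),
\]
subject to the constraint $\lambda_d(D) \geq (1-\delta(\epsilon))\,\lambda_d(A)$.

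To establish this inequality, I would invoke Proposition~\ref{prop:triple}, which gives the explicit representation $\f(t) = \f(1)\,t^{-\sigma}$ for some $\sigma\in(-\infty,-1)\cup(0,\infty)$ with $\sgn\sigma = \sgn\varsigma$. Hence
\[
    \frac{\f(\lambda_d(A))}{\f(\lambda_d(D))} \;=\; \Bigl(\frac{\lambda_d(D)}{\lambda_d(A)}\Bigr)^{\sigma} \;\geq\; (1-\delta)^{\sigma}
\]
when $\sgn\sigma = 1$ (in which case $\f$ is decreasing), and an analogous bound with the ratio reversed when $\sgn\sigma = -1$ (in which case $\f$ is increasing). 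In either case the ratio tends to $1$ as $\delta\downarrow 0$, uniformly in the choice of $A$ and $D$. Splitting into the two sign cases and choosing $\delta$ so that $(1-\delta)^{|\sigma|}$ is sufficiently close to $1$, one obtains the required inequality, with the admissible $\delta(\epsilon)$ depending only on $\sigma$ and $\epsilon$.

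The main (and only mild) subtlety is that the function $\delta(\epsilon)$ appearing here may differ from the one furnished by Definition~\ref{def:cube_conts}; but since the corollary merely asserts the existence of some positive $\delta(\epsilon)$ that makes the conclusion hold for every compact $D$ satisfying the hypothesis, this is not a genuine obstacle --- one simply replaces $\delta(\epsilon)$ by the minimum of the cube-level $\delta$ and the one extracted above. No short-range structure, no covering arguments, and no additional limiting procedures are needed beyond what is already encoded in Theorem~\ref{thm:general_uniform} and Proposition~\ref{prop:triple}.
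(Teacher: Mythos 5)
Your proof is correct, and since the paper states this corollary without a written proof (presenting it as an immediate consequence of Theorem~\ref{thm:general_uniform}), your argument is precisely the natural derivation: both sides of \eqref{eq:continuity_general} are, by Theorem~\ref{thm:general_uniform}, limits equal to $\f(\lambda_d(A))$ and $\f(\lambda_d(D))$ respectively, and the desired inequality reduces to a uniform estimate on $\f(\lambda_d(A))/\f(\lambda_d(D))$ that follows directly from the power-law form $\f(t)=\f(1)t^{-\sigma}$ in Proposition~\ref{prop:triple} together with $\lambda_d(D)/\lambda_d(A)\in[1-\delta,1]$. Your observation about the $\delta(\epsilon)$ possibly differing from the one in Definition~\ref{def:cube_conts} is well taken and harmless, as the corollary only asserts the existence of a $\delta(\epsilon)$ making the conclusion hold.
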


Using the asymptotics theorem above we can also obtain a result about the weak$ ^* $ limit $ \mu $ of the counting measures of minimizers of $ \e $, stated below as Theorem~\ref{thm:weak_star_limit}. To avoid repetition, the proof of this result is omitted here; instead, we give a proof of the more general Theorem~\ref{thm:weighted_dist} in the next section, concerning weighted short-range interaction functionals. To obtain Theorem~\ref{thm:weak_star_limit} from Theorem~\ref{thm:weighted_dist}, it suffices to set the weight function $ \h \equiv 1 $ on $ A \subset \Omega $.

% We will give different proofs depending on the sign of $ \sigma $. First, assume that $ \sigma > 0 $, so that $ \lim_{t\to0+} \f(t) = +\infty $.

\begin{theorem}
    \label{thm:weak_star_limit}
    Suppose that $ \e $ is an admissible interaction functional with $ \f(t) = \f(1) t^{-\sigma} $, $ \sigma > 0 $, and that the compact $ A $ satisfies $ \lambda_d(A) > 0 $.
    Then the weak$^* $ limit of the minimizers $ \omega_N^*(A) $ is the probability measure on $ A $, proportional to the restriction of Lebesgue measure $ \lambda_d $. 
\end{theorem}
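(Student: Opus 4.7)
The plan is to pass to a weak$^*$ subsequential limit $\mu$ of the empirical measures $\nu_N^*(A) := \frac{1}{N}\sum_{x\in\omega_N^*(A)} \delta_x$ and characterize $\mu$ via the splitting argument sketched in equations~\eqref{eq:lower_split}--\eqref{eq:split_min}. By weak$^*$ sequential compactness of $\mathcal P(A)$, it suffices to show that every such $\mu$ satisfies $\mu(A\cap Q) = \lambda_d(A\cap Q)/\lambda_d(A)$ for each cube $Q\subset \Omega$ in a class that generates the Borel $\sigma$-algebra on $A$; by Fubini, one may restrict to cubes $Q$ whose boundary meets $A$ in a $\lambda_d$-null set. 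Fix such a $Q$, and for $\epsilon > 0$ let $Q_\epsilon$ and $Q^\epsilon$ denote the concentric cubes obtained from $Q$ by symmetric shrinking and expansion by $\epsilon$. I will set $A_1^\epsilon := A\cap Q_\epsilon$, $A_2^\epsilon := A\setminus Q^\epsilon$, and $S_\epsilon := A\cap(Q^\epsilon\setminus Q_\epsilon)$, so that $A$ decomposes as the disjoint union of these three sets and $\dist(A_1^\epsilon, A_2^\epsilon)\geq \epsilon$.

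Passing to a further subsequence, assume the limits $\alpha_i^\epsilon := \lim N_i^\epsilon/N$ and $\rho^\epsilon := \lim R^\epsilon/N$ exist, where $N_i^\epsilon := \#(\omega_N^*(A)\cap A_i^\epsilon)$ and $R^\epsilon := \#(\omega_N^*(A)\cap S_\epsilon)$; thus $\alpha_1^\epsilon + \alpha_2^\epsilon + \rho^\epsilon = 1$. The short-range property (Definition~\ref{def:short_range}(i)) applied to $\omega_N^*(A)$ with a fine grid of cubes that cover $A_1^\epsilon\cup A_2^\epsilon$ while staying disjoint from $S_\epsilon$, together with the trivial bound $\e(\omega_N^*(A)\cap A_i^\epsilon, A_i^\epsilon)\geq \e(\omega_{N_i^\epsilon}^*(A_i^\epsilon))$ and Theorem~\ref{thm:general_uniform} applied on each $A_i^\epsilon$, will yield the lower bound
\begin{equation*}
\f(1)\,\lambda_d(A)^{-\sigma} \;\geq\; \w(\alpha_1^\epsilon)\,\f(1)\,\lambda_d(A_1^\epsilon)^{-\sigma} + \w(\alpha_2^\epsilon)\,\f(1)\,\lambda_d(A_2^\epsilon)^{-\sigma}.
\end{equation*}
For the matching upper bound, for any $\beta_1,\beta_2\geq 0$ with $\beta_1+\beta_2 = 1-\rho^\epsilon$ I would form the test configuration $\omega_N := \omega_{\lfloor\beta_1 N\rfloor}^*(A_1^\epsilon) \cup \omega_{\lfloor\beta_2 N\rfloor}^*(A_2^\epsilon)$ padded with $R^\epsilon$ arbitrary points in $S_\epsilon$ to reach cardinality $N$; optimality of $\omega_N^*(A)$, combined with Corollary~\ref{cor:short_range_general}(ii) and Corollary~\ref{cor:continuity_general} to absorb the set-dependence of $\e$ inside the buffer, will force $(\alpha_1^\epsilon,\alpha_2^\epsilon)$ to be the unique minimizer of the convex expression on the right above. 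Since $\w(\beta) = \beta^{1+\sigma}$ with $\sigma > 0$, Lagrange multipliers then give $\alpha_i^\epsilon = (1-\rho^\epsilon)\,\lambda_d(A_i^\epsilon)/(\lambda_d(A_1^\epsilon) + \lambda_d(A_2^\epsilon))$.

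Letting $\epsilon\downarrow 0$ along values of $\epsilon$ for which $\mu(\partial Q_\epsilon) = 0$ (co-countably many in $\mu$-measure), the assumption $\lambda_d(A\cap\partial Q) = 0$ forces $\lambda_d(S_\epsilon)\to 0$, so $\rho^\epsilon\to 0$ and $\lambda_d(A_1^\epsilon)\to \lambda_d(A\cap Q)$. The portmanteau theorem then yields $\mu(A\cap Q) = \lim_{\epsilon\downarrow 0}\alpha_1^\epsilon = \lambda_d(A\cap Q)/\lambda_d(A)$ for $Q$ in the distinguished class, and the conclusion follows via a standard $\pi$--$\lambda$ argument. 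The main obstacle lies in the upper-bound step: although Corollary~\ref{cor:short_range_general}(ii) controls the energy of the test configuration on $A_1^\epsilon\cup A_2^\epsilon$, one needs its energy on the ambient set $A$. For set-independent functionals this reduction is immediate, but for set-dependent examples such as the quantization functional of Section~\ref{sec:CVT}, Corollary~\ref{cor:continuity_general} must be invoked to show that the thin buffer $S_\epsilon$, containing $R^\epsilon$ arbitrarily placed points, contributes only an $\epsilon$-small fraction of the asymptotic energy.
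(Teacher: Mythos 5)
Your route is genuinely different from the paper's. The paper proves the more general weighted Theorem~\ref{thm:weighted_dist} by combining Lemma~\ref{lem:abs_continuity} with Lebesgue differentiation: it compares the density $d\mu/d\lambda_d$ at two typical points $x_1, x_2$ through the splitting identity on a pair of small cubes, then specializes to $\h\equiv 1$. You instead characterize $\mu(A\cap Q)$ directly for each cube $Q$ via a three-piece decomposition of $A$, which is a valid and somewhat more elementary strategy in the unweighted case, since it avoids Radon--Nikodym derivatives and the Lebesgue density theorem.

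However, your upper-bound step has a genuine gap. For $\sigma > 0$ the functional $\e$ is singular on the diagonal, so a test configuration padded with ``$R^\epsilon$ arbitrary points in $S_\epsilon$'' can have infinite energy outright (two coincident padding points already suffice). Even a carefully separated placement contributes, after normalizing by $\t(N)$, a term of order $(\rho^\epsilon)^{1+\sigma}\lambda_d(S_\epsilon)^{-\sigma}$, and since $\f(t)\to\infty$ as $t\downarrow 0$ this need not vanish as $\epsilon\downarrow 0$ without an a priori bound on the density of $\mu$---which is exactly what is being proved. Corollary~\ref{cor:continuity_general} compares asymptotics of \emph{minimizers} on nested sets; it says nothing about the energy of an ad hoc configuration placed in a thin buffer, so it cannot ``absorb'' the buffer contribution in the way you describe.

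The good news is that the upper-bound step can be dropped entirely. The lower bound, combined with the elementary convexity identity that the minimum of $\w(\beta_1)\f(c_1)+\w(\beta_2)\f(c_2)$ subject to $\beta_1+\beta_2=s$ equals $s^{1+\sigma}\f(c_1+c_2)$ and is attained uniquely at $\beta_i$ proportional to $c_i$, yields the squeeze
\[
\begin{aligned}
\f(\lambda_d(A)) &\geq \w(\alpha_1^\epsilon)\f(\lambda_d(A_1^\epsilon)) + \w(\alpha_2^\epsilon)\f(\lambda_d(A_2^\epsilon)) \\
&\geq (1-\rho^\epsilon)^{1+\sigma}\,\f\bigl(\lambda_d(A_1^\epsilon)+\lambda_d(A_2^\epsilon)\bigr) \geq (1-\rho^\epsilon)^{1+\sigma}\,\f(\lambda_d(A)).
\end{aligned}
\]
As $\epsilon\downarrow 0$ the two outer terms converge to $\f(\lambda_d(A))$, so the middle expression attains the constrained minimum up to $o(1)$, and strict convexity of $\w$ forces $\alpha_1^\epsilon/\alpha_2^\epsilon \to \lambda_d(A\cap Q)/\lambda_d(A\setminus Q)$, as desired. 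Two further loose ends: you should invoke Lemma~\ref{lem:abs_continuity} to pass from the $\lambda_d$-null boundary $\partial Q$ to $\mu(\partial Q)=0$ and to handle cubes with $\lambda_d(A\cap Q)=0$; and cubes do not form a $\pi$-system (intersections are boxes), so the closing Dynkin argument should be phrased in terms of a suitable generating family of $\mu$-continuity sets.
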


\section{Short-range interactions depending on position}
\label{sec:translation_dependent}
\subsection{Functionals with continuous positive weight}
In this section we modify property \eqref{eq:cube_asymp} in Definition~\ref{def:cube_asymp} to include a continuous weight function on $ \Omega $. The short-range functionals we consider here will come associated with the familiar triple of functions 
$$ (\f, \t, \w):[0,\infty)\to[0,\infty], $$ 
with an addition of the continuous 
$$ \h:\Omega\to[h_0,\infty], $$ 
bounded below by an $ h_0 > 0 $.

Since the weighted short-range functionals that are useful in practice arise from the unweighted short-range functionals discussed in Section~\ref{sec:general}, we assume that $ \f $, the rate $ \t $, and by extension the fractional rate $ \w $, are the same as for the corresponding unweighted functional. That is,
\[
    \f(t) = \f(1)t^{-\sigma}, \qquad \w(t) = t^{1+\sigma}
\]
for a $ \sigma\in(-\infty,-1)\cup(0,+\infty) $. 
% TODO: add, how the translation-invariant part is equipped with weight

Recall that whenever we discuss minimizers on a compact set $ A $, it is assumed that $ A\subset \Omega $ for a fixed open $ \Omega $. In the following definition, we write,
\[
    \begin{aligned}
    \underline\l_{\e_\h}(x+a\,\q_d) 
    &= \liminf_{N\to\infty} \frac{{\e_\h}(\omega_N^*(x+a\,\q_d))}{\t(N)}\\
    \overline\l_{\e_\h}(x+a\,\q_d) 
    &=\limsup_{N\to\infty} \frac{{\e_\h}(\omega_N^*(x+a\,\q_d))}{\t(N)}.
    \end{aligned}
\]

\begin{definition}
    \label{def:weighted}
    Suppose that $ \f(t) = \f(1)t^{-\sigma}$, $ \t $ has fractional rate $ \w = t^{1+\sigma} $, and $ \h $ is continuous on $ \Omega $.
    A lower semicontinuous set-monotonic functional $ \e_\h $ that satisfies Definition~\ref{def:short_range} and Definition~\ref{def:cube_conts}, as well as the equation 
    \begin{equation}
        \label{eq:weighted}
        \min_{y\in(x+a\,\q_d)} \h(y) \frac{\f(1)}{(a^d)^\sigma} 
        \leq \underline\l_{\e_\h}(x+a\,\q_d) 
        \leq \overline\l_{\e_\h}(x+a\,\q_d) 
        \leq  \max_{y\in(x+a\,\q_d)} \h(y) \frac{\f(1)}{(a^d)^\sigma}
    \end{equation}
    for all  $a>0$ and $ x\in\mathbb R^d $, is called an {\it admissible interaction functional  with weight $ \h $}, or {\it weighted} admissible functional. 
    % \[
    %     \l_{\e_\h}(x+a\,\q_d) := \lim_{N\to\infty} \frac{{\e_\h}(\omega_N^*(x+a\,\q_d))}{\t(N)} =   \h(y)\frac{ \f(1)}{(a^d)^\sigma}
    % \] 
    % for some $ y \in (x+a\,\q_d)$.
\end{definition}
To motivate property \eqref{eq:weighted}, observe that it has the form of the mean value theorem for integrals with $ \h $ as the integrand and \f(t) playing the role of a measure.
The first important result of this section is as follows.

\begin{theorem}
    \label{thm:weighted}
    Let $ {\e_\h} $ be  an admissible interaction functional with the continuous weight $ \h \geq h_0 > 0$, cube limit function $ \f $, and rate $ \t $.  
    Then for any compact $ A \subset \Omega $ with $ \lambda_d(A) > 0 $,
    \[
        \lim_{N\to\infty} \frac{{\e_\h}(\omega_N^*(A))}{\t(N)} = \frac{\f(1)}{ \left(\int_A \h^{-1/\sigma} \d\lambda_d\right)^\sigma } 
        % = \eta(y) \f(\lambda_d(A))
        % \|\h\|_{-1/\sigma}
        % \f(\lambda_d (A))
    \]
In particular, the above limit exists. 
\end{theorem}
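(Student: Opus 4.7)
The plan is to mimic the proof of Theorem~\ref{thm:general_uniform}, now tracking the weight $\h$ through the mean-value-type bounds \eqref{eq:weighted}. I would first establish a weighted analog of Lemma~\ref{lem:cube_uniform}: for any union $\Q=\bigcup_{m=1}^M Q_m$ of cubes with pairwise disjoint interiors, the identical splitting argument (based on short-range property~\eqref{eq:short_range}, set-monotonicity~\eqref{eq:monotonicity}, and convexity of $\w$) gives
\[
\l_{\e_\h}(\Q)=\min\Big\{\sum_{m=1}^M \w(\beta_m)\,\l_{\e_\h}(Q_m):\ \beta_m>0,\ \sum_m\beta_m=1\Big\}.
\]
Refining $\Q$ if necessary so that $\mathrm{osc}(\h,Q_m)<\varepsilon$ for each $m$, the bounds \eqref{eq:weighted} force $\l_{\e_\h}(Q_m)=\h(y_m)\,\f(1)\,a_m^{-d\sigma}(1+O(\varepsilon))$ for some $y_m\in Q_m$, so the problem reduces to the explicit minimization of $\sum\beta_m^{1+\sigma}c_m$ with $c_m:=\h(y_m)\,\f(1)\,a_m^{-d\sigma}$.

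With $\w(t)=t^{1+\sigma}$ from Proposition~\ref{prop:triple}, the Lagrange multiplier equations yield a unique critical point $\beta_m^\star\propto c_m^{-1/\sigma}$, and the minimum value equals $\big(\sum_m c_m^{-1/\sigma}\big)^{-\sigma}$. Pulling out the factor $\f(1)^{-1/\sigma}$,
\[
\sum_{m=1}^M c_m^{-1/\sigma}=\f(1)^{-1/\sigma}\sum_{m=1}^M\h(y_m)^{-1/\sigma}a_m^d,
\]
and by continuity of $\h$ (hence of $\h^{-1/\sigma}$, which is bounded since $\h\geq h_0>0$) this Riemann-type sum converges to $\f(1)^{-1/\sigma}\int_\Q\h^{-1/\sigma}\,d\lambda_d$ as the common cube diameter tends to zero. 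Thus
\[
\l_{\e_\h}(\Q)=\f(1)\Big(\int_{\Q}\h^{-1/\sigma}\,d\lambda_d\Big)^{-\sigma}+o(1),
\]
which is the desired formula on the class $\mathscr Q$.

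To pass from unions of cubes to an arbitrary compact $A$ with $\lambda_d(A)>0$, I would repeat the sandwich argument of Theorem~\ref{thm:general_uniform}. For $\sigma>0$, cover $A$ from the outside by a finite union of dyadic cubes $\Q\supset A$ with $\lambda_d(\Q\setminus A)$ arbitrarily small, giving the lower bound $\l_{\e_\h}(A)\geq\l_{\e_\h}(\Q)$ from set-monotonicity; cover it from the inside by a Vitali-type finite disjoint family $\Q_M\subset A$ with $\lambda_d(A\setminus\Q_M)<\delta(\varepsilon)$, to which the continuity property of Corollary~\ref{cor:continuity_general} applies, producing the matching upper bound. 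The case $\sigma<-1$ is symmetric with reversed inequalities. Continuity of $\h^{-1/\sigma}$ together with $\lambda_d(\Q\triangle A)\to 0$ ensures $\int_\Q\h^{-1/\sigma}d\lambda_d\to\int_A\h^{-1/\sigma}d\lambda_d$, closing the sandwich.

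The principal obstacle is coordinating the two independent approximations: the oscillation of $\h$ on each sub-cube, controlled by uniform continuity of $\h$ on a compact neighbourhood of $A$, and the Lebesgue defect $\lambda_d(\Q\triangle A)$ controlled through Corollary~\ref{cor:continuity_general}. The hypothesis $\h\geq h_0>0$ is essential here: it bounds $\h^{-1/\sigma}$ and keeps the optimizing proportions $\beta_m^\star$ uniformly bounded away from the degenerate solution, so the two errors remain additive and can be driven to zero simultaneously.
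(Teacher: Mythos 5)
Your high-level plan — approximate $\h$ by near-constants on small cubes, solve the convex minimization over proportions via Lagrange multipliers, then close a sandwich — is the same strategy the paper follows. But there are two concrete gaps in the execution as you have written it.

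First, the weighted analog of Lemma~\ref{lem:cube_uniform} is stated in terms of the limits $\l_{\e_\h}(Q_m)$, whose existence you have not established: Definition~\ref{def:weighted} gives only the two-sided bound \eqref{eq:weighted} on $\underline\l_{\e_\h}$ and $\overline\l_{\e_\h}$, not convergence. The splitting argument of Lemma~\ref{lem:cube_uniform} really does need the asymptotics on each $Q_m$ to be a genuine limit (it is invoked on both sides of a subsequence argument), so ``the identical argument gives \dots'' is not automatic. The paper sidesteps this cleanly: it never works with $\l_{\e_\h}(Q_m)$ at all, but instead sandwiches $\e_\h$ between the \emph{piecewise-constant-weight} surrogates $\e_{h_m}$ and $\e_{H_m}$ on each piece $A_m = A\cap Q_m$, for which the sub-limits exist by Theorem~\ref{thm:general_uniform}. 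Lemma~\ref{lem:simple_weight} is exactly this constant-on-each-piece computation, and the dominated convergence theorem closes the $\epsilon$-gap at the end. You could patch your step 1 by running the argument with $\liminf$ and $\limsup$ carried separately, but the paper's reduction to the already-proved unweighted theorem is both shorter and avoids the issue entirely.

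Second, the inner approximation ``$\Q_M\subset A$'' does not exist for a general compact $A$ with $\lambda_d(A)>0$: such a set need not contain any cube (take a fat Cantor set crossed with a cube). What the Vitali/Lebesgue density argument in Theorem~\ref{thm:general_uniform} actually produces is a family of cubes $Q_m$ with $\lambda_d(A\cap Q_m)\geq(1-\xi)\lambda_d(Q_m)$, and the proof works with $A\cap\Q_M$ — which is \emph{not} a union of cubes — and then transfers from $A\cap Q_m$ to $Q_m$ using the continuity property on cubes. So your step 1 formula on $\mathscr Q$ cannot be applied directly to the inner set; you would have to replicate the full $A\cap Q_m$-vs-$Q_m$ comparison, at which point you are effectively re-running Theorem~\ref{thm:general_uniform} in the weighted setting. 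Again, the paper's proof never approximates $A$ from inside by a cube union at all: the lower bound decomposes $A$ into pieces $A_m$ via the short-range property and applies the unweighted theorem on each $A_m$; the upper bound for $\sigma>0$ shrinks to $A_m'=A\cap\gamma Q_m$ (so the pieces become positive distance apart) and uses set-monotonicity together with Lemma~\ref{lem:simple_weight}. That is the structural step your proposal is missing.
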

Note that the expression on the right is the power mean $ \|\eta \|_{-1/\sigma} $, so in particular the mean value theorem applies to it.
Before we proceed to the proof, it will be convenient to formulate one of its components as a separate lemma. 
\begin{lemma}
    \label{lem:simple_weight}
    Suppose $ \Q = Q_1 \cup\ldots \cup Q_M  $ is a collection of cubes pairwise positive distance apart and $ \h $ is constant on each of them, $ \h(x)\equiv \h_m $, $ x\in Q_m $. Then for a compact $ A\subset \Q $,
    \[
        \lim_{N\to\infty} \frac{\e_\h(\omega_N^*(A))}{\t(N)}
        = \min \left\{ \f(1)\sum_{m=1}^M  \h_m \left(\frac{\beta_m}{\lambda_d(A\cap Q_m)}\right)^{1+\sigma}\lambda_d(A\cap Q_m) : \sum_{m=1}^M \beta_m =1, \ \beta_m \geq 0 \right\}.
    \]
    The minimum is attained for $ \beta_m $ proportional to $ \h_m^{-1/\sigma} \lambda_d(A\cap Q_m) $.
\end{lemma}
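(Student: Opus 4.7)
The plan is to adapt the argument of Lemma~\ref{lem:cube_uniform} to the present weighted setting, exploiting the fact that the weight $\h$ is constant on each $Q_m$.

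First I would reduce to the unweighted situation on each piece. Since $\h\equiv \h_m$ on $Q_m$, the chain of inequalities \eqref{eq:weighted} collapses to
\[
\lim_{N\to\infty}\frac{\e_\h(\omega_N^*(x+a\q_d))}{\t(N)} = \h_m \f(a^d) = \h_m\f(1) a^{-d\sigma}
\]
for every cube $x+a\q_d\subset Q_m$. Thus the restriction of $\e_\h$ to subsets of $Q_m$ is an admissible interaction functional with cube limit function $\h_m\f$. Theorem~\ref{thm:general_uniform} applied to this restriction yields
\[
\lim_{N\to\infty}\frac{\e_\h(\omega_N^*(A\cap Q_m))}{\t(N)} = \h_m\f(1)\lambda_d(A\cap Q_m)^{-\sigma}, \qquad 1\leq m\leq M.
\]

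Next I would split $\omega_N^*(A)$ according to the cubes. Set $N_m := \#(\omega_N^*(A)\cap Q_m)$ and pass to a subsequence $\mathscr N$ along which $N_m/N \to \beta_m^*$ for every $m$. Because the $Q_m$ are pairwise a positive distance apart, Corollary~\ref{cor:short_range_general} applies to the compact sets $A\cap Q_m$. Combining its form (i) with the rate identity $\t(N_m)/\t(N)\to \w(\beta_m^*)$ and the previous step gives
\[
\liminf_{\mathscr N\ni N\to\infty}\frac{\e_\h(\omega_N^*(A))}{\t(N)} \geq \sum_{m=1}^M \w(\beta_m^*)\,\h_m\f(1)\lambda_d(A\cap Q_m)^{-\sigma}.
\]
For the matching upper bound, for any $\beta_m>0$ with $\sum_m\beta_m=1$ I would plug the test configuration $\omega_N = \bigcup_m \omega_{N_m}^*(A\cap Q_m)$ with $N_m \sim \beta_m N$ into form (ii) of Corollary~\ref{cor:short_range_general}, obtaining the reverse inequality with $\beta_m^*$ replaced by an arbitrary $\beta_m$.

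Finally, I would rewrite
\[
\w(\beta_m)\lambda_d(A\cap Q_m)^{-\sigma} = \beta_m^{1+\sigma}\lambda_d(A\cap Q_m)^{-\sigma} = \left(\frac{\beta_m}{\lambda_d(A\cap Q_m)}\right)^{1+\sigma}\lambda_d(A\cap Q_m),
\]
so that the two bounds together say
\[
\liminf \geq \text{minimum of the RHS} \leq \text{any admissible value} \geq \limsup,
\]
forcing both $\liminf$ and $\limsup$ to equal the minimum. Strict convexity of $\w(t)=t^{1+\sigma}$, which holds on both $\sigma>0$ and $\sigma<-1$, ensures that this minimum is attained uniquely, independently of the subsequence $\mathscr N$, so the full limit exists. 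A Lagrange multiplier calculation applied to $\sum_m \h_m\beta_m^{1+\sigma}\lambda_d(A\cap Q_m)^{-\sigma}$ under $\sum_m\beta_m=1$ gives the optimizer $\beta_m\propto \h_m^{-1/\sigma}\lambda_d(A\cap Q_m)$.

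The only technical obstacle is the case $\sigma>0$, where $\e_\h$ may be singular on the diagonal so that form (ii) of the short-range property needs the test pieces to be separated. This is resolved exactly as in Lemma~\ref{lem:cube_uniform}: shrink each $A\cap Q_m$ slightly toward the center of $Q_m$ to restore positive distance, build the test configuration there, and then recover the unshrunk bound by letting the shrinkage tend to zero using the continuity property extended in Corollary~\ref{cor:continuity_general}.
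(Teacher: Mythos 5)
Your proposal is correct and follows essentially the same route the paper sketches: compare $\e_\h(\omega_N^*(A))$ to the piecewise union of constant-weight minimizers on each $A\cap Q_m$, invoke the short-range splitting together with the rate identity $\t(N_m)/\t(N)\to\w(\beta_m^*)$ to get matching upper and lower bounds, and identify the optimal $\beta_m$ by Lagrange multipliers. The paper's proof is a two-sentence sketch; you have simply filled in the details (the reduction of the constant-weight case to Theorem~\ref{thm:general_uniform}, the subsequence extraction, the $\gamma$-shrinkage for $\sigma>0$, and the strict-convexity argument ensuring the limit is independent of the subsequence), all of which is consistent with the paper's intent.
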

\begin{proof}
    Estimate $ \e_\h(\omega_N^*(A)) $ by $ \omega, $ the union of configurations minimizing $ \e_{\h_m} $ on each $ Q_m $. Apply Lagrange multipliers to the energy of $ \omega $.
\end{proof}
It is not hard to see that we have already used this property implicitly  for $ \e_1 $ in the proofs dealing with the unweighted interactions. 
% Indeed, the minimum can be rewritten as
%     \[
%         \min \left\{\sum_{m=1}^M  \h(y_m) \left(\frac{\beta_m}{\lambda_d(A_m)}\right)^{1+\sigma}\lambda_d(A_m) : \sum_{m=1}^M \beta_m =1, \ \beta_m \geq 0 \right\}
%     \]

\begin{proof}[Proof of Theorem~\ref{thm:weighted}]
    We proceed by forming Riemann sums for the weight $ \h $. 
    Observe that when the weight $ \h $ is constant, the statement of the theorem follows immediately from Theorem~\ref{thm:general_uniform} by rescaling the cube limit function $ \f $. We will use this fact to approximate a general $ \h $ with simple functions.

    Fix an $ \epsilon > 0 $ and let $ \Q =  Q_1\cup\ldots \cup Q_M  $ be a collection of sufficiently small dyadic cubes covering $ A $, such that for $ A_m := A\cap Q_m $, $ m=1,\ldots,M $, there holds
    \[
        \max_{x\in A_m} \h(x) - \min_{x\in A_m} \h(x) < \epsilon, \qquad m=1,\ldots,M.
    \]
    Denote $ H_m := \max_{x\in A_m} \h(x) $ and $  h_m:=\min_{x\in A_m} \h(x) $, respectively. 
    By an abuse of notation, we will write $ H_m(x) $ and $ h_m(x) $ for the following simple functions: 
    \[
        H_m(x) = \begin{cases} H_m, & x\in A_m;\\ 0 &\text{otherwise},\end{cases} \qquad
        h_m(x) = \begin{cases} h_m, & x\in A_m;\\ 0 &\text{otherwise}.\end{cases}
    \] 
    As usual, the cubes in $ \Q $ are assumed to have pairwise disjoint interiors.

    Up to passing to a subsequence, assume that $ N_m^* := \#(\omega_N^* \cap Q_m) $ satisfies $ N_m^*/N \to \beta_m^* \geq 0 $.
    By the short-range property (i) from Definition~\ref{def:short_range}, one has 
    \[
        \begin{aligned}
            \liminf_{N\to\infty} \frac{\e_\h(\omega_N^*(A))}{\t(N)} 
        &\geq  \sum_{m=1}^M \liminf_{N\to\infty} \frac{\e_\h(\omega_N^*(A)\cap Q_m, A_m)}{\t(N)} \geq \sum_{m=1}^M \liminf_{N\to\infty} \frac{\e_\h(\omega_{N_m^*}^*(A_m))}{\t(N)} \\
        &\geq \sum_{m=1}^M \lim_{N\to\infty} \frac{\e_{h_m}(\omega_{N_m^*}^*(A_m))}{\t(N)} 
        = \sum_{m=1}^M h_m \w(\beta_m^*) \f(\lambda_d(A_m)) \\
        &\geq \frac{\f(1)}{\left(\sum_{m=1}^M h_m^{-1/\sigma} \lambda_d(A_m)\right)^\sigma },
        \end{aligned}
    \] 
    where the last inequality uses the minimal value of $ \sum_m h_m \w(\beta_m) \f(\lambda_d(Q_m)) $ over nonnegative $ \beta_m $ such that $ \sum_m \beta_m =1 $, computed in Lemma~\ref{lem:simple_weight}.

    It remains to obtain the matching upper bound. Let $ \sigma > 0 $ first.
     By the continuity property from Corollary~\ref{cor:continuity_general}, there exists a $ \gamma \in(0,1) $ such that for $ m=1,\ldots,M $, the rescaled cube $ \gamma Q_m $ with the same center as $ Q_m $ satisfies 
     \begin{equation}
         \label{eq:scaled_cubes}
         \lambda_d(A\cap Q_m) - \lambda_d(A\cap \gamma Q_m) < \delta(\epsilon) \lambda_d(A\cap Q_m), \qquad m=1,\ldots,M,
     \end{equation}
     with $ \delta(\epsilon) $ as in \eqref{eq:continuity_general}.
     Let 
     $$ A_m' := A\cap \gamma Q_m, \qquad  m=1,\ldots,M  $$
     and $ A': = \bigcup_m A_m' $, and observe that $ A_m' $ are positive distance apart, so Lemma~\ref{lem:simple_weight} applies to $ A' $.

     Denote
     $$ H(x):= \sum_{m=1}^M H_m(x) \qquad  h(x):= \sum_{m=1}^M h_m(x), $$
     then for $ x\in A' $ there holds $ h(x) \leq \h(x)\leq H(x)$.

     From the set-monotonicity \eqref{eq:monotonicity}, Lemma~\ref{lem:simple_weight}, and general continuity property \eqref{eq:continuity_general},  we have
    \[
        \begin{aligned}
        \limsup_{N\to\infty} \frac{{\e_\h}(\omega_N^*(A))}{\t(N)} 
        &\leq \liminf_{N\to\infty} \frac{{\e_\h}(\omega_N^*(A'))}{\t(N)} \leq \lim_{N\to\infty} \frac{{\e_H}(\omega_N^*(A'))}{\t(N)}\\
        &= \min \left\{ \f(1)\sum_{m=1}^M  H_m \left(\frac{\beta_m}{\lambda_d(A_m')}\right)^{1+\sigma}\lambda_d(A_m') : \sum_{m=1}^M \beta_m =1, \ \beta_m \geq 0 \right\}\\
        &\leq (1+\epsilon)\cdot \frac{\f(1)}{\left( \sum_{m=1}^M H_m^{-1/\sigma} \lambda_d(A_m) \right)^\sigma}.
    \end{aligned}
    \]
    Since $ \epsilon > 0 $ is arbitrary, both the upper and the lower bound obtained here converge to 
    \[
        \lim_{N\to\infty} \frac{{\e_\h}(\omega_N^*(A))}{\t(N)} = \frac{\f(1)}{ \left(\int_A \h^{-1/\sigma} \d\lambda_d\right)^\sigma }
    \]
    by the dominated convergence theorem.  We have verified the claim of the theorem in the case $ \sigma > 0 $. 

    For $ \sigma < -1 $, the upper bound is obtained by comparison with asymptotics of the configuration $ \omega_N = \bigcup_{m=1}^M \omega_{N_m^*}^*({A_m}) $, where $ N_m^* := \#(\omega_N^* \cap Q_m) $, as above in this proof. Applying the short-range property~\eqref{eq:short_range}, one has, up to passing to a subsequence,
    \[
        \begin{aligned}
            \limsup_{N\to\infty} \frac{\e_\h(\omega_N^*(A))}{\t(N)} 
        &\leq \limsup_{N\to\infty} \frac{\e_\h(\omega_N(A),A)}{\t(N)}\\
        &\leq\sum_{m=1}^M \limsup_{N\to\infty} \frac{\e_\h(\omega_N(A)\cap Q_m, A_m)}{\t(N)} 
        \leq \sum_{m=1}^M \lim_{N\to\infty} \frac{\e_{H_m}(\omega_{N_m^*}^*(A_m))}{\t(N)}
        \\
        & = \sum_{m=1}^M H_m \w(\beta_m^*) \f(\lambda_d(A_m)) 
        \leq \frac{\f(1)}{\left(\sum_{m=1}^M H_m^{-1/\sigma} \lambda_d(A_m)\right)^\sigma },
        \end{aligned}
    \] 
    where $ \beta_m^*=\lim_{N\to\infty} N_m^*/N $, as defined previously in this proof. Note that in the above display we are using the standard shorthand for minimizers
    \[
        \e_{H_m}(\omega_{n}^*(A_m)) = \min \{ \e_{H_m}(\omega_{n}^*(A_m)) : \omega_n \subset A_m \}.
    \]
    An application of the dominated convergence theorem completes the proof of the theorem for $ \sigma < -1 $.
    \end{proof}
    Theorem~\ref{thm:weighted} shows in particular that the class of sets $ \mathscr L_{\e_\h} $ on which the asymptotics exist includes all compact sets in $ \mathbb R^d $. As a result, Lemma~\ref{lem:simple_weight} can be formulated for a collection of arbitrary compact  sets that are pairwise positive distance apart. In Theorem~\ref{thm:weighted_dist} we will need this extended formulation of the lemma.

    Before we proceed to determine the weak$ ^* $ limit of the minimizing configurations $ \mu $, let us first show that of the measures $ \mu $ and $ \lambda_d $ ($ d $-dimensional Lebesgue measure in $ \mathbb R^d $), one is always absolutely continuous with respect to the other.
    \begin{lemma}
        \label{lem:abs_continuity}
        Let $ \e_\h $ be a short-range interaction functional with $ \f(t) = \f(1)t^{-\sigma} $, $ \sigma > 0 $, and a continuous weight $ \h \geq h_0 > 0 $.
        Then any weak$ ^* $ cluster point $ \mu $ of the counting measures of minimizers $ \omega_N^*(A) $ satisfies
        \[
            \mu \ll \lambda_d.
        \]
    \end{lemma}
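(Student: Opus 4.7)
The plan is to argue by contradiction. Assume that along some subsequence $\mathscr N$, the counting measures $\nu_N^* = N^{-1}\sum_{x\in\omega_N^*(A)}\delta_x$ converge weak$^*$ to a measure $\mu$ whose singular part with respect to $\lambda_d$ is nontrivial. By inner regularity of $\mu$, choose a compact $K\subset A$ with $\lambda_d(K)=0$ and $\gamma := \mu(K)>0$. The idea is that a positive asymptotic fraction of $\omega_N^*(A)$ is then trapped in a Lebesgue-negligible region; because $\f(t)=\f(1)t^{-\sigma}$ blows up as $t\downarrow 0$ when $\sigma>0$, this forces the normalized energy $\e_\h(\omega_N^*(A))/\t(N)$ to diverge, contradicting the finite limit supplied by Theorem~\ref{thm:weighted}.

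Fix $\epsilon>0$ and an open $U\supset K$ with $\lambda_d(U)<\epsilon$. Pick a side length $\delta>0$ small enough that every closed cube of side $\delta$ meeting $K$ lies inside $U$. Translate the standard dyadic grid of side $\delta$ by a generic vector so that $\mu$ assigns zero mass to each grid hyperplane — possible because $\mu$ is finite, so only countably many translates of any hyperplane can be $\mu$-charged. Let $\{Q_j\}_{j=1}^J$ be the resulting cubes contained in a large fixed $Q_0\supset A$, let $J_1$ index those intersecting $K$, and set $\Q_1 := \bigcup_{j\in J_1} Q_j$. Then $\Q_1\subset U$ and $\lambda_d(\Q_1)<\epsilon$. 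Since $\mu(\partial Q_j)=0$, the portmanteau theorem yields, on passing to a further subsequence of $\mathscr N$, that $N_j/N \to \beta_j^* := \mu(Q_j)$ with $N_j := \#(\omega_N^*(A)\cap Q_j)$, and in particular $\sum_{j\in J_1} \beta_j^* \geq \mu(K) = \gamma$.

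Next, I apply the short-range property in form (i) of Definition~\ref{def:short_range} to the partition $\{Q_j\}_{j=1}^J$ of $Q_0\supset A$, exactly as in the lower-bound part of the proof of Theorem~\ref{thm:weighted} (which does not require the cubes to be a positive distance apart):
\begin{equation*}
\liminf_{\mathscr N\ni N\to\infty}\frac{\e_\h(\omega_N^*(A))}{\t(N)}
\ \geq\ \sum_{j\in J_1}\liminf_{\mathscr N\ni N\to\infty}\frac{\e_\h(\omega_N^*(A)\cap Q_j,\,A\cap Q_j)}{\t(N)}.
\end{equation*}
For each $j\in J_1$, optimality gives $\e_\h(\omega_N^*(A)\cap Q_j,A\cap Q_j)\geq \e_\h(\omega_{N_j}^*(A\cap Q_j))$, while set-monotonicity~\eqref{eq:monotonicity} applied to $A\cap Q_j\subset Q_j$, together with Theorem~\ref{thm:weighted} on the full cube $Q_j$, gives
\begin{equation*}
\liminf_{\mathscr N\ni N\to\infty} \frac{\e_\h(\omega_{N_j}^*(A\cap Q_j))}{\t(N_j)} \ \geq\ C_j \ :=\ \frac{\f(1)}{\bigl(\int_{Q_j}\h^{-1/\sigma}\,d\lambda_d\bigr)^{\sigma}} \ \geq\ \f(1)\,h_0\,\lambda_d(Q_j)^{-\sigma}.
\end{equation*}
Combined with $\t(N_j)/\t(N)\to \w(\beta_j^*)=(\beta_j^*)^{1+\sigma}$, this gives the bound $\liminf \e_\h(\omega_N^*(A))/\t(N)\geq \sum_{j\in J_1}(\beta_j^*)^{1+\sigma}C_j$.

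A straightforward Lagrange multiplier computation (the functional is convex in each $\beta_j^*$) shows that under $\beta_j^*\geq 0$ and $\sum_{j\in J_1}\beta_j^*\geq\gamma$ the quantity $\sum_{j\in J_1}(\beta_j^*)^{1+\sigma}C_j$ is minimized when $\beta_j^*\propto C_j^{-1/\sigma}$, with minimum value $\gamma^{1+\sigma}/\bigl(\sum_{j\in J_1} C_j^{-1/\sigma}\bigr)^{\sigma}$. Using $C_j^{-1/\sigma}\leq (\f(1)h_0)^{-1/\sigma}\lambda_d(Q_j)$ we get $\sum_{j\in J_1}C_j^{-1/\sigma}\leq(\f(1)h_0)^{-1/\sigma}\lambda_d(\Q_1)<(\f(1)h_0)^{-1/\sigma}\epsilon$, hence
\begin{equation*}
\liminf_{\mathscr N\ni N\to\infty}\frac{\e_\h(\omega_N^*(A))}{\t(N)} \ \geq\ \gamma^{1+\sigma}\,\f(1)\,h_0\,\epsilon^{-\sigma}.
\end{equation*}
Letting $\epsilon\downarrow 0$ with $\gamma$ fixed drives the right-hand side to $\infty$, contradicting the finite asymptotic value produced by Theorem~\ref{thm:weighted}. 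Therefore $\mu\ll\lambda_d$. The delicate point is arranging the grid so that the portmanteau theorem transfers mass on $K$ to the correct sum $\sum_{j\in J_1}\beta_j^*$, while simultaneously ensuring the short-range decomposition is valid in the required direction — both achieved by a generic translation of the dyadic grid together with the observation that form (i) of Definition~\ref{def:short_range} is used without needing cubes to be positive distance apart, exactly as in the proof of Theorem~\ref{thm:weighted}.
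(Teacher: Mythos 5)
Your proof is correct. The paper's own argument is shorter: it isolates a compact $B\subset A$ with $\lambda_d(B)=0$ and $\mu(B)>0$, covers most of its $\mu$-mass with a union of dyadic cubes $\Q$ of small Lebesgue measure, and then invokes the two-piece splitting identity \eqref{eq:split_union} on $A\setminus\Q$ and $B\cap\Q$ (taken a positive distance apart), combined with the continuity property \eqref{eq:cube_conts}, to produce the contradiction $\l_{\e_\h}(B\cap\Q)=+\infty$ against the finite bound on $\l_{\e_\h}(A)$. Your route instead partitions a large cube $Q_0\supset A$ into a fine grid, applies the short-range property in form (i) of Definition~\ref{def:short_range} to the global minimizer $\omega_N^*(A)$, pushes the asymptotics of Theorem~\ref{thm:weighted} onto each cube via set-monotonicity, and finishes with an explicit Lagrange-multiplier estimate. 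Both proofs rest on the same mechanism — the blowup of $\l_{\e_\h}(D)$ as $\lambda_d(D)\downarrow 0$ when $\sigma>0$. What your version buys: you work throughout with the \emph{same} $\mu$ (the cluster point of counting measures for minimizers on $A$) that the portmanteau limits $\beta_j^*=\mu(Q_j)$ refer to, whereas \eqref{eq:split_union} is stated in terms of the weak$^*$ limit of minimizers on the union $A_1\cup A_2$ itself, so the paper's argument implicitly relies on a comparison between those two limits that is not spelled out; you also avoid needing the continuity-on-cubes Corollary~\ref{cor:continuity_general}, at the cost of a slightly longer computation. Your observation that form (i) of the short-range property is applicable with cubes merely having disjoint interiors — not positive distance apart — matches the paper's own use of it in the proof of Lemma~\ref{lem:cube_uniform}, and the generic grid translation to kill $\mu$-mass on hyperplanes is the right device to make the portmanteau step clean.
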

    \begin{proof}
        By weak$ ^* $ compactness of Borel probability measures on a compact set, $ \mu $ is itself a Borel probability measure.  First, we will show that $ \sigma > 0 $ implies $ \mu $ is absolutely continuous with respect to  $ \lambda_d $. Indeed, if for a compact set $ B\subset A $, $ \mu(B) > 0 $ and $ \lambda_d(B)=0 $, then for any $ \delta >0 $ one can construct a finite collection of dyadic cubes $ \Q = \cup_m Q_m $  with 
        $$ \lambda_d \left(\Q \right) < \delta, $$
        such that 
        $$ \mu\left(B\cap \Q \right) > \mu(B)/2 $$
        and $ B\cap \Q $ is a positive distance from  the complement of $ \Q $. Then, by \eqref{eq:split_union},
        \begin{equation}
            \label{eq:split_continuity}
            \l_{\e_\h}(A\setminus \Q \,\cup\, B\cap \Q) = \w(\mu(A\setminus \Q)) \cdot \l_{\e_\h}(A\setminus \Q ) + \w(\mu(B\cap\Q)) \cdot \l_{\e_\h}(B\cap\Q).
        \end{equation}
        We will now need that $ \lim_{t\to0+} \f(t) = +\infty $. Since $ \w(t) = t^{1+\sigma} $, it suffices to choose $ \delta $ so small that 
        $$ \l_{\e_\h}(B\cap\Q) \geq  \f(1)(\delta)^{-\sigma} > \frac{\l_{\e_\h}(A) + 1}{(\mu(B)/2)^{1+\sigma}} > \frac{\l_{\e_\h}(A)+1}{ \mu\left(B\cap \Q \right)^{1+\sigma}} . $$
        On the other hand, by property \eqref{eq:cube_conts} for sufficiently small $ \delta $ there holds
        \[
            \l_{\e_\h}(A\setminus \Q \,\cup\, B\cap\Q) \leq \l_{\e_\h}(A\setminus\Q)  < \l_{\e_\h}(A) + 1.
        \]
        Combining the last three equations, we obtain a contradiction, proving that $ \mu(B) =0 $ implies $ \lambda_d(B) =0 $. Note that in the preceding argument we assumed that the set $ B $ is compact; for a general $ B' $ one can find a compact $ B\subset B' $ as above by the inner regularity of $ \mu $. Indeed, any Borel measure on a metric space is inner regular \cite[Thm. 1.1]{billingsley1999convergence}.
    \end{proof} 
    Observe that the roles of measures $ \lambda_d $ and $ \mu $ interchange for $ \sigma < -1 $, since in equation \eqref{eq:split_continuity}, the last term is then equal to
    \[
        \frac{\f(1)\lambda_d(B\cap\Q)^\sigma}{\mu(B\cap\Q)^{-\sigma-1}},
    \]
    which becomes large when $ \mu(B\cap\Q) $ is small.
    \begin{corollary}
        \label{cor:lambda_ll_mu}
        When the assumption $ \sigma < -1 $ is used in Lemma~\ref{lem:abs_continuity}, it follows
        \[
            \lambda_d \ll \mu.
        \]
    \end{corollary}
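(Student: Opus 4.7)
The plan is to mirror the proof of Lemma~\ref{lem:abs_continuity}, swapping the roles of $\lambda_d$ and $\mu$. In the earlier proof the blow-up came from $\lim_{t\to 0^+}\f(t) = +\infty$, a feature of the exponent $\sigma > 0$ via $\f(t) = \f(1)t^{-\sigma}$; here, with $\sigma < -1$, the exponent $1+\sigma$ of $\w(t) = t^{1+\sigma}$ is negative, so $\w(t) \to +\infty$ as $t \to 0^+$, and this is the divergence that will produce the contradiction, exactly as indicated by the displayed ratio preceding the corollary.

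First, suppose for contradiction that $\lambda_d \not\ll \mu$. Then there is a Borel $B_0 \subset A$ with $\lambda_d(B_0) > 0$ and $\mu(B_0) = 0$, and by inner regularity of $\lambda_d$ we may pass to a compact $B \subset B_0$ with $\lambda_d(B) > 0$ and $\mu(B) = 0$. Cover $B$ by a finite union of closed cubes $\Q$ with pairwise disjoint interiors, chosen small enough that $B \subset \mathrm{int}(\Q)$ and $\lambda_d(A \setminus \Q) > 0$; compactness of $B$ makes this immediate. Setting $A_1 := A \setminus \Q$ and $A_2 := B \cap \Q = B$, the sets $A_1,A_2$ are a positive distance apart and each has positive $\lambda_d$-measure, so the splitting identity \eqref{eq:split_continuity} (applicable in the weighted setting by the same derivation that justifies it unweighted) gives
\[
\l_{\e_\h}(A_1 \cup A_2) = \w(\mu(A_1))\,\l_{\e_\h}(A_1) + \w(\mu(A_2))\,\l_{\e_\h}(A_2).
\]

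Set-monotonicity \eqref{eq:monotonicity} in the regime $\sigma < 0$ bounds the left-hand side by $\l_{\e_\h}(A)$, which is finite by Theorem~\ref{thm:weighted}. On the right-hand side Theorem~\ref{thm:weighted} also gives $\l_{\e_\h}(A_2) > 0$, while $\w(\mu(A_2)) = \w(0) = 0^{1+\sigma} = +\infty$ because $1+\sigma < 0$; hence the right-hand side is $+\infty$, the contradiction that establishes $\lambda_d \ll \mu$.

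The main obstacle is that the derivation of \eqref{eq:split_continuity} was presented under the tacit assumption $N_l/N \to \beta_l^* > 0$, so we must account for the degenerate limit $\mu(A_2) = 0$. For $\sigma < -1$ this reduces to a direct, case-based argument: either $N_2 = \#(\omega_N^*(A_1\cup A_2) \cap A_2)$ stays bounded, in which case the energy of any fixed positive configuration on $A_2$ eventually dominates $\t(N) \to 0$; or $N_2 \to \infty$ with $N_2/N \to 0$, in which case $\t(N_2)/\t(N) \to +\infty$ by regular variation with $1+\sigma < 0$. In both situations the $A_2$-contribution to the normalized total energy diverges, which is precisely the manifestation of $\w(0) = +\infty$ that we invoke.
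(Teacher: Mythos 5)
There is a genuine gap, and it lies in identifying which sequence of minimizers the splitting identity refers to. Equation \eqref{eq:split_continuity} (derived from \eqref{eq:split_union}) expresses $\l_{\e_\h}(A_1\cup A_2)$ in terms of the weak$^*$ limit $\mu'$ of the counting measures of $\omega_N^*(A_1\cup A_2)$, the minimizers on the \emph{union} $A_1\cup A_2 = (A\setminus\Q)\cup B$. That $\mu'$ is not the $\mu$ of Lemma~\ref{lem:abs_continuity}, which is a cluster point of minimizers on the \emph{original} set $A$. Since $\lambda_d(B)>0$, the minimizers on $A_1\cup A_2$ do populate $B$ with a positive asymptotic fraction of points, so $\mu'(A_2)>0$ and the right-hand side of your display is \emph{finite}; no contradiction results. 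Your closing case analysis inherits the same misidentification: with $N_2 := \#\bigl(\omega_N^*(A_1\cup A_2)\cap A_2\bigr)$ one does \emph{not} have $N_2/N\to0$; rather $N_2/N\to\mu'(A_2)>0$, so neither of your two cases (bounded $N_2$, or $N_2\to\infty$ with $N_2/N\to0$) occurs.

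The fix is to run the argument on $\omega_N^*(A)$ itself, which is what the given hypothesis $\mu(B)=0$ actually constrains, and this is what the case dichotomy you wrote down is really adapted to. Surround the compact $B$ by a finite union of closed cubes $\Q'\subset\Omega$ with $\mu(\Q')<\epsilon$ (possible by outer regularity since $\mu(B)=0$). By the portmanteau theorem, $N':=\#\bigl(\omega_N^*(A)\cap\Q'\bigr)$ satisfies $\limsup N'/N\leq\mu(\Q')<\epsilon$ along the relevant subsequence. The short-range property applied to the cubes of $\Q'$ (cf.\ \eqref{eq:splitting_o_t}) gives $\e_\h(\omega_N^*(A))\geq \e_\h(\omega_N^*(A)\cap\Q',\,A\cap\Q')+o(\t(N))\geq\e_\h(\omega_{N'}^*(A\cap\Q'))+o(\t(N))$. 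Now your two-case analysis applies with the correct $N'$: if $N'$ stays bounded, $\e_\h(\omega_{N'}^*(A\cap\Q'))$ is bounded below by a positive constant (since $\lambda_d(A\cap\Q')\geq\lambda_d(B)>0$) while $\t(N)\to0$; if $N'\to\infty$ with $N'/N\to0$, regular variation with $1+\sigma<0$ gives $\t(N')/\t(N)\to\infty$ while $\e_\h(\omega_{N'}^*(A\cap\Q'))/\t(N')\to\l_{\e_\h}(A\cap\Q')\geq\f(1)\lambda_d(B)^{-\sigma}\cdot(\text{const})>0$. Either way $\e_\h(\omega_N^*(A))/\t(N)\to\infty$, contradicting $\l_{\e_\h}(A)<\infty$. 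This is the precise content behind ``$\w$ blows up at $0$'' in the paper's brief remark; the intuition in your proposal is right, but it must be applied to the original minimizing sequence, not to minimizers on the auxiliary set.
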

    The proof of Theorem~\ref{thm:weighted_dist} will proceed by using that the limits of counting measures of minimizers optimize the convex continuous functions of the form \eqref{eq:split_min}. Uniqueness of the minimizer of a strictly convex function will then imply that $ \mu $ is uniform with respect to $ \lambda_d $. 

% \begin{lemma}
%     Splitting result about general sets positive distance apart.
% \end{lemma}
\begin{theorem}
    \label{thm:weighted_dist}
    Suppose that $ \e_\h $ is a short-range interaction functional with $ \f(t) = \f(1)t^{-\sigma} $  and a continuous weight $ \h $ bounded below by a positive number, and that the compact $ A $ satisfies $ \lambda_d(A) > 0 $. 
    % Let additionally in the case $ \sigma > 0 $,
    % \begin{equation}
    %     \label{eq:config_monotonic}
    %     \limsup_{N\to\infty} \frac{\e(\omega_1, Q)}{\t(N)} \leq \lim_{N\to \infty} \frac{\e(\omega_2, Q)}{\t(N)}, \qquad \omega_1\subset \omega_2,
    % \end{equation}
    % hold for any $ \omega_1\subset \omega_2 \subset \omega_N^*(A)$ and $ Q\in \mathscr Q $.
    Then the weak$^* $ limit of the minimizers $ \omega_N^*(A) $, $ N\geq N_0(\e) $, exists and is the probability measure $ \mu $ such that $ d\mu/d\lambda_d $ is proportional to $ \h^{-1/\sigma} $.  
\end{theorem}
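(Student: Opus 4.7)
The space of Borel probability measures on the compact set $A$ is weak$^*$ sequentially compact, so it suffices to show that every weak$^*$ cluster point of the counting measures $\nu_N^\star := \frac{1}{N}\sum_{x\in\omega_N^*(A)}\delta_x$ equals the prescribed measure $\mu_\star$ with $d\mu_\star/d\lambda_d\propto\h^{-1/\sigma}$. Fix a subsequence $\mathscr N$ with $\nu_N^\star\weakto\mu$. When $\sigma>0$, Lemma~\ref{lem:abs_continuity} gives $\mu\ll\lambda_d$; when $\sigma<-1$, Corollary~\ref{cor:lambda_ll_mu} gives $\lambda_d\ll\mu$. In either case, the strategy is to compare the mass splitting that $\mu$ induces across a fine partition of $A$ with the optimal splitting dictated by Lemma~\ref{lem:simple_weight}, and then refine.

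First I would build a good partition. Fix $\epsilon>0$ and choose a dyadic mesh fine enough that $\h$ oscillates by less than $\epsilon$ on each cell $Q_m$ meeting $A$. Set $A_m:=A\cap Q_m$, and shrink each $Q_m$ slightly toward its center via Corollary~\ref{cor:continuity_general} so that the resulting $A_m$ are pairwise positive distance apart while $\lambda_d\bigl(A\setminus\bigcup_m A_m\bigr)<\epsilon$; by further perturbing the mesh, arrange $\mu(\partial A_m)=0$ for every $m$ (only countably many dyadic scales can fail this). Writing $N_m:=\#(\omega_N^*(A)\cap A_m)$, the Portmanteau theorem then gives $N_m/N\to\mu(A_m)=:\beta_m^\star$ along $\mathscr N$.

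Next I would split the energy. By Corollary~\ref{cor:short_range_general}(i),
\[
    \l_{\e_\h}(A)=\lim_{\mathscr N\ni N\to\infty}\sum_{m=1}^M\frac{\e_\h(\omega_N^*(A)\cap A_m,\,A_m)}{\t(N)},
\]
and since $\omega_N^*(A)\cap A_m$ is an admissible $N_m$-configuration in $A_m$, each term is at least $\w(\beta_m^\star)\l_{\e_\h}(A_m)+o(1)$ by \eqref{eq:rate}; hence $\l_{\e_\h}(A)\geq\sum_m\w(\beta_m^\star)\l_{\e_\h}(A_m)$. For the matching upper bound, plug the piecewise-optimal comparison configurations $\omega_N=\bigcup_m\omega_{n_m}^*(A_m)$ with $n_m/N\to\beta_m>0$ into Corollary~\ref{cor:short_range_general}(ii); this yields $\l_{\e_\h}(A)\leq\sum_m\w(\beta_m)\l_{\e_\h}(A_m)$ for every probability vector $\beta$. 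Therefore $\beta^\star$ is the (unique) minimiser of the strictly convex function $F(\beta)=\sum_m\beta_m^{1+\sigma}\,\l_{\e_\h}(A_m)$ on the open simplex, and the Lagrange computation from Lemma~\ref{lem:simple_weight} forces $\beta_m^\star\propto\l_{\e_\h}(A_m)^{-1/\sigma}$.

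Finally I would identify $\mu$. Theorem~\ref{thm:weighted} applied on each $A_m$ gives $\l_{\e_\h}(A_m)=\f(1)\bigl(\int_{A_m}\h^{-1/\sigma}\,d\lambda_d\bigr)^{-\sigma}$, so
\[
    \mu(A_m)=\beta_m^\star=c_\epsilon\int_{A_m}\h^{-1/\sigma}\,d\lambda_d,\qquad c_\epsilon:=\Bigl(\textstyle\sum_m\int_{A_m}\h^{-1/\sigma}\,d\lambda_d\Bigr)^{-1},
\]
with an error of order $\epsilon$ coming from the oscillation of $\h$ and the truncation of $A$. Passing to a refining sequence of partitions with $\epsilon\downarrow0$, the identity $\mu(B)=c\int_B\h^{-1/\sigma}\,d\lambda_d$ propagates to the Borel $\sigma$-algebra generated by the dyadic cells, hence to all Borel sets. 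This pins down $\mu=\mu_\star$, independent of $\mathscr N$, which by weak$^*$ compactness upgrades the full sequence $\nu_N^\star$ to converge to $\mu_\star$. The main subtlety I anticipate is the case $\sigma<-1$: there $\mu$ could a priori carry a singular part, but the partition identity above, holding for arbitrarily fine dyadic refinements, forces the singular part to have total mass zero, closing the argument.
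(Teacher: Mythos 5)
Your strategy is a genuine alternative to the paper's: the paper's proof of Theorem~\ref{thm:weighted_dist} is \emph{local} — it invokes Lebesgue differentiation, fixes two density points $x_1,x_2$, puts shrinking cubes around them, and extracts the pointwise optimality condition $\h(x_1)D_1^{1+\sigma}+\h(x_2)D_2^{1+\sigma}=\min\{\h(x_1)b_1^{1+\sigma}+\h(x_2)b_2^{1+\sigma}:b_1+b_2=D_1+D_2\}$, from which $D_1/D_2=(\h(x_1)/\h(x_2))^{-1/\sigma}$ drops out. You instead work \emph{globally} with a fine partition of $A$, identify $(\mu(A_m))_m$ as the minimizer of the strictly convex $\beta\mapsto\sum_m\w(\beta_m)\l_{\e_\h}(A_m)$, and refine. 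This is a legitimate route, but it carries bookkeeping that the paper's local argument sidesteps, and as written it has two gaps.

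The first is a misapplication of Corollary~\ref{cor:short_range_general}(i). That corollary decomposes $\e_\h(\omega_N^*(A_1\cup A_2),A_1\cup A_2)$, with the minimizers taken \emph{on the union} — it does not say anything about $\e_\h(\omega_N^*(A),A)$ when $A$ is strictly larger than $\bigcup_m A_m$. Applied as you intend, it would give $\l_{\e_\h}\bigl(\bigcup_m A_m\bigr)=\lim\sum_m\e_\h(\cdot)/\t(N)$, not $\l_{\e_\h}(A)$, and the $\beta_m^\star=\mu(A_m)$ then need not sum to 1. To get the claimed lower bound you must instead invoke Definition~\ref{def:short_range}(i) (which does allow $\omega_N^*(A)$) with the \emph{un-shrunk} cubes $Q_m$ chosen so that $\Q=\bigcup_m Q_m\supset A$, making $A\cap\Q=A$ and $\sum_m\mu(A\cap Q_m)=1$. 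The shrunk cells $A_m$ are then needed only for the upper bound, after which a bridge via Corollary~\ref{cor:continuity_general} closes the gap between $\l_{\e_\h}(A\cap Q_m)$ and $\l_{\e_\h}(A_m)$. This is exactly the two-set bookkeeping the paper performs in the proof of Theorem~\ref{thm:weighted}.

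The second gap is specific to $\sigma<-1$ and is not fixed by the patch above as you present it. There one has $\lambda_d\ll\mu$, not $\mu\ll\lambda_d$, so shrinking $\lambda_d\bigl(A\setminus\bigcup_m A_m\bigr)$ below $\epsilon$ gives \emph{no} control of $\mu\bigl(A\setminus\bigcup_m A_m\bigr)$: a priori, $\mu$ could concentrate in the boundary strips you discard, so $\sum_m\beta_m^\star$ need not approach 1. Worse, set-monotonicity for $\sgn\varsigma=-1$ gives $\l_{\e_\h}\bigl(\bigcup_m A_m\bigr)\leq\l_{\e_\h}(A)$, which is the wrong direction for promoting $\l_{\e_\h}\bigl(\bigcup_m A_m\bigr)\leq\sum_m\w(\beta_m)\l_{\e_\h}(A_m)$ into a bound on $\l_{\e_\h}(A)$. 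The remedy is to notice that for $\varsigma<0$ Definition~\ref{def:short_range} does \emph{not} require the cubes to be positive distance apart, so one should not shrink at all: run both the lower and upper bounds directly on the cubes $Q_m$ with shared boundaries covering $A$. That eliminates the truncation error and, with it, the ``subtlety about singular parts'' you flag at the end — once the identity $\mu(A\cap Q_m)=c\int_{A\cap Q_m}\h^{-1/\sigma}\,d\lambda_d$ is established exactly for every cube in a refining sequence, the measure is determined and is absolutely continuous automatically. As submitted, though, the $\sigma<-1$ branch of the argument does not close.
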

\begin{proof} 
    % Let $ a_0 $ be such that for every $ x\in A $ and $ 0< a < a_0 $  there holds
    % \[
    %     \max_{y-x\in a\q_d} \h(y) / \min_{y-x\in a\q_d} \h(y) \leq (1+\epsilon).
    % \]
    In this proof, for a cube $ Q_l = x_l+ a\q_d $, $ l=1,2 $, we write $ \gamma Q_l $ to denote the rescaled cube with the same center: $  x+ \gamma a\q_d $, and the same convention applies to $ \Q = Q_1\cup Q_2 $. The notations $ n $,  $n'$,  $ n_1,$  $ n_2  $ are used for members of sequences of natural numbers, depending on $ N $, $\gamma $, and $ a $:  $n=n(N,\gamma,a)$, etc. These sequences do not have to be defined for all $ N $, and their precise properties will transpire from the proof. Quantities $ \beta_l $ are nonnegative and depend on $ \gamma $ and $ a $.

    By Lemma~\ref{lem:abs_continuity} and Corollary~\ref{cor:lambda_ll_mu}, any weak$ ^* $ cluster point $ \mu $ of minimizers $ \omega_N^*(A) $  satisfies either $ \mu \ll \lambda_d $ or $ \lambda_d \ll \mu $. Hence the limit
    \[ 
        \frac{d\mu}{d\lambda_d}(x) = \lim_{a\downarrow 0}\frac{\mu(x+a\q_d)}{\lambda_d(x+a\q_d)};
    \]
    exists in $ [0,\infty] $ for $ \lambda_d $-a.e. $ x\in  A $ \cite[Thm. 2.12]{mattila1995geometry}. Furthermore, by the Lebesgue density theorem \cite[Cor. 2.14]{mattila1995geometry},
    \begin{equation}
        \label{eq:lebesgue_density}
        \lim_{a\downarrow 0}\frac{\lambda_d((x+a\q_d)\cap A)}{\lambda_d(x+a\q_d)} = 1, \qquad \text{for }  \lambda_d\text{-a.e. }  x\in  A .
    \end{equation}

    We have that in $ A\times A $ for $ \lambda_d\otimes\lambda_d $-almost every pair $ x_1, x_2\in  A $ both the above limits hold; assume that $ x_1,x_2 $ is such a pair. Let $ D_1 =  d\mu/d\lambda_d(x_1) $ and $ d\mu/d\lambda_d(x_2) = D_2 $; suppose also that $ D_1 $ is finite. If this is not the case for all $ x $ for which the two preceding limits exist, one has $ d\mu/d\lambda_d(x) = +\infty $ for $ \lambda_d $-a.e. $ x\in A $, so necessarily $ \sigma < -1 $. We will consider this case at the end of the proof.
    
    It follows that 
    \begin{equation}
        \label{eq:limiting_ratio}
        \lim_{a\downarrow 0}\frac{\mu(x_1+a\q_d)}{\mu(x_2+a\q_d)} = \frac{D_1}{D_2}
    \end{equation}
    is well-defined.
    Note also that
    $$ \mu(\partial(x_l+a\q_d)) = 0 $$
    holds except possibly for a countable collection of $ a $. We will assume in the rest of the proof that the relevant cubes are always chosen so that the above equality is satisfied. For a sufficiently small $ a_0 $, the cubes $ Q_l:= (x_l+a_l\q_d)$, $l=1,2$ with $ 0 <  a < a_0 $ are positive distance apart and do not cover $  A $. 
    Let
    % TODO: fix underline
    \[
        \underline \h_l = \min_{Q_l} \eta(x)  \qquad \text{and} \qquad   \overline \h_l = \max_{Q_l} \eta(x), \quad l =1,2,
    \] 
    then $ \underline \h_l \to \h(x_l) $ and $ \overline \h_l \to \h(x_l) $ when $ a\downarrow0 $.

    For a sequence of minimizers $ \omega_N^* $, $ N\in \mathscr N $ whose counting measures weak$ ^* $ converge to $ \mu $, the short-range property \eqref{eq:short_range} together with Theorem~\ref{thm:weighted} gives for the union of cubes $ \Q := Q_1\cup Q_2 $:
    \begin{equation} 
        \label{eq:compare_1}
        \begin{aligned}
        \liminf_{\mathscr N \ni N\to\infty}
        &\frac{\e_\h(\omega_{N}^*\cap \Q,\,A\cap \Q)}{\t(N)} \geq \\
        & \geq 
        \f(1)\left(\underline \h_1 \mu(Q_1)^{1+\sigma} \cdot (\lambda_d[ Q_1\cap A])^{-\sigma}  
        + \underline \h_2 \mu(Q_2)^{1+\sigma} \cdot (\lambda_d[ Q_2\cap A])^{-\sigma} \right)
        \\
        & =: \underline L(a).
        \end{aligned}
    \end{equation}
    By \eqref{eq:limiting_ratio} and \eqref{eq:lebesgue_density}, 
    \begin{equation}
        \label{eq:energy_density_lower}
        \lim_{a\downarrow0} \frac{\underline L(a)}{a^d} = 
        \f(1)\left( \h(x_1) D_1^{1+\sigma}  
        +  \h(x_2) D_2^{1+\sigma} \right).
    \end{equation}

    To obtain a matching upper bound on the expression in \eqref{eq:energy_density_lower}, we next consider a configuration consisting of local minimizers on $ \Q $ and $ A\setminus \Q $. Fix $ \gamma_0\in(0,1) $ and let
    \[
        \gamma = \begin{cases}
            \gamma_0 \in (0,1), & \sigma > 0;\\
            1,& \sigma < -1.
        \end{cases}
    \]
    For $ \sigma>0  $ we will eventually set $ \gamma\uparrow1 $. Let $ \gamma\Q = (x_1+\gamma a\q_d) \cup (x_2+\gamma a\q_d) $, as introduced above.
    Apart from the set $ \gamma \Q \in \mathscr Q $, we construct a collection of cubes with disjoint interiors covering $ A\setminus \Q $; denote this collection by $ \Q' $. By taking the cubes in $ \Q' $ small enough, we ensure that $ \gamma\Q $ and $ \Q' $ are positive distance apart. Let $ A' = A\cap(\gamma\Q\cup \Q') $.

    Denote $ n = \#(\omega_N^*(A)\cap \overset{\circ}{\Q} ) $ and $ n'=\#(\omega_N^*(A)\cap \Q' ) $, so that $ n+n' = N $. 
    Let further $ n_1,$  $ n_2,$ be positive integers such that $ n_1+ n_2 = n $ and 
    \[
        \lim_{N\to \infty} \frac{n_l}{N}  = \beta_l, \qquad l =1,2,
    \]
    with $ \beta_l $ to be specified later. By construction, $ \beta_1+\beta_2 = \mu(\Q) $, and by definition of $ D_l $, there holds $ (\beta_1+\beta_2)/a^d \to (D_1+D_2) $, $ a\downarrow0 $. Without loss of generality, we assume that both limits
    \[
        \lim_{a\downarrow 0}\beta_l / a^d =: b_l, \qquad l = 1,2,
    \]
    exist.
    The upper bound on the right-hand side of \eqref{eq:energy_density_lower} is obtained by comparing the value of $ \e_\h $ on $ \omega_N^*(A) $ with that on $ \omega_N $, consisting of local minimizers on $ \gamma\Q $ and its complement to $ A' $:
    \[
        \omega_N = \omega_{n_1}^*(A\cap \gamma Q_1)  \cup \omega_{n_2}^*(A\cap \gamma Q_2) \cup \omega_{n'}^*(A\cap\Q').
    \]

    Let $ {\mathscr N}_{sup} \subset \mathbb N $ be the subsequence such that
    \[
        \lim_{{\mathscr N}_{sup} \ni N\to\infty} \frac{\e_\h(\omega_{N}^*\cap \Q,\,A\cap \Q)}{\t(N)}  = 
        \limsup_{\mathscr N \ni N\to\infty} \frac{\e_\h(\omega_{N}^*\cap \Q,\,A\cap \Q)}{\t(N)}
    \]
    Passing to the limit along $ {\mathscr N}_{sup} $, from property \eqref{eq:short_range} we deduce, using the sets $ \Q , \Q' \in \mathscr Q  $ for $ \omega_{N}^*(A) $ and $ \gamma\Q, \Q' \in \mathscr Q $ for $ \omega_N $ respectively: 
    % Using the optimality of the minimizers $ \omega_N^*(A) $ and 
    \begin{equation*} 
        \begin{aligned}
        \lim_{N\to\infty}
        &\frac{\e_\h(\omega_{N}^*(A),\, A)}{\t(N)} \geq 
        \limsup_{\mathscr N \ni N\to\infty} \frac{\e_\h(\omega_{N}^*(A)\cap\Q,\, \Q)}{\t(N)} 
        +\liminf_{{\mathscr N}_{sup} \ni N\to\infty} \frac{\e_\h(\omega_{N}^*(A)\cap(\overline{A\setminus\Q}),\, (\overline{A\setminus\Q}))}{\t(N)}    \\
        \lim_{N\to\infty}
        &\frac{\e_\h(\omega_{N},\, A')}{\t(N)} = 
        \lim_{N\to\infty} \frac{\e_\h(\omega_N\cap\gamma\Q,\, \gamma\Q)}{\t(N)} 
        +\lim_{N\to\infty} \frac{\e_\h(\omega_{n'}^*(\overline{A\setminus\Q}))}{\t(N)},
        % & = 
        % \f(1)\left(\underline \h_1 \mu(Q_1)^{1+\sigma} \cdot (\lambda_d[ Q_1\cap A])^{-\sigma}  
        % + \underline \h_2 \mu(Q_2)^{1+\sigma} \cdot (\lambda_d[ Q_2\cap A])^{-\sigma} \right)
        % \\
        % & =: \underline L(a),
        \end{aligned}
    \end{equation*}
    where the existence of the limit in the left-hand side of the second equation follows from existence of limits in the corresponding right-hand side, in particular from the existence of the limits of counting measures on $ \Q $ and $ \Q' $.
    Since $ A' =A $ for $ \sigma < -1 $, the set monotonicity~\eqref{eq:monotonicity} gives
    \[
        \lim_{N\to\infty} \frac{\e_\h(\omega_{N}^*(A),\, A)}{\t(N)} \leq \lim_{N\to\infty} \frac{\e_\h(\omega_{N},\, A)}{\t(N)} \leq 
        \lim_{N\to\infty} \frac{\e_\h(\omega_{N},\, A')}{\t(N)}.
    \]
    On the other hand, minimizers $\omega_{n'}^*(\overline{A\setminus\Q})$ are optimal on the set $ \overline{A\setminus\Q} $, implying
    \[
        \liminf_{{\mathscr N}_{sup} \ni N\to\infty} \frac{\e_\h(\omega_{N}^*(A)\cap(\overline{A\setminus\Q}),\, (\overline{A\setminus\Q}))}{\t(N)}  \geq \lim_{N\to\infty} \frac{\e_\h(\omega_{n'}^*(\overline{A\setminus\Q}))}{\t(N)},
    \]
    so that there must hold, by property \eqref{eq:short_range},
    \begin{equation}
        \label{eq:compare_2}
        \begin{aligned}
        \limsup_{\mathscr N \ni N\to\infty} \frac{\e_\h(\omega_{N}^*\cap \Q,\,A\cap \Q)}{\t(N)} 
        &\leq 
        \lim_{N\to\infty} \frac{\e_\h(\omega_N\cap\gamma\Q,\, \gamma\Q)}{\t(N)} \\
        &= 
        \lim_{N\to\infty} \frac{\e_\h(\omega_{n_1}^*(\gamma Q_1))}{\t(N)} + \lim_{N\to\infty} \frac{\e_\h(\omega_{n_2}^*(\gamma Q_2))}{\t(N)}\\
        &\leq 
        \f(1)\left(\overline \h_1 \beta_1^{1+\sigma} \cdot (\lambda_d[\gamma Q_1\cap A])^{-\sigma}  
        + \overline \h_2 \beta_2^{1+\sigma} \cdot (\lambda_d[\gamma Q_2\cap A])^{-\sigma} \right)\\
        & =:\overline L(a, \gamma,\beta_1,\beta_2).
        \end{aligned}
    \end{equation}
    As above for equation~\eqref{eq:energy_density_lower}, applying~\eqref{eq:limiting_ratio} and~\eqref{eq:lebesgue_density} gives 
    \begin{equation}
        \label{eq:energy_density_upper}
        \lim_{a\downarrow0} \frac{\overline L(a, \gamma,\beta_1,\beta_2)}{a^d} = 
        \gamma^{-\sigma}\f(1)\left( \h(x_1) b_1^{1+\sigma}  
        +  \h(x_2) b_2^{1+\sigma} \right).
    \end{equation}
    Using that for $ \sigma > 0 $, \eqref{eq:energy_density_upper} holds for any $ \gamma\in(0,1) $, combined with \eqref{eq:energy_density_lower} it gives
    \[
        \h(x_1) D_1^{1+\sigma}  +  \h(x_2) D_2^{1+\sigma}  \leq 
        \h(x_1) b_1^{1+\sigma}  
        +  \h(x_2) b_2^{1+\sigma} 
    \]
    for all $ \sigma \in (-\infty, -1)\cup(0,\infty) $ and all $ b_l \geq 0 $ with $ b_1+b_2 = D_1 + D_2 $. It implies
    \begin{equation}
        \label{eq:local_optimality}
        \h(x_1) D_1^{1+\sigma}  +  \h(x_2) D_2^{1+\sigma}  =  
        \min \left\{ \h(x_1) b_1^{1+\sigma}  +  \h(x_2) b_2^{1+\sigma} : b_1+b_2 = D_1 + D_2\right\}
    \end{equation}
    whence
    \[
        \frac{D_1}{D_2} = \frac{{\h(x_1)}^{-1/\sigma}}{{\h(x_2)}^{-1/\sigma}},
    \]
    assuming that both $ D_l $ are finite. If on the other hand $ D_2 = +\infty $, it must be $ \sigma < -1 $, and the minimum in~\eqref{eq:local_optimality} occurs for $ D_1 = D_2 = +\infty $. It follows that $ d\mu(x)/d\lambda_d(x) = +\infty $ for $ \lambda $-almost all $ x\in A $. Equivalently, $ d\lambda(x)/d\mu(x) = 0 $ for $ \lambda $-almost all $ x\in A $ This is a contradiction to $ \lambda_d \ll \mu $ and $ \lambda_d(A)>0 $.

    Note that the case when $ d\mu/d\lambda_d(x) = +\infty $ for $ \lambda_d $-a.e. $ x\in A $, which was previously postponed, has therefore been considered as well, and the proof is now complete.
\end{proof}
\begin{remark}
    \label{rem:nice_weighted_expression}
    Using the expression for the density of the weak$ ^* $ limit of the minimizers $ \mu $ from Theorem~\ref{thm:weighted_dist}, the value of the asymptotics from Theorem~\ref{thm:weighted} can be expressed as
    \[
        \lim_{N\to\infty} \frac{{\e_\h}(\omega_N^*(A))}{\t(N)} = \f(1) \int_A \h(x)\left(\frac{d\mu}{d\lambda}(x)\right)^{1+\sigma}\, d\lambda_d(x).
    \]
    Indeed, to verify this, substitute the expression for the density of $ \mu $ from Theorem~\ref{thm:weighted_dist} into the above display:
    \[
        \frac{d\mu}{d\lambda}(x) = \frac{\h^{-1/\sigma}(x)}{\int_A\h^{-1/\sigma}(x)\,d\lambda_d(x)}.
    \]
\end{remark}

\subsection{Combining weight with external field}
\label{sec:ext}
Theorems~\ref{thm:weighted} and \ref{thm:weighted_dist} can be naturally extended to a more general class of functionals $ \e_{\h,\xi} $, which are equipped with both weight and an external field (often called {\it confining potential})
\[
    \xi:\Omega\to[0,\infty].
\]
Such $ \e_{\h,\xi} $ must satisfy the short range and continuity properties~\eqref{eq:short_range}--\eqref{eq:cube_conts}, and in addition instead of the asymptotics on cubes must depend on $ {\h,\xi} $ as in \eqref{eq:ext_field} below. 
As before, we consider compact sets $ A\subset \Omega $ contained in the fixed open $ \Omega $, and use the standard notation for $ \liminf $ and $ \limsup $ of $ \e_{\h,\xi} $:
\[
    \begin{aligned}
    \underline\l_{\e_{\h,\xi}}(x+a\,\q_d) 
    &= \liminf_{N\to\infty} \frac{{\e_{\h,\xi}}(\omega_N^*(x+a\,\q_d))}{\t(N)}\\
    \overline\l_{\e_{\h,\xi}}(x+a\,\q_d) 
    &=\limsup_{N\to\infty} \frac{{\e_{\h,\xi}}(\omega_N^*(x+a\,\q_d))}{\t(N)}.
    \end{aligned}
\]
\begin{definition}
    \label{def:ext_field}
    Suppose that $ \f(t) = \f(1)t^{-\sigma} $, $ \t $ has the fractional rate $ \w = t^{1+\sigma} $, functions $ \h \geq \h_0>0$ and $ \xi\geq0 $  are continuous on $ \Omega $.
    A functional $ \e_{\h,\xi} $ that satisfies Definitions~\ref{def:short_range}--\ref{def:cube_conts}, and such that
    \begin{equation}
    \label{eq:ext_field}
    \begin{aligned}
        &\min_{y\in(x+a\,\q_d)} \h(y) \frac{\f(1)}{(a^d)^\sigma} 
        + \min_{y\in(x+a\,\q_d)}\xi(y)\\
        &\leq \underline \l_{\e_{\h,\xi}}(x+a\,\q_d) 
        \leq \overline \l_{\e_{\h,\xi}}(x+a\,\q_d)\\ 
        &\leq  \max_{y\in(x+a\,\q_d)} \h(y) \frac{\f(1)}{(a^d)^\sigma}
        + \max_{y\in(x+a\,\q_d)}\xi(y)
    \end{aligned}
    \end{equation}
    for all  $a>0$ and $ x\in\mathbb R^d $, is called a short-range interaction functional with weight $ \h $ {\it  and external field $ \xi $}
\end{definition}
The results about asymptotics of the minimal value of $ \e_{\h,\xi} $ on compact $ A $, similar to Theorems~\ref{thm:weighted} and \ref{thm:weighted_dist}, can easily be obtained along the same lines. Indeed, the case of a $ \xi $ identically constant on $ A $ is trivial. The main observation one has to make is that the dependence of the asymptotics on limits of counting measures is convex in the case of $ A $ being a union of compact cubes positive distance apart; namely we have the following generalization of Lemma~\ref{lem:simple_weight}:
\begin{lemma}
    Suppose $ \Q = \{ Q_1,\ldots,Q_M \}  $ is a collection of cubes pairwise positive distance apart. Then
    \[
        \begin{aligned}
        \lim_{N\to\infty}
        &\frac{\e_{\h,\xi}(\omega_N^*(A))}{\t(N)}\\
        &= \min \left\{ \f(1)\sum_{m=1}^M  \h(y_m) \left(\frac{\beta_m}{\lambda_d(Q_m)}\right)^{1+\sigma}\lambda_d(Q_m) +\beta_m \xi(z_m) : \sum_{m=1}^M \beta_m =1, \ \beta_m \geq 0 \right\}
        \end{aligned}
    \]
    for some $ y_m, z_m \in Q_m $. The minimum is attained for 
    \[
        \beta_m = \left(\frac{L_1-\xi(z_m)}{\f(1)(1+\sigma)\h(y_m)}\right)^{1/\sigma}_+ \lambda_d(Q_m),
    \]
    where $ L_1 $ is a normalizing constant, chosen so that $ \beta_m $ add up to 1, and $ (\cdot)_+ $ denotes the positive part.
\end{lemma}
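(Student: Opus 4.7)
My plan is to follow the template of Lemma~\ref{lem:simple_weight}, now weaving the external-field term $\xi$ into the analysis via the two-sided mean-value bound~\eqref{eq:ext_field}. Since the cubes $Q_1,\ldots,Q_M$ are by hypothesis pairwise positive distance apart, Corollary~\ref{cor:short_range_general} applies directly both to $\omega_N^*(\Q)$ and to any piecewise-minimizing union $\bigcup_m \omega_{N_m}^*(Q_m)$ for either sign of $\sigma$, so no $\gamma$-shrinking trick is needed. The closed form of the optimal $\beta_m$ is then extracted from the KKT conditions.

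For the lower bound I would pass to a subsequence $\mathscr N$ along which $N_m^* := \#(\omega_N^*(\Q)\cap Q_m)$ satisfies $N_m^*/N \to \beta_m^* \geq 0$, so that $\sum_m \beta_m^* = 1$. The short-range decomposition (i) together with $\t(N_m^*)/\t(N) \to \w(\beta_m^*) = (\beta_m^*)^{1+\sigma}$ and the left inequality of~\eqref{eq:ext_field} yields
\begin{equation*}
\liminf_{\mathscr N \ni N\to\infty} \frac{\e_{\h,\xi}(\omega_N^*(\Q))}{\t(N)} \;\geq\; \sum_{m=1}^M \Bigl[\f(1)\min_{Q_m}\h\cdot(\beta_m^*/\lambda_d(Q_m))^{1+\sigma}\lambda_d(Q_m) + \beta_m^*\min_{Q_m}\xi\Bigr].
\end{equation*}
For the matching upper bound, given any $(\beta_m)$ on the simplex I would form the test sequence $\omega_N = \bigcup_m \omega_{N_m}^*(Q_m)$ with $N_m/N \to \beta_m$; short-range property (ii) together with the right inequality of~\eqref{eq:ext_field} and set-monotonicity~\eqref{eq:monotonicity} gives the same expression with $\min$ replaced by $\max$ of $\h,\xi$, and subsequent minimization over $(\beta_m)$ delivers a valid upper bound on $\limsup$.

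To reconcile the two bounds, define
\begin{equation*}
\Phi(y_1,z_1,\ldots,y_M,z_M) := \min\Bigl\{\sum_{m=1}^M\Bigl[\f(1)\h(y_m)(\beta_m/\lambda_d(Q_m))^{1+\sigma}\lambda_d(Q_m) + \beta_m\xi(z_m)\Bigr] : \sum_m\beta_m = 1,\ \beta_m \geq 0\Bigr\}
\end{equation*}
on the compact connected product $\prod_m(Q_m\times Q_m)$. The function $\Phi$ is continuous and monotone in each of its $2M$ scalar arguments $\h(y_m),\xi(z_m)$; connecting the all-$\min$ and all-$\max$ corners by a continuous path and applying the intermediate value theorem yields $y_m,z_m \in Q_m$ for which $\Phi$ equals the true limit value sandwiched between the two bounds. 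The explicit form of the minimizer finally follows from the stationarity condition
\begin{equation*}
\f(1)(1+\sigma)\h(y_m)(\beta_m/\lambda_d(Q_m))^\sigma + \xi(z_m) = L_1
\end{equation*}
on active constraints, solved for $\beta_m$ with Lagrange multiplier $L_1$ fixed by $\sum_m\beta_m = 1$; inactive constraints $\beta_m = 0$ account for the positive-part truncation $(\cdot)_+$. The main obstacle I anticipate is this last multivariate IVT step, but it reduces cleanly to continuity of $\Phi$ on a path-connected domain.
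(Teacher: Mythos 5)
Your overall template — two-sided estimate via the short-range decomposition, mean-value bound, and convexity, then KKT to extract the explicit $\beta_m$ — is exactly the route the paper intends (its own proof simply says to imitate Lemma~\ref{lem:simple_weight} and apply Lagrange multipliers, and the IVT step you add to pin down $y_m,z_m$ is a sensible formalization of what the paper leaves implicit). The KKT computation giving $\beta_m=\bigl((L_1-\xi(z_m))/(\f(1)(1+\sigma)\h(y_m))\bigr)^{1/\sigma}_+\lambda_d(Q_m)$ is also correct. However, there is a genuine gap in the way you produce the lower bound (and symmetrically the upper bound), and it concerns the exponent on $\beta_m^*$ attached to the external-field term.

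You assert that short-range property~(i), the scaling $\t(N_m^*)/\t(N)\to\w(\beta_m^*)=(\beta_m^*)^{1+\sigma}$, and the left inequality of~\eqref{eq:ext_field} together yield
\[
\liminf_{\mathscr N\ni N\to\infty}\frac{\e_{\h,\xi}(\omega_N^*(\Q))}{\t(N)}
\;\geq\; \sum_{m}\Bigl[\f(1)\min_{Q_m}\h\cdot(\beta_m^*)^{1+\sigma}\lambda_d(Q_m)^{-\sigma}+\beta_m^*\min_{Q_m}\xi\Bigr].
\]
But the chain you describe — restrict to $Q_m$, compare to $\omega_{N_m^*}^*(Q_m)$, split $\t(N)^{-1}=\t(N_m^*)^{-1}\cdot\t(N_m^*)/\t(N)$, apply the cube bound~\eqref{eq:ext_field} — multiplies the \emph{entire} bracket $\min\h\cdot\f(1)\lambda_d^{-\sigma}+\min\xi$ by $\w(\beta_m^*)=(\beta_m^*)^{1+\sigma}$, giving $(\beta_m^*)^{1+\sigma}\min_{Q_m}\xi$ for the external-field part, not $\beta_m^*\min_{Q_m}\xi$. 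The two powers are different whenever $\sigma\neq 0$, so the step as quoted does not reach your displayed formula. The external-field contribution scales \emph{linearly} in $\beta_m^*$ precisely because, for the concrete realization $\e_{\h,\xi}=\e_\h+\frac{\t(N)}{N}\sum_i\xi(x_i)$ of Proposition~\ref{prop:adding_ext_field}, the restricted field sum over $Q_m$, divided by $\t(N)$, is $\frac1N\sum_{x\in Q_m}\xi(x)\to\mu(Q_m)\xi(z_m)=\beta_m^*\xi(z_m)$ by weak$^*$ convergence, \emph{bypassing} the $\w$-scaling entirely; the $\e_\h$ part alone goes through the $\w(\beta_m^*)$ machinery. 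The abstract axioms of Definitions~\ref{def:short_range} and~\ref{def:ext_field}, applied mechanically as in your sketch, do not deliver the asymmetric dependence on $\beta_m^*$. You need to split off the external-field sum and treat it separately (or invoke Proposition~\ref{prop:adding_ext_field} to reduce to the concrete form), route the $\e_\h$ part through Lemma~\ref{lem:simple_weight}, and only then recombine. The same repair is needed in your upper bound.
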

The proof of the above lemma, just as that of Lemma~\ref{lem:simple_weight}, consists of applying Lagrange multipliers to the function in the minimum. One can repeat the proofs of Theorems~\ref{thm:weighted}~and~\ref{thm:weighted_dist} with the obvious adjustments, such as the right-hand side of estimates \eqref{eq:compare_1} and \eqref{eq:compare_2} now being in terms of the expression
\[
    \f(1)
    \left(
        \h(x_1) \frac{\mu(Q_1)^{1+\sigma}}{\lambda_d( Q_1\cap A)^{\sigma}}  +
        \h(x_2) \frac{\mu(Q_2)^{1+\sigma}}{\lambda_d( Q_2\cap A)^{\sigma}}  
        % + \h(x_2) \mu(Q_2)^{1+\sigma} \cdot (\lambda_d[ Q_2\cap A])^{-\sigma} 
    \right)
    + \mu(Q_1) \xi(x_1) + \mu(Q_2) \xi(x_2).
\]
The rest of the proofs for $ {\e_{\h,\xi}} $ follow Theorems~\ref{thm:weighted}~and~\ref{thm:weighted_dist}, and will be omitted here. We state the counterparts of the results for $ {\e_{\h}} $ as follows.
\begin{theorem}
    \label{thm:ext_dist}
    Let $ {\e_{\h,\xi}} $ be  a short-range interaction functional with cube limit function $ \f(1)t^\sigma $ and rate $ \t $, equipped with a continuous weight $ \h $ and an external field $ \xi $.  
    Then for any compact $ A \subset \Omega $,
    \[
        \lim_{N\to\infty} \frac{{\e_{\h,\xi}}(\omega_N^*(A))}{\t(N)} =
        \f(1) \int_A \h(x)\,\pphi(x)^{1+\sigma}\, d\lambda_d(x) + \int_A \xi(x) \, \pphi(x)\, d\lambda_d(x),
    \]
    where $ \pphi $ is the density of a probability measure $ \mu $ supported on $ A $, and is given by    
    \[
        \pphi(x) = \frac{d\mu}{d\lambda_d}(x) = \left(\frac{L_1-\xi(x)}{\f(1)(1+\sigma)\h(x)}\right)^{1/\sigma}_+
    \]
    for a normalizing constant $ L_1 $.
    Moreover, any sequence of minimizers of $ {\e_{\h,\xi}} $ on $ A $ converges weak$ ^* $ to $ \mu $.
\end{theorem}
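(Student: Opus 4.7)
My plan is to follow the template established by Theorems~\ref{thm:weighted} and \ref{thm:weighted_dist}, adapting both the Riemann-sum / Lagrange-multiplier argument for the asymptotics and the local comparison argument for the weak$^*$ limit, so that they accommodate the extra linear term $\int \xi\, d\mu$. The preceding lemma already does the hard combinatorial optimization work once the problem has been discretized on cubes.

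\smallskip

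\textbf{Stage 1: asymptotics.} Fix $\epsilon>0$ and, using uniform continuity of $\h$ and $\xi$ on a neighbourhood of $A$, choose a finite dyadic cover $\Q = \bigcup_{m=1}^M Q_m$ of $A$ by equal cubes with pairwise disjoint interiors such that $\overline{\h}_m - \underline{\h}_m < \epsilon$ and $\overline{\xi}_m - \underline{\xi}_m < \epsilon$ on each $A_m := A\cap Q_m$. Passing to a subsequence along which $N_m^*/N \to \beta_m^*$ (where $N_m^* := \#(\omega_N^*(A)\cap Q_m)$), the short-range property \eqref{eq:short_range}(i) combined with the lower bound in \eqref{eq:ext_field} yields
\[
\liminf_{N\to\infty} \frac{\e_{\h,\xi}(\omega_N^*(A))}{\t(N)} \;\geq\; \sum_{m=1}^M\!\Bigl[\underline{\h}_m\, \w(\beta_m^*)\, \f(\lambda_d(A_m)) + \beta_m^*\, \underline{\xi}_m\Bigr].
\]
For the matching upper bound, test against $\omega_N = \bigcup_m \omega_{N_m}^*(A\cap \gamma Q_m)$ with $N_m/N \to \beta_m$ and $\gamma\in(0,1)$ (set $\gamma=1$ if $\sigma<-1$): the short-range property \eqref{eq:short_range}(ii), the upper bound in \eqref{eq:ext_field}, and set-monotonicity give
\[
\limsup_{N\to\infty} \frac{\e_{\h,\xi}(\omega_N^*(A))}{\t(N)} \;\leq\; \gamma^{-d\sigma}\sum_{m=1}^M\!\Bigl[\overline{\h}_m\, \w(\beta_m)\, \f(\lambda_d(A_m)) + \beta_m\,\overline{\xi}_m\Bigr],
\]
valid for any nonnegative $\beta_m$ summing to $1$. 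Minimizing the right-hand side via the lemma preceding the theorem, then letting $\gamma\uparrow 1$, $\epsilon\downarrow 0$, and refining the cover, the two bounds meet at the claimed integral with density
\[
\pphi(x) = \left(\frac{L_1-\xi(x)}{\f(1)(1+\sigma)\h(x)}\right)_+^{1/\sigma};
\]
the passage to the limit is justified by the dominated convergence theorem (using $\h\geq h_0>0$ to keep $\pphi$ integrable).

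\smallskip

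\textbf{Stage 2: weak$^*$ limit.} The absolute continuity arguments of Lemma~\ref{lem:abs_continuity} and Corollary~\ref{cor:lambda_ll_mu} carry over verbatim, since the external field contributes only a term bounded by $\|\xi\|_\infty$ that does not disrupt the blow-up of $\f$ near the origin which drove those arguments. For a weak$^*$ cluster point $\mu$ and $\lambda_d\otimes\lambda_d$-a.e. pair $(x_1,x_2)\in A\times A$, repeat the local comparison of Theorem~\ref{thm:weighted_dist} between $\omega_N^*(A)$ and a test sequence that rearranges mass between two small cubes around $x_1$ and $x_2$; this forces the densities $D_l := (d\mu/d\lambda_d)(x_l)$ to minimize
\[
(b_1,b_2)\ \longmapsto\ \h(x_1) b_1^{1+\sigma} + \h(x_2) b_2^{1+\sigma} + \xi(x_1) b_1 + \xi(x_2) b_2
\]
subject to $b_1+b_2 = D_1+D_2$. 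Strict convexity gives the Euler--Lagrange condition
\[
\f(1)(1+\sigma)\h(x_l)\, D_l^{\sigma} + \xi(x_l) = L_1, \qquad l=1,2,
\]
for a common Lagrange multiplier $L_1$, which inverts to the claimed formula for $\pphi$. The normalization $\int_A \pphi\,d\lambda_d = 1$ fixes $L_1$ uniquely, proving that the full sequence converges weak$^*$ to $\pphi\,d\lambda_d$.

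\smallskip

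\textbf{The main obstacle} I anticipate is the regime $\sigma<-1$ combined with points where $\xi(x)>L_1$. There the formula forces $\pphi(x)=0$ on a set of positive Lebesgue measure, contradicting $\lambda_d\ll\mu$ from Corollary~\ref{cor:lambda_ll_mu} unless one first restricts the argument to the effective support $\{\xi\leq L_1\}\cap A$. The remedy is to replace $A$ by this effective support at the outset (showing that optimal configurations place asymptotically no mass elsewhere, since the $\xi$-cost of a point $x$ with $\xi(x)>L_1$ exceeds the $\h$-gain from inserting it), and then to apply the $\sigma<-1$ arguments on the restricted set, where absolute continuity is automatic. In the $\sigma>0$ regime this difficulty is absent because $\mu\ll\lambda_d$ holds unconditionally.
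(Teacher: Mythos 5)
Your Stages 1 and 2 reproduce the paper's intended argument: the authors explicitly state that Theorem~\ref{thm:ext_dist} is obtained by repeating the proofs of Theorems~\ref{thm:weighted} and~\ref{thm:weighted_dist} with the $\xi$-term carried through (the RHS of \eqref{eq:compare_1}--\eqref{eq:compare_2} acquiring the extra summand $\mu(Q_1)\xi(x_1)+\mu(Q_2)\xi(x_2)$), and the Lagrange-multiplier computation yielding the density formula is exactly the one in the lemma preceding the theorem. Your Riemann-sum discretization of $\h$ and $\xi$, the use of \eqref{eq:ext_field} on small cubes, and the pairwise rearrangement argument leading to the Euler--Lagrange condition $\f(1)(1+\sigma)\h(x_l)D_l^\sigma+\xi(x_l)=L_1$ all match.

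However, your ``main obstacle'' paragraph has the signs reversed and describes a spurious difficulty. For $\sigma<-1$ we have $1+\sigma<0$, so the denominator $\f(1)(1+\sigma)\h(x)$ is negative; the bracket is therefore negative (and the positive part zero) exactly when $L_1-\xi(x)>0$, i.e.\ when $\xi(x)<L_1$ --- not $\xi(x)>L_1$ as you wrote. And that case in fact never arises: for $\sigma<-1$ the integrand $\h\,\pphi^{1+\sigma}$ blows up as $\pphi\to 0^+$, so $\s(\pphi;\h,\xi)=+\infty$ whenever $\pphi$ vanishes on a set of positive $\lambda_d$-measure. The minimizing density is thus automatically strictly positive $\lambda_d$-a.e., forcing $L_1\leq\min_A\xi$ and making the positive part vacuous; this is the real reason $\lambda_d\ll\mu$ causes no trouble, not a restriction to an effective support. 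It is in the regime $\sigma>0$ that the $(\cdot)_+$ genuinely activates, cutting off $\{\xi>L_1\}$, and there $\mu\ll\lambda_d$ is (as you note) perfectly compatible with $d\mu/d\lambda_d=0$ on a positive-measure set. So your proposed ``remedy'' is both misdirected to the wrong sign of $\sigma$ and unnecessary; dropping that paragraph would leave a proof that is both correct and aligned with the paper's.
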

The assumption of continuity of $ \xi $ can be removed. The more general case of lower semicontinuous $ \xi $ has been considered in \cite{hardinGenerating2017} for the short-range functional $ \e_{\h,\xi} $ given by the hypersingular Riesz energy:
\[
    \sum_{i\neq j} \|x_i - x_j\|^{-s} + N^{s/d} \sum_{i} \xi(x_i), \qquad s > d.
\]
In the present paper we only deal with continuous $ \xi $ so as to streamline the exposition; lower semicontinuous $ \xi $ can be handled similarly to \cite{hardinGenerating2017}.

A straightforward way to equip a (weighted) short-range functional with an external field is to add the sum $ \sum_i \xi(x_i) $ over the points in $ \omega_N $. Indeed, for a constant external field this fact is trivial; the general case is  a consequence of the previous theorem and the proof of Theorems~\ref{thm:weighted},~\ref{thm:weighted_dist}. As a result we have the following fact.
\begin{proposition}
    \label{prop:adding_ext_field}
    Suppose a short-range functional  $ {\e_{\h,\xi}} $ is given. Then for any short-range interaction functional $ {\e_{\h}} $ with the same weight $ \h $, cube limit function $ \f $, and rate $ \t $, but no external field, the asymptotics and the weak$ ^* $ limit of minimizers of $ {\e_{\h,\xi}} $ and $ \e_\h +  \t(N)/\xi $, defined by
    \[
        (\e_\h + \xi)(\omega_N,A):= \e_\h(\omega_N,A) + \frac{\t(N)}N \sum_{i=1}^N \xi(x_i),
    \]
    coincide.
\end{proposition}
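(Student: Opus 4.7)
The plan is to verify that the combined functional $\e_\h + \xi$, as defined in the statement, is itself a short-range interaction functional with weight $\h$ and external field $\xi$ in the sense of Definition~\ref{def:ext_field}. Once this is done, Theorem~\ref{thm:ext_dist} applies to both $\e_{\h,\xi}$ and $\e_\h + \xi$ with identical data $(\h, \xi, \f(1), \sigma)$, and therefore produces the same asymptotic value and the same weak$^*$ limit of minimizers.

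First I would check the ``soft'' conditions. Lower semicontinuity of $\e_\h + \frac{\t(N)}{N} \sum_i \xi(x_i)$ on $\Omega^N$ is immediate from the lower semicontinuity of $\e_\h$ and the continuity of $\xi$. Set-monotonicity \eqref{eq:monotonicity} can be verified by comparing minimizers on $A\subset B$ for $\e_\h$ and noting that the added field term is nonnegative, bounded above by $\t(N)\max \xi$, and contributes at most $O(\t(N))$; since $\e_\h$ obeys the monotonicity and the field term is of order $\t(N)$, the desired $\liminf$--$\limsup$ inequality follows by splitting the two pieces. For the short-range property \eqref{eq:short_range}, the key observation is that for any configuration $\omega_N$, the sum $\sum_i \xi(x_i)$ splits exactly as $\sum_m \sum_{x_i \in Q_m} \xi(x_i)$ over cubes $Q_m$ with disjoint interiors; the external field terms in the numerator and denominator of \eqref{eq:short_range} therefore cancel, reducing the question to the short-range property of $\e_\h$, which holds by hypothesis. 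The continuity property on cubes \eqref{eq:cube_conts} follows similarly, using that the field term at any minimizer on a subset $D \subset Q$ differs from the corresponding term on $Q$ by at most $\epsilon\,\t(N)\max \xi$.

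The most delicate step is verifying the cube asymptotics~\eqref{eq:ext_field}. Let $\omega_N^\dagger$ denote a minimizer of $\e_\h + \xi$ on $x + a\q_d$. For the upper bound, I would test the combined functional against an $\e_\h$-minimizer $\tilde \omega_N$ on the cube, obtaining
\[
(\e_\h + \xi)(\omega_N^\dagger) \leq \e_\h(\tilde\omega_N) + \frac{\t(N)}{N}\sum_{\tilde x_i \in \tilde\omega_N} \xi(\tilde x_i) \leq \e_\h(\tilde\omega_N) + \t(N)\max_{y\in x+a\q_d}\xi(y),
\]
and dividing by $\t(N)$ and letting $N\to\infty$ controls $\overline\l$ by $\max \h\cdot \f(1)/(a^d)^\sigma + \max \xi$, using Definition~\ref{def:weighted} for $\e_\h$. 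For the lower bound, we have
\[
(\e_\h + \xi)(\omega_N^\dagger) \geq \e_\h(\omega_N^\dagger) + \t(N)\min_{y\in x+a\q_d}\xi(y) \geq \min_{\omega_N\subset x+a\q_d}\e_\h(\omega_N) + \t(N)\min_{y\in x+a\q_d}\xi(y),
\]
giving the matching lower bound for $\underline\l$.

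Having established Definition~\ref{def:ext_field} for $\e_\h + \xi$, I invoke Theorem~\ref{thm:ext_dist}. Since the two functionals $\e_{\h,\xi}$ and $\e_\h + \xi$ share the same $\h$, $\xi$, $\f(1)$, and $\sigma$, the theorem produces the same density $\pphi$ in Remark~\ref{rem:nice_weighted_expression}-form and hence the same asymptotic value
\[
\lim_{N\to\infty}\frac{\e(\omega_N^*(A))}{\t(N)} = \f(1)\int_A \h\,\pphi^{1+\sigma}\,d\lambda_d + \int_A \xi\,\pphi\,d\lambda_d
\]
and the same weak$^*$ limit $\pphi\,d\lambda_d$ of minimizers. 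I expect the main technical obstacle to be verifying the cube asymptotics~\eqref{eq:ext_field} cleanly, since one must track the possibility that the minimizers of $\e_\h + \xi$ differ from those of $\e_\h$ alone, and thereby propagate the two-sided bracket bounds through without losing the additive structure between the two terms.
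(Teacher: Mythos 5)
Your overall strategy — verify that $\e_\h + \xi$ satisfies Definition~\ref{def:ext_field} and then invoke Theorem~\ref{thm:ext_dist} — is a reasonable reading, and your two-sided estimate for the cube asymptotics~\eqref{eq:ext_field} is sound. However, the short-range verification contains a genuine gap. You claim that because $\sum_i\xi(x_i)$ splits exactly over a partition into cubes, the external-field contributions in the numerator and denominator of~\eqref{eq:short_range} cancel. This would be true if the prefactor were the same on both sides, but it is not: by the definition $(\e_\h + \xi)(\omega_N,A) = \e_\h(\omega_N,A) + \frac{\t(N)}{N}\sum_{i=1}^N\xi(x_i)$, the prefactor is tied to the cardinality of the configuration being evaluated. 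When the functional is applied to $\omega_N\cap Q_m$ with $N_m$ points, its external-field part is $\frac{\t(N_m)}{N_m}\sum_{x_i\in Q_m}\xi(x_i)$, not $\frac{\t(N)}{N}\sum_{x_i\in Q_m}\xi(x_i)$. Writing $\beta_m = N_m/N$ and letting $\bar\xi_m$ be the average of $\xi$ over $\omega_N\cap Q_m$, the field part of the numerator of~\eqref{eq:short_range} is asymptotically $\t(N)\sum_m\beta_m^{1+\sigma}\bar\xi_m$, while the denominator contributes $\t(N)\sum_m\beta_m\bar\xi_m$. For $\sigma>0$ and at least two nonzero $\beta_m$ one has $\sum_m\beta_m^{1+\sigma}\bar\xi_m < \sum_m\beta_m\bar\xi_m$ (the inequality reverses for $\sigma<-1$), so the two quantities are not asymptotically equal, and the ratio in~\eqref{eq:short_range} fails to converge to~$1$ whenever $\xi$ is nontrivial. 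A constant $\xi\equiv\xi_0>0$ already exhibits this: with two equal cubes and $\beta_1=\beta_2=1/2$, the numerator field term is $2^{-\sigma}\t(N)\xi_0$ versus $\t(N)\xi_0$ in the denominator. Thus $\e_\h+\xi$ does not, in general, satisfy Definition~\ref{def:short_range}, and Theorem~\ref{thm:ext_dist} cannot be invoked through that route.

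The paper's argument is different: rather than checking Definition~\ref{def:ext_field} for $\e_\h+\xi$, it re-runs the proofs of Theorems~\ref{thm:weighted} and~\ref{thm:weighted_dist} with the added term tracked directly. The structural fact your approach does not use is that after normalization by $\t(N)$, the added term is simply $\frac1N\sum_{i=1}^N\xi(x_i)$, which converges to $\int\xi\,d\mu$ along any subsequence whose counting measures converge weak$^*$ to $\mu$ (by continuity of $\xi$). This lets one carry the field contribution into the two-sided estimates~\eqref{eq:compare_1}--\eqref{eq:compare_2} as the terms $\mu(Q_l)\xi(x_l)$ indicated in Section~\ref{sec:ext}: the $\e_\h$ piece supplies the short-range decomposition, while the external-field piece needs only the weak$^*$ convergence of counting measures and does not need to satisfy~\eqref{eq:short_range} on its own.
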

\noindent We will see in Section~\ref{sec:examples} that adding a weight $ \h $ to a short-range functional $ \e $ can be done by multiplying an appropriate part of $ \e $ by a continuous function $ \h $.

Outside of this section, we use $ \e $ to denote functionals that can include weight or external field, whenever this cannot result in a confusion. That is, for a fixed continuous $ \h:\Omega \to [h_0,\infty] $ and $ \xi:\Omega \to [0,\infty] $, we write
\[
    \e = \e_{\h,\xi}.
\]

\section{Generalizations}
\subsection{Short-range functionals on embedded sets}
\label{sec:embedded}
In this section we consider the asymptotics of interaction functionals, defined on compact subsets of $ \mathbb R^{d'} $ of Hausdorff dimension $ d < d' $. To distinguish them from the interactions depending on the full-dimensional sets, we denote such functionals $ \ee(\omega_N, A) $. Note that in the prior discussion, the assumption $ \lambda_d(A) > 0 $ implied that the Hausdorff dimension of $  A $ satisfies $ \dim_H( A) = d $, the dimension of the ambient space. Now we deal with the case when the dimension of set $ A $ is strictly less than that of the ambient space. The overall idea is that major components of the argument in $ \mathbb R^d $ can be transferred to this new context.

The present section is an application of the standard measure theoretic argument, developed for the hypersingular Riesz energies \cite{hardinMinimal2005,borodachovAsymptotics2008,borodachovDiscrete2019} and inspired by the work of Federer~\cite{federerGeometric1996a}. We write $ \mathcal H_d $ for the $ d $-dimensional Hausdorff measure on $ \mathbb R^{d'} $. It is assumed to be normalized so that to coincide with $ \lambda_d $ on isometric embeddings of sets from $ \mathbb R^d $ into $ \mathbb R^{d'} $. We will need to consider maps from subsets of $ \mathbb R^{ d' } $ into $ \mathbb R $, that are almost isometries. 
By an almost isometry we understand a {\it bi-Lipschitz map} $ \psi: A \to \mathbb R^{d} $, which satisfies by definition
\[
    (1+\epsilon)^{-1} \| x-y\| \leq \| \psi(x)- \psi(y)\| \leq (1+\epsilon)\| x-y\| \qquad \text{ for any }x,y\in A \subset \mathbb R^{d'}.
\]
The quantity $ (1+\epsilon) $ is called the {\it Lipschitz constant} of the map $ \psi $. Note that the inverse map $ \psi^{-1} $ exists and is also bi-Lipschitz.
Finally recall the shorthand for the minimizing configurations $ \omega_N^*(A) $, which by definition satisfy
$$ \ee(\omega_N^*(A)) = \min \left\{ \ee(\omega_N,A) : \omega_N \subset A \right\} $$
for a compact $ A\subset \mathbb R^{d'} $.

As in Section~\ref{sec:short_range}, we fix a bounded open $ \Omega \subset \mathbb R^{d'} $ and assume that the weight and external field $ \h, \xi $ are defined on $ \Omega $ and continuous there. Let the lower semicontinuous functional $ \ee $ be defined on $ \Omega \subset \mathbb R^{d'} $; that is, 
\[
    \ee(\cdot,\ A): \omega_N \to [0,\infty], \qquad \omega_N\subset A\subset \Omega, \ N\geq N_0(\ee),
\]
is defined for all compact $ A\subset \Omega $ and is lower semicontinuous on $ { \Omega}^{N} $.
Similarly to Section~\ref{sec:short_range}, $ \ee(\omega_N, A) $ is additionally assumed to be set-monotonic in $ A \subset \mathbb R^{d'} $ for any fixed sequence $ \omega_N $, $ N\geq N_0(\ee) $, in the sense of \eqref{eq:monotonicity}.
Our goal is to reduce the asymptotics of $ \ee $ to those of a short-range functional defined on $ \mathbb R^d $, which will be denoted by $ \e $ as before. More precisely, we will need 
\begin{definition} 
    \label{def:stable_lip}
    A lower-semicontinuous functional $ \ee $ is said to be an {\it embedded translation-invariant d-dimensional short-range interaction}, if there exists an admissible  translation-invariant interaction functional $ \e $ acting on $ \mathbb R^d $, such that
    for a compact $ A\subset \mathbb R^{d'} $ and a configuration $ \omega_N\subset A $, for any bi-Lipschitz map $ \psi: A\to \subset \mathbb R^d $ with the constant $ 1+\epsilon $, one has
    \[
        (1-\gamma(\epsilon)) \cdot \e[\psi(\omega_N),\, \psi(A)] \leq  \ee(\omega_N,\, A) \leq (1+\gamma(\epsilon)) \cdot \e[\psi(\omega_N),\, \psi(A)]
    \]
    where the term $ \gamma(\epsilon) $ depends only on $ \epsilon $ and $ \gamma(\epsilon) \to 0 $ when $ \epsilon\downarrow0 $. The functional $ \e $ is referred to as {\it the flattened version of $ \ee $}.
\end{definition}
Note that an $ \ee $ satisfying the above definition is translation-invariant on sets that can be approximately embedded in $ \mathbb R^d $. As in Section~\ref{sec:short_range}, associated to the flattened interaction $ \e $ we have a triple 
$$ \left(\f(1)t^{-\sigma},\ \t,\ t^{1+\sigma}\right), $$ 
where $ \f(1) > 0 $,  $ \t $ is a rate \eqref{eq:rate}, and $ \sigma\in(-\infty,-1)\cup(0,\infty) $. 
Applying Definition~\ref{def:stable_lip} to $ \omega_N^*(A) $, we have
\begin{equation}
    \label{eq:stable_lip}
    (1-\gamma(\epsilon))\cdot \l_\e(\psi( A)) 
    \leq \liminf_{N\to\infty} \frac{\ee(\omega_N^*( A))}{\t(N)} 
    \leq \limsup_{N\to\infty} \frac{\ee(\omega_N^*( A))}{\t(N)}
    \leq (1+\gamma(\epsilon)) \cdot \l_\e(\psi( A)),
\end{equation}
where $ \psi: A \to \mathbb R^d $ is assumed to be bi-Lipschitz with constant $ 1+\epsilon $. Note that due to continuity of $ \psi $, $ \psi(A) $ is a compact subset of $ \mathbb R^d $, thus the asymptotics of $ \e $ on it are well-defined by Section~\ref{sec:short_range}; furthermore, if the embedding map can be constructed for any $ \epsilon > 0 $, the above asymptotics depend only on the value of $ \mathcal H_d(A) = $ (some smoothness assumptions on $ A $ are necessary as well, see below). It is essential here that the quantity $ \gamma(\epsilon) $ does not depend on a specific bi-Lipschitz embedding of the compact set $ A $ in $ \mathbb R^d $, but only on its constant.

As is clear from Definition~\ref{def:stable_lip}, we shall need the existence of maps arbitrarily close to an isometry, embedding compact sets $ A\subset \mathbb R^{d'} $ into $ \mathbb R^d $. Since not every compact $ A \subset \mathbb R^{d'} $ can be  embedded in a lower-dimensional $ \mathbb R^d $ with arbitrarily small distortions, we now proceed to describe the class of sets for which such maps do exist.
Recall that  a set $  A\subset \mathbb R^{d'} $ is said to be $ d ${\it-rectifiable} if there exist a bounded open $ A_0\subset \mathbb R^d $ and a Lipschitz map $ \psi: A_0 \to \mathbb R^{d'} $ such that $  A \subset \psi(A_0) $. It is a known result due to Federer that a compact $ d $-rectifiable $ A $ is up to a set of $ d $-Hausdorff measure 0 a union of bi-Lipschitz images of compact sets in $ \mathbb R^d $ with Lipschitz constants arbitrarily close to 1. More precisely, one has the following
\begin{lemma}[{\cite[Thm. 3.2.18]{federerGeometric1996a}}]
    \label{lem:federer}
    Suppose that $  A\subset \mathbb R^{d'} $ is $ d $-rectifiable. Then for any fixed $ \epsilon>0 $ there exist compact sets $ K_m $, $ m\geq 1 $ and bi-Lipschitz maps $ \psi_m:K_m\to \mathbb R^{d'} $, $ m\geq 1 $ such that $ \psi_m(K_m) \subset  A $ are disjoint and
    $$ \mathcal H_d\left( A\setminus \bigcup_{m=1}^\infty \psi_m(K_m)\right) =0.  $$
\end{lemma}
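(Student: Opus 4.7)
The plan is to combine Rademacher's differentiability theorem with successive applications of Lusin's and Egorov's theorems, then use a linear reparametrization to convert the pointwise affine approximation of the defining Lipschitz map into a genuine almost-isometry on each piece. By the rectifiability hypothesis recalled in the paper, there is a compact $A_0\subset\mathbb R^d$ and a Lipschitz map $\mathcal F:A_0\to\mathbb R^{d'}$ with $A=\mathcal F(A_0)$; extend $\mathcal F$ Lipschitz to all of $\mathbb R^d$ by Kirszbraun. Rademacher yields differentiability at $\lambda_d$-a.e.\ point. Split $A_0=E_0\cup E_1\cup N$, where $N$ is $\lambda_d$-null, $E_0=\{x:\mathrm{rank}\, D\mathcal F(x)<d\}$, and $E_1=\{x:D\mathcal F(x)\text{ is injective}\}$.

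First I would dispose of the singular part $\mathcal F(E_0)$. Since the Jacobian vanishes on $E_0$, either the area formula or a direct Sard-type covering argument (cover $E_0$ by small balls of radius $r$ whose $\mathcal F$-image lies in an $o(r)$-thin slab around a $(d-1)$-plane) gives $\mathcal H_d(\mathcal F(E_0))=0$. It therefore suffices to exhaust $\mathcal F(E_1)$ by bi-Lipschitz images.

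On $E_1$, a preliminary countable partition reduces to a subset on which both $\|D\mathcal F(\cdot)\|$ and $\|(D\mathcal F(\cdot)^* D\mathcal F(\cdot))^{-1/2}\|$ are uniformly bounded. Lusin on this subset produces a compact $F$ on which $D\mathcal F$ is continuous, and Egorov produces a compact $F'\subset F$ on which the Rademacher remainder $|\mathcal F(y)-\mathcal F(x)-D\mathcal F(x)(y-x)|/|y-x|$ tends to zero uniformly as $y\to x$ with $x,y\in F'$. Given $\epsilon>0$, cover $F'$ by finitely many compact pieces $P$ of sufficiently small diameter; on each $P$ pick $x_0\in P$, set $L_0:=D\mathcal F(x_0)$, and polar-decompose $L_0=U_0 S_0$ with $S_0:=(L_0^* L_0)^{1/2}:\mathbb R^d\to\mathbb R^d$ symmetric positive definite and $U_0:=L_0 S_0^{-1}:\mathbb R^d\to\mathbb R^{d'}$ an isometric embedding. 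On the compact set $K:=S_0(P-x_0)\subset\mathbb R^d$ define $\psi(z):=\mathcal F(x_0+S_0^{-1}z)$. Writing $z=S_0(y-x_0)$, the identity $L_0 S_0^{-1}=U_0$ together with the uniform smallness of the Egorov remainder and of $\|D\mathcal F(y')-L_0\|$ for $y'\in P$ (the uniform bound on $\|S_0^{-1}\|$ ensures the error transports cleanly to the $z$-variable) yields $(1+\epsilon)^{-1}|z-z'|\leq|\psi(z)-\psi(z')|\leq(1+\epsilon)|z-z'|$.

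Iterating the construction (at each round extract finitely many disjoint $\psi_m(K_m)\subset A$ exhausting $\mathcal F(E_1)$ up to a set of small $\mathcal H_d$-measure, then repeat on the remainder) produces the required countable collection, and the Lipschitz bound on $\mathcal F$ transports the $\lambda_d$-null residual in $A_0$ to an $\mathcal H_d$-null set in $A$. The main obstacle is the third step: upgrading Rademacher's pointwise linearization to a two-sided bi-Lipschitz estimate on a set of positive measure. This requires the simultaneous uniformization of $D\mathcal F$ (via Lusin) and of the Rademacher remainder (via Egorov), together with the explicit reparametrization through $S_0^{-1}$ that absorbs the non-isometric part of $L_0$ and leaves only the isometric factor $U_0$ contributing to the metric comparison.
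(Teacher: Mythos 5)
The paper does not actually prove this lemma; it is imported verbatim from Federer's \emph{Geometric Measure Theory}, Theorem~3.2.18, so there is no in-paper proof to compare against. Your outline is, in substance, the classical argument behind that theorem: Kirszbraun extension, Rademacher, discard the degenerate set $E_0$ via the area formula (the ``Sard-type'' slab covering is an alternative, but since $\mathcal F$ is only Lipschitz the area formula is the cleaner route), Lusin to make $D\mathcal F$ continuous on a compact set, Egorov to make the first-order Taylor remainder uniform, and finally the precomposition with $S_0^{-1}$ from the polar decomposition so that only the isometric factor $U_0$ enters the metric comparison. The chain of estimates you describe on each small piece $P$ does yield $(1+\epsilon)^{-1}|z-z'|\leq|\psi(z)-\psi(z')|\leq(1+\epsilon)|z-z'|$, provided the modulus coming from Egorov and from the continuity of $D\mathcal F$ is taken small relative to $\epsilon$ and to the uniform bound on $\|S_0^{-1}\|$ that the preliminary countable partition secures.

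One step that deserves to be made explicit is the disjointness of the image pieces. Disjoint compacta $P$ in $A_0$ do not have disjoint images $\mathcal F(P)$ in $A$, since $\mathcal F$ need not be injective globally. The cure is to order the pieces, set $B_m:=\psi_m(K_m)\setminus\bigcup_{j<m}\psi_j(K_j)$ (a Borel subset of $A$), pull back $B_m$ along the injective map $\psi_m$ to a Borel subset of $K_m$, and then use inner regularity of $\lambda_d$ to replace it by a compact subset carrying all but an arbitrarily small amount of measure; the bi-Lipschitz property is of course inherited by restriction. The leftover mass feeds into the next round of the exhaustion. You gesture at this with ``extract finitely many disjoint $\psi_m(K_m)\subset A$,'' but this is precisely where the injectivity of each $\psi_m$ is used and where the residual $\mathcal H_d$-null set is produced, so it is worth a sentence rather than an aside.
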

For our purposes it will also be enough to assume that $  A $ is such that $ \mathcal M_d(A) = \mathcal H_d(A) $ and $ A $ is $ (\mathcal H_d,d) $-rectifiable, that is, up to a set of $ \mathcal H_d $-measure 0 it is equal to a countable union of $ d $-rectifiable sets. It can be readily seen from the above lemma that changing $  A $ by a set of $ \mathcal H_d $-measure 0 does not change the bi-Lipschitz representation which is of interest to us.

Note further that both i) $ d $-rectifiability and ii) $ (\mathcal H_d,d) $-rectifiability together with $ \mathcal M_d(A) = \mathcal H_d(A) $ are preserved under taking compact subsets of $  A $. That is, any compact set $  D \subset  A $ has these smoothness properties as well \cite[Lem. 8.7.2]{borodachovDiscrete2019}.
It is useful to observe that for a $ d $-rectifiable $ A $, the $ d $-dimensional Minkowski content coincides with the $ d $-dimensional Hausdorff measure $ \mathcal H_d $ \cite[Thm. 3.2.39]{federerGeometric1996a}. When the smoothness assumptions on $  A $ must be relaxed to it being only $ (\mathcal H_d, d) $-rectifiable, one has to make the additional requirement $ \mathcal H_d( A) = \mathcal M_d( A) $.

In order to consider non-translation-invariant functionals on $ \mathbb R^{d'} $, we shall use a pair of continuous functions prescribing the weight and external field, respectively:
$$ \h:\Omega \to [\h_0,\infty], \qquad \xi:\Omega \to [0,\infty]. $$
We assume that there is a translation-invariant interaction $ \e $ acting on $ \mathbb R^d $, associated with $ \ee $.
Following Definition~\ref{def:ext_field}, we will say that $ \ee $ is {\it equipped with  weight $ \h $ and external field $ \xi $}, if
\begin{equation}
    \label{eq:associated}
    \begin{aligned}
        &(1-\gamma(\epsilon)) \cdot \l_\e(\psi( A)) \min_{y\in A} \h(y)  
        + \min_{y\in A}\xi(y)\\
        &\leq \liminf_{N\to\infty} \frac{\ee(\omega_N^*( A))}{\t(N)} 
    \leq \limsup_{N\to\infty} \frac{\ee(\omega_N^*( A))}{\t(N)}\\
        \leq &(1+\gamma(\epsilon)) \cdot \l_\e(\psi( A))  \max_{y\in A} \h(y) 
        + \max_{y\in A}\xi(y)
    \end{aligned}
\end{equation}
holds for all $ d $-rectifiable sets $ A $ [$ (\mathcal H_d,d)$-rectifiable A with $ \mathcal H_d(A) = \mathcal M_d(A) $].
Here both $ \h $ and $ \xi $ will be assumed continuous, but we expect the argument from \cite{hardinGenerating2017} dealing with lower semicontinuous $ \xi $ to be fully applicable.

In order that the asymptotics of a functional $ \ee $ on a $ d $-rectifiable set exist, it is necessary to make assumptions stronger than those in Definitions~\ref{def:cube_asymp}--\ref{def:cube_conts}. 
The analog of continuity with respect to the distance $ \lambda_d( A\triangle B) $ will need to be strengthened to more general sets and the distance $ \mathcal H_d( A\triangle B) $. We will give two versions of this property, that involve either $ d $-rectifiable or $ (\mathcal H_d, d) $-rectifiable set $ A $. 

\begin{definition}
    \label{def:strong_cube_conts}
    The functional $ \ee $ is said to have the {\it embedded continuity property}, if
    for every compact $ d $-rectifiable [$ (\mathcal H_d,d) $-rectifiable] $  A\subset  \Omega $ [with $ \mathcal H_d(A) = \mathcal M_d(A) $] and any $ \epsilon \in (0,1) $ there holds
    \begin{equation}
        \label{eq:strong_cube_conts}
        \liminf_{N\to\infty}   \sgn \sigma \cdot  \frac{\ee(\omega_N^*( A))}{\t(N)} \geq  
         \liminf_{N\to\infty}\, ( \sgn \sigma - \epsilon) \cdot  \frac{\ee(\omega_N^*( D))}{\t(N)} 
    \end{equation}
    whenever the compact $  D \subset  A $ satisfies $ \mathcal H_d( D) \geq (1 - \delta(A, \epsilon) )\,\mathcal H_d( A)$ 
    with $ \delta(A,\epsilon) $ depending only on $ A $ and $ \epsilon $.
\end{definition}
On the other hand, the short-range property from Definition~\ref{def:short_range} can be relaxed to sets positive distance apart as follows.
\begin{definition}
    \label{def:strong_short_range}
    A functional $ \ee $ is said to have the {\it embedded short-range property} if for any pair of compact $ d $-rectifiable [$ (\mathcal H_d,d) $-rectifiable] sets $ A_1, A_2 \subset  \Omega $ [with $ \mathcal H_d(A) = \mathcal M_d(A) $] positive distance apart, there holds
    \begin{equation}
        \lim_{N\to\infty} \frac{\ee(\omega_N\cap A_1, A_2)+\ee(\omega_N\cap A_2,A_2)}{\ee(\omega_N, (A_1\cup A_2))} = 1,
    \end{equation}
    where the sequence $ \omega_N $ has one of the two forms:

    \noindent{\bf (i)}
    $ \omega_N  = \omega_N^*(A)  $;

    \noindent{\bf (ii)}
    $ \omega_N = \bigcup_{m=1,2} \omega_{N_m}^*(A\cap Q_m) $, with $ N_1 + N_2 = N $, $ N_m\to \infty $.
\end{definition}
To summarize, a general short-range functional $ \ee $ for which we study the asymptotics of $ \ee(\omega_N^*(A)) $ when the compact $ A \subset \mathbb R^{d'} $ is $ d $-rectifiable [$ (\mathcal H_d,d) $-rectifiable with $ \mathcal H_d(A) = \mathcal M_d(A) $] must be as in the following
\begin{definition}
    \label{def:embedded}
    A lower-semicontinuous set monotonic functional $ \ee $ on $ \mathbb R^{d'} $ is called {\it an embedded admissible functional}, if it satisfies Definitions~\ref{def:strong_short_range} and~\ref{def:strong_cube_conts}, and there is an associated translation-invariant interaction functional $ \e $ acting on $ \mathbb R^d $, such that~\eqref{eq:associated} holds for every $ d $-rectifiable compact $ A\subset \Omega $ [$ (\mathcal H_d,d)$-rectifiable A with $ \mathcal H_d(A) = \mathcal M_d(A) $]. The triple $ (\f(1)t^{-\sigma}, \t, t^{1+\sigma}) $ of $ \e $ is called the {\it associated triple} of $ \ee $.
\end{definition}
Recall that set-monotonicity of $ \ee $ is the equation \eqref{eq:monotonicity} with $ \sigma $ in place of $ \varsigma $. For functionals satisfying Definition~\ref{def:embedded} we obtain an analog of Theorem~\ref{thm:ext_dist}.

\begin{theorem}
    \label{thm:embedded}
    Suppose $ A \subset \mathbb R^{d'} $ is a $ d $-rectifiable set, $ \mathcal H_d(A) > 0 $, {\rm [}$ (\mathcal H_d, d) $-rectifiable set with $ \mathcal M_d( A) = \mathcal H_d( A) ${\rm ]}, and $ \ee $ is an embedded admissible functional with associated triple $ (\f(1)t^{-\sigma}, \t, t^{1+\sigma}) $, continuous weight $ \h\geq h_0 > 0 $, and continuous external field $ \xi\geq0 $.
    Then 
    \[
        \lim_{N\to\infty} \frac{{\e_{\h,\xi}}(\omega_N^*( A))}{\t(N)} =
        \f(1) \int_{ A} \h(x)\,\pphi(x)^{1+\sigma}\, d\mathcal H_d(x) + \int_{ A} \xi(x) \, \pphi(x)\, d\mathcal H_d(x),
    \]
    where $ \pphi $ is the density of a probability measure $ \mu $ supported on $  A $, and is given by    
    \[
        \pphi(x) = \frac{d\mu}{d\mathcal H_d}(x) = \left(\frac{L_1-\xi(x)}{\f(1)(1+\sigma)\h(x)}\right)^{1/\sigma}_+
    \]
    for a normalizing constant $ L_1 $.
\end{theorem}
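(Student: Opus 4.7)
The plan is to reduce the embedded problem to the translation-invariant flat case handled by Theorem~\ref{thm:ext_dist}, by chopping $A$ into finitely many pieces that are bi-Lipschitz images of sets in $\mathbb R^d$ with distortion arbitrarily close to $1$. Fix $\epsilon>0$. By Lemma~\ref{lem:federer} applied to the $d$-rectifiable (or $(\mathcal H_d,d)$-rectifiable) set $A$, there exist countably many disjoint compact sets $K_m\subset\mathbb R^d$ and bi-Lipschitz maps $\psi_m:K_m\to A$ with Lipschitz constant at most $1+\epsilon$, such that $\mathcal H_d(A\setminus\bigcup_m \psi_m(K_m))=0$. Choose $M$ large enough that $A_M:=\bigcup_{m=1}^M \psi_m(K_m)$ satisfies $\mathcal H_d(A\setminus A_M)<\delta(A,\epsilon)\mathcal H_d(A)$, where $\delta(A,\epsilon)$ comes from Definition~\ref{def:strong_cube_conts}. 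Shrinking each $\psi_m(K_m)$ slightly, we may assume these pieces are pairwise positive distance apart, still covering $A$ up to $\mathcal H_d$-mass at most $\delta(A,\epsilon)\mathcal H_d(A)$.

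The second step is to subdivide each $K_m$ into finitely many small compact pieces $K_{m,i}$ on which $\h\circ \psi_m$ and $\xi\circ\psi_m$ oscillate by less than $\epsilon$. Let $B_{m,i}:=\psi_m(K_{m,i})$ and denote $N_{m,i}:=\#(\omega_N^*(A)\cap B_{m,i})$. By the embedded short-range property (Definition~\ref{def:strong_short_range}), applied inductively to the finite collection of positive-distance-apart pieces $\{B_{m,i}\}$,
\begin{equation*}
    \e_{\h,\xi}(\omega_N^*(A),A) = \sum_{m,i} \e_{\h,\xi}(\omega_N^*(A)\cap B_{m,i},\,B_{m,i}) + o(\t(N)).
\end{equation*}
On each piece, Definition~\ref{def:stable_lip} combined with \eqref{eq:associated} compares $\e_{\h,\xi}$ on $B_{m,i}$ with the flat functional $\e_{\h,\xi}$ on $\psi_m^{-1}(B_{m,i})=K_{m,i}\subset\mathbb R^d$, up to a factor $(1\pm\gamma(\epsilon))$. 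Since $\psi_m$ has Lipschitz constant $1+\epsilon$, one has $(1+\epsilon)^{-d}\mathcal H_d(B_{m,i})\le \lambda_d(K_{m,i})\le (1+\epsilon)^d\mathcal H_d(B_{m,i})$, and the oscillation of $\h,\xi$ on $B_{m,i}$ is under $\epsilon$. Thus Theorem~\ref{thm:ext_dist} applied on each $K_{m,i}$ (after passing to a subsequence so that $N_{m,i}/N$ converges to some $\beta_{m,i}$) gives
\begin{equation*}
    \lim\frac{\e_{\h,\xi}(\omega_{N_{m,i}}^*(B_{m,i}))}{\t(N)} = \w(\beta_{m,i})\bigl(\f(1)\h(y_{m,i})\mathcal H_d(B_{m,i})^{-\sigma}\bigr) + \beta_{m,i}\xi(z_{m,i}) + O(\epsilon),
\end{equation*}
with $y_{m,i},z_{m,i}\in B_{m,i}$, uniformly in the choice of pieces.

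The third step is to optimize over the splits $\{\beta_{m,i}\}$ subject to $\sum\beta_{m,i}=\mu(A_M)$, where $\mu$ is any weak$^*$ cluster point of the counting measures $\nu_N^*$ of $\omega_N^*(A)$. The generalized Lemma stated just before Theorem~\ref{thm:ext_dist} identifies the minimizing $\beta_{m,i}$ and shows that the resulting Riemann-type sum converges, as the mesh shrinks and $\epsilon\downarrow 0$, to
\begin{equation*}
    \f(1)\int_A \h(x)\pphi(x)^{1+\sigma}\,d\mathcal H_d(x) + \int_A \xi(x)\pphi(x)\,d\mathcal H_d(x),
\end{equation*}
with $\pphi$ given by the formula in the statement. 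The absolute-continuity arguments of Lemma~\ref{lem:abs_continuity} and Corollary~\ref{cor:lambda_ll_mu}, transposed to $\mathcal H_d\restr A$ via the bi-Lipschitz charts, ensure that every weak$^*$ cluster point $\mu$ has a density with respect to $\mathcal H_d$ (respectively, vice versa if $\sigma<-1$), so that the local optimality relation
\begin{equation*}
    \h(x)D(x)^{1+\sigma} + \xi(x)D(x) = \min_{b\ge 0}\{\h(x)b^{1+\sigma}+\xi(x)b\} + \text{constant in }x
\end{equation*}
pins down $D(x)=d\mu/d\mathcal H_d(x)$ as the claimed expression, with $L_1$ determined by $\mu(A)=1$.

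The main obstacle I expect is handling the residual $A\setminus A_M$ simultaneously with the bi-Lipschitz distortion: the continuity property in Definition~\ref{def:strong_cube_conts} lets us ignore a set of small $\mathcal H_d$-measure at the cost of a factor $(1\pm\epsilon)$ in the asymptotics, but this must be combined carefully with the $(1\pm\gamma(\epsilon))$ factor from the embedding and with the Riemann sum error, all while the minimizers $\omega_N^*(A)$ may place a nontrivial proportion of points in the thin layer near the seams between pieces. The point is that the short-range property allows the seam contribution to be absorbed into the $o(\t(N))$ term, since the seams are positive distance apart from the bulk; the uniform convexity of $\w$ then forces the limiting densities on the pieces to agree with the pointwise optimum, recovering the closed-form expression for $\pphi$.
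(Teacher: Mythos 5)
Your overall strategy---Federer decomposition via Lemma~\ref{lem:federer}, transfer through the bi-Lipschitz charts using \eqref{eq:associated}, short-range splitting, and Lagrange-multiplier optimization---is essentially the same route the paper takes. One displayed step, however, is not correct as written. You assert
\[
\e_{\h,\xi}(\omega_N^*(A),A) = \sum_{m,i} \e_{\h,\xi}(\omega_N^*(A)\cap B_{m,i},\,B_{m,i}) + o(\t(N)),
\]
but the residual $A\setminus A_M$ has $\mathcal H_d$-measure up to $\delta(A,\epsilon)\mathcal H_d(A)>0$, so $\omega_N^*(A)$ places an $O(N)$ fraction of its points there and the energy on the residual is of order $\t(N)$, not $o(\t(N))$. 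Moreover, the embedded short-range property (Definition~\ref{def:strong_short_range}) only decomposes $\ee$ for the minimizer on the union $A_1\cup A_2$ of the pieces; it is not stated for $\omega_N^*(A)$ when $A\supsetneq\bigcup_{m,i}B_{m,i}$. The fix is an ordering issue: one must \emph{first} compare $\l_{\ee}(A)$ with $\l_{\ee}(A_M)$, using set monotonicity on one side and the embedded continuity property (Definition~\ref{def:strong_cube_conts}) on the other, which replaces your $o(\t(N))$ additive error with a two-sided $(1\pm\epsilon)$ multiplicative factor; only \emph{then} split $\omega_N^*(A_M)$ over the $B_{m,i}$, where the cross-piece contribution genuinely is $o(\t(N))$ because the pieces are positive distance apart. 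Your ``main obstacle'' paragraph gestures at this but conflates the residual with the seams---the seams are $o(\t(N))$, the residual is not. With that reordering, the remainder of your argument (subsequential convergence of $N_{m,i}/N$, convexity of $\w$, the analogues of Lemma~\ref{lem:abs_continuity} and Corollary~\ref{cor:lambda_ll_mu}) matches the paper, which for brevity treats $\xi\equiv 0$ and refers to Section~\ref{sec:ext} for the external field while you carry $\xi$ through directly.
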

\begin{proof}
    This proof follows the standard approach using set monotonicity and short-range property to obtain two-sided estimates for $ \l_\ee(A) $; compare the proof of Poppy-seed bagel theorem \cite[\S 8.7]{borodachovDiscrete2019}. For simplicity, we set $ \xi = 0 $; the case of nontrivial external field can be handled similarly, see the discussion in Section~\ref{sec:ext}. The key element is the decomposition of set $  A $ from Lemma~\ref{lem:federer}. By this lemma, for any fixed $ \epsilon > 0 $ one can find a collection of compact subsets of $ \mathbb R^d $, $ \{ K_m \}_{m=1}^M $, such that 
    $$ \mathcal H_d\left( A\setminus \bigcup_{m=1}^M \psi_m(K_m)\right) <\delta(\epsilon)\mathcal H_d( A), $$
    with the quantity $ \delta(\epsilon) $ chosen as in Definition~\ref{def:strong_cube_conts}. Since $ \bigcup_m \psi(K_m) $ is a compact subset of $ d $-rectifiable set, it is itself $ d $-rectifiable, and $\mathcal H_d $ coincides with $ \mathcal M_d $ on it. Thus, property \eqref{eq:strong_cube_conts} applies to the compact set $ \bigcup_m \psi(K_m) \subset  A $.

    Without loss of generality, we can make the diameter of $ K_m $ sufficiently small, so that 
    $$ \max_{K_m} \h(x) - \min_{K_m} \h(x) =: \overline \h_m - \underline \h_m  \leq \epsilon. $$
    Since the compact sets $ \psi(K_m) $ are disjoint, they are positive distance apart, and short-range property applies. Arguing as in Lemma~\ref{lem:simple_weight},  we obtain using \eqref{eq:associated}
    \[
        \begin{aligned}
        \overline\l_\ee \left(\bigcup_{m=1}^M \psi_m(K_m)\right) 
        &\leq (1+\gamma(\epsilon)) 
        \min \left\{ \f(1)\sum_{m=1}^M \overline \h_m \left(\frac{\beta_m}{\lambda_d(K_m)}\right)^{1+\sigma}\lambda_d(K_m) : \sum_{m=1}^M \beta_m =1, \ \beta_m \geq 0 \right\}\\
        &\leq \frac{(1+\gamma(\epsilon))}{(1-\epsilon)^d} \cdot \frac{\f(1)}{\left(\sum_{m=1}^M \overline \h_m^{-1/\sigma} \mathcal H_d(\psi(K_m))\right)^\sigma },
        \end{aligned}
    \]
    as well as the converse inequality with $ \overline \l_\ee $ replaced by $ \underline\l_\ee $ and $ \overline \h_m $ with $ \underline \h_m $:
    \[
        \underline\l_\ee \left(\bigcup_{m=1}^M \psi_m(K_m)\right) 
        \geq \frac{(1-\gamma(\epsilon))}{(1+\epsilon)^d} \cdot \frac{\f(1)}{\left(\sum_{m=1}^M \underline \h_m^{-1/\sigma} \mathcal H_d(\psi(K_m))\right)^\sigma }.
    \]
    Note that for $ \epsilon\downarrow 0 $, due to continuity of $ \h $ on $  A $, the right-hand side in both estimates converges pointwise to 
    \begin{equation}
        \label{eq:goal}
        \frac{\f(1)}{\int_{ A} \h(x)\, d\mathcal H_d(x)}.
    \end{equation}
    This convergence is also bounded, since $ \h\geq \h_0 > 0 $.
    Having thus established two-sided estimates for the asymptotics on $ \bigcup_{m=1}^M \psi_m(K_m) $, we next have to relate them to the asymptotics on $ A  $. This relation follows from the monotonicity~\eqref{eq:monotonicity} and the embedded continuity property from Definition~\ref{def:strong_cube_conts}. For $ \sigma > 0 $ there holds by monotonicity
    \[
        \overline \l_\ee( A) \leq \overline\l_\ee \left(\bigcup_{m=1}^M \psi_m(K_m)\right) \leq \frac{(1+\gamma(\epsilon))}{(1-\epsilon)^d} \cdot \frac{\f(1)}{\left(\sum_{m=1}^M \overline \h_m^{-1/\sigma} \mathcal H_d(\psi(K_m))\right)^\sigma }.
    \]
    % TODO: explain why the condition H_d( D) = M_d( D) is necessary
    On the other hand, by the embedded continuity property~\eqref{eq:strong_cube_conts} and the choice of $ \{ K_m \} $,
    \[
        \underline \l_\ee( A) \geq (1-\epsilon) \underline\l_\ee \left(\bigcup_{m=1}^M \psi_m(K_m)\right)
        \geq (1-\epsilon) \frac{(1-\gamma(\epsilon))}{(1+\epsilon)^d}\cdot \frac{\f(1)}{\left(\sum_{m=1}^M \underline \h_m^{-1/\sigma} \mathcal H_d(\psi(K_m))\right)^\sigma }.
    \]
    For $ \sigma < -1 $ the two previous inequalities are reversed as follows:
    \[
        \begin{aligned}
        \overline \l_\ee( A)
        &\leq (1+\epsilon) \overline\l_\ee \left(\bigcup_{m=1}^M \psi_m(K_m)\right) \leq (1+\epsilon) \frac{(1+\gamma(\epsilon))}{(1-\epsilon)^d} \cdot \frac{\f(1)}{\left(\sum_{m=1}^M \overline \h_m^{-1/\sigma} \mathcal H_d(\psi(K_m))\right)^\sigma },
        \\
        \underline \l_\ee( A) 
        &\geq  \underline\l_\ee \left(\bigcup_{m=1}^M \psi_m(K_m)\right)
        \geq  \frac{(1-\gamma(\epsilon))}{(1+\epsilon)^d}\cdot \frac{\f(1)}{\left(\sum_{m=1}^M \underline \h_m^{-1/\sigma} \mathcal H_d(\psi(K_m))\right)^\sigma }.
        \end{aligned}
    \]
    For both signs of $ \sigma $, the right-hand side of the estimates converge to~\eqref{eq:goal}; by continuity of $ \h $ and Lebesgue dominated convergence theorem, our first claim follows.

    The second claim is obtained in the same way as the result of Theorem~\ref{thm:weighted_dist}, by using that compact subsets $ D $ of $  A $ are $ d $-rectifiable [$ (\mathcal H_d,d)$-rectifiable with $ \mathcal H_d( D) = \mathcal M_d( D) $]; see also the proof of the Poppy-seed bagel theorem \cite[Lem. 8.7.4]{borodachovDiscrete2019} in the case of uniform density.

    The addition of the external field is handled as in the full-dimensional case.
    As discussed in Section~\ref{sec:ext}, the assumption of continuity of the external field can be weakened to lower semicontinuity.
\end{proof}

% $ \Omega $ is such that its Minkowski content and Hausdorff measure are equal, $ \mathcal M_d(\Omega) = \Hd(\Omega) $. 

\subsection{Concave fractional rate for \texorpdfstring{$ -1 <  \sigma < 0 $}{-1 < sigma < 0}}
\label{sec:generalizations} 
In Section~\ref{sec:general}, we mentioned briefly that the results concerning admissible functionals with strictly convex fractional rate $ \w $ can be generalized to maximizers of upper semicontinuous functionals with strictly concave fractional rate. We will now outline the necessary modifications Section~\ref{sec:general} to handle such functionals; in effect, one only needs to consider maximizers in place of minimizers.

We consider upper semicontinuous functionals $ \hat\e(\omega_N,A) $, and denote their maximizers by $ \omega^*_N(A) $, so that 
\[
    \E(\omega_N^*(A)) := \max \{ \E(\omega_N, A) : \omega_N \subset A \}.
\]
There is a triple associated to $ \E $,
$$ (\f, \t, \w):[0,\infty)\to[0,\infty]^3. $$
The rate $ \t $ is assumed to be measurable with the fractional rate given by a strictly concave $ \w $:
\begin{equation*}
    \lim_{u\to \infty}\frac{\t(t u)}{\t(u)} = \w(t),
\end{equation*}
and the set monotonicity is given by equation~\eqref{eq:monotonicity}, with $ \sgn \varsigma = -1 $. 
\begin{definition}
    An upper semicontinuous interaction functional with  rate $ \t $ is said to be {\it admissible}, if it is set monotonic with $ \sgn \varsigma = -1 $ and Definitions~\ref{def:cube_conts}, \ref{def:short_range}, and \ref{def:cube_conts} hold with $ \E $ in place of $ \e $ and $ \sgn \varsigma =-1 $, assuming that $ \omega_N^*(A) $ denotes the maximizers of this functional.
\end{definition}
All the results obtained in Sections~\ref{sec:short_range}--\ref{sec:translation_dependent} about asymptotics of the minimizers of $ \e $ with negative $ \sigma $ then apply to the maximizers of $ \E $. In particular, by the argument of Section~\ref{sec:regular_variation} about functions of regular variation, the above definition implies
\[
    \f(t) = \f(1) t^{-\sigma}, \qquad \w(t) = t^{1+\sigma}, \qquad -1 < \sigma < 0.
\]
An example of functional with such interaction is Riesz energy with negative exponent, truncated to $ k $ nearest neighbors:
\[
    E^k_s(\omega_N) = \sum_{i=1}^N \sum_{j\in I_{i,k} } \|{x}_i - {x}_j \|^{-s}, \qquad  -d < s < 0.
\]
Indeed, this expression is scale-invariant and short-range. It also satisfies Definition~\ref{def:local}: it is not hard to see that the maximizers have the optimal order of separation, and thus interactions between the different tiles are of a smaller order than $ E^k_s $, see Section~\ref{sec:nearest_neighbor}. Definition~\ref{def:cube_conts} can be verified as it was done for optimal quantizers, see Section~\ref{sec:CVT}; we omit further details here. 

In Section~\ref{sec:meshing}, we will explore in more detail another example of interaction with concave $ \w $: the Persson-Strang meshing algorithm, which has been successfully applied to generating point distributions, suitable for meshes \cite{Persson2004a}. It turns out, this functional also can be treated in the framework of Sections~\ref{sec:short_range}--\ref{sec:translation_dependent}.
It it an interesting question to study how suitable $ E^k_s $ with negative $ s $ is   for domain discretization, compared to the positive $ s $ case. 

\section{Verification of short-range interaction properties}
\label{sec:examples}

In this section we consider concrete examples of interaction functionals and verify that they are admissible; that is, lower semicontinuous, set-monotonic, and satisfying Definitions~\ref{def:cube_asymp}--\ref{def:cube_conts}. For the weighted and external field versions we check the Definitions~\ref{def:weighted}, and \ref{def:ext_field}, and the embedded continuity property as given for embedded sets in Definition~\ref{def:embedded}. The standard example of a translation-invariant short-range functional with $ \sigma > 0 $ will be the $ k $-truncated Riesz energy, discussed in Section~\ref{sec:nearest_neighbor}:
\begin{equation*}
    E^k_s(\omega_N) := \sum_{i=1}^N \sum_{j\in I_{i,k} } \|{x}_i - {x}_j \|^{-s}, \qquad  s > 0,
\end{equation*}
for which $ \sigma = s/d $. In Section~\ref{sec:1d} we show that on the one-dimensional flat torus $ \mathbb R/\mathbb Z $, minimizers of the $ k $-truncated Riesz energy are given by the equally spaced points, for any $ k\geq1 $.

The standard example of a short-range functional with $ \sigma < -1 $ will be the quantization error, discussed in Section~\ref{sec:CVT},
\begin{equation*}
    \cvt(\omega_N, A) = \int_A \min_i \|y-x_i\|^p \d\lambda_d(y),
\end{equation*}
for which $ \sigma = -1-p/d $.

A range of functionals can be dealt with using the methods of the present paper. In addition to the two types mentioned above, in Section~\ref{sec:meshing} we discuss some popular meshing algorithms and how they can be interpreted in our framework. 

\subsection{Asymptotics on the cube for \texorpdfstring{$ s $}{s}-scale invariant  functionals}
\label{sec:cube} 

Let us demonstrate how to establish the existence of asymptotics on cubes from Definition~\ref{def:cube_asymp} for some interaction functionals of interest; we assume that the considered functionals are lower semicontinuous, monotonic, and translation invariant. 

We say that a translation invariant interaction functional $ \e $ is {\it approximately s-scale invariant}, if there exists an $ s\in(-\infty,-d)\cup(0,\infty) $ such that 
\[
    \e(\omega_N^*(t\,A)) = \frac{\psi(t^d)}{t^{s}}\, \e(\omega_N^*(A)), \qquad N \geq N_0(\e),
\] 
where $ \psi(t) > 0  $ is continuous for $ t > 0 $ and satisfies 
\[
    \lim_{t\downarrow 0} \frac{\psi(c\,t)}{\psi(t)} = 1 \qquad \text{for any } c > 0.
\]
If $ \psi \equiv 1 $, the corresponding $ \e $ is said to be {\it s-scale invariant}. The monotonicity of the functional $ \e $  is assumed as in \eqref{eq:monotonicity}, with $ \sgn s = \sgn \varsigma $.
In this section we write as before $ \q_d = [-1/2,1/2]^d $.  
\begin{remark}
    As will become apparent from the proof of Theorem~\ref{thm:cube}, the equality in approximate $ s $-scale invariance can be replaced with an inequality:
    \[
        \e(\omega_N^*(t\,A)) = \frac{\psi(t^d)}{t^{s}}\, \e(\omega_N^*(A)), \qquad N \geq N_0(\e),
    \] 
    for a $ \psi(t) $ as above.
\end{remark}

Apart from the approximate scale invariance, we will need the additional assumption of local interactions dominating in $ \e $. We have previously assumed the short-range property \eqref{eq:short_range}; the property given in the following definition is closely related to the short-range property \eqref{eq:short_range}, yet does not follow from it.
\begin{definition}
    \label{def:local}
    We say that a translation invariant functional $ \e $ is {\it local}, if for any sequence of minimizing configurations $ \omega_n^* \subset \q_d $, $ n\geq N_0(\e) $, and $ \gamma \in (0,1] $, there holds 
    \begin{equation}
        \label{eq:locality}
        \limsup_{\gamma\uparrow 1}
        \limsup_{n\to \infty}
        \limsup_{L\to\infty} 
        \frac
        {\e(\omega, \q_d)}
        {L^d\,\e\left( \frac\gamma{L} \omega_n^* ,\frac\gamma{L}\q_d \right)}
        \leq 1,
    \end{equation}
    where 
    \[
        \omega = \bigcup_{
        \boldsymbol i\in\, L\mathbb Z^d }
        \left(\frac\gamma{L}\omega_n^* +  \frac{\boldsymbol i}L\right).
    \]
\end{definition} 
In the preceding definition, configuration $ \omega $ is obtained by tiling $ \q_d $ with copies of $ \omega_n $. Note that the denominator of \eqref{eq:locality} holds the interactions within each of the $ L^d $ cubes of side length $ \gamma/L $, which tile a portion of $ \q_d $, controlled by $ \gamma $. Setting $ \gamma =1 $, one obtains a precise  tessellation of $ \q_d $ by its copies $ \frac1L \q_d $.
\begin{remark}
    Any interaction functional $ \e $ that depends only on $ k $ nearest neighbor interactions for $ k $ fixed is local.
\end{remark} 
\begin{theorem}
    \label{thm:cube}
    Let $ \e $ be a local, translation invariant, and approximately $ s $-scale invariant interaction functional. Then the following limit exists:
    \[
        \lim_{N\to\infty} \frac{\e(\omega_N^*(\q_d))}{\psi(1/N)N^{1+s/d}}.
    \]
    In particular, $ \e $ satisfies equation \eqref{eq:cube_asymp} from Definition~\ref{def:cube_asymp} with
    \[
        \f(t) = t^{-s/d} \lim_{N\to\infty} \frac{\e(\omega_N^*(\q_d))}{\psi(1/N)N^{1+s/d}},\qquad t > 0.
    \]
\end{theorem}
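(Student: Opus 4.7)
The plan is to apply a classical Fekete-style tiling-and-scaling argument, in the tradition of \cite{hardinMinimal2005, gruberOptimum2004}. Setting $g(N):=\e(\omega_N^*(\q_d))/[\psi(1/N)N^{1+s/d}]$, the aim is to show $\limsup_N g(N)\le \liminf_N g(N)$, which forces the existence of the limit. First I would fix $n,L\in\mathbb{N}$, $\gamma\in(0,1)$, and introduce the trial configuration $\omega = \bigcup_{\mathbf{i}\in L\mathbb{Z}^d\cap L\q_d}(\tfrac{\gamma}{L}\omega_n^*(\q_d)+\tfrac{\mathbf{i}}{L})$ of $N=L^dn$ points in $\q_d$, with one translated, rescaled copy of $\omega_n^*(\q_d)$ inside each sub-cube of side $1/L$. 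Locality (Definition~\ref{def:local}) gives $\e(\omega,\q_d)\le(1+\delta)\,L^d\,\e(\tfrac{\gamma}{L}\omega_n^*,\tfrac{\gamma}{L}\q_d)$ with $\delta\to 0$ in the iterated limit prescribed there, and approximate $s$-scale invariance yields $\e(\tfrac{\gamma}{L}\omega_n^*,\tfrac{\gamma}{L}\q_d)=[\psi((\gamma/L)^d)/(\gamma/L)^s]\,\e(\omega_n^*(\q_d))$.

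Combining with $\e(\omega_N^*(\q_d))\le\e(\omega,\q_d)$ and dividing through by $\psi(1/N)N^{1+s/d}$, the algebraic simplification $L^{d+s}/(L^dn)^{1+s/d}=n^{-1-s/d}$ produces the key scaling inequality
\[
    g(L^dn)\le (1+\delta)\,\gamma^{-s}\,\frac{\psi((\gamma/L)^d)\,\psi(1/n)}{\psi(1/(L^dn))}\,g(n).
\]
Slow variation of $\psi$ at $0$ forces $\psi((\gamma/L)^d)/\psi(1/(L^dn))=\psi(\gamma^dn/N)/\psi(1/N)\to 1$ as $L\to\infty$ with $n,\gamma$ fixed, so in the model case $\psi\equiv 1$ (applicable to all examples of Section~\ref{sec:examples}) this collapses to $\limsup_{L\to\infty}g(L^dn)\le\gamma^{-s}g(n)$. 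I would then choose $n_k\to\infty$ with $g(n_k)\to\liminf_N g(N)$, which controls $g$ along the sub-lattice $\{L^dn_k\}$, and to match an arbitrary $N$ would repeat the construction with a flexible tiling in which $r:=N-L^dn$ of the sub-cubes receive $n+1$ points and the remaining $L^d-r$ receive $n$, so that the total point count equals $N$ exactly. Letting $\gamma\uparrow 1$ then yields $\limsup_N g(N)\le\liminf_N g(N)$, hence the existence of $C:=\lim_N g(N)$.

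The identification of $\f$ is immediate: dividing the $s$-scale invariance identity $\e(\omega_N^*(x+a\q_d))=[\psi(a^d)/a^s]\,\e(\omega_N^*(\q_d))$ by $\psi(1/N)N^{1+s/d}$ and passing to the limit gives $\f(a^d)=a^{-s}C=C(a^d)^{-s/d}$, as claimed. The main obstacle is the gap-filling step: without \emph{a priori} continuity of $N\mapsto\e(\omega_N^*(\q_d))$, the flexible tiling works only if $\e(\omega_{n+1}^*(\q_d))/\e(\omega_n^*(\q_d))$ can be uniformly controlled as $n\to\infty$, which in practice calls for a preliminary bounded-ness argument on $g$ or an intrinsic monotonicity of the functional under addition of one point; for a general slowly varying $\psi$, one must additionally choose $n$ and $N$ of comparable order so that the residual $\psi$ factors are negligible.
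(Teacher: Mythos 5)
Your proposal correctly identifies the strategy as a Fekete-type tiling-and-scaling argument, and the algebra and use of slow variation are essentially what the paper does, but the route for passing from $N=L^dn$ to arbitrary $N$ is genuinely different. The paper does not use a flexible tiling with $n$- and $(n+1)$-point cells. Instead, for each $N$ (in a subsequence realizing the $\limsup$) it fixes $n$ from a sequence nearly realizing the $\liminf$ and chooses $L$ so that $(L-1)^dn\le N<L^dn$. The trial configuration $\omega$ has exactly $L^dn\ge N$ points (all cells identical, as required by Definition~\ref{def:local}), and the inequality
\[
\frac{\e(\omega_N^*(\q_d))}{\psi(1/N)N^{1+s/d}}\le\frac{\e(\omega,\q_d)}{\psi(1/N)\bigl(n(L-1)^d\bigr)^{1+s/d}}
\]
is used: the energy in the numerator is that of the larger configuration, while the denominator uses the smaller count $n(L-1)^d\le N$. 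After the scale-invariance algebra this produces an extra factor $\bigl(L/(L-1)\bigr)^{s+d}$, which tends to $1$ as $L\to\infty$, and a ratio $\psi(\gamma^d/L^d)/\psi(1/N)\to1$ by slow variation with $n,\gamma$ fixed. In particular, the obstacle you flag --- uniform control of $\e(\omega_{n+1}^*)/\e(\omega_n^*)$, i.e.\ continuity of $N\mapsto\e(\omega_N^*(\q_d))$ --- never arises in the paper's version, since a single tile size suffices. What the paper needs instead, and uses tacitly, is that $\e(\omega_N^*(\q_d))\le\e(\omega,\q_d)$ even though $|\omega|=L^dn>N$; this is not a listed hypothesis but holds whenever the energy does not increase under removal of points, as is the case for the repulsive ($s>0$) local functionals considered. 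Your mixed $n/(n+1)$ tiling also strictly exceeds what Definition~\ref{def:local} provides (it is stated only for periodic tilings by identical copies of a single $\omega_n^*$), so besides the continuity-in-$N$ concern you correctly raise, you would need to extend the locality hypothesis. The final identification $\f(a^d)=a^{-s}C$ is handled the same way in both arguments.
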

Observe that this result determines the exponent $ \sigma $ in $ \f $ and $ \w $ for (approximately) $ s $-scale invariant functionals on $ \mathbb R^d $:
\[
    \sigma = s/d.
\]
\begin{proof}
    Suppose $ s > 0 $; the case of $ s < 0 $ can considered similarly, taking into account the different direction of monotonicity in \eqref{eq:monotonicity}.
    Let 
    \[
        \underline \l := \liminf_{N\to\infty} \frac{\e(\omega_N^*(\q_d))}{N^{1+s/d}},
    \] 
    and fix a sequence of minimizing configuration $ \underline \omega_n^* = \underline\omega_n^*(\q_d) $, $ n\in \uline{\mathscr N} $ for which $ \e(\underline \omega_n^*) < (\underline \l+\epsilon)n^{1+s/d} $, $ n\geq N_0(\e) $. In order to obtain an estimate on
    \[
        \overline \l := \limsup_{N\to\infty} \frac{\e(\omega_N^*(\q_d))}{N^{1+s/d}},
    \] 
    consider the sequence $ \oline{\mathscr N}\subset \mathbb N $ such that the minimizing configurations 
    $$ \{\overline \omega_N^* \}_{N\in \oline{\mathscr N}} = \{\omega_N^*(\q_d) \}_{N\in \oline{\mathscr N}}  $$
    achieve the limit of $ \overline \l $. For every $ N \in\oline{\mathscr N} $, find a pair of numbers of the form $ (L-1)^d\,n $ and $ L^d\,n $ such that 
    \begin{equation}
        \label{eq:squeeze}
        (L-1)^d\, n \leq N < L^d\,n.
    \end{equation}
    Following Definition~\ref{def:local}, we next estimate the value of $ \e(\overline\omega_N^*) $, $ N\in \oline{\mathscr N} $, from above by the value of $ \e $ on the configuration obtained by tiling $ \q_d $ with copies of $\underline \omega_n^*(\frac1L \q_d) $; more precisely, let
    \[
        \omega = \bigcup_{
        \boldsymbol i\in\, L\mathbb Z^d }
        \left(\omega_n^*\left(\frac\gamma{L}\q_d\right) +  \frac{\boldsymbol i}L\right), \qquad n \geq N_0(\e),
    \]
    where $ L\mathbb Z = \{ 0,1,\ldots, L-1 \} $, and  $\omega_n^*\left(\frac\gamma{L}\q_d\right)$ denotes, following the above notation, the configuration minimizing $ \e $ on the cube $ \frac\gamma L \q_d $. We assume $ \gamma \in (0,1) $, a constant to be specified later; we will eventually let $ \gamma\uparrow 1 $. 

    % When $ n\to\infty $, there holds $ \gamma(\underline\omega_n,l,k) \to 1 $. 
    By the approximate scale invariance of $ \e $ there holds
    \[
        \e\left(\omega_n^*\left(\frac\gamma{L}\q_d\right)\right) = \psi\left( \frac{\gamma^d}{L^d} \right) \left(\frac L\gamma\right)^s \e(\underline\omega_n^*),
    \]
    and so from the minimality of the functional $ \e $ on $ \overline\omega_N^* $ and \eqref{eq:squeeze}, we have 
    \[
        \begin{aligned} 
             \frac{\e(\overline\omega_N^*)}{\psi(1/N)N^{1+s/d}}
            &\leq \frac{\e(\omega, \q)}{\psi(1/N)N^{1+s/d}} \\
            &
            \leq  \frac{\e(\omega,\q)}{L^d\,\e\left(\omega_n^*\left(\frac\gamma{L}\q_d\right)\right)}
            \cdot 
            \frac{L^d\,\psi\left(\frac {\gamma^d}{L^d}\right) \left(\frac L\gamma\right)^s \e(\underline\omega_n^*)}{\psi(1/N)(n(L-1)^d)^{1+s/d}} \\
            &\leq \frac{\e(\omega,\q)}{L^d\,\e\left(\omega_n^* \left(\frac\gamma{L}\q_d\right) \right)}
            \cdot 
            \frac{\psi\left(\frac {\gamma^d}{L^d}\right) }{\psi(1/N)}\cdot \gamma^{-s} \left(\frac{L}{L-1}\right)^{s+d}(\underline \l+\epsilon).
    \end{aligned}
    \]
    By the construction of $ \omega $, and locality of the functional $ \e $, the first ratio is eventually bounded by 1 when $ L \to \infty $ by Definition~\ref{def:local}. After taking $ \gamma $ close to 1, $ n $ sufficiently large, the first claim of the theorem follows.  
    % TODO: clarify

    % Dividing both sides of the last inequality by $ N^{1+s/d} $ and using \eqref{eq:squeeze} yields
    % \[
    %     \begin{aligned}
    %         \overline \l = \lim_{\oline{\mathscr N}\ni N\to\infty} \frac{\e(\overline\omega_N^*) }{N^{1+s/d}} 
    %         \leq \frac1{\gamma^s}  \frac{L^{s+d}}{(L-1)^{s+d}} \frac{\e(\underline\omega_n^*)}{n^{1+s/d}}  
    %         \leq \frac1{\gamma^s}  \frac{L^{s+d}}{(L-1)^{s+d}} (\underline \l + \epsilon). 
    %     \end{aligned}
    % \]
    % This inequality holds for arbitrary $ \gamma \in (0,1) $ and $ \epsilon > 0 $.
    % Since also $ L $ and $ n $ can be taken arbitrarily large, we have $ \overline \l \leq \underline \l $, which completes the proof of existence of the desired limit.

    The second claim is a consequence of 
    \[
        \w(t) = \lim_{N\to \infty} \frac{\psi(1/tN)(tN)^{1+s/d}}{\psi(1/N)(N)^{1+s/d}} = t^{1+s/d} = t^{1+\sigma},
    \]
    whence $ \f(t) = \f(1) t^{-s/d} $.
\end{proof}
The above theorem will allow us to obtain existence of asymptotics on cubes for the two classes of functionals of primary interest to us: Riesz $ k $ nearest neighbor functionals and optimal quantization error. It should be noted that the theorem does not guarantee that the asymptotics on cubes are finite; we shall verify this for both types of functionals separately.

\subsection{Riesz \texorpdfstring{$ k $}{k} nearest neighbor functionals}
\label{sec:nearest_neighbor}
This section deals with the modification of the classical Riesz energy that acts locally, by taking into account only the interactions with the $ k $ nearest neighbors of every point in $ \omega_N $. 
Namely, we define the value of the \textit{Riesz k nearest neighbor $ s $-energy} on a collection $ \omega_N = \{ x_1,\ldots,x_N \} \subset \Omega \subset \mathbb R^d $ as
\begin{equation*}
    E^k_s(\omega_N) := \sum_{i=1}^N \sum_{j\in I_{i,k} } \|{x}_i - {x}_j \|^{-s}, \qquad  s > 0,
\end{equation*}
As above, $ \omega_N \subset \Omega $ denotes $ N $-element subsets of $ \Omega $ and the indices of $ k $ nearest neighbors of $  x_i $ are denoted by $ I_{i,k} $. Thus $ I_{i,k} \subset \{ 1,\ldots,N \} $, $ \#  I_{i,k} = k $, and
\[
    \| x_i -  x_l\| \leq \| x_i -  x_j\| \quad \text{ for any } l\in I_{i,k} \text{ and } j\notin I_{i,k}.
\]
It will be also convenient to write $ x_{i,l} $ for the $ l $-th nearest neighbor of the point $ x_i $, $ 1\leq l \leq N-1 $. In particular, distances $ \|x_i - x_{i,l}\|  $ are increasing in $ l $.
In line with Definition~\ref{def:ext_field}, we will further augment the energy functional with a weight and an external field:
\begin{equation}\label{eq:k_energy}
    E^k_s(\omega_N; \kappa, \ext) := \sum_{i=1}^N \sum_{j\in I_{i,k} } \kappa( x_i,  x_j) \|{x}_i - {x}_j \|^{-s} +  N^{s/d} \sum_{i=1}^N \ext( x_i), \qquad  s > 0.
\end{equation}
Here the function $ \kappa \geq 0 $ is assumed to be lower semicontinuous on $ \Omega \times\Omega $; in addition, $ \kappa $ is bounded on $ \Omega\times \Omega $ and continuous and strictly positive on $ \diag(\Omega\times\Omega) $ as a subset of $ \Omega\times\Omega $. The external field $ \xi:\Omega \to [0,\infty] $ is assumed to be continuous as well (see the discussion after Theorem~\ref{thm:ext_dist} about lower semicontinuous $ \ext $). The positivity of $ \kappa $ and $ \ext $ can be relaxed to boundedness below; we assume positivity to avoid technicalities in the proofs.

Our eventual goal is to verify that Definitions~\ref{def:ext_field}, \ref{def:short_range}, \ref{def:cube_conts}. It is easy to see that the truncation of interactions to a fixed number of nearest neighbors guarantees that $ E^k $ is local, satisfying Definition~\ref{def:local}. Without such truncation, the locality does not hold when $ s < d $.

% TODO move this paragraph to intro
Modelling repulsive interactions that take into account only a certain number of nearest neighbors has numerous applications in physics and chemistry \cite{isobeHardsphere2015}. Truncation to a fixed number of nearest neighbors can be used to lower the computational complexity of node-distributing algorithms \cite{borodachovLow2014, vlasiukFast2018}; its advantage over radial truncation is that storing the coordinates of a configuration requires constant amount of memory.
 
\medskip

\noindent{\bf Existence of minimizers of $ E^k_s $ and their local properties. } In order to study the minimizing configurations, we first need to show that \eqref{eq:k_energy} attains its minimum on $ A^N $, that is, demonstrate lower semicontinuity of $ E^k $.
\begin{lemma}
    The functional $ E^k_s(\omega_N; \kappa,\ext) $ is lower semicontinuous on $ \Omega^N $ for $ N $ fixed.
\end{lemma}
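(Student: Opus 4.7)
The external-field term $N^{s/d}\sum_{i=1}^N \xi(x_i)$ is continuous on $\Omega^N$ since $\xi$ is continuous, so it suffices to establish lower semicontinuity of the interaction sum
\[
F(\omega_N) := \sum_{i=1}^N \sum_{j \in I_{i,k}} \kappa(x_i, x_j)\|x_i - x_j\|^{-s}.
\]
When ties in the nearest-neighbor selection render $I_{i,k}$ ambiguous, I interpret $F(\omega_N)$ as the minimum of this expression over all admissible choices of $I_{i,k}$. This is the principal subtlety: a fixed-but-arbitrary choice of $I_{i,k}$ would cause $F$ to have discontinuous jumps at tie configurations, whereas the min-convention resolves these jumps in the lower-semicontinuous direction and is the correct definition for a minimization problem.

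Fix $\omega_N^{(n)} \to \omega_N$ in the product topology. If some two points of $\omega_N$ coincide, say $x_i = x_{i'}$ with $i \neq i'$, then $F(\omega_N) = +\infty$; for the perturbations $\|x_i^{(n)} - x_{i'}^{(n)}\| \to 0$ while $\kappa(x_i^{(n)}, x_{i'}^{(n)}) \to \kappa(x_i, x_i) > 0$ by continuity of $\kappa$ on the diagonal, and eventually $i'$ becomes a nearest neighbor of $i$. The corresponding term of $F(\omega_N^{(n)})$ therefore diverges to $+\infty$, and nonnegativity of the remaining terms forces $F(\omega_N^{(n)}) \to +\infty$.

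If instead all points of $\omega_N$ are distinct, then all pairwise distances are strictly positive. Passing to a subsequence I may assume that $I_{i,k}(\omega_N^{(n)}) = J_i$ is constant in $n$ for every $i$. Combining continuity of $\|\cdot\|^{-s}$ on distinct pairs with lower semicontinuity of $\kappa$ yields
\[
\liminf_{n\to\infty} F(\omega_N^{(n)}) \geq \sum_{i=1}^N \sum_{j \in J_i} \kappa(x_i, x_j)\|x_i - x_j\|^{-s}.
\]
Passing to the limit in the defining nearest-neighbor inequalities $\|x_i^{(n)} - x_l^{(n)}\| \leq \|x_i^{(n)} - x_m^{(n)}\|$ for $l \in J_i$ and $m \notin J_i \cup \{i\}$ shows that each $J_i$ is a valid choice of $k$ nearest neighbors of $x_i$ in $\omega_N$ (perhaps one of several when ties are present), so the right-hand side dominates $F(\omega_N)$ by the min-convention, closing the argument.
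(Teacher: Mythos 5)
Your proof is correct and takes a genuinely different route from the paper's. The paper argues directly by a fine $\epsilon$-perturbation estimate: it partitions the other points by their distance from $x_i^0$, chooses $\epsilon$ smaller than half the smallest gap between distinct distances, and shows that within such an $\epsilon$-ball the $l$-th nearest-neighbor distance cannot change by more than $2\epsilon$, so the Riesz factors are controlled and lower semicontinuity of $\kappa$ and $\xi$ finishes. You instead pass to a subsequence on which the index sets $I_{i,k}(\omega_N^{(n)})=J_i$ are constant, take the limit termwise using lower semicontinuity of $\kappa$ and nonnegativity, and then verify by passing to the limit in the defining inequalities that each $J_i$ remains a valid $k$-nearest-neighbor selection at the limit. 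This is a cleaner combinatorial bookkeeping device and avoids the explicit $\epsilon$-$\delta$ estimate entirely.

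More importantly, you correctly identify a genuine subtlety that the paper's proof glosses over: the treatment of ties. With any fixed tie-breaking rule (e.g.\ smallest index), lower semicontinuity can actually fail: take $d=1$, $k=1$, $N=3$, $x_1=0$, $x_2=1$, $x_3=-1$, a $\kappa$ with $\kappa(0,1)$ much larger than $\kappa(0,-1)$, and perturb $x_3$ inward so that it becomes the strict nearest neighbor of $x_1$; the limit of the energies then drops below the value at the tied configuration computed with the index-ordered convention. The paper's argument only shows that the \emph{distances} $\|x_i - x_{i,l}\|$ converge to $\|x_i^0 - x_{i,l}^0\|$, which suffices for the Riesz factor but does not control $\kappa$ when the identity of the neighbor swaps along the sequence. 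Your min-over-admissible-selections convention (equivalently, working with the lower semicontinuous envelope of the naively defined functional) is the right fix, and it is what makes the final step of your proof --- $J_i$ is a valid choice, so $\sum_i\sum_{j\in J_i}\kappa(x_i,x_j)\|x_i-x_j\|^{-s}$ dominates $F(\omega_N)$ --- actually close the argument. Good.

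One small imprecision: in the coincident-points case you say ``eventually $i'$ becomes a nearest neighbor of $i$,'' which need not be literally true if several points coincide; what you in fact use is that the nearest-neighbor distance from $x_i^{(n)}$ is at most $\|x_i^{(n)}-x_{i'}^{(n)}\|\to 0$, and since that nearest neighbor then also converges to $x_i$, continuity and positivity of $\kappa$ on the diagonal force the corresponding term to diverge. The conclusion is unchanged, but the wording should be tightened.
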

\begin{proof}
    Fix a configuration $ \omega_N^0 = \{ x_1^0,\ldots,x_N^0 \} $; if $ x_i^0 = x_j^0 $ for some $ i\neq j $,  $ E^k_s(\omega_N^0; \kappa,\ext) = +\infty $. Due to the lower semicontinuity of $ \ker_s $, positivity of $ \kappa $ on the diagonal, and boundedness of $ \ext $ from below, there holds
    \[ 
        E^k_s(\omega_N; \kappa,\ext) \to +\infty, \qquad \hbox{ whenever }  \omega_N \to \omega_N^0 \hbox{ in }  \Omega^N.
    \]
    Let now $ \omega_N^0 $ consist of distinct points. Fix a sufficiently small $ \epsilon > 0 $ and suppose configuration $ \omega_N = \{ x_0,\ldots,x_N \} $ is within $ \epsilon $ in the $ l^\infty $ distance from $ \omega_N^0 $, so that 
    \begin{equation}
        \label{eq:linfty_convergence}
        \|{x}_i - {x}_i^0 \| < \epsilon, \qquad 1 \leq i \leq N.
    \end{equation}
    Note that if $ \epsilon < \Delta(\omega_N^0) /2 $, half of the minimal distance between the elements  of $ \omega_N^0 $ as given in \eqref{eq:separation}, then
    $ \|{x}_i - {x}_j \|^{-s} $ are continuous functions of $ \omega_N $ in the $ \epsilon $-neighborhood of $ \omega_N^0 $.  Fix an index $ i $ and consider distances from $ x_i $ to its nearest neighbors:
    \[
        d_l' := \| x_i - x_{i,l}\|, \qquad  1\leq l \leq N-1.
    \]
    By the definition of $ x_{i,l} $, $ d_{l+1}'\geq d_l' $. Let $ \{ d_m \} $ be the  increasing sequence of unique values among $ \{ d_l' \} $. Lastly, assume that $ \epsilon < \Delta(\omega_N^0) /2 $ and also
    \[
        \epsilon < \min \left\{ (d_{m+1}-d_m)/4 \right\}.
    \] 
    % In view of \eqref{eq:linfty_convergence}, 
    % The last assumption guarantees that it is possible to 
    Define a partition of the points in $ \omega_N^0 $ and $ \omega_N $ indexed by $ d_m $, using the maps
    \[ 
        \begin{aligned}
          \mathfrak p^0\colon  x_j^0 & \mapsto \|x_i^0 - x_j^0\|,\\
          \mathfrak p  \colon  x_j   & \mapsto \|x_i^0 - x_j^0\|.
        \end{aligned}
    \]
    Observe that in view of \eqref{eq:linfty_convergence} and the choice of $ \epsilon $, for any $ x_m $, $ x_n\in\Omega_n $ inequality $ \mathfrak p(x_m) < \mathfrak p(x_n) $ implies 
    \[ 
        \|x_i - x_m \| < \|x_i - x_n \|.
    \]
    This means, nearest neighbors of $ x_i $ and $ x_i^0 $ of the same order in the two configurations must satisfy
    \[ 
        \mathfrak p^0 (x_{i,l}^0) = \mathfrak p (x_{i,l}),
    \]
    that is, the nearest neighbors of $ x_i $ in $ \omega_N $, ordered by increasing distance, have the same indices as those of $ x_i^0 $ in $ \omega_N^0 $, up to permutations inside the partitions introduced by the maps $ \mathfrak p^0 $ and $ \mathfrak p $.
    Let $ 1 \leq l \leq k $. The above construction can be summarized by observing that
    \[ 
        \|x_i^0 - x_{i,l}^0 \| - 2 \epsilon < \|x_i - x_{i,l} \| < \|x_i^0 - x_{i,l}^0 \| + 2 \epsilon,
    \]
    where $x_{i,l}^0$ and $x_{i,l}$ denote the $ l $-th nearest neighbor to $ x_i $ in the respective configuration. Since $ \|x_i^0 - x_{i,l}^0 \| > 0 $, we have shown that the Riesz term in \eqref{eq:k_energy} for $ \omega_N $, corresponding to the $ l $-th nearest neighbor of the $ i $-th point, is close to the same term in \eqref{eq:k_energy} for $ \omega_N^0 $. In combination with lower semicontinuity of $ \kappa $ and $ \ext $, this finishes the proof.
\end{proof}
Thus, the minimizers of $ E^k $ on any compact $ A\subset \Omega $ are well-defined.
By Proposition~\ref{prop:adding_ext_field}, adding the term 
\begin{equation}
    \label{eq:ext_field_summand}
    N^{s/d} \sum_{i=1}^N \ext( x_i)
\end{equation}
to a short-range interaction functional $ \e_\h $ with rate $ \t(t) = t^{1+s/d} $ gives a short-range functional with external field, $ \e_{\h,\xi} $. It therefore suffices to show that
\[
    E^k_s(\omega_N; \kappa) = E^k_s(\omega_N; \kappa, 0) = \sum_{i=1}^N \sum_{j\in I_{i,k} } \kappa( x_i,  x_j) \|{x}_i - {x}_j \|^{-s}
\]
is a short-range functional with the rate $ t^{1+s/d} $. In view of continuity of $ \kappa $ on $ \diag(\Omega\times\Omega) $, it is natural to expect that the weight $ \h $ of the above expression as a short-range functional $ \e_\h $ is $ \kappa(x,x)$,  $x \in \Omega $.
% Let us first show that the energy $ E^k(\cdot\,;1,0) $ is a short-range functional. 

We will further establish some useful local properties of the $ k $-truncated energy: separation and covering.
Let
\begin{equation}
    \label{eq:separation}
    \Delta(\omega_N) := \min_{1\leq i \leq N} \|x_i - x_{i,1}\|
\end{equation}
denote the minimal distance to the nearest neighbor in a configuration $ \omega_N \subset \Omega $. We refer to the quantity $ \Delta(\omega_N) $ as the {\it separation} of $ \omega_N $. A simple volume argument implies $ \Delta(\omega_N) \leq C(A) N^{-1/d} $ for any $ \omega_N\subset A \subset \Omega $. To show that $ \Delta(\omega_N^*(A)) $ for minimizers has the optimal order $ N^{-1/d} $, we need the following geometric fact.
\begin{lemma}
    \label{lem:few_nns}
    Fix a configuration $ \omega_N\subset \mathbb R^d $ and $ r > 0 $. For any $ x_i\in \omega_N $, the number of points $ x_j $ in $ \omega_N\setminus B(x_i,r) $ such that $ x_i $ is one of $ k $ nearest neighbors of $ x_j $ is bounded by $ n(k,d) $, depending only on the number of neighbors $ k $ and the dimension $ d $. That is, 
    \[
        \#\{ j : i\in I_{j,k}, \ x_j\notin B(x_i,r) \} \leq n(k,d), \qquad 1\leq i \leq N.
    \]
\end{lemma}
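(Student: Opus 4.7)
The plan is to prove this by a cone-decomposition argument centered at $x_i$, and observe that the radius $r$ plays no role in the bound. Fix an angle $\theta_0 \in (0, \pi/3)$, say $\theta_0 = \pi/4$. I will first invoke the standard fact that $\mathbb R^d \setminus \{x_i\}$ can be partitioned into a finite number $C(d)$ of closed cones with apex at $x_i$, each of half-opening angle at most $\theta_0/2$, where $C(d)$ depends only on the dimension; this follows from compactness of the unit sphere $S^{d-1}$ together with a finite cover by spherical caps of angular radius $\theta_0/2$.

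The heart of the proof is the claim that at most $k$ indices $j$ from the set
\[
    S := \{ j : i \in I_{j,k}\}
\]
can correspond to points $x_j$ lying in a single such cone $\mathcal C$. To see this, suppose $x_{j_1}, \ldots, x_{j_m}$ all lie in $\mathcal C$, labelled so that $\|x_{j_1}-x_i\| \leq \cdots \leq \|x_{j_m}-x_i\|$. For any $a < b$, letting $\theta$ be the angle at $x_i$ in the triangle $x_{j_a}\,x_i\,x_{j_b}$, we have $\theta \leq \theta_0 < \pi/3$ by construction of $\mathcal C$, so $\cos\theta > 1/2$. The law of cosines then gives
\[
    \|x_{j_a}-x_{j_b}\|^2 = \|x_{j_a}-x_i\|^2 + \|x_{j_b}-x_i\|^2 - 2\|x_{j_a}-x_i\|\|x_{j_b}-x_i\|\cos\theta,
\]
and substituting $\|x_{j_a}-x_i\| \leq \|x_{j_b}-x_i\|$ together with $\cos\theta > 1/2$ yields $\|x_{j_a}-x_{j_b}\| < \|x_{j_b}-x_i\|$. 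Hence each of the $b-1$ points $x_{j_1},\ldots,x_{j_{b-1}}$ is strictly closer to $x_{j_b}$ than $x_i$ is, so the rank of $x_i$ among the neighbors of $x_{j_b}$ is at least $b$. Membership $i \in I_{j_b,k}$ forces $b \leq k$, hence $m \leq k$.

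Summing over the $C(d)$ cones gives $\#S \leq k\, C(d) =: n(k,d)$, which bounds the quantity in the lemma (the restriction $x_j \notin B(x_i, r)$ only shrinks the set $S$, so the same bound applies a fortiori). The main technical point is really just the elementary trigonometric estimate in the cone, and I do not anticipate any serious obstacle beyond choosing the opening angle strictly below $\pi/3$ so that strict inequality survives the edge case $\|x_{j_a}-x_i\| = \|x_{j_b}-x_i\|$.
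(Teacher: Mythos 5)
Your proof is correct and takes essentially the same approach as the paper: both arguments decompose directions from $x_i$ into angular regions of opening below $\pi/3$ (you use cones of half-angle $\theta_0/2<\pi/6$; the paper projects radially onto a sphere and uses geodesic caps of radius $\pi/6$), and both rest on the same law-of-cosines observation that two points within angular distance $<\pi/3$ of one another satisfy $\|x_{j_a}-x_{j_b}\|<\max\{\|x_i-x_{j_a}\|,\|x_i-x_{j_b}\|\}$, capping the count per region at $k$. Your formulation is slightly more direct in that it avoids the contradiction framing and yields the explicit bound $n(k,d)=k\,C(d)$ with $C(d)$ a spherical covering number, whereas the paper leaves $n(k,d)$ defined implicitly as an extremal packing quantity on $\mathbb S^{d-1}$.
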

    \begin{figure}[t]
        \centering
        \includegraphics[width=0.6\linewidth]{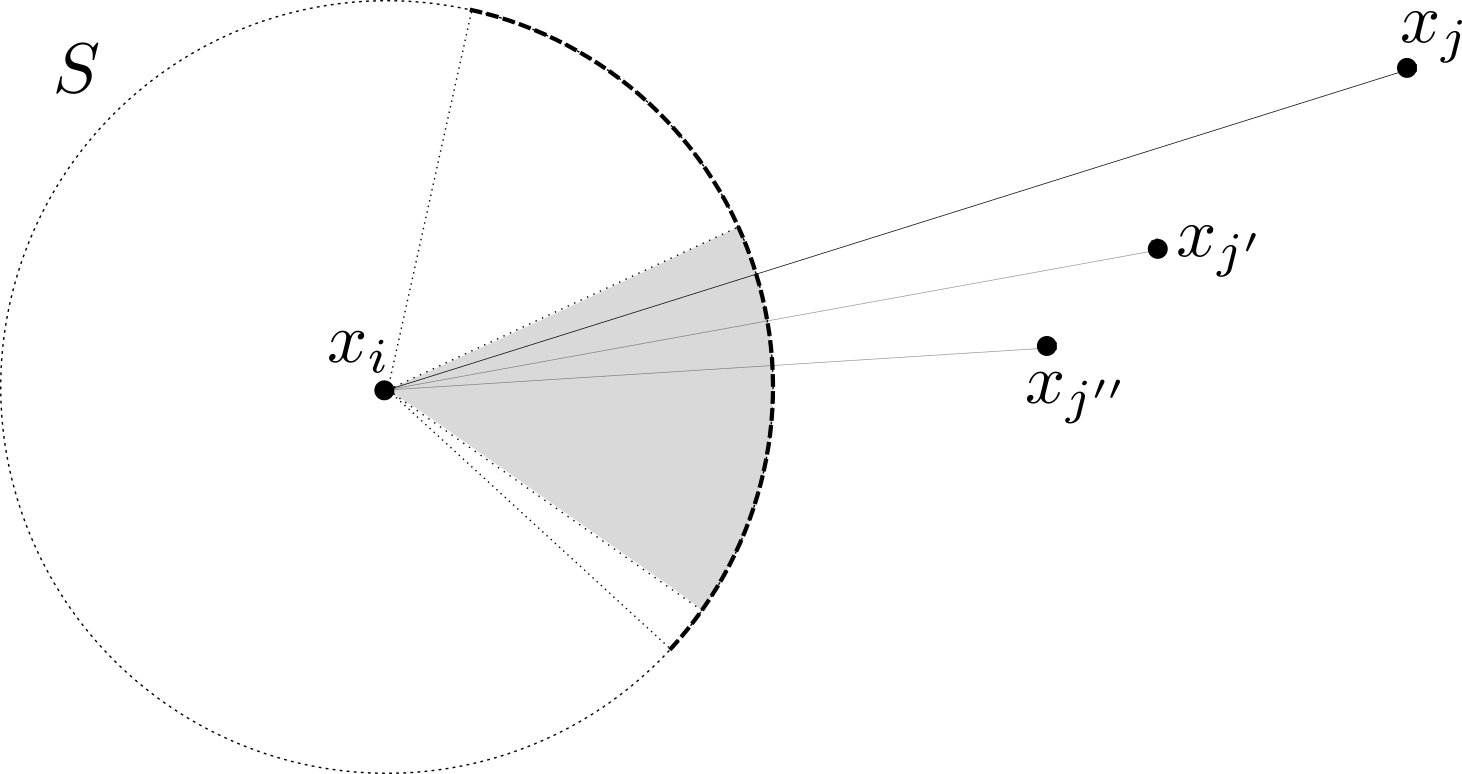}
        \caption{If the spherical cap of radius $ \pi/3 $ around $ x_j $ (dashed) contains the projection of $ x_{j'} $, it follows $ \|x_j-x_{j'}\| <\max\{ \|x_i-x_j\|, \|x_i-x_{j'}\|\} $. Hence, at most $ k $ points among $ \{ x_j \in B(x_i,r)^c: i\in I_{j,k} \} $ can be projected into any cap of angular radius $ \pi/6 $ (shaded).  }
        \label{fig:nns}
    \end{figure}
\begin{proof}
    Fix an $ x_i \in \omega_N $ and let $ \omega_{N,i} $ denote 
    \[
        \{ x_j : i\in I_{j,k} \}, \qquad 1\leq i \leq N.
    \]
    Let further $ \pi_S $ be the radial projection onto $ S := \partial B(x_i,r) $ and consider the image of points in $ \omega_{N,i}\setminus B(x_i,r) $ under this projection:  
    $$ \pi_S(\omega_{N,i}\setminus B(x_i,r)).$$
    Suppose that a geodesic ball on $ S $ of radius $ \pi/6 $, denoted $ B_S(y,\pi/6) $, $ y\in S $, contains more than $ k $ elements of this image. Let $ x_j $ be the furthest of the points in $ \omega_{N,i}\setminus B(x_i,r) $, such that $ \pi_S(x_j) \in B_S(y,\pi/6) $. Then $ B_S(y,\pi/6) \subset B_S(\pi_S(x_j),\pi/3) $, implying that $ B_S(\pi_S(x_j),\pi/3) $ contains $ k $ projections distinct from $ \pi_S(x_j) $. 

    On the other hand, for every point $ x_{j'} \in \omega_{N,i}\setminus B(x_i,r) $, from $ \pi_S(x_{j'}) \in B_S(\pi_S(x_j),\pi/3) $ it follows $ \angle x_jx_ix_{j'} < \pi/3 $, whence 
    $$ \|x_{j'}-x_j\| <\max\{ \|x_i-x_j\|, \|x_i-x_{j'}\|\} = \|x_i-x_j\|, $$
    since $ x_j $ was chosen the furthest from $ x_i $. Thus, every other point projected into $ B_S(x,\pi/6)$ is closer to $ x_j $ than $ x_i $, and it must be $ i\notin I_{j,k} $, a contradiction.
    By this argument, the constant $ n(k,d) $ chosen as
    \[
        n(k,d) : = \max \left\{ n : \exists \omega_n\subset \mathbb S^{d-1} \text{ such that } \#(B_S(y,\pi/6) \cap \omega_n)\leq k\ \forall y\in \mathbb S^{d-1}\right\}
    \]
    has the properties stated in the claim of the lemma.
\end{proof}
\noindent We will also need the following classical result due to Frostman.
\begin{proposition}[Frostman's lemma {\cite[p. 112]{mattila1995geometry}}, \cite{frostman1935potentiel}]
    \label{prop:frostman}
    For any compact $ A $ with $ \mathcal H_d(A) > 0 $ there is a finite nontrivial Borel measure $ \mu  $ on $ \mathbb R^{d'} $ with support inside $ A $ such that
\[ 
    \mu(B(x,r)) \leq r^d,\ {x}\in \mathbb R^{d'}.
\]
\end{proposition}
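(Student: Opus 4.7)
The plan is to construct $\mu$ by the classical Frostman dyadic redistribution. Fix a cube $Q_0$ containing $A$ and assume after rescaling that $Q_0$ has side $1$. For each integer $n \geq 1$, denote by $\mathcal{D}_k$ the collection of dyadic subcubes of $Q_0$ of side $2^{-k}$. I will build $\mu_n$ by a bottom-up sweep through scales $k = n, n-1, \ldots, 0$, and then pass to a weak$^*$ limit.

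First I would initialize at the finest scale: set $\mu_n^{(n)} := \sum_{Q \in \mathcal{D}_n,\, Q \cap A \neq \emptyset} 2^{-nd}\, \delta_{x_Q}$ for arbitrary points $x_Q \in Q \cap A$. Then I perform the inductive step: for $k = n-1, n-2, \ldots, 0$ and each $Q \in \mathcal{D}_k$, if $\mu_n^{(k+1)}(Q) > 2^{-kd}$, I uniformly rescale $\mu_n^{(k+1)}|_Q$ by the factor $2^{-kd}/\mu_n^{(k+1)}(Q)$; otherwise leave it unchanged. Call the resulting measure $\mu_n := \mu_n^{(0)}$. By construction $\mu_n(Q) \leq 2^{-kd} = \lambda_d(Q)$ for every dyadic cube $Q$ of side $2^{-k}$, $0 \leq k \leq n$, and $\mu_n$ is supported on (a neighborhood of) $A$.

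The main obstacle is to obtain a uniform lower bound $\mu_n(\mathbb{R}^{d'}) \geq c(d)\,\mathcal{H}_d^\infty(A)$, where $\mathcal{H}_d^\infty$ is the $d$-dimensional Hausdorff content. The idea is a stopping-time argument: for each $x \in A$, let $Q(x)$ be the largest dyadic cube containing $x$ on which the rescaling was triggered (setting $Q(x) = Q_0$ if the top cube triggers, or the smallest cube containing $x$ otherwise). Then the collection $\{Q(x) : x \in A\}$ covers $A$, its cubes are pairwise disjoint or nested, and after selecting the maximal ones one gets a cover $\{Q_j\}$ of $A$ with $\mu_n(Q_j) = \lambda_d(Q_j)$ for each $j$. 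Summing, $\mu_n(\mathbb{R}^{d'}) = \sum_j \lambda_d(Q_j) \gtrsim \sum_j (\operatorname{diam} Q_j)^d \geq \mathcal{H}_d^\infty(A) > 0$, where the last inequality uses that $\mathcal{H}_d(A) > 0$ forces positive Hausdorff content.

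Finally, by Banach--Alaoglu I extract a weak$^*$ convergent subsequence $\mu_n \weakto \mu$; the total mass bound survives in the limit, so $\mu$ is nontrivial, and since the $\mu_n$ are supported within the $2^{-n}$-neighborhood of the compact set $A$, the limit $\mu$ is supported in $A$. For the ball bound, I cover an arbitrary $B(x,r) \subset \mathbb{R}^{d'}$ by a bounded number (depending only on $d,d'$) of dyadic cubes of side at most $2r$; each contributes at most $(2r)^d$ to $\mu_n$, hence $\mu(B(x,r)) \leq C(d,d')\, r^d$. Rescaling $\mu$ by the constant $C(d,d')^{-1}$ yields the stated estimate $\mu(B(x,r)) \leq r^d$ while preserving nontriviality.
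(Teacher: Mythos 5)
The paper states Frostman's lemma as a cited classical result (Mattila~\cite[p.~112]{mattila1995geometry}, Frostman~\cite{frostman1935potentiel}) and does not supply a proof, so there is no in-paper argument to compare against. Your proof is a correct reconstruction of the standard dyadic ``Frostman scheme'' --- initialize at scale $n$ with mass $2^{-nd}$ on each dyadic cube meeting $A$, sweep upward renormalizing whenever a coarser cube exceeds the threshold $(\mathrm{side})^d$, extract a weak$^*$ limit --- which is precisely the argument in the reference the paper cites. The key steps (the stopping-time selection of maximal saturated cubes giving $\mu_n(\mathbb R^{d'})\gtrsim\mathcal H_d^\infty(A)$, the positivity of Hausdorff content when $\mathcal H_d(A)>0$, and the bounded-overlap of dyadic cubes and balls yielding the final estimate up to a dimensional constant) are all sound.

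Two small points worth tidying. First, since $A\subset\mathbb R^{d'}$ with $d'\geq d$, the threshold for a cube of side $2^{-k}$ is $(\mathrm{side}\,Q)^d=2^{-kd}$, which is \emph{not} the $d'$-dimensional Lebesgue measure $\lambda_{d'}(Q)=2^{-kd'}$; writing this as ``$\lambda_d(Q)$'' is a notational slip when $d'>d$, though the quantity you actually use ($2^{-kd}$) is correct. Second, because you initialize with point masses $\delta_{x_Q}$ at points $x_Q\in Q\cap A$, the measures $\mu_n$ are supported exactly in $A$ (not merely in a $2^{-n}$-neighborhood), which makes the support statement for the weak$^*$ limit immediate; on the other hand, the point-mass initialization means $\mu_n$ fails the ball bound at radii below $2^{-n}$, so the passage to the limit for the estimate $\mu(B(x,r))\leq Cr^d$ for every fixed $r>0$ requires the standard portmanteau step of comparing against slightly larger open balls and choosing $n$ with $2^{-n}<r$. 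Both are routine and do not affect the validity of the argument.
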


The following lemma will be necessary to establish asymptotic optimality of the truncated energies; it shows that for truncation to a nearest neighbor of any order, minimizers are spread over the set $ \Omega $ with the best possible order of separation.
We will discuss the weighted energy without external field, but the proof can be readily extended to the case of a continuous external field $ \xi $ produced by the term~\eqref{eq:ext_field_summand}. 
\begin{lemma}
    \label{lem:separation}
    Suppose $ s > 0 $, $ A \subset \Omega $ is compact, and a bounded lower semicontinuous $ \kappa(x,y):A\times A \to [0,\infty] $ is  continuous and positive at the points of $ \diag(A\times A) \subset A\times A $. Then the minimizers of $ E^k_s(\cdot\,; \kappa) $ have the optimal order of $ k $-separation:
    \[
        \Delta(\omega_N^*(A)) \geq C(A) N^{-1/d}.
    \]
\end{lemma}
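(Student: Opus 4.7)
The plan is a replacement-and-compare argument. Assuming $\Delta(\omega_N^*)<\varepsilon N^{-1/d}$ for a constant $\varepsilon$ to be fixed, I form a competitor $\tilde\omega$ by removing the point $x_*$ realizing the minimum separation and inserting a well-spread new point $y$, and show that for $\varepsilon$ small enough the minimality inequality $E^k_s(\omega_N^*)\leq E^k_s(\tilde\omega)$ fails. The first step is the upper bound $E^k_s(\omega_N^*(A);\kappa)\leq C_0(A)\,N^{1+s/d}$: since $\mathcal H_d(A)>0$, a Euclidean cube $Q$ with $\lambda_d(Q\cap A)>0$ exists, and a uniform subgrid of $Q$ of spacing $\sim N^{-1/d}$ intersected with $A$ supplies a test configuration whose every $k$-nearest-neighbor distance has order $N^{-1/d}$; boundedness of $\kappa$ on $A\times A$ yields the bound. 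Applying Frostman's lemma (Proposition~\ref{prop:frostman}) to $A$ gives a positive Borel measure $\mu$ on $A$ with $\mu(B(x,r))\leq r^d$; since $\mu\bigl(\bigcup_{j\neq*}B(x_j,\eta N^{-1/d})\bigr)\leq\eta^d$, for $\eta^d<\mu(A)$ one finds $y\in A$ with $\|y-x_j\|\geq\eta N^{-1/d}$ for every $j\neq*$.

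Setting $\tilde\omega=(\omega_N^*\setminus\{x_*\})\cup\{y\}$, I decompose $E^k_s(\tilde\omega)-E^k_s(\omega_N^*)$ into (a) the $k$ removed terms from the $i=*$ row, (b) the $k$ added terms from the $i=y$ row, (c) the rearrangement terms for points $x_j$ with $*\in I_{j,k}(\omega_N^*)$, and (d) the rearrangement terms for points whose $k$-nearest list in $\tilde\omega$ acquires $y$. Lemma~\ref{lem:few_nns} applied with $r\downarrow 0$ bounds the index sets in (c) and (d) by $n(k,d)$. Letting $c_0>0$ be a lower bound for $\kappa$ near $\diag(A\times A)$, contribution (a) is at most $-c_0\Delta^{-s}$ from the single pair $(x_*,x_{*,1})$ alone, while contributions (b) and (d) are $O(N^{s/d})$ because every new distance involving $y$ is at least $\eta N^{-1/d}$.

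The main obstacle is (c). The naive estimate for a single bracket, $\kappa(x_j,x_{j,\mathrm{new}})\|x_j-x_{j,\mathrm{new}}\|^{-s}-\kappa(x_j,x_*)\|x_j-x_*\|^{-s}\leq(\kappa_{\max}-c_0)\|x_j-x_*\|^{-s}$, can be of order $\Delta^{-s}$ and would cancel the gain in (a). The resolution is to exploit continuity of $\kappa$ on the diagonal: by compactness of $A$, for any $\varepsilon_0>0$ there exists $r_0>0$ with $|\kappa(y,z)-\kappa(x,x)|<\varepsilon_0$ whenever $x\in A$ and $\|x-y\|,\|x-z\|\leq r_0$. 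On the regime $\|x_j-x_*\|,\|x_j-x_{j,\mathrm{new}}\|\leq r_0$, both values of $\kappa$ lie within $\varepsilon_0$ of $\kappa(x_*,x_*)$, so the bracket reduces to $\kappa(x_*,x_*)\bigl(\|x_j-x_{j,\mathrm{new}}\|^{-s}-\|x_j-x_*\|^{-s}\bigr)+R$ with main term $\leq 0$ (since $\|x_j-x_{j,\mathrm{new}}\|\geq\|x_j-x_*\|$) and $|R|\leq 2\varepsilon_0\|x_j-x_*\|^{-s}\leq 2\varepsilon_0\Delta^{-s}$. Fixing $\varepsilon_0=c_0/(4n(k,d))$ and summing over the at most $n(k,d)$ near-diagonal indices bounds their contribution to (c) by $(c_0/2)\Delta^{-s}$; the indices outside this regime involve at least one distance $\geq r_0$, and therefore contribute only $O(1)$ total. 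Assembling the estimates yields $E^k_s(\tilde\omega)-E^k_s(\omega_N^*)\leq-\tfrac{c_0}{2}\Delta^{-s}+C_1(A,k,s,\kappa)\,N^{s/d}$, and minimality forces $\Delta\geq\bigl(c_0/(2C_1)\bigr)^{1/s}N^{-1/d}$, proving the claim.
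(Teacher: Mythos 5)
Your proposal follows the same replacement-and-compare strategy as the paper: if the minimal nearest-neighbor distance $\Delta$ were too small, swap the offending point $x_*$ for a Frostman-separated point $y\in A$ and derive a contradiction from the minimality of $E^k_s(\omega_N^*)$. The difference lies in the bookkeeping of the energy increment. The paper's displayed bound for $E^k_s(\omega_N';\kappa)-E^k_s(\omega_N^*;\kappa)$ accounts for the dropped nearest-neighbor pair of $x_1$, the new row of $y$, and the rows that acquire $y$ as a neighbor; your analysis makes explicit a fourth, positive contribution that the paper's expression does not carry: for each $x_j$ that had $x_*$ among its $k$ nearest neighbors and whose new list does \emph{not} pick up $y$, the replacement term $\kappa(x_j,x_{j,k+1})\|x_j-x_{j,k+1}\|^{-s}$ can exceed the removed $\kappa(x_j,x_*)\|x_j-x_*\|^{-s}$ once $\kappa$ is nonconstant, and this bracket can be of the same order $\Delta^{-s}$ as the gain from $x_*$'s own row. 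For $\kappa\equiv 1$ (or $\kappa$ nonincreasing away from $\diag(A\times A)$) the bracket is $\leq 0$ by $\|\cdot\|^{-s}$-monotonicity alone, which is likely the case the paper had in view; for the general $\kappa$ of the lemma the continuity of $\kappa$ at the diagonal is genuinely needed at this step, and your split on whether $\|x_j-x_*\|$ and $\|x_j-x_{j,\mathrm{new}}\|$ lie below a threshold $r_0$ — with $\varepsilon_0 = c_0/(4n(k,d))$ and the count of such $j$ bounded by Lemma~\ref{lem:few_nns} — closes the estimate cleanly: the near-diagonal brackets contribute at most $(c_0/2)\Delta^{-s}$, the others $O(1)$, and the $-c_0\Delta^{-s}$ gain survives. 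So your proof is correct and is a tighter version of the paper's argument for the same route. One cosmetic remark: the bound $E^k_s(\omega_N^*(A);\kappa)\leq C_0(A)N^{1+s/d}$ you label as ``the first step'' is never used afterward and could be dropped.
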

\begin{proof}
    Suppose that, after a suitable renumbering of $ \omega_N^*(A) = \{ x_1^N,\ldots,x_N^N \} $, 
    \[
        \Delta(\omega_N^*(A)) = \| x_1^N - x_{1,1}^N\| = c_N N^{-1/d}.
    \]
    We need to show that $ c_N \geq C(A) > 0 $.
    By the discussion after  Proposition~\ref{prop:frostman}, there exists a point $ y^N \in A $, such that
    \[
        \| y^N - x_i^N\| \geq c(A) N^{-1/d}, \qquad i = 1,\ldots,N,
    \]
    where $ c(A) =  (\mu(A)/2)^{1/d}  $
    Since $ \omega_N^*(A) $ is a minimizer, replacing  $ x_1^N $ with $ y^N $ results in a larger value of $ E^k_s(\cdot\,; \kappa) $ than the one on $ \omega_N^*(A) $. Let 
    $ \omega_N' =  \{ y^N \} \cup \omega_N^*(A)\setminus \{ x_1^N \}  $, the configuration obtained by such replacement.
    Denote by $ y_l $, $ 1\leq l \leq k $ the $ k $ nearest neighbors to $ y $ in $ \omega_N^*(A) $, $ N\geq N_0(\e) $. 

    By compactness of $ A $ and continuity of $ \kappa $ on the diagonal, there exists $ \delta > 0 $ such that
    \[
        0 < m_\delta : =  \min \{ \kappa(x,y) : \|x-y\|\leq \delta \}. 
    \]
    Let further 
    \begin{equation}
        \label{eq:offdiagonal}
        M := M(\kappa,N) = \sup\{ \kappa(x,y) : \|x-y\|\geq c(A) N^{-1/d} \},
    \end{equation}
    where $ M $ is independent of $ N $ in view of boundedness of $ \kappa $. If
    \[
        \liminf_{N\to \infty} c_N N ^{-1/\d} \geq \delta,
    \]
    the proof is complete. Otherwise, suppose $ \| x_1^N - x_{1,1}^N\| = c_N N ^{-1/\d} < \delta $ for some $ N $. For such an $ N $, we have
    \begin{equation*}
        \begin{aligned}
            0  &\leq E^k_s(\omega_N'; \kappa) - E^k_s(\omega_N^*(A); \kappa) \leq\\
            &\leq \frac{-\kappa(x_1^N , x_{1,1}^N)}{\| x_1^N - x_{1,1}^N\|^{s}} 
            +
            \sum_{l=1}^k \frac{\kappa(y^N , y_{l}^N) }{\| y^N - y_{l}^N\|^{s}} +
            \sum_{i:y^N\in I_{i,k}} \frac{\kappa(x_i,y^N)}{ \| y^N - x_i\|^{s}}\\
            &\leq  \left(-m_\delta c_N^{-s} + k M c(A)^{-s} + n(d,k) M c(A)^{-s}\right) N^{s/d},
        \end{aligned}
    \end{equation*}
    where it is used that all the points $ x_i $ in the second summation are distance at least $ c(A) N^{-1/d} $  from $ y^N $. This implies 
    \begin{equation}
        \label{eq:replace_point}
        c_N\geq \left(\frac{m_\delta}{(k  + n(d,k)) M }\right)^{1/s} c(A),
    \end{equation}
    as desired.
\end{proof}
\begin{corollary}
    Minimizing the $ k $-truncated energy for any $ k \geq 1 $ yields a separated configuration.
\end{corollary}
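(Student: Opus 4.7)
The plan is essentially to observe that this corollary is a verbatim reformulation of Lemma~\ref{lem:separation}. In the terminology used throughout the short-range interaction literature (and in the monograph \cite{borodachovDiscrete2019}), a sequence of $N$-point configurations $\omega_N \subset A$ is called \emph{separated} precisely when its nearest-neighbor distance satisfies
\[
    \Delta(\omega_N) \geq c(A)\, N^{-1/d}
\]
for a positive constant $c(A)$ independent of $N$. Since the exponent $-1/d$ is the largest possible (by the volume/packing bound $\Delta(\omega_N) \leq C(A) N^{-1/d}$), this is the strongest separation one can hope for.

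My approach, then, is simply to point to Lemma~\ref{lem:separation}: it asserts exactly that $\Delta(\omega_N^*(A)) \geq C(A) N^{-1/d}$ for the minimizers of $E^k_s(\cdot\,;\kappa)$, where the constant $C(A)$ produced in~\eqref{eq:replace_point} depends only on the compact set $A$, the kernel $\kappa$, the exponent $s$, the dimension $d$, and the truncation parameter $k$ — in particular not on $N$. The presence of a continuous external field $\xi$, producing the additional summand $N^{s/d}\sum_i \xi(x_i)$, changes the bounds in the comparison $E^k_s(\omega_N';\kappa) - E^k_s(\omega_N^*(A);\kappa) \geq 0$ only by a lower-order additive term (since replacing one point contributes $O(N^{s/d})$ to the external-field term, while the Riesz terms scale like $N^{s/d} \cdot c_N^{-s}$), so the same proof applies to $E^k_s(\cdot\,;\kappa,\xi)$.

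There is no substantive obstacle here: the work has already been done in Lemma~\ref{lem:separation} (with the geometric input provided by Lemma~\ref{lem:few_nns} and Frostman's lemma). The only small point worth emphasizing in the write-up is that the constant of separation is uniform in $N$, and in particular holds for all $k \geq 1$ with a constant depending on $k$ through $n(d,k)$ in~\eqref{eq:replace_point}. Thus the corollary is immediate.
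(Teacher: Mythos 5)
Your proposal is correct and matches the paper's approach: the paper offers no separate proof of this corollary precisely because it is an immediate restatement of Lemma~\ref{lem:separation}, with ``separated configuration'' meaning $\Delta(\omega_N^*(A)) \geq C(A)\,N^{-1/d}$ with $C(A)$ independent of $N$ (though depending on $k$ through $n(d,k)$ via~\eqref{eq:replace_point}). Your remark on the external field is also consistent with the paper's own aside preceding Lemma~\ref{lem:separation}, though it is not needed for the corollary as stated.
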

It can be seen from \eqref{eq:replace_point} that the weight $ \kappa $ needs to be strictly positive on the diagonal to guarantee separation.

% the assumption of boundedness of $ \kappa(x,y) $ on the $ \diag(A\times A) $ in the lemma can be relaxed to 
% \[
%     \liminf_{N\to\infty} \frac{\inf \{ \kappa(x,y) : \|x-y\|\leq C\cdot c(A)\, N^{-1/d} \}}{\sup\{ \kappa(x,y) : \|x-y\|\geq c(A)\, N^{-1/d} \}} > 0,
% \] 
% for some $ C > 0 $. The weight $ \kappa $ still has to be bounded off the diagonal.

Observe that the proof of Lemma~\ref{lem:separation}, in particular inequality~\eqref{eq:replace_point}, gives for any $ y\in A $, 
\[
    B(y, C(A)\Delta(\omega_N^*) ) \cap \omega_N^*(A) \neq \emptyset.
\]
That is, the covering radius of the collection $ \omega_N^*(A) \subset A $ is at most $ C(A)\Delta(\omega_N^*) $. Using this inequality, we will establish that minimizers of $ E^k $ have covering radius of the optimal order.
\begin{lemma}
    \label{lem:covering}
    For a compact set $ A $ and energy $ E^k_s(\cdot\,; \kappa) $ as in the previous lemma, there holds
    \[
        % \#\left(B(y, C(A)N^{-1/d}) \cap \omega_N^*(A)\right) \geq k
        \min_i \left\{\|y - x_i^N\| : x_i^N\in \omega_N^*(A)  \right\} \leq C(A)N^{-1/d}
    \]
    for any $ y \in A $.
\end{lemma}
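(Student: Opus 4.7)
The plan is to mirror the swap argument from the proof of Lemma~\ref{lem:separation}: if some point $y \in A$ were much further than $N^{-1/d}$ from every point of $\omega_N^*(A)$, we could replace a closest-pair member by $y$ and strictly decrease the energy, contradicting minimality.

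Concretely, fix $y \in A$ and set $r := \min_i \|y - x_i^N\|$. We may assume $r \geq c(A) N^{-1/d}$, otherwise there is nothing to prove. Relabel so that $\|x_1^N - x_{1,1}^N\| = \Delta := \Delta(\omega_N^*(A))$ and consider the competitor
\[
\omega_N' := \bigl(\omega_N^*(A) \setminus \{x_1^N\}\bigr) \cup \{y\} \subset A.
\]
Repeating the bound from the proof of Lemma~\ref{lem:separation} almost verbatim, with $r$ in place of $c(A) N^{-1/d}$: the increment $E^k_s(\omega_N';\kappa) - E^k_s(\omega_N^*(A);\kappa)$ loses at least the ``closest pair'' contribution $\kappa(x_1^N, x_{1,1}^N)\Delta^{-s} \geq m_\delta \Delta^{-s}$ from the inner sum for $i = 1$ (this requires $\Delta \leq \delta$, which holds for all but finitely many $N$ by the elementary pigeonhole estimate $\Delta \leq C_0(A) N^{-1/d}$ valid for any $N$-point subset of the compact set $A$); and it gains at most the $k$ nearest-neighbor terms of $y$ in $\omega_N'$, plus at most $n(d,k)$ additional terms where $y$ becomes a new $k$-nearest neighbor of some existing $x_i^N$ (Lemma~\ref{lem:few_nns}). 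Each of these $k + n(d,k)$ gained terms involves a distance at least $r$ and a weight at most $M$ via~\eqref{eq:offdiagonal}, while all remaining modifications of the double sum only decrease the energy. Minimality of $\omega_N^*(A)$ therefore yields
\[
0 \leq -m_\delta \Delta^{-s} + (k + n(d,k))\, M\, r^{-s},
\]
i.e.\ $r \leq \bigl(\tfrac{(k + n(d,k))\,M}{m_\delta}\bigr)^{1/s} \Delta$. Combined with $\Delta \leq C_0(A) N^{-1/d}$, this gives the desired $r \leq C(A) N^{-1/d}$; the finitely many small-$N$ exceptions are absorbed into $C(A)$ via the trivial bound $r \leq \diam{A}$.

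The main technical obstacle is the bookkeeping of the energy change: swapping $x_1^N$ for $y$ shifts many of the $k$-nearest-neighbor sets $I_{i,k}$ and thus modifies many entries of the double sum in~\eqref{eq:k_energy}. The key observation, already exploited in the proof of Lemma~\ref{lem:separation}, is that one only needs (i) a lower bound on a single definitely-lost term (the closest pair within the $i = 1$ row) and (ii) an upper bound on the number of newly created ``$y$-adjacent'' terms. All other modifications can be discarded: either they represent lost positive terms (which only make the upper bound looser) or bumped-down nearest neighbors whose net contribution is controlled uniformly by $M r^{-s}$. Lemma~\ref{lem:few_nns} is essential for bounding the number of points acquiring $y$ as a reciprocal neighbor.
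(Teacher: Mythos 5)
Your argument is correct and follows essentially the same two-step plan as the paper: the swap estimate from Lemma~\ref{lem:separation} gives the covering bound $r \leq C(A)\Delta(\omega_N^*)$, and a volume/pigeonhole argument gives $\Delta(\omega_N) \leq C(A) N^{-1/d}$ for any $N$-point subset of the compact $A$. The only cosmetic difference is that you re-derive the ``covering radius $\leq C\Delta$'' estimate from scratch, whereas the paper simply invokes inequality~\eqref{eq:replace_point} from the proof of Lemma~\ref{lem:separation} in the remark preceding the lemma and then runs the volume argument as a contradiction.
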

\begin{proof}
    In view of the upper bound of $ C(A)\Delta(\omega_N^*) $ on the covering radius, it suffices to verify $ \Delta(\omega_N) \leq C(A) N^{-1/d} $ for any sequence of discrete sets $ \omega_N,$  $ N\geq k+1 $.
    Since the set $ A $ is compact, it is contained in some cube $ [-a,a]^d $. Suppose by contradiction, for a sequence $ c_N\to +\infty $ with $ c_N N^{-1/d}\to 0 $, and a collection of discrete sets $ \omega_N\subset A $, $ N\geq 2 $, there holds
    \[
        \Delta(\omega_N) > c_NN^{-1/d}.
    \]
    Then 
    \[
        B\left(x_i^N, \frac{c_N}2 N^{-1/d} \right) \cap B\left(x_j^N, \frac{c_N}2 N^{-1/d} \right) = \emptyset, \qquad i\neq j,
    \]
    but since $ \lambda_d \left( B\left(x_i^N, \frac{c_N}2 N^{-1/d} \right) \right) = c(d)c_N^{d} /N $ and all such balls are contained in 
    $$ \left[-a-c_N N^{-1/d},\ a+c_N N^{-1/d}\right]^d ,$$
    we must have
    \[
        2^d\left(a+c_N N^{-1/d}\right)^d \geq c(d)c_N^{d},
    \]
    which is a contradiction to $ c_N\to +\infty $ and $ c_N N^{-1/d}\to 0 $. We have therefore established the lemma.
\end{proof}
To see that this is indeed the optimal order for covering, let the measure $ \mu $ be as in Frostman's lemma, then for $ r_0=  c(A) N^{-1/d} := (\mu(A)/2)^{1/d} N^{-1/d}  $ and the set
$$ D =A\setminus \bigcup_{i} B({x}_i, r_0) ,$$
there holds $ \mu (D) \geq \mu(A)/2 $, so in particular $ D \neq \emptyset $. It follows that the covering radius of $ A $ is at least $ c(A) N^{-1/d} $.

\medskip
\noindent{\bf Verification of \eqref{eq:weighted}.}
Having established the separation and covering of the minimizers of $ E^k $, we return to verification of Definition~\ref{def:weighted}. We aim to show that $ E^k $ is an admissible functional with asymptotics on cubes $ \f(t) = \f(1) t^{s/d} $, rate $ \t(t) = t^{1+s/d} $, and weight $ \h(x) = \kappa(x,x) $. Observe that the monotonicity property~\eqref{eq:monotonicity} is an immediate consequence of $ E_s^k $ being dependent only on the collection $ \omega_N $, and not the set $ A $. Thus, minimizers satisfy $ \e(\omega_N^*(A)) \geq \e(\omega_N^*(B)) $ if $ A\subset B $.

We shall start with the unweighted energy $ E^k_s(\cdot\,;) $. Note that it is $ s $-scale invariant and local in the sense of Definition~\ref{def:local}. To verify Definition~\ref{def:cube_asymp}, it remains to show that the asymptotics on cubes are finite and strictly positive. It is easy to see that for any $ \omega_N $ chosen on a rectangular lattice,
\[
    E^k_s(\omega_N) \leq CN^{1+s/d},
\]
since distance between nearest neighbors for such $ \omega_N $ is on the order of $ N^{-1/d} $. To verify positivity, let $ \omega_N^*, N\geq k+1 $ denote a sequence of minimizers of $ E^k $ on $ \q_d $. There holds
\[
    E^k_s(\omega_N^*) \geq \sum_{i=1}^N \|x_i - x_{i,1}\|^{-s}  = \sum_{i=1}^N (\|x_i - x_{i,1}\|^d)^{-s/d} \geq N^{1+s/d} \left(\sum_{i=1}^N \|x_i - x_{i,1}\|^d\right)^{-s/d},
\]
where the last inequality follows from Jensen's inequality applied to the function $ t^{-s/d} $. It remains to observe that balls of the form $ B(x_i, \|x_i - x_{i,1}\|/2) $ are pairwise disjoint, so $ \lambda_d(B(0,1))\cdot2^{-d}\sum_{i=1}^N \|x_i - x_{i,1}\|^d \leq 3^d = \lambda_d(3\q_d) $. To summarize, 
\[
    E^k_s(\omega_N^*) \geq cN^{1+s/d}.
\]

Thus, Theorem~\ref{thm:cube} applies, and the asymptotics on cubes for $ E^k $ exist and are finite; Definition~\ref{def:cube_asymp} holds for the unweighted case. This implies also that equation \eqref{eq:weighted} from  Definition~\ref{def:weighted} is satisfied for any constant weight $ \kappa(x,y) $. For a non-constant weight $ \kappa $, due to the continuity of $ \kappa $ on $ \diag \Omega\times \diag \Omega $, for any $ \epsilon > 0 $ there exists a $ \delta > 0 $ for which  $ 1-\epsilon \leq \kappa(x,y)/\kappa(x,x) \leq 1+\epsilon $ whenever $ \|x-y\|< \delta $.
We can then choose $ N $ sufficiently large, so that $ \|x_i - x_{i,k}\| \leq \delta $ for $ 1\leq i \leq N $ and any minimizer $ \omega_N^* $, by the covering lemma. Since $ \epsilon $ is arbitrary, it follows
\begin{equation*}
    \min_{y\in(x+a\,\q_d)} \kappa(y,y) \f(a^d) 
    \leq \underline\l_{\e_\h}(x+a\,\q_d) 
    \leq \overline\l_{\e_\h}(x+a\,\q_d) 
    \leq  \max_{y\in(x+a\,\q_d)} \kappa(y,y) \f(a^d),
\end{equation*}
which is equation \eqref{eq:weighted}.

\medskip
\noindent{\bf Verification of \eqref{eq:short_range}.}
The short-range property \eqref{eq:short_range} for truncated Riesz energies holds trivially in the case of cubes $ Q_m $ positive distance apart. Indeed, the terms involving interactions between different cubes combined are at most  $ Nk r = o (N^{1+s/d}) $, where $ r $ is the separation distance between cubes. To verify this property for the sequence $ \omega_N^*(A) $ and cubes with disjoint interiors (but not necessarily disjoint), observe by Lemma~\ref{lem:separation}, for any $ \epsilon >0 $, for a sufficiently small $ \delta $, the $ \delta $-neighborhood of $ \bigcup_m \partial(Q_m) $ contains at most the $ \epsilon $ fraction of all points in $ \omega_N^*(A) $. The interactions between such points are therefore bounded by $ \epsilon N^{1+s/d} $; interactions between different cubes $ Q_m $ for elements of $ \omega_N^*(A) $ outside such $ \delta $-neighborhood are $ o (N^{1+s/d}) $ as above. Because $ \epsilon $ is arbitrary, \eqref{eq:short_range} follows.

\medskip
\noindent{\bf Verification of \eqref{eq:cube_conts}.}
It remains to focus on Definition~\ref{def:cube_conts}, the regularity property for $ E^k $. Recall the statement of this property: for any cube $ A =  x+a\,\q_d $ there must hold
    \begin{equation*}
        \lim_{N\to\infty}   \frac{\e(\omega_N^*(Q))}{\t(N)} \geq  
         \limsup_{N\to\infty} ( 1 - \epsilon) \cdot  \frac{\e(\omega_N^*(D))}{\t(N)} 
    \end{equation*}
    whenever the compact $ D \subset Q $ satisfies $ \lambda_d(D) \geq (1 - \delta( \epsilon) )\,\lambda_d(Q)$.
\medskip

The next lemma is the primary component in \eqref{eq:cube_conts} for the truncated Riesz. It uses the $ d $-dimensional Minkowski content $ \m_d $ in $ \mathbb R^{d'} $ in place of the Lebesgue measure $ \lambda_d $ on $ \mathbb R^d $. In the case $ d = d' $ these two coincide. We will later use also the statement in terms of Minkowski content to establish the asymptotics on embedded sets, see Section~\ref{sec:embedded}.

\begin{lemma}
    \label{lem:upper_reg}
    For a fixed compact set $ A \subset \mathbb R^{d'} $ with $ \mathcal H_d(A) > 0 $ and any $ \epsilon > 0 $, there exists a $ \delta(A,\epsilon) $ such that 
    \[
        \liminf_{N\to \infty}\frac{E^k_s(\omega_N^*(A); \kappa)}{N^{1+s/d}} > (1-\epsilon) \liminf_{N\to \infty} \frac {E^k_s(\omega_{N}^*(D); \kappa)}{N^{1+s/d}}
    \] 
    holds whenever a compact set $  D \supset A $ satisfies $ \m_d (D) > \m_d (A) - \delta $. 
\end{lemma}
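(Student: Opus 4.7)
The plan is to convert any minimizer on $A$ into a competitor supported on $D \subset A$ whose energy exceeds that of the minimizer by at most $\epsilon\,N^{1+s/d}$; combined with the trivial bound $E^k_s(\omega_N^*(D);\kappa) \le E^k_s(\tilde\omega_N;\kappa)$ for any $\tilde\omega_N \subset D$, this will close the argument. (I read the inclusion ``$D\supset A$'' as a typo for ``$D\subset A$'', matching the continuity property~\eqref{eq:cube_conts}.)

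I would begin with a minimizer $\omega_N^*=\omega_N^*(A;\kappa)$ and extract two structural facts. By Lemma~\ref{lem:separation}, $\Delta(\omega_N^*)\ge c(A)N^{-1/d}$, so the disjoint balls $B(x_i,\tfrac12 c(A)N^{-1/d})$ have total $d$-content $\lesssim 1/N$, whence a packing bound gives
\[
    M:=\#\bigl(\omega_N^*\cap(A\setminus D)\bigr) \;\le\; C\,\mathcal M_d(A\setminus D)\cdot N \;\le\; C\,\delta\,N.
\]
By Lemma~\ref{lem:few_nns}, each individual point of $\omega_N^*$ appears in at most $k+n(k,d)$ summands of $E^k_s$, and each such summand is bounded by $\|\kappa\|_\infty (c(A)N^{-1/d})^{-s}=O(N^{s/d})$ thanks to the separation. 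Hence deleting the $M$ exterior points lowers the energy by at most $C_1\delta\,N^{1+s/d}$.

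Next I would insert $M$ replacement points into a fixed sub-cube $Q\subset D$ of positive $d$-dimensional content. Such a sub-cube exists once $\delta$ is small enough: in the full-dimensional case this is immediate from $\mathcal M_d(D)\ge \mathcal M_d(A)-\delta>0$, and in the embedded case $d<d'$ it follows from Lemma~\ref{lem:federer} after passing to a bi-Lipschitz chart of constant close to $1$ on a rectifiable piece of $A$. On an $N^{-1/d}$-spaced lattice inside $Q$ there are $\gtrsim N$ candidate sites; at most one site per retained point of $\omega_N^*\cap Q$ is forbidden, so a pigeonhole gives $M$ unblocked sites at mutual and cross separation $\gtrsim N^{-1/d}$ provided $\delta$ is chosen small enough. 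Each inserted point then contributes only $O(N^{s/d})$ to $E^k_s$, yielding
\[
    E^k_s(\tilde\omega_N;\kappa) \;\le\; E^k_s(\omega_N^*;\kappa) + C_2\,\delta\,N^{1+s/d}.
\]
Dividing by $N^{1+s/d}$, taking $\liminf_N$, and using that $\liminf_N E^k_s(\omega_N^*(D);\kappa)/N^{1+s/d}$ is bounded below by a positive constant (via Theorem~\ref{thm:cube} applied to a sub-cube of $D$, together with the positivity of $\kappa$ on the diagonal), I would select $\delta$ so that $C_2\delta$ is strictly smaller than $\epsilon$ times this lower bound, producing the stated strict inequality.

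The main obstacle is the insertion step in the embedded setting: one needs both the Minkowski-content bound on $M$ and an $N^{-1/d}$-separated $M$-point subset of $D$. Both issues reduce to the Euclidean case via the bi-Lipschitz parametrizations of Lemma~\ref{lem:federer} with constants arbitrarily close to $1$, after which the pigeonhole packing argument in $\mathbb R^d$ is routine; the price paid is absorbed into the constants $C,C_1,C_2$, which is harmless because $\epsilon$ is fixed before $\delta$ is chosen.
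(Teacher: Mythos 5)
Your overall plan---trade a minimizer on $A$ for a competitor on $D$ with slightly larger energy---is natural, and your use of Lemma~\ref{lem:separation} and Lemma~\ref{lem:few_nns} to price each deleted or inserted point at $O(N^{s/d})$ is correct. (You are also right that ``$D\supset A$'' is a typo for ``$D\subset A$''.) But there is a genuine gap in the step $M=\#\bigl(\omega_N^*\cap(A\setminus D)\bigr)\le C\,\mathcal M_d(A\setminus D)\cdot N\le C\delta N$. The hypothesis controls $\mathcal M_d(A)-\mathcal M_d(D)$, equivalently the volume $\lambda_{d'}\bigl[A(r)\setminus D(r)\bigr]$ for small $r$; it does \emph{not} control $\mathcal M_d(A\setminus D)$, because the $r$-neighborhood of $A\setminus D$ also fills a collar lying inside $D(r)$. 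Concretely, take $A=[0,1]^2$ and let $D\subset A$ be a nowhere dense compact set of Lebesgue measure $1-\delta/2$; then $\overline{A\setminus D}=A$, so $\mathcal M_2(A\setminus D)=1$, and your estimate collapses to the useless $M\le CN$. Showing the true $M\approx\delta N/2$ would essentially require the equidistribution conclusion toward which this lemma is a step. The same example defeats the insertion step: such a $D$ contains no sub-cube of positive content, and the rectifiable-chart fallback you sketch for $d<d'$ produces bi-Lipschitz images of cubes in $A$, not pieces of $D$.

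The paper avoids both problems by \emph{projecting} instead of deleting and re-inserting. Every point of $\omega_N^*(A)$ within $(\gamma+c)N^{-1/d}$ of $D$ is replaced by its nearest point of $D$---this requires nothing structural of $D$---and only the \emph{far} points are discarded. The far points carry disjoint balls of radius $\gamma N^{-1/d}$ contained in $A(cN^{-1/d})\setminus D(cN^{-1/d})$, whose volume is exactly what $\mathcal M_d(A)-\mathcal M_d(D)<\delta$ bounds, so their count is $O(\delta N)$. Projection contracts pairwise distances by a factor at least $1-2(\gamma+c)$, so homogeneity of the Riesz kernel and the polynomial-growth bound on $\kappa$ make the energy comparison \emph{multiplicative}, delivering the $(1-\epsilon)$ factor directly. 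This also removes your final additive-to-multiplicative conversion, which silently relies on a positive lower bound for $\liminf_N E_s^k(\omega_N^*(D);\kappa)/N^{1+s/d}$ that is not an immediate consequence of Theorem~\ref{thm:cube} when $d<d'$. If you restrict deletion to the far points and project the rest, your argument closes---but it is then the paper's argument.
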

\begin{proof}
    Let $ D \subset A $ be a compact set. According to Lemma~\ref{lem:separation}, the separation of a minimizer configuration on $ A $ satisfies $ \Delta(\omega_N^*(A)) \geq C(A) N^{-1/d} $. By rescaling $ A $ if necessary, it can be assumed that $ C = 1 $ so that
    \[
        \Delta(\omega_N^*(A)) \geq N^{-1/d}.
    \]

    The proof will consist in demonstrating a way to retract configurations from $ A $ to $ D $ without increasing the value of $ E^k $ on them too much. Let us write $ S(h) $ for the $ h $-neighborhood of a set $ S $.
    Fix an $ N \geq k+1 $ and consider $ \omega_N^*(A) $. We will first show that most of $ x_i\in\omega_N^*(A) $ have a point from $ D $ close to them. Observe that $ D(h) \subset A(h) $ for any $ h > 0 $. In view of $ \m_d (A) < \m_d (D) + \delta $, we can find small enough $ c>0 $ such that for all $ N $ there holds
    \[
        \lambda_p\left[A(c N^{-1/d})\right] - \lambda_p\left[D(c N^{-1/d})\right] < v_{{d'}-d} c^{{d'}-d} N^{-({d'}-d)/d} \delta,
    \]
    where $ \lambda_p $ denotes the $ {d'} $-dimensional Lebesgue measure in $ \mathbb R^{d'} $ and $ v_{d'-d} $ is the volume of the unit ball in $ \mathbb R^{{d'}-d} $. Thus the number of disjoint balls of radius $ \gamma N^{-1/d} $, for a $ \gamma>0 $, that can be contained in $ A(c N^{-1/d})\setminus D(c N^{-1/d}) $ is at most 
    \[
        \frac{v_{{d'}-d}\, c^{{d'}-d} \delta N^{-({d'}-d)/d} }{v_p \gamma^{d'} N^{-{d'}/d}} = \delta K({d'},d)\gamma^{-{d'}} c^{{d'}-d} N.
    \]
    It follows that for at least $ N ( 1- \delta K({d'},d)\gamma^{-{d'}} c^{{d'}-d} )  $ points in $ \omega_N^*(A) $, the closest point in $ D $ is at most distance
    \[
        (\gamma + c)N^{-1/d}
    \]
    away. Consider the subset $ \{ x_i' \} \subset \omega_N^*(A) $ for which this is the case, and for each $ x_i' $ find the closest point in $ D $. Denote the resulting set by $ \omega $. By the preceding discussion, 
    \[
        N_\omega := \#\omega\geq N \left( 1- \delta K({d'},d)\gamma^{-{d'}} c^{{d'}-d} \right). 
    \]
    Consider a pair $ x_i' $, $ x_j' $; let the nearest points in $ \omega $ be $ y_i $ and $y_j$ respectively. Since the separation between elements of $ \omega_N^* $ is at least $ N^{-1/d} $, there holds
    \begin{equation}
        \label{eq:nottooclose}
        \|y_i - y_j\| \geq (1-2(\gamma+c)) \| x_i' - x_j' \|.
    \end{equation}
    In estimating $ E^k_s(\omega_N^*(A);\kappa) $ in the next step, we will allow $ \kappa $ to be infinite on the diagonal. We would like to control the ratio of terms of the form
    \[
        \kappa(y_i , y_j)\|y_i - y_j\|^{-s} \big/ \kappa(x_i' , x_j') \| x_i' - x_j' \|^{s},
    \]
    and it will be enough to assume that the weight $ \kappa $ does not grow too fast towards the diagonal: at most as a polynomial of degree $ r $, so that when $ \|z_1-z_2\| \geq t \|z_1'-z_2'\| $, then
    \[
        \kappa(z_1 , z_2)\big/ \kappa(z_1' , z_2') \leq Ct^{-r}.
    \]
    This is of course the case with $ r=0 $ when $ \kappa $ is bounded and strictly positive on the diagonal $ \diag(D\times D) $.

    By the scaling properties of the $ \ker_s $ kernel and the above discussion, \eqref{eq:nottooclose} implies
    \[
        E^k_s(\omega_N^*(A);\kappa) \geq (1-2(\gamma+c))^{-s-r} E^k_s(\omega;\kappa).
    \]
    By the above estimate on the cardinality $ N_\omega $, we have finally
    \begin{equation}
        \label{eq:projected_omega_n}
        \frac{E^k_s(\omega_N^*(A);\kappa)}{N^{1+s/d}} \geq \frac{\left( 1- \delta K({d'},d)\gamma^{-{d'}} c^{{d'}-d} \right)^{1+s/d}}{(1-2(\gamma+c))^{s+r}}\cdot \frac{E^k_s(\omega_{N_\omega}^*(D);\kappa)}{N_\omega^{1+s/d}}.
    \end{equation}
    Taking $ \gamma = \delta^{1/{d'}} $ and letting $ \delta \to 0 $, $ c\to 0 $, we complete the proof of the first claim. 
\end{proof}
The proof of the above lemma shows that $ \kappa $ can be continuous on the diagonal in the extended sense, by growing to $ +\infty $ with subpolynomial rate.
\begin{lemma}
    For a compact set $ A \subset \mathbb R^{d'} $ for which $ \mathcal H_d(A) > 0 $ and $ \l_{E^k}(A) $ exists, and any $ \epsilon > 0 $, there is a $ \delta(A,\epsilon) $ such that 
    \[
        \l_{E^k} (A) > (1-\epsilon) \limsup_{N\to \infty} \frac {E^k_s(\omega_{N}^*(D); \kappa)}{N^{1+s/d}}
    \] 
    holds whenever a compact set $  D \subset A $ satisfies $ \m_d (D) > \m_d (A) - \delta $. 
\end{lemma}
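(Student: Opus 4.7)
The plan is to run the projection construction of Lemma~\ref{lem:upper_reg} in the opposite direction. Instead of starting from a minimizer on $A$ and passing to a configuration on $D$ with slightly fewer points, I start from a minimizer $\omega_M^*(A)$ of cardinality $M$ slightly \emph{larger} than $N$, project it into $D$ to obtain a configuration of at least $N$ points, then trim it down to exactly $N$ points. Since $D\subset A$, set-monotonicity already forces $\limsup_N E^k_s(\omega_N^*(D);\kappa)/N^{1+s/d}\geq \l_{E^k}(A)$, so the lemma amounts to a matching upper bound with an $\epsilon$-factor slack.

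Fix $\epsilon>0$ and pick positive parameters $\eta,\gamma,c$ small enough that $(1-2(\gamma+c))^{-s}(1-\eta)^{-(1+s/d)}\leq 1+\epsilon/4$; set $\delta=\eta/[K(d',d)\gamma^{-d'}c^{d'-d}]$, matching the quantity from the proof of Lemma~\ref{lem:upper_reg}. For each $N$ let $M=\lceil N/(1-\eta)\rceil$. Applying the projection construction of Lemma~\ref{lem:upper_reg} to $\omega_M^*(A)$ produces a configuration $\omega\subset D$ of cardinality $N_\omega\geq M(1-\eta)\geq N$, formed by replacing each $x_i'\in\omega_M^*(A)$ lying within $(\gamma+c)M^{-1/d}$ of $D$ by its closest point in $D$. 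The same computation as in Lemma~\ref{lem:upper_reg} yields
\[
E^k_s(\omega;\kappa)\leq (1-2(\gamma+c))^{-s}\,E^k_s(\omega_M^*(A);\kappa).
\]

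The key remaining step is a trimming argument: for any $N$-point subconfiguration $\omega'\subset\omega$, I claim $E^k_s(\omega';\kappa)\leq (1+\epsilon_1(N))E^k_s(\omega;\kappa)$ with $\epsilon_1(N)\to 0$ as $N\to\infty$. The essential observation is that passing to a subset $\omega'\subset\omega$ can only enlarge each $j$-th nearest-neighbor distance at any $x_i\in\omega'$ (fewer candidates produce larger order statistics), so the unweighted contribution $\sum_{j\in I_{i,k}(\omega')}\|x_i-x_j\|^{-s}$ only decreases, and summing over $i\in\omega'\subset\omega$ gives the unweighted inequality. To carry this through for the weight, use that both $\omega$ and $\omega'$ inherit separation $\Theta(M^{-1/d})$ from Lemma~\ref{lem:separation} applied to $\omega_M^*(A)$ (and removing points cannot decrease separation), so every $k$-nearest-neighbor pair contributing to either energy has separation $O(N^{-1/d})$. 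Uniform continuity of $\kappa$ on the diagonal of $\Omega\times\Omega$ then gives $\kappa(x_i,x_j)=\kappa(x_i,x_i)(1+\epsilon_1(N))$ uniformly over these pairs, delivering the claim. This trimming bound is the main obstacle, as it requires the weight continuity to apply uniformly across all $k$-nearest-neighbor pairs in both configurations.

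Combining the two estimates, $E^k_s(\omega_N^*(D);\kappa)\leq E^k_s(\omega';\kappa)\leq (1+\epsilon_1(N))(1-2(\gamma+c))^{-s}E^k_s(\omega_M^*(A);\kappa)$. Dividing by $N^{1+s/d}$, using $(M/N)^{1+s/d}\to(1-\eta)^{-(1+s/d)}$, and invoking existence of $\l_{E^k}(A)$ so that $E^k_s(\omega_M^*(A);\kappa)/M^{1+s/d}\to\l_{E^k}(A)$, one obtains
\[
\limsup_{N\to\infty}\frac{E^k_s(\omega_N^*(D);\kappa)}{N^{1+s/d}}\leq (1-2(\gamma+c))^{-s}(1-\eta)^{-(1+s/d)}\l_{E^k}(A)\leq (1+\epsilon/4)\l_{E^k}(A),
\]
which rearranges to the stated inequality after replacing $\epsilon$ by a slightly smaller value.
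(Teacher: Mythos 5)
Your overall route coincides with the paper's: both proofs run the projection of Lemma~\ref{lem:upper_reg} from a minimizer of $A$ into $D$, land on the estimate~\eqref{eq:projected_omega_n}, and then reconcile cardinalities. The paper does the reconciliation by picking $N$ so that $N_\omega\geq n$ and invoking monotonicity of the optimal energy in the number of points; you pick $M$ so that the projection has at least $N$ points and then trim. The trimming is legitimate bookkeeping (and arguably cleaner than the paper's terse ``$N_\omega-n\leq 1$''), but your justification of the trimming bound contains a concrete error.

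The error is the inference ``both $\omega$ and $\omega'$ inherit separation $\Theta(M^{-1/d})$\ldots so every $k$-nearest-neighbor pair contributing to either energy has separation $O(N^{-1/d})$.'' Separation in the sense of Lemma~\ref{lem:separation} gives a \emph{lower} bound on pairwise distances; it says nothing about an upper bound on the $k$-th nearest-neighbor distance. The upper bound you want is a covering statement of the kind in Lemma~\ref{lem:covering}, and that is precisely what is \emph{not} preserved here: the projection drops all points of $\omega_M^*(A)$ lying at distance $>(\gamma+c)M^{-1/d}$ from $D$, so $\omega$ need not cover $D$ at scale $O(N^{-1/d})$, and after trimming to an arbitrary $N$-point subset $\omega'\subset\omega$ the gaps can only widen. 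Consequently you cannot conclude that $\kappa(x_i,x_j)=\kappa(x_i,x_i)(1+\epsilon_1(N))$ uniformly over the contributing pairs, because some of those pairs may be at distance bounded away from zero.

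The conclusion of your trimming step is nonetheless correct, and can be rescued without any covering. Fix $\delta_1>0$ so small that $(1-\epsilon_1)\kappa(x,x)\leq\kappa(x,y)\leq(1+\epsilon_1)\kappa(x,x)$ whenever $\|x-y\|<\delta_1$. Split the double sum defining $E^k_s(\omega';\kappa)$ into near pairs ($\|x_i-x_j\|<\delta_1$) and far pairs. For a near pair, the $j$-th nearest-neighbor distance of $x_i$ in $\omega\supset\omega'$ is no larger than that in $\omega'$, so it is also $<\delta_1$, and the corresponding term in $E^k_s(\omega;\kappa)$ dominates up to a factor $\tfrac{1+\epsilon_1}{1-\epsilon_1}$. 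The far pairs contribute at most $Nk\,\kappa_{\max}\,\delta_1^{-s}=O(N)$. Since $E^k_s(\omega;\kappa)\geq c\,N^{1+s/d}$ by separation and positivity of $\kappa$ on the diagonal, the far contribution is $o(N^{1+s/d})$, and one gets $E^k_s(\omega';\kappa)\leq \tfrac{1+\epsilon_1}{1-\epsilon_1}\,E^k_s(\omega;\kappa)+o(N^{1+s/d})$. Feeding this into your chain (in place of the uniform-continuity claim) completes the proof.
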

\begin{proof}
    Equation \eqref{eq:projected_omega_n} with $ \gamma = \delta^{1/{d'}} $ implies
    \[
        \frac{E^k_s(\omega_N^*(A);\kappa)}{N^{1+s/d}} \geq \frac{\left( 1- K({d'},d) c^{{d'}-d} \right)^{1+s/d}}{(1-2(\delta^{1/{d'}}+c))^{s}}\cdot \frac{E^k_s(\omega_{N_\omega}^*(D);\kappa)}{N_\omega^{1+s/d}}
    \]
    holds for any sufficiently small $ c > 0 $ and $ \delta > 0 $ with $ N_\omega \geq ( 1- K({d'},d) c^{{d'}-d} )N $. For a fixed $ \epsilon > 0 $ pick $ \delta $ and $ c $ so that
    \[
        \frac{\left( 1- K({d'},d) c^{{d'}-d} \right)^{1+s/d}}{(1-2(\delta^{1/{d'}}+c))^{s}} > 1-\epsilon,
    \]
    and let a compact $ D $ satisfy $ \m(D) > \m(A) - \delta $. Let further $ \mathscr N $ be such that 
    \[
        \lim_{\mathscr N \ni n\to \infty} \frac {E^k(\omega_{n}^*(D))}{n^{1+s/d}} = \limsup_{N\to \infty} \frac {E^k(\omega_{N}^*(D))}{N^{1+s/d}},
    \]
    and for each $ n $ take $ N = \lceil n / ( 1- K({d'},d) c^{{d'}-d} ) \rceil $. Then for $ N_\omega(n,c) $ corresponding to such $ N(n,c) $ there holds $ N_\omega \geq n $ with $ N_\omega - n \leq 1 $. By \eqref{eq:projected_omega_n} we then have
    \[
        \begin{aligned}
        \frac{E^k_s(\omega_N^*(A);\kappa)}{N^{1+s/d}}
        &\geq \frac{\left( 1- K({d'},d) c^{{d'}-d} \right)^{1+s/d}}{(1-2(\delta^{1/{d'}}+c))^{s}}\cdot \frac{E^k_s(\omega_{N_\omega}^*(D);\kappa)}{N_\omega^{1+s/d}}\\
        &\geq \frac{\left( 1- K({d'},d) c^{{d'}-d} \right)^{1+s/d}}{(1-2(\delta^{1/{d'}}+c))^{s}}\cdot \left(\frac{n}{N_\omega}\right)^{1+s/d}  \frac {E^k(\omega_{n}^*(D))}{n^{1+s/d}}.
    \end{aligned}
    \]
    Since $ n/N_\omega \to 1 $, $ n\to \infty $, the desired statement follows from the choice of $ c,\delta $, and $ D $.
\end{proof}
Our goal now is to obtain \eqref{eq:cube_conts} in the weighted case from the above lemmas.
For a constant weight it follows from the scale-invariance of $ E^k_s(\cdot\,) $. Namely, by the previous lemma and the existence of cube asymptotics for the unweighted energy $ E^k_s(\cdot\,) $ which was established above, we have for the unit cube $ \q_d $ and any $ \epsilon > 0 $
\[
    \l_{E^k} (\q_d) = \f(1) \geq (1-\epsilon) \limsup_{N\to \infty} \frac {E^k(\omega_{N}^*(D))}{N^{1+s/d}},
\] 
whenever there holds $ D\subset \q_d $ and $ \lambda_d(D) > 1- \delta_0 $, with $ \delta_0 (\epsilon)$ depending only on $ \epsilon $. 
For an arbitrary cube $ z+a\q_d\subset \Omega $ and a compact subset $ D\subset \q_d $ satisfying $  \lambda_d(D) > a^d(1-\delta_0) $, the rescaled and translated set $ -z+\frac1a D $ is such that $  \lambda_d\left(\frac1a D\right) > 1-\delta_0 $, so one has by $ s $-scale invariance
\[
    \omega_{N}^*\left(-z+\frac1a D\right) = a^s \omega_{N}^*\left(D\right),
\]
and
\[ 
    \l_{E^k} (z+a\q_d) =  a^{-s} \f(1) \geq a^{-s}(1-\epsilon) \limsup_{N\to \infty} \frac {E^k\left(\omega_{N}^*\left(-z+\frac1a D\right)\right)}{N^{1+s/d}} = (1-\epsilon) \limsup_{N\to \infty} \frac {E^k\left(\omega_{N}^*\left(D\right)\right)}{N^{1+s/d}}.
\]
This implies that for any cube $ z+a\q_d $ of Lebesgue measure $ a^d > 0 $ and a compact set $ A\subset (z+a\q_d) $ with $ \lambda_d(A) > a^d - \delta $, where $ \delta = \delta_0(\epsilon)a^d $, there holds
\[
    \liminf_{N\to\infty} \frac{\e(\omega_N^*(z+a\q_d))}{\t(N)} \geq  \limsup_{N\to\infty} (1 - \epsilon) \frac{\e(\omega_N^*(A))}{\t(N)}. 
\]
Finally, an inspection of the proof of Theorem~\ref{thm:weighted} shows that for a weighted energy $ E^k_s(\cdot\,;\kappa) $, it suffices to verify \eqref{eq:cube_conts} for cubes on which $ \kappa(x,y) $ is constant, which is immediate from the above. Once Theorem~\ref{thm:weighted} is obtained, a general continuity result follows.
This completes the proof of the continuity on cubes \eqref{eq:cube_conts} for weighted $ E^k_s(\cdot\,;\kappa) $.
The embedded continuity property for Definition~\ref{def:strong_cube_conts} follows from directly from Lemma~\ref{lem:upper_reg}.
We have now established the following 
\begin{proposition}
    For a positive bounded, continuous on the diagonal $ \kappa $, the $ k $-truncated Riesz energy
    \[
        E^k_s(\omega_N;\kappa) = \sum_{i=1}^N \sum_{j\in I_{i,k} } \kappa(x_i,x_j) \|{x}_i - {x}_j \|^{-s}
    \]
    is an admissible functional with weight $ \kappa(x,x) $ and rate $ \t = t^{1+s/d} $.
\end{proposition}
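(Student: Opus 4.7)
The strategy is to assemble the results established earlier in this subsection into a verification of the four requirements of an admissible weighted functional: lower semicontinuity, set-monotonicity, the weighted cube asymptotics \eqref{eq:weighted} (equivalently Definitions~\ref{def:cube_asymp} and \ref{def:weighted}), the short-range property \eqref{eq:short_range}, and the continuity property \eqref{eq:cube_conts}. No new ingredient is needed; the task is organizational.

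First, lower semicontinuity on $\Omega^N$ follows from the lemma established at the beginning of this subsection (continuity of $\|x_i-x_j\|^{-s}$ off the diagonal plus lower semicontinuity of $\kappa$ plus blow-up on coincidences). Set-monotonicity with $\varsigma>0$ is immediate, since $E^k_s(\omega_N;\kappa)$ depends only on the multiset $\omega_N$, not on the set $A$, so minimizing over a smaller compact set yields a larger value. For the cube asymptotics in the unweighted case $\kappa\equiv 1$, I would invoke Theorem~\ref{thm:cube}: the functional $E^k_s(\cdot\,)$ is translation-invariant, $s$-scale-invariant, and local (since it depends only on $k$ nearest-neighbor interactions, which is a special case of the remark in Section~\ref{sec:cube}). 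The hypothesis $0<\f(1)<\infty$ is verified by the two displayed calculations just above Lemma~\ref{lem:upper_reg}: a rectangular-lattice configuration gives $E^k_s(\omega_N)\leq CN^{1+s/d}$, while Jensen's inequality applied to $t^{-s/d}$ together with the packing bound $\sum_i\|x_i-x_{i,1}\|^d\lesssim 1$ gives $E^k_s(\omega_N^*)\geq cN^{1+s/d}$.

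To pass from constant weight to general $\kappa$, that is, to obtain \eqref{eq:weighted}, I would use the covering Lemma~\ref{lem:covering}: for a minimizer $\omega_N^*(x+a\q_d)$ the nearest-neighbor distances satisfy $\|x_i-x_{i,k}\|\leq C N^{-1/d}$, which tends to $0$. By continuity of $\kappa$ on $\diag(\Omega\times\Omega)$, for any $\epsilon>0$ and $N$ large one has $(1-\epsilon)\kappa(x_i,x_i)\leq \kappa(x_i,x_{i,l})\leq (1+\epsilon)\kappa(x_i,x_i)$ for $1\leq l\leq k$, so the weighted energy is sandwiched between $(1\pm\epsilon)$ times a $\kappa(x,x)$-weighted Riemann-sum approximation. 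Taking $\min$ and $\max$ of $\kappa(y,y)$ over $x+a\q_d$ and sending $\epsilon\downarrow0$ yields \eqref{eq:weighted} with weight $\h(y)=\kappa(y,y)$.

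The short-range property \eqref{eq:short_range} was verified in the paragraph preceding Lemma~\ref{lem:upper_reg}: cross-cube interactions contribute at most $O(Nk\,r^{-s})$ when cubes are separated by $r>0$ (this is the $\liminf\geq 1$ side for sequences (i) and (ii) with positive separation), while for abutting cubes with disjoint interiors the separation Lemma~\ref{lem:separation} restricts the number of points in any $\delta$-neighborhood of $\bigcup_m\partial Q_m$ to a fraction $\epsilon$ of $N$, whose contribution is $O(\epsilon N^{1+s/d})$. The continuity on cubes \eqref{eq:cube_conts} was obtained from Lemma~\ref{lem:upper_reg} combined with the $s$-scale invariance argument (transferring $D\subset\q_d$ with $\lambda_d(D)>1-\delta_0(\epsilon)$ to $-z+\frac1a D\subset\q_d$ of comparable measure) on cubes where $\kappa$ is constant; the general continuous $\kappa$ reduces to this case by the same sandwich argument as in the previous paragraph. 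The only potential obstacle I foresee is bookkeeping: making sure that the $\epsilon$ from weight oscillation and the $\delta(\epsilon)$ from Lemma~\ref{lem:upper_reg} are chosen in the correct order so that the continuity constant $(1-\epsilon)$ in \eqref{eq:cube_conts} survives. This is straightforward once all the pieces are in place.
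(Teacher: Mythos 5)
Your proposal is correct and follows essentially the same route as the paper: set-monotonicity from configuration-only dependence, unweighted cube asymptotics via Theorem~\ref{thm:cube} together with the two-sided $N^{1+s/d}$ bounds, the passage to weight $\kappa(x,x)$ via the covering lemma and diagonal continuity of $\kappa$, the short-range property via the separation lemma and a boundary-neighborhood count, and continuity on cubes via Lemma~\ref{lem:upper_reg} with a scale-invariance reduction to constant $\kappa$. The only noteworthy wrinkle is that your cross-cube estimate $O(Nk\,r^{-s})$ is the (correct) version of what the paper writes as ``$Nkr$'' — evidently a typo there — so no substantive gap remains.
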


The final result in this section rephrases the above proposition, and is a counterpart to the prior results about Riesz energies \cite{hardinMinimal2005}, in particular those equipped with a weight \cite{borodachovAsymptotics2008}, or an external field \cite{hardinGenerating2017}. 

\begin{proof}[Proof of Theorem~\ref{thm:k_asympt}.]
    As discussed above, the $ k $-truncated Riesz energy
    \[
        E^k_s(\omega_N; \kappa, \ext) := \sum_{i=1}^N \sum_{j\in I_{i,k} } \kappa( x_i,  x_j) \|{x}_i - {x}_j \|^{-s} +  N^{s/d} \sum_{i=1}^N \ext( x_i), \qquad  s > 0,
    \]
    satisfies Definition~\ref{def:ext_field}, Definitions~\ref{def:short_range} and~\ref{def:cube_conts}. It follows that Theorem~\ref{thm:ext_dist} applies. This immediately gives the desired statement.
\end{proof}
\subsection{Minimizers on the circle}
\label{sec:1d}
When $ d = \dim_H\Omega = 1 $, we can compute explicitly the values of $ C_{s,1}^k $ for any $ s > 0 $ and $ k\geq 1 $. In fact, we will show that the minimizers of the energy $ E^k(\cdot\,; \ker) $ on the periodized interval $ [0,1] $ are equally spaced, for any convex decreasing kernel $ \ker $. Equivalently, minimizers of such energies on $ \mathbb S^1 $ with embedded distance are equally spaced points for any convex decreasing kernel.

\begin{theorem}
    \label{thm:1d} 
    Let $ \Omega = \mathbb S^1  $  with the metric $ \rho = s / 2\pi $ for the arc length $ s $, and assume that $ \ker(x,y) = \pphi(\rho(x , y)) $ for a convex decreasing $ \pphi:[0,\infty)\to [0,\infty) $. For any $ N\geq 2$ and $ k\geq 1 $, the energy
    \[
        E^k(\omega_N; \pphi) = \sum_{i=1}^N \sum_{j\in I_{i,k}} \pphi\left(\rho({x}_i , {x}_j) \right).
    \]
    is minimized by every set $ \omega_N^* $ that consists of equally spaced points.
\end{theorem}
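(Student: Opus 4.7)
The plan is to combine Jensen's inequality applied to cumulative arc lengths with a symmetrization over the asymmetric choice of $k$ nearest neighbors. First I would parameterize $\mathbb{S}^1$ so that its total length in the metric $\rho$ equals $1$, label a configuration $\omega_N$ in circular order as $x_1,\ldots,x_N$, and let $d_i=\rho(x_i,x_{i+1})$ (indices mod $N$), so $\sum_i d_i=1$. For $m\ge 1$ let $\ell_i^+(m)=d_i+d_{i+1}+\cdots+d_{i+m-1}$ and $\ell_i^-(m)=d_{i-1}+\cdots+d_{i-m}$; a simple reindexing (telescoping) gives $\sum_{i=1}^N\ell_i^{\pm}(m)=m$. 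To handle pairs whose one-sided arc exceeds $1/2$, extend $\varphi$ to the circular function $\widetilde\varphi(t):=\varphi(\min(t,1-t))$ on $[0,1]$; since $\min(t,1-t)$ is concave and $\varphi$ is convex decreasing, $\widetilde\varphi$ is still convex, and $\varphi(\rho(x_i,x_{i+m}))=\widetilde\varphi(\ell_i^+(m))$.

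With these preliminaries, Jensen applied to $\widetilde\varphi$ yields
\[
\sum_{i=1}^N \widetilde\varphi(\ell_i^\pm(m)) \;\ge\; N\,\widetilde\varphi(m/N),
\]
with equality precisely when all $d_i$ are equal. Independently, because $I_{i,k}$ picks out the $k$ smallest distances from $x_i$ and $\varphi$ is decreasing, $\sum_{j\in I_{i,k}}\varphi(\rho(x_i,x_j))$ is the sum of the $k$ \emph{largest} values among the $N-1$ quantities $\varphi(\rho(x_i,x_\cdot))$; hence it dominates the corresponding sum over any other $k$-subset. Taking that $k$-subset to consist of the $a$ closest clockwise and $b$ closest counter\-clockwise neighbors of $x_i$ (for any $a,b\ge 0$ with $a+b=k$) gives the key bound
\[
\sum_{j\in I_{i,k}}\varphi(\rho(x_i,x_j)) \;\ge\; \sum_{m=1}^{a}\widetilde\varphi(\ell_i^+(m)) + \sum_{m=1}^{b}\widetilde\varphi(\ell_i^-(m)).
\]

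To finish, I would apply this inequality once with $(a,b)=(\lceil k/2\rceil,\lfloor k/2\rfloor)$ and once with $(a,b)=(\lfloor k/2\rfloor,\lceil k/2\rceil)$, average the two, sum over $i$, use $\sum_i\widetilde\varphi(\ell_i^-(j))=\sum_i\widetilde\varphi(\ell_i^+(j))$, and finally invoke the Jensen bound to obtain
\[
E^k(\omega_N;\varphi) \;\ge\; N\sum_{j=1}^{\lceil k/2\rceil}\widetilde\varphi(j/N) + N\sum_{j=1}^{\lfloor k/2\rfloor}\widetilde\varphi(j/N).
\]
A direct computation verifies that both sides coincide for any equally spaced $\omega_N^*$: all $d_i=1/N$ saturates Jensen, and with that uniformity the split bound is saturated because $I_{i,k}$ can be taken as $\lceil k/2\rceil$ on one side and $\lfloor k/2\rfloor$ on the other (the convention $\widetilde\varphi(j/N)=\varphi(\min(j,N-j)/N)$ matches the geodesic distance). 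The one delicate point, and the main obstacle in the argument, is the odd-$k$ case: there is no ``symmetric'' split $a=b$, and the $(m{+}1)$-th left and right neighbors are tied in the equally spaced configuration. The averaging step over the two off-by-one splits is exactly what is needed to turn the asymmetric single-side estimates into a symmetric bound that is sharp at the equally spaced configuration.
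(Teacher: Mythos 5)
Your proof is essentially the paper's: replace the true $k$-nearest-neighbor set $I_{i,k}$ by a fixed split of $a$ clockwise plus $b$ counterclockwise neighbors, compare using monotonicity of $\varphi$, sum over $i$, telescope the arc lengths, and finish with Jensen applied to the convex kernel. Your introduction of $\widetilde\varphi(t)=\varphi(\min(t,1-t))$ is a careful refinement---it accounts for the fact that on the circle $\sum_i\rho(x_i,x_{i+l})\le l$ holds only as an inequality, whereas the paper's~\eqref{eq:distance_sum} is written as an equality (harmless there, since $\varphi$ is decreasing, but your version is cleaner). The averaging over the two off-by-one splits that you flag as ``the one delicate point'' is in fact unnecessary: after summing over $i$, the reindexing $\ell_i^-(m)=\ell_{i-m}^+(m)$ gives $\sum_i\widetilde\varphi(\ell_i^-(m))=\sum_i\widetilde\varphi(\ell_i^+(m))$, so the single split $(a,b)=(\lceil k/2\rceil,\lfloor k/2\rfloor)$ already yields the symmetric lower bound $N\sum_{m=1}^{\lceil k/2\rceil}\widetilde\varphi(m/N)+N\sum_{m=1}^{\lfloor k/2\rfloor}\widetilde\varphi(m/N)$, which is saturated by equally spaced points.
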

\begin{proof}
    Consider an arbitrary set $ \omega_N $ of $ N $ distinct points in $ \Omega $. We will show that its energy is at least the one of $ \omega_N^* $, as defined above.
    We will assume that the elements of $ \omega_N = \{ x_1,\ldots,x_N \} $ are enumerated in the increasing order, so that for example $ x_1 $ and $ x_3 $ are adjacent to the point $ x_2 $, etc. We will also use indices of $ x_i $ modulo $ N $, so for any $ x_i $ the two adjacent points in the above ordering are given by $ x_{i-1} $ and $ x_{i+1} $.

    Consider the following sets of indices 
    \[
        \left\{ i-\left\lfloor \frac k2\right\rfloor, i-\left\lfloor \frac k2\right\rfloor +1,\ldots, i+\left\lceil \frac k2\right\rceil-1, i+\left\lceil \frac k2\right\rceil \right\}.  
    \] 
    Let $ I'_{i,k} $ stand for the above set, ordered by the nondecreasing distance  from $ j' \in I'_{i,k} $ to $ x_i $. Then, if $ j \in I_{i,k} $ and $ j' \in I'_{i,k} $ have the same position in the respective ordered set, there holds
    \[
        \rho(x_i - x_j) \leq  \rho(x_i - x_{j'}),
    \]
    since by definition given previously, $ I_{i,k} $ is the set of indices of the $ k $ nearest neighbors of $ x_i $, ordered by the nondecreasing distance to $ x_i $. By the monotonicity of $ \pphi $, it follows 
    \begin{equation}
        \label{eq:iprime}
            \sum_{j\in I_{i,k}} \pphi\left(\rho({x}_i , {x}_j) \right) \geq \sum_{j'\in I'_{i,k}} \pphi\left( \rho ({x}_i , {x}_{j'}) \right), \qquad 1\leq i \leq N.  
    \end{equation}

    Now observe that for any set of $ N $ distinct points $ \omega_N\subset \Omega $,
    \[
        \sum_{i=1}^N \rho(x_i , x_{i+1}) = 1.
    \]
    Indeed, the above sum contains the distances between adjacent points, which add up to the length of $ \Omega $. Similarly, one has
    \begin{equation}
        \label{eq:distance_sum}
        \sum_{i=1}^N \rho(x_i , x_{i+l})  = \sum_{i=1}^N \sum_{j=1}^l  \rho( x_{i+j-1} , x_{i+j}) = l,
    \end{equation}
    whenever $ 2k \leq N $.

    In view of \eqref{eq:iprime} and convexity of $ \pphi $, we obtain
    \[
        \begin{aligned}
        E^k(\omega_N; \phi)
        &= \sum_{i=1}^N \sum_{j\in I_{i,k}} \pphi\left( \rho({x}_i , {x}_j) \right) \geq \sum_{i=1}^N\sum_{j'\in I'_{i,k}} \pphi\left( \rho({x}_i , {x}_{j'}) \right) \\
        &= \sum_{\substack{l=-\lfloor k/2\rfloor\\ l\neq 0}}^{\lceil k/2 \rceil} \sum_{i=1}^N \pphi\left(\rho ({x}_i , x_{i+l} )\right) \geq \sum_{\substack{l=-\lfloor k/2\rfloor\\ l\neq 0}}^{\lceil k/2 \rceil} N \, \pphi \left(\frac1N \sum_{i=1}^N \left(\rho({x}_i , x_{i+l}) \right)\right) \\
        &= N  \sum_{\substack{l=-\lfloor k/2\rfloor\\ l\neq 0}}^{\lceil k/2 \rceil} \pphi \left(\frac lN \right) = E^k(\omega^*_N; \phi).
    \end{aligned}
    \]
    In the second line of the above equation we used Jensen inequality. Here, as defined in the statement of the theorem, $ \omega_N^* $ consists of $ N $ equally spaced points in $ \Omega $.
\end{proof}

\begin{corollary} 
    The value of the constant $ C_{s,1}^k $ is given by 
    \[
        C_{s,1}^k  = \sum_{\substack{l=-\lfloor k/2\rfloor\\ l\neq 0}}^{\lceil k/2 \rceil} \frac1{|l|^s}.
    \] 
\end{corollary}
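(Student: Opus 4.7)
The plan is to specialize Theorem~\ref{thm:1d} to the Riesz kernel $\varphi(t) = t^{-s}$, which is strictly convex and strictly decreasing on $(0,\infty)$ for $s>0$, and then translate from the arc-length setting of Theorem~\ref{thm:1d} to the Euclidean setting in which $C_{s,1}^k$ is defined. Let $\rho$ denote the arc-length metric on $\mathbb{S}^1$ normalized so that the total length equals $1$. Applying Theorem~\ref{thm:1d} with this $\varphi$ immediately yields that equally spaced $N$-point configurations minimize the truncated arc-length Riesz energy, and that
\[
M^{\mathrm{arc}}_N \;:=\; \min_{\omega_N \subset \mathbb{S}^1}\sum_{i=1}^N\sum_{j\in I_{i,k}} \rho(x_i,x_j)^{-s} \;=\; N^{1+s}\sum_{\substack{l=-\lfloor k/2\rfloor\\ l\neq 0}}^{\lceil k/2\rceil}\frac{1}{|l|^s}.
\]

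By the definition of the cube limit function $\f$ and by Theorem~\ref{thm:general_uniform} applied to the unit interval $[0,1]\subset\mathbb R$,
\[
C_{s,1}^k \;=\; \f(1) \;=\; \lim_{N\to\infty}\frac{M^{\mathrm{Eucl}}_N}{N^{1+s}}, \qquad M^{\mathrm{Eucl}}_N \;:=\; \min_{\omega_N\subset[0,1]} E^k_s(\omega_N),
\]
where $E^k_s$ uses the Euclidean distance $|x-y|$. Identifying $[0,1]$ with $\mathbb{S}^1$ via $0\sim 1$, every $\omega_N\subset[0,1]$ is simultaneously a configuration on the circle, and the elementary inequality $\rho(x,y)\leq|x-y|$ for $x,y\in[0,1]$ yields the termwise bound $\rho(x_i,x_j)^{-s}\geq|x_i-x_j|^{-s}$. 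Summing the $k$ largest such terms preserves this pointwise dominance, so $E^k_{\mathrm{arc}}(\omega_N)\geq E^k_{\mathrm{Eucl}}(\omega_N)$ for every $\omega_N\subset[0,1]$; taking the minimum gives $M^{\mathrm{arc}}_N\geq M^{\mathrm{Eucl}}_N$.

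For the reverse estimate I would take a Euclidean minimizer $\omega_N^*\subset[0,1]$; by Lemma~\ref{lem:separation}, $\Delta(\omega_N^*)\gtrsim 1/N$, so at most $O(k)$ of its points lie within Euclidean distance $O(k/N)$ of an endpoint. The arc-length and Euclidean $k$-nearest-neighbor sums at $\omega_N^*$ therefore agree on all non-boundary points, while each of the $O(k)$ boundary points contributes $O(N^s)$ in either metric. Hence $E^k_{\mathrm{arc}}(\omega_N^*) = M^{\mathrm{Eucl}}_N + O(N^s)$, whence $M^{\mathrm{arc}}_N \leq M^{\mathrm{Eucl}}_N + O(N^s)$. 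Combining with the previous paragraph, dividing by $N^{1+s}$, and passing to the limit produces
\[
C_{s,1}^k \;=\; \sum_{\substack{l=-\lfloor k/2\rfloor\\ l\neq 0}}^{\lceil k/2\rceil}\frac{1}{|l|^s}.
\]
The only step requiring any care is the $O(N^s)$ boundary comparison between arc-length and Euclidean energies in this last paragraph, which is routine given the separation estimate of Lemma~\ref{lem:separation}.
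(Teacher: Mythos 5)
Your proof is correct and follows the same strategy as the paper: apply Theorem~\ref{thm:1d} with $\varphi(t)=t^{-s}$ to get the circle asymptotics, then identify the circle with the periodized unit interval and argue that the arc-length and Euclidean minimal energies differ only by boundary effects that are $o(N^{1+s})$. The paper condenses your final two paragraphs into the phrase ``due to the properties of short-range energies,'' and your explicit boundary estimate via the separation bound of Lemma~\ref{lem:separation} is a reasonable way to make that precise.
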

\begin{proof}
    The unit circle $ \mathbb S^1 $ with the metric $ \rho $ above can be identified with the unit interval $ [0,1) $ equipped with the natural distance with periodization. Due to the properties of short-range energies, the asymptotics of the minimal energy for this set $ A' = [0,1) $ with this distance coincide with the asymptotics for $ A = [0,1] \subset \mathbb R $ with the Euclidean distance. 
\end{proof}

\subsection{Relation of \texorpdfstring{$ k $}{k}-truncated energies for \texorpdfstring{$ s>d $}{s>d} to hypersingular Riesz energies} 
\label{subsec:background} 
In this section we will clarify the relation between the $ k $-truncated Riesz energy discussed above and the non-truncated hypersingular Riesz energy $ E_s(\omega_N; \kappa, \ext) $ on $ \mathbb R^d $, given by  
\begin{equation}
    E_s(\omega_N; \kappa, \ext) := \sum_{i \neq j } \kappa( x_i,  x_j) \|{x}_i - {x}_j \|^{-s} +  N^{s/d} \sum_{i=1}^N \ext( x_i), \qquad  s > d.
\end{equation}
We show that for $ k\to \infty $, the asymptotics of $ E_s^k $ and $ E_s $ coincide.

Recall the following statement, characterizing the minimal asymptotics of this energy for a constant weight, as well as the behavior of its minimizers in the weak$ ^* $ topology. 

\begin{theorem}\cite{hardinGenerating2017} 
    \label{thm:lim_external}
Assume $s > d$. Let $A\subset \R^{d'} $ be a $ d $-rectifiable compact set, $ \mathcal{H}_d(A)>0 $. Let further $\ext$ be a lower semicontinuous function on $ A $, finite on a set of positive $ \Hd $-measure, and $ \kappa \equiv 1 $. 
Then every sequence of minimizing configurations $ \omega_N^*, N\geq 2, $ satisfies
\begin{equation}\label{eq:S_lim}
    \lim\limits_{N\to\infty}  \frac{ E_s(\omega_N^*; 1, \ext) }{N^{1+s/d}}=  \int\limits \frac{L_1+sq( x)/d}{1+s/d}  \, \d\mu( x) =: \mathfrak S(s,\ext),
\end{equation} 
where $ \mu $ is the probability measure given by
\begin{equation}\label{eq:L}
% \int\limits_A \left(\frac{L_1-\ext( x)}{\c(1+s/d)}\right)^{d/s}_+\d\Hd( x)=1, 
% \qquad
\frac{\d\mu }{ \d\Hd}( x) = \left(\frac{L_1-\ext( x)}{\c(1+s/d)}\right)^{d/s}_+.
\end{equation}
Conversely, if $ \{ {\omega}_N\}_{N\geq2} $ satisfies 
\begin{equation}\label{asymp_en_min}
	\lim_{N\to\infty} \frac{ E_s(\omega_N; 1, \ext) }{N^{1+s/d}} = \mathfrak S(s,\ext), 
	\end{equation}
then
\begin{equation*}
\frac{1}{N}\sum_{ x\in {\omega}_N} \delta_{  x}\weakto  \d\mu \quad \text{ as } N\to\infty.
\end{equation*}
\end{theorem}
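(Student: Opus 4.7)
The plan is to realize the hypersingular Riesz energy as an embedded admissible functional in the sense of Definition~\ref{def:embedded}, and then invoke Theorem~\ref{thm:embedded} to read off both the asymptotic value and the weak$^*$ limit of the counting measures. Concretely, I would verify that $E_s(\cdot\,;1,\eta)$ fits the framework of Sections~\ref{sec:short_range}--\ref{sec:embedded} with cube limit function $\f(t)=\f(1)t^{-s/d}$, rate $\t(N)=N^{1+s/d}$ and fractional rate $\w(t)=t^{1+s/d}$, so that $\sigma=s/d>0$. The expression for $\mu$ and for the limit of $E_s(\omega_N^*)/N^{1+s/d}$ then follow immediately from the conclusion of Theorem~\ref{thm:embedded} with $\h\equiv 1$ and the external field given by $\xi=\eta$.

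First, I would address the translation-invariant flat functional on $\mathbb R^d$. Since $E_s$ is $s$-scale invariant and depends only on the configuration (so monotonicity~\eqref{eq:monotonicity} is automatic with $\sgn\varsigma=+1$), once locality in the sense of Definition~\ref{def:local} is established, Theorem~\ref{thm:cube} would produce a finite positive $\f(1)=\lim_N E_s(\omega_N^*(\q_d))/N^{1+s/d}$ and identify $\sigma=s/d$. The boundedness of this limit is a classical Jensen/volume argument as in Section~\ref{sec:nearest_neighbor}; separation of minimizers (of order $N^{-1/d}$) follows by the same ``move-a-point'' argument as in Lemma~\ref{lem:separation}, and this separation is what makes both the locality property and the continuity property~\eqref{eq:cube_conts} go through essentially verbatim via the retraction argument of Lemma~\ref{lem:upper_reg}.

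The main obstacle is verifying the short-range property from Definition~\ref{def:short_range} for the \emph{full} Riesz sum (not the $k$-truncated version, where it was transparent). Here I would use the hypersingularity $s>d$ to bound the cross-tile contribution. For the sequence $\omega_N=\omega_N^*(A)$, partition $\q_d$ dyadically into the cubes $Q_m$ and use the separation $\Delta(\omega_N)\gtrsim N^{-1/d}$ to estimate, for any $x_i$ in one tile, the sum $\sum_{j\notin Q_m(x_i)}\|x_i-x_j\|^{-s}$ by a dyadic shell decomposition: the $\ell$-th shell around $x_i$ contains $O(\ell^{d-1})$ points at distance $\gtrsim \ell N^{-1/d}$, contributing $O(\ell^{d-1-s}N^{s/d})$; summing in $\ell\geq 1$ converges precisely because $s>d$, yielding total cross-tile interactions of size $O(N\cdot N^{s/d} \cdot \delta^{s-d})$ where $\delta$ is the fraction of points within $\delta N^{-1/d}$ of the tile boundary. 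This is $o(N^{1+s/d})$ once $\delta\to 0$. For sequences of the form (ii), i.e.\ unions of piecewise minimizers on tiles positive distance apart, the same dyadic shell bound shows cross-tile terms are of lower order, giving the matching inequality required by Remark~\ref{rem:one_sided_short_range}.

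Having established that $E_s$ is an admissible translation-invariant functional on $\mathbb R^d$ with triple $(\f(1)t^{-s/d},\,t^{1+s/d},\,t^{1+s/d})$, I would promote it to an embedded functional on $d$-rectifiable $A\subset\mathbb R^{d'}$ by the standard Lipschitz-pushforward estimate: if $\psi:A\to\mathbb R^d$ is bi-Lipschitz with constant $1+\epsilon$, then each pair interaction is distorted by at most $(1+\epsilon)^{\pm s}$, giving Definition~\ref{def:stable_lip} with $\gamma(\epsilon)=(1+\epsilon)^s-1$. The external field $\eta$ is incorporated via Proposition~\ref{prop:adding_ext_field}; lower semicontinuity of $\eta$ (rather than continuity) is handled exactly as in \cite{hardinGenerating2017}. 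Once all of these pieces are in place, Theorem~\ref{thm:embedded} with $\h\equiv 1$, $\xi=\eta$, $\sigma=s/d$, and the identification $\c=\f(1)$ yields the density formula~\eqref{eq:L}, and the energy asymptotic $\mathfrak S(s,\eta)$ of~\eqref{eq:S_lim} follows upon substituting this $\mu$ into the integral expression from the conclusion of that theorem (equivalently, using Remark~\ref{rem:nice_weighted_expression} and the pointwise identity $\c(1+s/d)\,\pphi(x)^{s/d}+\eta(x)=L_1$ on $\supp\mu$ to collapse the two integrals into $\int (L_1+s\eta/d)/(1+s/d)\,d\mu$). The converse weak$^*$ statement is the uniqueness part of Theorem~\ref{thm:embedded}.
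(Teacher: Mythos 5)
The paper does not prove Theorem~\ref{thm:lim_external} at all: it is stated with the citation tag \cite{hardinGenerating2017} and is recalled as a known result from that reference, whose proof relies on the machinery of \cite{hardinMinimal2005} and its successors. So there is no in-paper proof to compare against. What you have written is instead a derivation of the theorem \emph{from} the general short-range framework of Sections~\ref{sec:short_range}--\ref{sec:embedded}, which is exactly what the paper alludes to informally in the remark and Corollary that follow the theorem (``By the general approach given in Section~\ref{sec:short_range}, it follows that the results of \cite{borodachovAsymptotics2008} and \cite{hardinGenerating2017} can be combined\dots''). Your route is therefore legitimate and in the spirit of the paper, and the mechanical parts check out: the dyadic-shell estimate for cross-tile interactions is essentially the computation in Lemma~\ref{lem:offdiagonal} and does give $o(N^{1+s/d})$ precisely because $s>d$; the bi-Lipschitz distortion factor $(1+\epsilon)^{\pm s}$ is the right $\gamma(\epsilon)$ for Definition~\ref{def:stable_lip}; and the algebraic collapse $\c(1+s/d)\pphi^{s/d}=L_1-\ext$ on $\supp\mu$ does turn the two integrals of Theorem~\ref{thm:embedded} into $\int (L_1+s\ext/d)/(1+s/d)\,d\mu$, matching~\eqref{eq:S_lim}.

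The one genuine gap is the regularity of the external field. Theorem~\ref{thm:lim_external} allows $\ext$ to be merely lower semicontinuous (and this is essential to the applications in \cite{hardinGenerating2017}), but the framework you invoke --- Definition~\ref{def:ext_field}, Theorem~\ref{thm:ext_dist}, and hence Theorem~\ref{thm:embedded} --- is developed in this paper only for \emph{continuous} $\xi$; the paper itself explicitly says so and defers the l.s.c.\ case to \cite{hardinGenerating2017}. Your sentence ``lower semicontinuity of $\eta$\dots is handled exactly as in \cite{hardinGenerating2017}'' therefore invokes the very result you are trying to reprove, which makes the argument circular at that point. As written, your proposal proves the theorem only under the stronger hypothesis that $\ext$ is continuous. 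To close the gap you would need to carry out the approximation of an l.s.c.\ $\ext$ from below by continuous fields and pass to the limit, verifying that both the asymptotic value and the weak$^*$ limit of minimizers are stable under that approximation --- none of which is in the present paper.

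A smaller remark: the short-range property \eqref{eq:short_range} for the \emph{non-truncated} hypersingular Riesz energy is also not verified anywhere in this paper (the paper checks it only for $E^k_s$ and for $\cvt$, and for the full Riesz sum defers to \cite{hardinMinimal2005}). Your shell estimate is the right idea, but it should be stated as a lemma you are proving rather than as something that ``goes through essentially verbatim'': in particular you need the optimal separation $\Delta(\omega_N^*)\gtrsim N^{-1/d}$ for minimizers of the \emph{full} energy (which, unlike Lemma~\ref{lem:separation} for the truncated case, is a nontrivial input from \cite{hardinMinimal2005}) before the shell count $\#L_m\lesssim m^{d-1}$ applies.
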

An analogous theorem has been established by Borodachov and the first two authors for a non-constant weight and absence of external field \cite{borodachovAsymptotics2008}.
As has been demonstrated in \cite{hardinMinimal2005}, the hypersingular Riesz kernel gives rise to a short-range interaction. By the general approach given in Section~\ref{sec:short_range}, it follows that the results of \cite{borodachovAsymptotics2008} and \cite{hardinGenerating2017} can be combined to obtain a result identical to Theorem~\ref{thm:k_asympt}, with non-truncated sum of the hypersingular interaction with $ s>d $. Suppose the weight $ \kappa $ is continuous on the diagonal $ \diag (A\times A) $, positive, bounded, and the external field $ \xi $ is continuous on $ A $, then there holds
\begin{corollary}
    Suppose $ A \subset \mathbb R^{d'} $ is a compact $ d $-rectifiable set.
    For an $ s > d $, any sequence of minimizers of the hypersingular Riesz $ s $-energy with kernel $ \kappa $ and external field $ \ext $ has the energy asymptotics given by
    \[
        \begin{aligned}
            \lim_{N\to\infty} \min_{\omega_N\subset A} \frac{E_s(\omega_N; \kappa, \ext)}{N^{1+s/d}}  
            &= \c\int_A \kappa(x,x) \pphi( x)^{1+s/d}\d\Hd(x) + \int_A \ext( x) \pphi( x) \d\Hd(x)\\
            &=: \mathfrak S(s,\kappa,\ext),
    \end{aligned}
    \]
    where
    \begin{equation*}
        \pphi( x) = \left(\frac{L_1-\ext( x)}{\c(1+s/d)\kappa( x,  x)}\right)^{d/s}_+,
    \end{equation*} 
    and the constant $ L_1 $ is chosen so that $ \pphi \d\Hd $ is a probability measure.
    Furthermore, for any sequence of configurations $ \{ \omega_N \} $ achieving the energy asymptotics of  $ \mathfrak S(s,\kappa,\ext) $, the weak$ ^* $-limit of the corresponding counting measures is $ \pphi \d\Hd $.
\end{corollary}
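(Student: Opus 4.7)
The plan is to verify that the hypersingular Riesz energy $E_s(\omega_N;\kappa,\xi)$ for $s > d$ is an embedded admissible functional in the sense of Definition~\ref{def:embedded}, with associated triple $(\f(1)t^{-s/d},\, t^{1+s/d},\, t^{1+s/d})$, weight $\h(x)=\kappa(x,x)$, and external field $\xi$; the conclusion then follows by direct invocation of Theorem~\ref{thm:embedded} (together with Theorem~\ref{thm:ext_dist} in the full-dimensional case), which yields the stated formulas with $\sigma = s/d$ and density $\pphi(x) = \bigl((L_1-\xi(x))/(\c(1+s/d)\kappa(x,x))\bigr)^{d/s}_+$. This essentially packages the known results of \cite{hardinMinimal2005,borodachovAsymptotics2008,hardinGenerating2017} into the general framework of Sections~\ref{sec:short_range}--\ref{sec:embedded}.

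First I would dispose of the easy axioms: lower semicontinuity of $E_s$ is immediate since $\kappa$ is lower semicontinuous on $\Omega\times\Omega$ and the Riesz kernel blows up on the diagonal; set-monotonicity \eqref{eq:monotonicity} with $\varsigma = s/d>0$ is trivial because $E_s$ depends only on the configuration, not the ambient set. The critical preliminary input from \cite{hardinMinimal2005} is that minimizers are well-separated, $\Delta(\omega_N^*(A)) \geq c(A)N^{-1/d}$, with covering radius of the same order. For the cube asymptotics \eqref{eq:cube_asymp}, I would appeal to Theorem~\ref{thm:cube}: $E_s$ is exactly $s$-scale invariant and translation invariant, so it remains to check the locality condition~\eqref{eq:locality}. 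This is where the hypothesis $s>d$ enters decisively, since then $\sum_{\boldsymbol{i}\in\mathbb{Z}^d\setminus\{0\}}\|\boldsymbol{i}\|^{-s}<\infty$; combined with the separation bound, cross-tile interactions in the tiled configuration contribute only $O(L^d N)$ while the within-tile contribution is $\Theta(L^d N^{1+s/d})$, giving \eqref{eq:locality}. Finiteness and strict positivity of the cube limit are verified as in Section~\ref{sec:nearest_neighbor}: the upper bound by placing a lattice configuration, the lower bound by Jensen's inequality applied to $t \mapsto t^{-s/d}$ together with the disjoint-ball packing argument.

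Next I would check the short-range property \eqref{eq:short_range} and the continuity property \eqref{eq:cube_conts}. For \eqref{eq:short_range}, pairs $(x_i,x_j)$ with $x_i,x_j$ in distinct cubes $Q_m, Q_{m'}$ either are separated by a fixed positive distance (handled by continuity/boundedness of $\kappa$ off the diagonal and the bound $\|x_i-x_j\|^{-s} \leq C$) or, in the limiting case where cubes share a boundary, the separation lemma and the convergence of the Riesz tail $\sum_{\|y-x\|\geq r}\|y-x\|^{-s} = O(r^{d-s})$ show cross-cube interactions are $o(N^{1+s/d})$ after taking thin boundary layers. For \eqref{eq:cube_conts}, the retraction construction of Lemma~\ref{lem:upper_reg} transfers verbatim: the separation bound allows one to relocate the $\approx(1-\delta)N$ points of $\omega_N^*(A)$ lying near $D$ to nearby points of $D$ without inflating pairwise distances by more than a factor $1-O(\delta^{1/d})$, which propagates through the $s$-power. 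The weighted refinement \eqref{eq:weighted} follows because the minimizers' nearest-neighbor distances tend to $0$, so by continuity of $\kappa$ at the diagonal, replacing $\kappa(x_i,x_j)$ by $\kappa(y,y)$ for any $y$ in the small cube introduces only a $1+o(1)$ distortion. The external field term $N^{s/d}\sum_i\xi(x_i)$ is handled by Proposition~\ref{prop:adding_ext_field}.

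The main obstacle is the short-range decomposition at the cube boundaries: unlike the $k$-nearest neighbor truncation, here every pair contributes, so one must argue that pairs straddling the boundary of a cube partition contribute negligibly. The separation lower bound $\Delta(\omega_N^*)\geq cN^{-1/d}$ guarantees that, in a boundary slab of width $\eta N^{-1/d}$, there are at most $O(\eta N)$ points, and the Riesz tail summation for $s>d$ converts this into an $O(\eta)$ fractional error in the total energy. Taking $\eta\downarrow 0$ closes the argument. For the embedded ($d$-rectifiable) case, the verification of Definitions~\ref{def:strong_short_range}--\ref{def:strong_cube_conts} and~\eqref{eq:associated} proceeds as in the proof of Theorem~\ref{thm:embedded}, using Lemma~\ref{lem:federer} to decompose $A$ into bi-Lipschitz images of compact planar sets with Lipschitz constant $1+\epsilon$; the associated flattened functional is $E_s$ on $\mathbb{R}^d$, and the $(1\pm\gamma(\epsilon))$ bounds in Definition~\ref{def:stable_lip} come directly from the bi-Lipschitz distortion raised to the $s$-th power.
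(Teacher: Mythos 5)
Your proposal is correct and follows the same route as the paper: treat the corollary as an instance of the general framework by verifying that $E_s$ with $s>d$ is an admissible (embedded) short-range interaction, then invoke Theorem~\ref{thm:embedded}. The paper delegates essentially all of this verification to \cite{hardinMinimal2005, borodachovAsymptotics2008, hardinGenerating2017} in a single sentence, whereas you sketch it directly --- exact $s$-scale invariance plus locality (using the convergent Riesz tail for $s>d$) feeding Theorem~\ref{thm:cube} for the cube limit, separation and tail-sum estimates for the short-range and continuity properties, and Federer's bi-Lipschitz decomposition for the embedded axioms --- so the substance is the same.
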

As before, the continuity of the external field can be relaxed to lower semicontinuity (see the end of Section~\ref{sec:translation_dependent}), and the weight only needs to be continuous on the diagonal in the extended sense, assuming it grows slower than a polynomial (see Section~\ref{sec:cube} and the proof of Lemma~\ref{lem:upper_reg}).
The value of $ C_{d,d} $ here is given by
\begin{equation}\label{eq:c_dd}
 C_{d,d}:= {\mathcal{H}_d(\mathbb{B}^d)}= \frac{\pi^{d/2} }{\Gamma\left(d/2+1 \right) } , \quad d\geq1, 
\end{equation}
where $ \Gamma $ is the standard gamma function, \cite{hardinMinimal2005}.
Furthermore, for $ d=1,\ s> 1 $ there holds
\begin{equation}\label{eq:cs1}
 \begin{aligned}
 C_{s,1}= 2\zeta(s),&\quad s>1,
 \end{aligned} 
\end{equation}
where $ \zeta $ is the Riemann zeta function, see e.g. \cite{FiMaRaSa2004}. Lastly, the {\it universal optimality} of $ E_8 $ and the Leech lattice means that they minimize all energies with completely monotonic kernels as functions of the distance over discrete sets with fixed density. The Riesz kernel $ 1/r^s $ is completely monotonic (that is, its derivatives have alternating signs), and as a result, $ \c $, $ d=8,24 $, is related to the respective lattice as
\begin{equation}\label{eq:conj_val}
 \c=|\Lambda_d|^{s/d}\zeta_{\Lambda_d}(s),\qquad s>d, \quad d = 8,24.
\end{equation}
Here $ \Lambda_d $  denotes the lattice, either $ E_8 $ or the Leech lattice; $ |\Lambda_d| $ stands for the volume of the fundamental cell of $ \Lambda_d $, and $ \zeta_{\Lambda_d} $ is the corresponding Epstein  zeta-function. The universal optimality of these lattices was shown by Cohn, Kumar, Miller, Radchenko, and Viazovska \cite{cohnUniversal2019}, following the methods of Viazovska \cite{viazovskaSphere2017a}. 
The exact value of $ \c $ is unknown for all the other pairs $ s,d $. In dimensions $ d= 2,\ 4$, the conjectured value is also given by the expression \eqref{eq:conj_val} with $ \Lambda_d $, respectively, the hexagonal and $ D_4 $ lattices \cite[Conj. 2]{brauchartNextorder2012}. It is easy to show \cite[Prop. 1]{brauchartNextorder2012} that the  conjectured values \eqref{eq:conj_val} are upper bounds for their respective $ \c $.

% Theorem~\ref{thm:lim_external} is useful for applications that require constructing discrete measures, approximating a given continuous distribution:
% \begin{corollary}\label{thm:recovery_bounded}
% Under the assumptions of Proposition~\ref{thm:lim_external}, let $\pphi:A\to[0,\infty)$ be an upper semicontinuous density of a probability measure $ \pphi\d\Hd  $. Then  the lower semicontinuous function $q:A\to\left(-\infty\right.,\left.0\right]$ given by
% \begin{equation}\label{q_definition}
% \ext( x):= -\c(1+s/d)\pphi( x)^{s/d}
% \end{equation}
% is such that any sequence  $\{\omega_N^*\}_{N\geq2}$ of minimizers of $ E_s(\cdot\,; , \ext) $ converges weak$ ^* $  to  $\pphi \d\Hd$:
% \begin{equation}\label{density_bounded}
% \frac{1}{N}\sum_{ x\in\omega_N^*} \delta_x\weakto  \pphi \d\Hd,\quad N\to\infty. 
% \end{equation}
% \end{corollary}

In the remainder of this section, we will show that in the hypersingular case $ s>d $, the asymptotics of the full energy are obtained from those of the truncated $ k $-energy for $ k\to \infty $.  We will need the following lemma, which has been established in a slightly different form in \cite[Lem. 5.2]{borodachovLow2014}. 
\begin{lemma}
    \label{lem:offdiagonal}
    Let $ \omega_N \subset \Omega \subset \mathbb R^d $  be a sequence of discrete sets satisfying $ \Delta(\omega_N) > C N^{-1/d} $, $ N\geq 2 $. Then there holds
    \[
        \limsup_{N\to \infty}\frac1{N^{1+s/d}}\sum_{i=1}^N  \sum_{j\notin I_{i,k} } \|{x}_i - {x}_j \|^{-s} = o(1), \qquad k \to \infty.
    \]
\end{lemma}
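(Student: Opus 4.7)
The plan is to use the uniform separation $\Delta(\omega_N)\geq CN^{-1/d}$ to control two things: first, a lower bound on the distance to the $k$-th nearest neighbor, and second, a packing bound on the number of points in any annulus. Combining these via a dyadic decomposition of the tail should produce a bound of order $k^{(d-s)/d}$, which is $o(1)$ as $k\to\infty$ precisely because $s>d$.

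First I would establish the following separation-based lower bound: there is a constant $c=c(C,d)>0$ such that for every $i$ and every $N$,
\[
\|x_i-x_{i,k}\|\geq c\,k^{1/d}\,N^{-1/d}.
\]
This follows from the standard volume/packing argument: the balls $B(x_j,\Delta/2)$ for $j\in I_{i,k}$ are pairwise disjoint and contained in $B(x_i,\|x_i-x_{i,k}\|+\Delta/2)$, so $k\cdot c_d\Delta^d \leq c_d(\|x_i-x_{i,k}\|+\Delta/2)^d$, which combined with $\Delta\geq CN^{-1/d}$ yields the claim. Next, I would establish a dyadic packing bound: for $r\geq\Delta$, the number of $x_j$'s with $r\leq\|x_i-x_j\|<2r$ is at most $K(d)(r/\Delta)^d$, again by the disjoint-ball packing argument inside the thickened annulus.

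Now fix $i$ and set $r_0:=c\,k^{1/d}N^{-1/d}$. Decomposing the tail by dyadic annuli $A_m:=\{y:2^mr_0\leq\|y-x_i\|<2^{m+1}r_0\}$ for $m\geq 0$, each point in $A_m$ contributes at most $(2^m r_0)^{-s}$, and there are at most $K(d)(2^m r_0/\Delta)^d$ of them. Therefore
\[
\sum_{j\notin I_{i,k}}\|x_i-x_j\|^{-s} \leq K(d)\Delta^{-d}\sum_{m=0}^{\infty}(2^m r_0)^{d-s} = K'(d)\,\Delta^{-d}\,r_0^{d-s}\cdot\frac{1}{1-2^{d-s}},
\]
where the geometric sum converges precisely because $s>d$. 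Plugging in $\Delta^{-d}\leq C^{-d}N$ and $r_0^{d-s}=c^{d-s}k^{(d-s)/d}N^{(s-d)/d}$ gives the per-point bound $C_1\,k^{(d-s)/d}\,N^{s/d}$, with $C_1$ independent of $i$ and $N$. Summing over $i=1,\dots,N$ and dividing by $N^{1+s/d}$ yields
\[
\frac{1}{N^{1+s/d}}\sum_{i=1}^N\sum_{j\notin I_{i,k}}\|x_i-x_j\|^{-s}\leq C_1\,k^{(d-s)/d},
\]
which is independent of $N$ and tends to $0$ as $k\to\infty$.

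The only real subtlety is the range $r<\Delta$: the dyadic sum starts at $m=0$ with $2^0 r_0=r_0\gtrsim k^{1/d}\Delta\geq\Delta$ once $k\geq 1$, so the packing bound is valid throughout and no separate treatment of nearby indices is needed (those are exactly the ones in $I_{i,k}$, which are excluded). The main obstacle I expect is book-keeping the dependence on $\Omega$ (specifically ensuring that the upper end of the dyadic sum, capped by $\mathrm{diam}(\Omega)$, does not spoil the bound), but since the geometric series converges absolutely the full infinite sum is an upper bound, so this issue is absorbed automatically. The separation hypothesis $\Delta(\omega_N)\geq CN^{-1/d}$ is used twice and is essential: without it neither the lower bound on $\|x_i-x_{i,k}\|$ nor the packing bound in annuli would hold.
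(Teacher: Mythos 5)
Your proof is correct and uses essentially the same mechanism as the paper's: separation gives a packing bound for the number of points in annuli around $x_i$, the off-diagonal sum becomes the tail of a series that converges precisely because $s>d$, and separation again lower-bounds the distance to the $k$-th nearest neighbor so the tail starts far enough out. The only cosmetic difference is that the paper partitions into arithmetic annuli of width $\sim N^{-1/d}$ (yielding $\#L_m\lesssim m^{d-1}$ and the tail $\sum_{m\geq M}m^{d-1-s}$ with $M\sim k^{1/d}$), while you use dyadic annuli starting at $r_0\sim k^{1/d}N^{-1/d}$ and obtain the explicit uniform-in-$N$ bound $C_1 k^{(d-s)/d}$ directly; both are equivalent.
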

\begin{proof}
    Fix an index $ i $. Denote $ CN^{-1/d} $ by $ 2h_N $ for brevity. For an $ m\geq1 $, let $$ L_m =  \{ j : mh_N < \|x_j - x_i\| \leq (m+1)h_N \}.  $$ 
    There holds $ |B(x_i,r)| =  c_p' r^d $ for any $ r>0 $, whence $ |B(x_i,r+t) \setminus B(x_i,r) | \leq c_p'' t(r+t)^{d-1} $, $ t\geq 0 $ for some positive constants $ c_p' $, $ c_p'' $. Since the distance between any two points in $ \omega_N $ is at least $ 2h_N $, open balls $ B(x_j, h_N) $ for $ 1\leq j \leq N $ must be pairwise disjoint. This allows to estimate $ \#L_m $ by volume considerations:
    \[
        \bigcup_{j\in L_m} B(x_j, h_N) \subset \left[B\left(x_i,(m-1)h\right) \, \big\backslash \, B\left(x_i,(m+2)h\right)\right], 
    \]
    where the union in the left-hand side is pairwise disjoint. Thus $ c_p' h_N^d \cdot \#L_m \leq  3c_p'' h_N((m+2)h_N)^{d-1}$, which gives
    \[ 
        L_m \leq  c_p m^{d-1}.
    \]
    Summing up the pairwise energies over spherical layers around $ x_i $, one obtains further
    \[ 
        \begin{aligned}
            \sum_{j=1}^N \|{x}_i - {x}_j \|^{-s} 
            &=  \sum_{m=1}^\infty \sum_{j\in L_m} \|{x}_i - {x}_j \|^{-s} \leq \sum_{m=1}^\infty \frac{c_p m^{d-1}}{(m h_N)^s} = \frac{c_p N^{s/d}}{C} \sum_{m=1}^\infty \frac1{m^{s-d+1}}.
        \end{aligned}
    \]
    Observe that when $k \geq \sum_{1}^{M-1} \# L_m$ and $ m <M $, $ L_m \subset I_{i,k} $. In view of the upper bound on $ \#L_m $ and the above equation,
    \[
        \frac1{N^{1+s/d}}\sum_{i=1}^N  \sum_{j\notin I_{i,k} } \|{x}_i - {x}_j \|^{-s} \leq c_pC^{-1} \sum_{m=M}^\infty \frac1{m^{s-d+1}} \qquad \hbox{ whenever }\quad k \geq \sum_{m=1}^{M-1} c_p m^{d-1},
    \]
    which gives the desired statement.  
\end{proof}

% TODO The above lemma enables us to compare the truncated and non-truncated energies in the  full-dimensional case $ d = p $. Since it was only essential that the Lebesgue measure $ |\cdot| $ satisfies $ |B(x,r)|\asymp r^d $, the same argument applies to the case of an embedded set $ \Omega \subset \mathbb R^p $, provided $ \Omega $ is $ d $-regular. This assumption can be bypassed by using the standard techniques from geometric measure theory, as seen in \cite{hardinMinimal2005,borodachovAsymptotics2008,borodachovLow2014}.

\begin{theorem}
    Suppose a sequence $ k(N) $, $ N\geq 2 $ satisfies $ k(N)\to \infty $  $ N\to \infty $. Then 
    \[
        \min_{\omega_N\subset\Omega} E^{k(N)}_s(\omega_N; \kappa,\ext) \Big/\min_{\omega_N\subset\Omega} E_s(\omega_N; \kappa, \ext)  \longrightarrow 1, \qquad N\to\infty,
    \]
    if this family of energies are considered on $ \mathbb R^d $.
\end{theorem}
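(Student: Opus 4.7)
The plan is to establish the matching bounds $\limsup_N \min E^{k(N)}_s/\min E_s \leq 1$ and $\liminf_N \min E^{k(N)}_s/\min E_s \geq 1$. The upper bound is immediate: since $\kappa \geq 0$, the off-diagonal tail
\[
T_k(\omega_N) := \sum_{i=1}^N \sum_{j \notin I_{i,k}} \kappa(x_i, x_j)\|x_i-x_j\|^{-s}
\]
is nonnegative, so $E_s(\omega_N) = E^{k(N)}_s(\omega_N) + T_{k(N)}(\omega_N) \geq E^{k(N)}_s(\omega_N)$ pointwise on configurations, and taking minima gives $\min E^{k(N)}_s \leq \min E_s$.

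For the matching lower bound I would exploit monotonicity in the truncation level: since $k(N) \to \infty$, for any fixed $k_0$ the inequality $E^{k(N)}_s \geq E^{k_0}_s$ holds pointwise once $N$ is sufficiently large, so $\min E^{k(N)}_s \geq \min E^{k_0}_s$. Combining this with Theorem~\ref{thm:k_asympt} applied at the fixed $k_0$ and with the hypersingular Corollary stated just above this theorem for the untruncated energy, we obtain
\[
\liminf_{N\to\infty} \frac{\min E^{k(N)}_s}{\min E_s} \geq \frac{\mathfrak S(s,k_0,\kappa,\ext)}{\mathfrak S(s,\kappa,\ext)}
\]
for every fixed $k_0$. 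Passing $k_0 \to \infty$ reduces the claim, via the explicit formulas in Theorem~\ref{thm:k_asympt}, to the convergence $C_{s,d}^{k_0} \to C_{s,d}$ of the truncated cube constants (with $\phi_{k_0}\to\phi$ following by continuous dependence on $C_{s,d}^{k_0}$).

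To obtain $C_{s,d}^{k_0}\to C_{s,d}$, I would work on the unweighted unit cube and use the minimizer $\omega_N^{**}$ of the full hypersingular Riesz energy $E_s$ as a test configuration. By the classical Hardin--Saff / Poppy-seed bagel separation result for hypersingular Riesz minimizers, $\Delta(\omega_N^{**})\geq c_0 N^{-1/d}$ with $c_0$ independent of $N$. Applying Lemma~\ref{lem:offdiagonal} to this uniformly separated sequence yields $T_{k_0}(\omega_N^{**})/N^{1+s/d}\to 0$ as $k_0\to\infty$, and the identity
\[
E^{k_0}_s(\omega_N^{**}) = E_s(\omega_N^{**}) - T_{k_0}(\omega_N^{**}) = \min E_s - T_{k_0}(\omega_N^{**})
\]
combined with a matching lower bound on $\min E^{k_0}_s$ then pins $C_{s,d}^{k_0}$ to $C_{s,d}$ in the limit.

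The main obstacle I anticipate is exactly this last pinning. One direction is effortless: the test-configuration argument gives $C_{s,d}^{k_0} \leq C_{s,d}$ (already known by pointwise comparison of the energies). The matching lower bound for each individual $k_0$ would in principle require Lemma~\ref{lem:offdiagonal} applied to the truncated-energy minimizer $\omega_N^{*,k_0}$ itself, using the separation bound from Lemma~\ref{lem:separation}; the subtlety is that that separation constant carries a factor depending on $k_0$, and the $k_0\to\infty$ limit must be taken with some care. The cleanest route around this is to observe that the hypersingular ($s>d$) decay of the sum $\sum_m m^{d-1-s}$ in the proof of Lemma~\ref{lem:offdiagonal} leaves enough room to absorb this $k_0$-dependence; alternatively one can route the last step through the $\Gamma$-convergence framework of Section~\ref{sec:gamma}, comparing the limiting functionals $\s$ on probability measures.
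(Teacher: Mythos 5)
Your scheme is sound at a high level but takes a genuinely different, and heavier, route than the paper. The paper compares the two minimizers directly: with $\omega_N^*$ minimizing $E^{k(N)}_s$ and $\omega_N'$ minimizing $E_s$, optimality gives $E^{k(N)}_s(\omega_N^*)\leq E^{k(N)}_s(\omega_N')\leq E_s(\omega_N')$ on one side and $E^{k(N)}_s(\omega_N^*)=E_s(\omega_N^*)-T_{k(N)}(\omega_N^*)\geq E_s(\omega_N')-T_{k(N)}(\omega_N^*)$ on the other, reducing the whole theorem to showing the off-diagonal tails are $o(N^{1+s/d})$ via Lemma~\ref{lem:offdiagonal}; no appeal to Theorem~\ref{thm:k_asympt} or to the constants $C_{s,d}^{k_0}$ is required, and the external field is handled at the end by a short weak$^*$ argument. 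Your route replaces this by monotonicity in $k$ plus the full asymptotics theorem, then sends $k_0\to\infty$, which additionally requires $\mathfrak S(s,k_0,\kappa,\ext)\to\mathfrak S(s,\kappa,\ext)$, a continuity statement for $\pphi_{k_0}$ and $L_1$ in $C_{s,d}^{k_0}$ that you gesture at but do not supply. Both routes ultimately hit the same bottleneck, and you identify it correctly: the constant in Lemma~\ref{lem:separation} depends on $k$.

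Your proposed repair, however, does not close that gap. Equation~\eqref{eq:replace_point} gives separation of order $\left(m_\delta/\bigl((k+n(d,k))M\bigr)\right)^{1/s}$; since $n(d,k)\asymp k$, this is $c_{k_0}\asymp k_0^{-1/s}$. Feeding this into Lemma~\ref{lem:offdiagonal}'s tail bound --- which carries a factor of order $c_{k_0}^{-s}$ through the shell length, with the shell cutoff $M\sim k_0^{1/d}$ forced by $k_0\gtrsim\sum_{m<M}c_pm^{d-1}$ --- yields
\[
\limsup_{N\to\infty}\frac{T_{k_0}\bigl(\omega_N^{*,k_0}\bigr)}{N^{1+s/d}}
\ \lesssim\ c_{k_0}^{-s}\,M^{d-s}\ \asymp\ k_0\cdot k_0^{(d-s)/d}\ =\ k_0^{(2d-s)/d},
\]
which tends to zero only when $s>2d$. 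So ``the hypersingular decay leaves enough room'' is false as stated on the range $d<s\leq 2d$; and the $\Gamma$-convergence alternative you mention is circular, since identifying the $\Gamma$-limit of $E^{k_0}_s$ requires exactly the constant $C_{s,d}^{k_0}$ whose convergence you need.

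What is actually needed is a sharper version of Lemma~\ref{lem:separation}. In the point-replacement argument, instead of bounding the energy gained by inserting the far point $y$ via the crude count $(k+n(d,k))\,Mc(A)^{-s}N^{s/d}$, observe that the contributing points are themselves $\delta$-separated with $\delta=c_NN^{-1/d}$ and lie at distance $\geq c(A)N^{-1/d}$ from $y$, so the gain is controlled by the convergent shell sum $c_p(\delta/2)^{-s}\sum_{m\geq M_0}m^{d-1-s}$ with $M_0\asymp c(A)/c_N$, which is $\lesssim\delta^{-s}c_N^{s-d}$. Balanced against the loss $m_\delta\,\delta^{-s}$, this yields $c_N^{s-d}\gtrsim 1$ independently of $k$, i.e.\ $k$-uniform optimal separation for all $s>d$; Lemma~\ref{lem:offdiagonal} then applies directly and both your route and the paper's go through. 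It is worth noting that the paper's own proof simply asserts ``both $\omega_N^*$ and $\omega_N'$ are optimally separated'' and would equally benefit from this sharpening being made explicit.
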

\begin{proof}
    First, assume $ \ext \equiv 0 $ on $ \Omega $.
    Let $ \omega_N^* = \{ x_1^*,\ldots,x_N^* \} $ be a sequence of minimizing configurations for the $ k(N) $-truncating kernels, so that
    \[
        E^{k(N)}_s(\omega_N^*; \kappa, 0) = \min_{\omega_N\subset\Omega} E^{k(N)}_s(\omega_N; \kappa, 0),
    \]
    with $ k(N)\to \infty $, $ N\to \infty $. Similarly, let $ \omega_N' = \{ x_1',\ldots,x_N' \} $ be the sequence minimizing the non-truncated Riesz energy:
    \[
        E_s(\omega_N'; \kappa, 0) = \min_{\omega_N\subset\Omega}
        E_s(\omega_N; \kappa,  0). 
    \]
    By the construction of $ \omega_N^* $ and $ \omega_N' $, for every $ N $ there holds, 
    \[ 
        E^{k(N)}_s(\omega_N^*; \kappa, 0) + 
        \sum_{i=1}^N  \sum_{j\notin I_{i,k} } \kappa(x_i,x_j) \|{x}_i' - {x}_j' \|^{-s} \leq E_s(\omega_N'; \kappa, 0).
    \]
    On the other hand, the value of $ E_s(\omega_N^*; , \kappa,0) $ can be written as
    \[
        \begin{aligned}
            E_s(\omega_N^*; \kappa, 0) 
            &=  \Biggl(\sum_{j\in I_{i,k} } +\sum_{j\notin I_{i,k} }\Biggr)\sum_{i=1}^N \kappa(x_i, x_j) \|{x}_i - {x}_j \|^{-s}\\
            &= E^{k(N)}_s(\omega_N^*; \kappa, 0) + \sum_{j\notin I_{i,k} } \kappa(x_i,x_j) \|x_i^* - x_j^* \|^{-s}.
    \end{aligned}
    \]
    Since the multiplicative weight satisfies $ \kappa(x_1, x_2) \leq \kappa_1 $ for some constant $ \kappa_1 $, combining the last two equations gives
    \[ 
        E_s(\omega_N^*; \kappa, 0) \leq E_s(\omega_N'; \kappa, 0) + \sum_{j\notin I_{i,k} } \kappa(x_i,x_j) \|x_i^* - x_j^* \|^{-s} - \sum_{j\notin I_{i,k} } \kappa(x_i,x_j) \|x_i' - x_j' \|^{-s}.
    \]
    Both $ \omega_N^* $ and $ \omega_N' $ are optimally separated, so Lemma~\ref{lem:offdiagonal} implies
    \[
        E_s(\omega_N^*; \kappa, 0) \leq E_s(\omega_N'; \kappa, 0) + o(N^{1+s/d}),
    \]
    which completes the proof in the case $ \ext \equiv 0 $. 
    The general case follows from weak$ ^* $-convergence for a continuous function $ \ext $.
\end{proof}

\subsection{Optimal quantizers}
\label{sec:CVT}
In this section we discuss the asymptotic behavior of optimal quantizers. Our goal is to verify that $ \cvt $ is an admissible functional in the sense of Section~\ref{sec:short_range}, so that Theorem~\ref{thm:general_uniform} applies. We will also verify that $ \cvt $ can be augmented with a continuous weight and external field, and the weighted version of $ \cvt $ is an admissible functional with weight as defined in Section~\ref{sec:translation_dependent}. Throughout the present section we have a fixed strictly convex norm $ \|\cdot \|: \mathbb R^d \to \mathbb R_+ $, such as one of the $ l_r $ norms $ \|\cdot\|_r $ for $ r > 1 $.  The {\it quantization error} of a configuration $ \omega_N\subset A $ for a compact $ A $ is defined as
\begin{equation}
    \label{eq:cvt}
    \cvt(\omega_N, A) = \sum_{i=1}^N \int_{V_i} \|y-x_i\|^p \d\lambda_d(y) = \int_A \min_i \|y-x_i\|^p \d\lambda_d(y),
\end{equation}
where $ p > 0 $ and $ V_i $ is the Voronoi cell of $ x_i $ on $ A $ with respect to the norm $ \|\cdot \| $:
\[
    V_i = \{ y\in A : \|y-x_i\| \leq \|y-x_j\|, \ j\neq i \}.
\]
The {\it Voronoi tessellation } of $ A $ corresponding to the configuration $ \omega_N $ is defined as the collection of Voronoi cells $ V_i $, $ 1\leq i \leq N $.
For a strictly convex norm, $ V_i $ are disjoint $ \lambda_d $-almost everywhere \cite[Thm. 1.5]{grafFoundations2000}. 

Clearly, $ \cvt $ is a translation invariant functional. In addition, there holds 
\[
    \cvt(T(\omega_N), T(A)) = c^{p+d}\cdot\cvt(\omega_N, A),
\]
for a map $ T:\mathbb R \to \mathbb R $ which is a similarity with scaling $ c >0 $.
It follows that the functional $ \cvt $ is $ -(p+d) $-scale invariant in the sense of Section~\ref{sec:cube}, which by the results of that section implies that the value of $ \sigma $ in the exponent of the fractional rate function $ \w $ must be
\[
    \sigma = - 1 - p/d,
\]
so that $ \w(t) = t^{-p/d} $.

As before, we write $ \omega_N^*(A) $ for the configuration attaining
\[
    \cvt(\omega_N^*(A)) := \min\{ \cvt(\omega_N, A)  : \omega_N \subset A \}.
\]
Note that in the terminology of Graf-Luschgy \cite{grafFoundations2000}, quantity~\eqref{eq:cvt} is the error of quantizing the measure $ \mathbb 1_A \lambda_d $ with the configuration $ \omega_N $. The difference between our approach and  \cite{grafFoundations2000} is that in the latter, optimal quantizers are the solutions of 
\[
    \min\{ \cvt(\omega_N, A)  : \omega_N \subset \mathbb R^d \},
\]
whereas we constrain optimization to $ A $. 
In the case of convex $ A $, $ \omega_N^*(A) $ belongs to $ A $, and the two definitions agree. Furthermore, $ \omega_N^*(A) $ converges to $ A $ asymptotically in the unconstrained case. Some additional discussion of how our approach applies to the unconstrained optimization can be found in Section~\ref{sec:further}.

To obtain quantizations of a nonuniform measure, absolutely continuous with respect to $ \lambda_d $, we modify $ \cvt $ with a continuous weight, as we did for the $ k $-truncated Riesz energy:
\begin{equation*}
    \cvt(\omega_N, A; \h) = \sum_{i=1}^N \int_{V_i} \h(y)\, \|y-x_i\|^p \d\lambda_d(y),
\end{equation*}
which gives a functional without translational invariance. Similarly to how it was done for the $ k $-truncated energies, $ \cvt $ can further be equipped with an external field term. 

As in the argument for the truncated Riesz functional, we start with the unweighted case,
$ \rho \equiv 1 $. To apply the framework of Section~\ref{sec:general} to the quantization error function $ \cvt $, we need to verify Definitions~\ref{def:cube_asymp},~\ref{def:short_range}, and~\ref{def:cube_conts}.
First, let us verify existence of asymptotics on cubes in Definition~\ref{def:cube_asymp}.

\medskip
\noindent{\bf Verification of \eqref{eq:cube_asymp}.}
By the results of Section~\ref{sec:cube} and scale-invariance of $ \cvt $, it suffices to show that 
\begin{equation}
    \label{eq:cvt_order}
    0 < \liminf_{N\to\infty} N^{p/d} {\cvt(\omega_N^*(\q_d))} \leq \limsup_{N\to\infty} N^{p/d} {\cvt(\omega_N^*(\q_d))} < +\infty,
\end{equation}
and verify Definition~\ref{def:local}. 

There exists an optimal covering configuration $ \omega_N^c $ on $ \q_d $, for which 
\[
    \min_i \left\{\|y - x_i^N\| : x_i^N\in \omega_N^c  \right\} \leq C(A)N^{-1/d}
\]
holds for every $ y\in \q_d $. Evaluating $ \cvt $ on such $ \omega_N^c $ gives
\[
    \cvt(\omega_N^c, \q_d) =  \int_{\q_d} \min_i \|y-x_i\|^p \d\lambda_d(y) \leq C(A)^pN^{-p/d},
\]
which yields the upper bound in~\eqref{eq:cvt_order}. The lower bound follows from an  isoperimetric inequality for optimal quantizers:
\begin{theorem}[{\cite[Thm. 4.16]{grafFoundations2000}}]
    A collection of equal balls $ \{ B_i \}_1^N \subset \mathbb R^d $ with disjoint interiors attains the minimum of $ \cvt(\omega_N^*(A)) $ over compact sets $ A $ of the same $ \lambda_d $-measure.
\end{theorem}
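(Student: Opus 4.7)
The plan is to prove a sharp lower bound on $\cvt(\omega_N,A)$ that depends on $A$ only through $\lambda_d(A)$, and then verify that a disjoint union of $N$ equal balls (with configuration placed at their centers) saturates every inequality. First, for an arbitrary $\omega_N = \{x_1,\ldots,x_N\}\subset A$ with Voronoi cells $V_i$, I would apply the bathtub principle one cell at a time: since $y\mapsto \|y-x_i\|^p$ is a nonnegative measurable function whose sublevel sets are precisely the balls $B(x_i,r)$ in the norm $\|\cdot\|$, the set of a given $\lambda_d$-measure that minimizes the integral of this function is a ball centered at $x_i$. Hence
\[
    \int_{V_i}\|y-x_i\|^p\,d\lambda_d(y)\ \geq\ \int_{B(x_i,r_i)}\|y-x_i\|^p\,d\lambda_d(y),
\]
where $r_i$ is chosen so that $\lambda_d(B(x_i,r_i))=\lambda_d(V_i)$. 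Note that the non-Euclidean norm causes no difficulty here, because sublevel sets of $\|\cdot\|$ are exactly balls in that norm.

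Second, I would reduce the right-hand side to a function of the cell volumes by a scaling computation. Setting $V_d := \lambda_d(B(0,1))$ and $I_{p,d} := \int_{B(0,1)}\|y\|^p\,d\lambda_d(y)$, the change of variables $y = x_i + r_i z$ together with $r_i = (\lambda_d(V_i)/V_d)^{1/d}$ gives
\[
    \int_{B(x_i,r_i)}\|y-x_i\|^p\,d\lambda_d(y) \;=\; I_{p,d}\,V_d^{-(p+d)/d}\,\lambda_d(V_i)^{1+p/d}.
\]
Summing over $i$ and denoting $C_{p,d}:=I_{p,d}\,V_d^{-(p+d)/d}$ yields
\[
    \cvt(\omega_N,A)\ \geq\ C_{p,d}\sum_{i=1}^N \lambda_d(V_i)^{1+p/d}.
\]

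Third, since the Voronoi cells partition $A$ (up to a $\lambda_d$-null set), $\sum_i \lambda_d(V_i) = \lambda_d(A)$, and the function $t\mapsto t^{1+p/d}$ is strictly convex (as $1+p/d > 1$), Jensen's inequality gives $\sum_i\lambda_d(V_i)^{1+p/d}\ \geq\ N\bigl(\lambda_d(A)/N\bigr)^{1+p/d}$. Therefore
\[
    \cvt(\omega_N^*(A))\ \geq\ C_{p,d}\,\lambda_d(A)^{1+p/d}\,N^{-p/d},
\]
and this lower bound depends on $A$ only through $\lambda_d(A)$.

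Finally, I would verify that equality is attained when $A=\bigsqcup_{i=1}^N B_i$ is a disjoint union of $N$ equal balls of volume $\lambda_d(A)/N$, by choosing $\omega_N$ to be the centers of the $B_i$: the Voronoi cell $V_i$ inside $A$ is exactly $B_i$, so the rearrangement step is tight; all $\lambda_d(V_i)$ are equal, so Jensen is tight. This identifies such unions of equal balls as minimizers and completes the argument. The only delicate point is the bathtub step, but it is a direct consequence of the fact that sublevel sets of $\|\cdot - x_i\|^p$ are the norm-balls, which holds verbatim for any norm; no other step is more than a routine computation or a convexity inequality.
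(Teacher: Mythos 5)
The paper does not prove this theorem---it is quoted without proof from Graf and Luschgy \cite{grafFoundations2000}---so there is no internal argument to compare against. Your proof is correct and matches the standard argument in that reference: partition into Voronoi cells, apply a bathtub/rearrangement inequality cell-by-cell to replace each $V_i$ by the centered ball of equal measure, reduce by scaling to $\sum_i \lambda_d(V_i)^{1+p/d}$, apply convexity of $t\mapsto t^{1+p/d}$ via Jensen, and then check that a disjoint union of $N$ equal balls with $\omega_N$ at the centers saturates both the rearrangement step (each Voronoi cell is already a centered ball, up to a $\lambda_d$-null boundary when balls touch) and the Jensen step (all cell volumes are equal).
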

\noindent From this theorem,
\[
    \inf\left\{ \cvt(\omega_N^*(A)) : \lambda_d(A) = 1, \ A\text{ compact} \right\} =  N^{-p/d}\int_{B(0,1)} \|y\|^p \, d\lambda_d(y),
\]
implying the lower bound for $ \q_d $.

To check that Definition~\ref{def:local} applies, it suffices to observe that for $ \cvt $ the expression in the limit~\eqref{eq:locality} is continuous in $ \gamma $, and one may simply substitute $ \gamma = 1 $. Then, using translational invariance and the following property of $ \cvt $ \cite[Lem. 4.14b]{grafFoundations2000}, which holds for an arbitrary configuration $ \omega \subset \bigcup_m A_m $:
\begin{equation}
    \label{eq:concavity}
    \cvt\left(\omega,\, \bigcup_m A_m\right) \leq \sum_m \cvt\left(\omega\cap A_m,\,  A_m\right),
\end{equation}
we have the desired inequality~\eqref{eq:locality}. This completes the verification of existence of asymptotics on cubes~\eqref{eq:cube_asymp} for constant weights.   

For a continuous weight $ \h $, we obtain the equation \eqref{eq:weighted} from the mean value theorem for integrals, from continuity of $ \h $. Indeed, for positive weights $ \h_1 \leq \h_2 $ there holds
\[
    \cvt(\omega_N^*(A), A; \h_1) \leq \cvt(\omega_N^*(A), A; \h_2).
\]
Our claim follows for a general continuous, strictly positive $ \h $ by comparison to $ \min_{y\in A} \h(y) $ and $ \max_{y\in A} \h(y) $.

\medskip
\noindent{\bf Verification of \eqref{eq:short_range}.}
To verify Definition~\ref{def:short_range}, we need a result of Gruber \cite{gruberOptimum2004} about the covering of optimal quantizers. In order to formulate it, recall that a {\it d-regular set} $ A $ by definition satisfies $ cr^d \leq  \mathcal H_d (A\cap B(x,r)) \leq Cr^d $ for the $ d $-dimensional Hausdorff measure $ \mathcal H_d $, every $ x\in A $, and $ 0\leq r \leq \diam A $.
\begin{proposition}[{\cite[Stmt. (2.19)]{gruberOptimum2004}}]
    \label{prop:covering_cvt}
    If the compact set $ A $ is $ d $-regular, any sequence of optimal quantizers $ \omega_N^*(A) $ has the optimal order of covering: that is, there exists a constant $ C $ such that 
    \[
        \min_i \left\{\|y - x_i^N\| : x_i^N\in \omega_N^*(A)  \right\} \leq C(A))N^{-1/d}, \qquad N \geq 1,
    \]
    for any $ y \in A $.
\end{proposition}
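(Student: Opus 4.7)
The plan is a single-point swap argument by contradiction. Suppose, toward a contradiction, that $\|y - x_i^N\| > R := c_1 N^{-1/d}$ for all $i$, with $c_1$ to be taken arbitrarily large; the goal is to produce $\omega'_N \subset A$ of cardinality $N$ with $\cvt(\omega'_N, A) < \cvt(\omega_N^*(A), A)$, contradicting the definition of the optimal quantizer.

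As preliminaries I would evaluate $\cvt$ on a cubic grid of side $\sim N^{-1/d}$ intersected with $A$ and use the $d$-regularity of $A$ to obtain $\cvt(\omega_N^*(A), A) \leq C_0 N^{-p/d}$. Writing $d_i$ for the distance from $x_i^N$ to its nearest neighbor in $\omega_N^*(A)$, the balls $B(x_i^N, d_i/2)$ are pairwise interior-disjoint and all contained in a fixed bounded neighborhood of $A$, yielding the packing estimate $\sum_i d_i^d \leq C_1$. Combining these two bounds with Markov's inequality applied to the Voronoi contributions, cell volumes, and values $d_i^d$, a three-way quantitative pigeonhole produces at least $N/4$ indices $i$ that simultaneously satisfy (a) $\int_{V_i} \|z-x_i^N\|^p\, d\lambda_d \leq 4C_0 N^{-1-p/d}$, (b) $\lambda_d(V_i) \leq 4\lambda_d(A)\, N^{-1}$, and (c) $d_i \leq K_3 N^{-1/d}$, with $K_3$ depending only on $C_1$.

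Fix any such $x_{i_0}^N$ and form $\omega'_N := (\omega_N^*(A) \setminus \{x_{i_0}^N\}) \cup \{y\}$, splitting the change as $\cvt(\omega'_N, A) - \cvt(\omega_N^*(A), A) = \Delta_{\mathrm{loss}} - \Delta_{\mathrm{gain}}$, where the two pieces record the change on the Voronoi cell of $y$ in $\omega'_N$ and on the old Voronoi cell $V_{i_0}$, respectively. For the gain I restrict to $A \cap B(y, R/4)$, on which the old nearest-point distance exceeds $3R/4$ while the new distance to $y$ is at most $R/4$; $d$-regularity gives $\lambda_d(A \cap B(y, R/4)) \geq c(R/4)^d$, so
\[
\Delta_{\mathrm{gain}} \geq c \bigl[(3R/4)^p - (R/4)^p\bigr] (R/4)^d \geq c'\, c_1^{p+d}\, N^{-1-p/d}.
\]
For the loss I use $\|z - x_{i_0'}^N\| \leq \|z - x_{i_0}^N\| + d_{i_0}$, where $x_{i_0'}^N$ is the nearest neighbor of $x_{i_0}^N$, together with the elementary inequality $(a+b)^p \leq 2^{\max(p-1,0)}(a^p + b^p)$; properties (a)--(c) then yield $\Delta_{\mathrm{loss}} \leq K N^{-1-p/d}$, with $K$ depending on $C_0$, $\lambda_d(A)$, $K_3$, $p$ but crucially not on $c_1$.

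Choosing $c_1$ so large that $c'\, c_1^{p+d} > K$ forces $\Delta_{\mathrm{gain}} > \Delta_{\mathrm{loss}}$, hence $\cvt(\omega'_N, A) < \cvt(\omega_N^*(A), A)$, the desired contradiction; the resulting bound $c_1 \leq (K/c')^{1/(p+d)}$ serves as the covering constant $C(A)$. The chief obstacle is ensuring that $K$ is genuinely independent of $c_1$ in the loss estimate, which rests on the simultaneous selection of $x_{i_0}^N$ satisfying (a)--(c); this succeeds because each of the three properties is satisfied by a three-quarter majority of the points of $\omega_N^*(A)$, so their intersection is nonempty for $N \geq 4$.
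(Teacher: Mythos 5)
The paper does not prove this proposition; it simply cites it as Statement~(2.19) in Gruber's paper \cite{gruberOptimum2004}, so there is no in-paper argument to compare against. Your proposal supplies a self-contained swap argument, which is the standard route for this kind of covering bound, and the substance is correct: the quantization error and Voronoi-cell volumes sum to $O(N^{-p/d})$ and $\lambda_d(A)$, and the disjointness of $B(x_i,d_i/2)$ gives $\sum_i d_i^d=O(1)$, so a three-way Markov/pigeonhole selection of a ``cheap'' point $x_{i_0}$ is legitimate; the $d$-regularity of $A$ gives $\lambda_d(A\cap B(y,R/4))\gtrsim R^d$ so the gain scales like $c_1^{p+d}N^{-1-p/d}$, while conditions (a)--(c) cap the loss at $KN^{-1-p/d}$ with $K$ free of $c_1$ (the $(a+b)^p$ inequality handles $p\le 1$ and $p>1$ uniformly). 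Two small points to tighten: first, in the sentence introducing $\Delta_{\mathrm{loss}}$ and $\Delta_{\mathrm{gain}}$ the ``respectively'' attributes the loss to the new Voronoi cell of $y$ and the gain to $V_{i_0}$, which is the reverse of what you actually use two sentences later; as written the decomposition statement is backwards even though the subsequent estimates are right. Second, the decomposition should be stated so that the gain region $A\cap B(y,R/4)$ and the loss region $V_{i_0}$ are handled without double-counting; the clean way is to split $A$ into $V_{i_0}\setminus B(y,R/4)$, $A\cap B(y,R/4)$, and the rest, observe the last piece contributes $\le 0$, and note that the integrand $\|z-x_{i_0'}\|^p-\|z-x_{i_0}\|^p$ is nonnegative on $V_{i_0}$ so restricting to $V_{i_0}\setminus B(y,R/4)$ only helps. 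Also worth recording that one needs $R/4\le\diam A$ for $d$-regularity to apply; this holds for $N$ large, and for the finitely many small $N$ the covering claim is vacuous once $C(A)\ge\diam A$. None of this affects the correctness of the approach.
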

\noindent Observe that by~\eqref{eq:concavity}, the value of the limit in \eqref{eq:short_range} is at least 1. On the other hand, the difference between the numerator and denominator in \eqref{eq:short_range} is due to the points $ y\in A\cap Q_m $, $ m=1,\ldots,M $, such that the nearest element to $ y $ in $ \omega_N\cap \Q $ belongs to a different cube $ Q_{l} $, $ l\neq m $. For brevity, we will give the argument in the case $ M = 2 $; the result for a general $ M $ follows similarly. Using the notation
\[
    Z_m = \{ y\in A\cap Q_m : \dist(y, \omega_N\cap Q_m) > \dist(y, \omega_N\cap Q_{3-m}) \}, \qquad m= 1,2,
\]
one has 
\[
    \begin{aligned}
        \e(\omega_N\cap Q_1, A\cap Q_1)
        &+\e(\omega_N\cap Q_2,A\cap Q_2) - \e\Big(\omega_N\cap [Q_1\cup Q_2],\ A\cap [Q_1\cup Q_2]\Big) \\
        &\leq \sum_{m=1,2} \int_{Z_m} \h(y) \dist\left(y,\omega_N\cap Q_m\right)^p\,d\lambda_d(y).
    \end{aligned}
\]
On the other hand, any line segment connecting $ y\in A\cap Q_m $ to the complement $ Q_m^c $ must intersect $ \partial Q_m $, whence by Proposition~\ref{prop:covering_cvt},
\[
    \begin{aligned}
    Z_m 
    &\subset \{ y\in A\cap Q_m : \dist(y, \omega_N\cap Q_m) > \dist(y, \partial Q_m) \} \\
    &\subset \{ y\in A\cap Q_m : \dist(y, \partial Q_m) \leq C(A) N^{-1/d} \},
    \end{aligned}
\]
and as a result, the measure of $ Z_m $ can be bounded as $ \lambda_d(Z_m) \leq C(A) N^{-1/d} \lambda_{d-1} (\partial Q_m) $. Combining this with the lower bound from~\eqref{eq:cvt_order}, we have
\[
    \limsup_{N\to\infty} \frac{ \sum_{m=1,2} \int_{Z_m} \h(y) \dist\left(y,\omega_N\cap Q_m\right)^p\,d\lambda_d(y) }{ \e(\omega_N(A) \cap [Q_1\cup Q_2]) } 
    \leq \limsup_{N\to\infty} \frac{\|\h\|_\infty \left(C(A) N^{-1/d}\right)^{1+p}\, 2 \lambda_{d-1} (\partial Q_m) }{ C(A)N^{-p/d} } = 0,
\]
which completes the proof of equality~\eqref{eq:short_range} when $ \omega_N $ is a minimizer on $ A $. The case of piecewise minimizer is handled in the same fashion. 

To verify the short-range property from Definition~\ref{def:strong_short_range}, when optimal quantizers are considered in the embedded case, suppose the compact sets $ A_1$,  $A_2 $ satisfy the necessary smoothness assumptions and are distance $ R>0 $ apart. It suffices to use $ N $ so large that for the covering radius estimate there holds $ C(A) N^{-1/d} < R $, since then the sets $ \tilde Z_m $ are empty:
\[
    \tilde Z_m = \{ y\in A_m : \dist(y, \omega_N\cap A_m) > \dist(y, \omega_N\cap A_{3-m}) \}, \qquad m= 1,2.
\]
Arguing as above for the full-dimensional case, we have the embedded short-range property.

\medskip
\noindent{\bf Verification of \eqref{eq:cube_conts}.}
Finally, we discuss Definition~\ref{def:cube_conts}. We will verify Definition~\ref{def:strong_cube_conts} directly; the full-dimensional property of Definition~\ref{def:cube_conts} then follows by scale-invariance of $ \cvt $. We assume that $ A $ is a $ d $-rectifiable and $ d $-regular compact subset of $ \mathbb R^{d'} $, $ d'\geq d $.
In view of $ \sgn \sigma = -1 $, \eqref{eq:strong_cube_conts} takes the form
\begin{equation}
    \label{eq:newform_iii}
    \limsup_{N\to\infty} N^{p/d} {\cvt(\omega_N^*(A))} \leq  
    \limsup_{N\to\infty} (1+ \epsilon) N^{p/d}{\cvt(\omega_N^*(D))},
\end{equation}
which must hold whenever the compact $ D \subset A $ satisfies $ \mathcal H_d(D) > \mathcal H_d (A)(1 - \delta(A, \epsilon) )$. We further suppose that $ A $ is $ d $-regular.

In view of the $ d $-regularity of $ A $, it is possible to insert at most $ c\delta N $ disjoint balls with radius $ CN^{-1/d} $ into the relative interior of $ A\setminus D $. This implies, for a maximal collection of such disjoint balls, its covering radius is again $ CN^{-1/d} $ (with a twice as big constant).
Denote the centers of a maximal collection $ \omega_n $, so that $ n =\# \omega_n \leq c\delta(A,\epsilon) N $, and let $ \omega = \omega_n \cup \omega_{N-n}^*(D) $. The value of the limit in the left-hand side of \eqref{eq:newform_iii} now can be estimated using \eqref{eq:concavity} and $ \omega $:
\[
    \begin{aligned}
        \limsup_{N\to\infty} &\, N^{p/d} {\cvt(\omega_{N}^*(A))}\\
        &\leq   \limsup_{N\to\infty} N^{p/d} {\cvt(\omega, A)} \\
        &\leq\limsup_{N\to\infty} N^{p/d} [ E(\omega_n, A\setminus D) + E(\omega_N^*(D)) ]  \\
        &\leq   c C  \delta + \limsup_{N\to\infty}\frac{N^{p/d}}{(N-c\delta N)^{p/d}} \cdot (N-n)^{p/d}{\cvt(\omega_{N-n}^*(D))},\\
        &\leq    \limsup_{N\to\infty} (1+\epsilon) \cdot N^{p/d}{\cvt(\omega_N^*(D))},
    \end{aligned}
    % \cvt(\omega^*(A)) \leq \cvt(\omega,A)
\]
for sufficiently small $ \delta(A,\epsilon) $, where for the last inequality we used the $ d $-regularity of $ A $. Indeed, due to the $ d $-regularity, there holds $ \limsup N^{p/d}{\cvt(\omega_N^*(D))} \geq C > 0 $ \cite[(2.19)]{gruberOptimum2004}

The above gives~\eqref{eq:newform_iii}. Continuity of $ \ee $ with a constant weight suffices for our purposes, see the proof of Theorem~\ref{thm:embedded}. This concludes the proof of \eqref{eq:strong_cube_conts}, and the proof of admissibility of $ \cvt $, both in the full-dimensional and embedded case.

Due to the admissibility of the optimal quantization error as an interaction functional, we obtain the applications of Theorems~\ref{thm:weighted_dist} and~\ref{thm:embedded} to this context. We formulate the result in the embedded case as an example. Observe that it strengthens the results of Gruber~\cite{gruberOptimum2004} to $ A $ being $ d $-rectifiable instead of a smooth manifold, and in addition includes the external field.
\begin{theorem}
    \label{thm:quantizers}
    Suppose $ p >0 $, $ A \subset \mathbb R^{d'} $ is a $ d $-rectifiable $ d $-regular set and $ \cvt $ is the quantization error with continuous weight $ \h\geq h_0 > 0 $, and continuous external field $ \xi\geq0 $:
    \[
        \cvt(\omega_N; \h,\xi) = \int_{A} \h(y) \min_{i} \|y-x_i\|^p \,d\mathcal H(y) + N^{p/d-1} \sum_{i=1}^N \xi(x_i),\qquad A\subset \mathbb R^{d'},\ d'\geq d.
    \]
    Then 
    \[
        \lim_{N\to\infty} {N^{p/d}}{{\e_{\h,\xi}}(\omega_N^*( A))} =
        c_{p,d} \int_{A} \h(x)\,\pphi(x)^{-p/d}\, d\mathcal H_d(x) + \int_{ A} \xi(x) \, \pphi(x)\, d\mathcal H_d(x),
    \]
    where $ \pphi $ is the density of a probability measure $ \mu $ supported on $  A $, and is given by    
    \[
        \pphi(x) = \frac{d\mu}{d\mathcal H_d}(x) = \left(\frac pd \cdot\frac{c_{p,d}\h(x)}{\xi(x)-L_1}\right)^{\frac{d}{p+d}}_+
    \]
    for a normalizing constant $ L_1 $. If $ \mathcal H_d(A) > 0 $, the weak$ ^* $-limit of the counting measures for any sequence of configurations attaining the optimal asymptotics is $ \pphi \d\Hd $

    Moreover, when $ \xi \equiv 0 $,  formula for the density simplifies to
    \[
        \pphi(x) = \frac {\h(x)^{\frac{d}{p+d}}}{\int_A \h(x)^{\frac{d}{p+d}} \, d\mathcal H_d(x)}.
    \]
\end{theorem}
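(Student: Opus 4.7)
The plan is to recognize that this theorem is a direct application of Theorem~\ref{thm:embedded} once we confirm that $\cvt$ with continuous weight $\h\geq h_0>0$ and continuous external field $\xi\geq 0$ is an embedded admissible functional. All of the ingredients of this confirmation have been set up in the preceding discussion of Section~\ref{sec:CVT}: lower semicontinuity of $\cvt$ in $\omega_N$ follows from dominated convergence applied to $\min_i\|y-x_i\|^p$; set-monotonicity \eqref{eq:monotonicity} holds with $\sgn\varsigma = -1$ since enlarging $A$ can only enlarge $\cvt(\omega_N^*(A))$; $-(p+d)$-scale invariance combined with Theorem~\ref{thm:cube} and the two-sided bound~\eqref{eq:cvt_order} forces the associated triple to be $(\f(1)t^{1+p/d},\,N^{-p/d},\,t^{-p/d})$ with $\sigma = -1-p/d\in(-\infty,-1)$ and $\f(1) = c_{p,d} = \lim_N N^{p/d}\cvt(\omega_N^*(\q_d))$. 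The short-range property on positive-distance pieces (Definition~\ref{def:strong_short_range}) is obtained through Proposition~\ref{prop:covering_cvt}, by showing that the mismatch between $\cvt$ on the union and the sum of $\cvt$ on the pieces is controlled by a boundary layer of width $O(N^{-1/d})$ that is negligible compared to $\t(N)$. The embedded continuity property~\eqref{eq:strong_cube_conts} comes from the $d$-regularity implicit in $d$-rectifiability together with the insertion argument using a maximal family of disjoint $CN^{-1/d}$-balls in $A\setminus D$. Continuity of $\h$ upgrades the constant-weight case to the mean-value form \eqref{eq:weighted} of Definition~\ref{def:weighted}, and Proposition~\ref{prop:adding_ext_field} handles the external field: the term $N^{p/d-1}\sum_i \xi(x_i)$ (understood according to the normalization of Proposition~\ref{prop:adding_ext_field}, i.e.\ contributing $\t(N)/N\cdot\sum_i\xi(x_i)$ to the rescaled energy) gives exactly the correction $\int_A\xi\,d\mu$ to the limit.

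With admissibility established, Theorem~\ref{thm:embedded} yields both the energy asymptotics and the weak$^*$ limit. The density formula from that theorem reads
\[
\pphi(x) = \left(\frac{L_1-\xi(x)}{\f(1)(1+\sigma)\h(x)}\right)^{1/\sigma}_+.
\]
Substituting $\f(1) = c_{p,d}$, $1+\sigma = -p/d$, and $1/\sigma = -d/(p+d)$ and rearranging signs, one obtains
\[
\pphi(x) = \left(\frac{-(L_1-\xi(x))}{c_{p,d}\,(p/d)\,\h(x)}\right)^{-d/(p+d)}_+ = \left(\frac{p}{d}\cdot\frac{c_{p,d}\,\h(x)}{\xi(x)-L_1}\right)^{d/(p+d)}_+,
\]
matching the stated density. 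The asymptotic value
\[
\f(1)\int_A \h(x)\pphi(x)^{1+\sigma}\,d\mathcal H_d(x) + \int_A \xi(x)\pphi(x)\,d\mathcal H_d(x)
\]
from Theorem~\ref{thm:embedded} becomes the claimed $c_{p,d}\int_A\h\,\pphi^{-p/d}\,d\mathcal H_d + \int_A\xi\,\pphi\,d\mathcal H_d$. When $\xi\equiv 0$, the Euler--Lagrange condition reduces to $c_{p,d}(1+\sigma)\h\,\pphi^\sigma = L_1$, so $\pphi\propto\h^{-1/\sigma} = \h^{d/(p+d)}$, which after normalization against $\mathcal H_d$ gives the simplified formula.

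The only nontrivial step beyond invoking the preceding machinery is the algebraic translation of the density formula from the abstract $\sigma\in(-\infty,-1)$ regime into the optimal-quantizer parametrization. The signs in $1/\sigma = -d/(p+d)$ and in $L_1-\xi(x)$ must be tracked carefully so that the positive-part construction $(\cdot)^{d/(p+d)}_+$ in the statement is equivalent to the abstract $(\cdot)^{1/\sigma}_+$ from Theorem~\ref{thm:embedded}; this is the main source of potential error but is purely symbolic. Everything else is a direct appeal to results already established earlier in the paper.
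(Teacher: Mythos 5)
Your proposal takes the same route as the paper: verify that $\cvt$ with weight and external field is an embedded admissible functional (lower semicontinuity, set-monotonicity with $\sgn\varsigma=-1$, cube asymptotics via the $-(p+d)$-scale invariance and Theorem~\ref{thm:cube}, short-range via Proposition~\ref{prop:covering_cvt}, embedded continuity via the ball-insertion argument), identify the associated triple with $\sigma = -1-p/d$ and $\f(1)=c_{p,d}$, then invoke Theorem~\ref{thm:embedded}; the algebraic translation of the abstract density into the quantizer parametrization and the $\xi\equiv 0$ simplification are both handled correctly. Three small points. First, the claim that $d$-regularity is ``implicit in $d$-rectifiability'' is false in general --- a $d$-rectifiable set need not be Ahlfors $d$-regular --- though the paper's own verification of Definition~\ref{def:strong_cube_conts} for $\cvt$ in Section~\ref{sec:CVT} also silently imposes $d$-regularity, so this is a gap inherited from the paper rather than introduced by you. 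Second, you correctly interpret the external-field normalization via Proposition~\ref{prop:adding_ext_field}: the coefficient must be $\t(N)/N = N^{-p/d-1}$, not the $N^{p/d-1}$ printed in the statement, since otherwise $N^{p/d}\cvt(\omega_N;\h,\xi)$ cannot have a finite limit containing $\int_A \xi\,\pphi\,d\mathcal H_d$; you do well to flag this implicitly. Third, the final assertion --- that \emph{any} asymptotically optimal sequence (not only actual minimizers) has $\pphi\,d\mathcal H_d$ as its weak$^*$ limit --- does not follow directly from Theorem~\ref{thm:embedded}, which is phrased for minimizers; the paper obtains it from the strict convexity of the $\Gamma$-limit functional, as noted in Remark~\ref{rem:uniqueness}. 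Attributing this to Theorem~\ref{thm:embedded} alone is a small omission; otherwise the proposal agrees with the paper's proof.
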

Note that the converse result about the weak$ ^* $-limit of an asymptotically optimal sequence follows from the convexity of the $ \Gamma $-limit of the energy functional, see Remark~\ref{rem:uniqueness}.
\noindent The constant $ c_{p,d} $ in the theorem is given by
\[
    c_{p,d} = \lim_{N\to\infty} N^{p/d}\, \e(\omega_N^*(\q_d)),
\]
which is the value of $ \f(1) $ in this context, see Definition~\ref{def:cube_asymp}.

\subsection{Meshing algorithms and short-range interactions}
\label{sec:meshing}
In this section we consider the full-dimensional problem only; all the sets below are in $ \mathbb R^d $ and have positive $ d $-dimensional Lebesgue measure.
We will discuss an algorithm for generating point distributions, due to Persson and Strang \cite{Persson2004a}, and how it fits into the framework of Sections~\ref{sec:short_range}--\ref{sec:translation_dependent}. Given a compact set $ A $, the algorithm produces a configuration in it, stable under the discrete dynamics determined by the Euler's method for an ODE. Persson and Strang are interested in the mesh given by the Delaunay triangulation of this configuration; the triangulation also determines the dynamics. Let $ T_i $ be the indices of vertices connected to $ x_i \in \omega_N $ in the Delaunay triangulation of $ \omega_N $. Vectors of the form $ x_j - x_i, \ j\in T_i $, will be called {Delaunay edges}. 

Persson and Strang introduce \cite[Fig. 3.1]{Persson2004a} a discrete update of $ \omega_N $ of the form
\begin{equation}
    \label{eq:dynamics}
    \omega_N^{l+1} = \omega_N^{l} + \Delta t \cdot F(\omega_N^l),
\end{equation}
where the superscript $ l $ denotes the number of the iteration, and the addition is performed in the vector space $ \mathbb (R^d)^N $. The force $ F $ is given by a modified Hooke's law, which is limited to only being repulsive; more precisely, it is the sum of forces directed along the edges of the Delaunay graph, with magnitudes given by
\[
    f_{i,j} = \left(  (1+P) \cdot \sqrt {m_2(\omega_N)} - \|x_j-x_i\|^2 \right)_+,
\] 
where
\[
    m_2(\omega_N) = \frac{\sum_{i=1}^N\sum_{j\in T_i} \|x_j-x_i\|^2}{2\sum_{i=1}^N \#T_i} 
\]
is the average square of edge length, and $ P $ a fixed strictly positive constant (a useful choice is $ P=1.2 $ for $ d=2 $ \cite{Persson2004a}).
Hence, $ F $ can be viewed as a gradient of the sum of energies of all edges, given by the Hooke's law:
\begin{equation}
    \label{eq:persson_strang}
    {\hat\e}(\omega_N) = \sum_{i=1}^N \sum_{j\in T_i} \frac12 \left(  (1+P) \cdot m_2(\omega_N) - \|x_j-x_i\|^2 \right)_+.
\end{equation}

Intuitively, $ {\hat\e} $ is the sum of energies from the compressions of springs which correspond to the Delaunay edges. The uncompressed lengths of these springs are prescribed by $ m_2(\omega) $ -- the quadratic average of the lengths of Delaunay edges. In \eqref{eq:persson_strang}, all the springs in the uncompressed state are of the same length, but this method also extends to non-uniform distributions. The approach of Persson and Strang is to introduce the weight into the first term of the inner sum, but the method of Section~\ref{sec:translation_dependent}, consisting in adding a multiplicative weight to the $ i $-th term of the outer sum, is applicable too.

Notice that, due to $ {\hat\e} $ being $ -2 $-scale invariant, the results of Section~\ref{sec:cube} imply
\[
    \sigma = -2/d,
\]
so for this functional the rate $ \t(N) = N^{1-2/d} $, a concave function. Thus, as explained in Section~\ref{sec:generalizations}, we need to consider the problem of maximizing $ {\hat\e} $:
\[
    \max \{ {\hat\e}(\omega_N) : \omega_N \subset A \}.
\]
In accordance with the notation of Section~\ref{sec:generalizations}, we write $ \E $ for the meshing functional (so that the notation of $ \e $ is restricted to functionals that are being minimized).
It is easy to see that the minimal value, $ {\hat\e}(\omega_N^0) = 0 $ is attained for the configuration consisting of $ N $ identical points: $ \omega_N^0 = \{ x_0, x_0,\ldots x_0 \} $. This poses a curious contradiction to the intuitive idea of the dynamics \eqref{eq:dynamics} as minimizing the energy of the system of springs, suggested by Persson and Strang. 

We will additionally assume that 
\[
    T_i \subset I_{i,k}
\]
for a large fixed $ k $; as before, $ I_{i,k} $ are the indices of the $ k $ nearest neighbors to $ x_i $. For a random configuration, the expected maximal cardinality of $ T_i $ grows with $ N $ as $ \log N / \log\log N $ \cite{bernExpected1991}, but the expected number of edges is only $ k(d)n $ \cite{dwyerExpected1993}. While it will be clear from the following discussion that there exists a sequence of minimizers in which $T_i \subset I_{i,k}$ holds, whether every sequence of minimizers satisfies this assumption remains open.

Let us verify that the functional $ {\hat\e} $ above is admissible — that is, has the properties from Definitions~\ref{def:cube_asymp}, \ref{def:short_range}, and \ref{def:cube_conts}, is upper semicontinuous, satisfies the monotonicity equation \eqref{eq:monotonicity} — and has concave rate function, which is what causes one to consider its maximizers. 
Note that we need to verify the monotonicity property~\eqref{eq:monotonicity} and Definitions~\ref{def:cube_asymp}, \ref{def:short_range}, \ref{def:cube_conts} with the maximizers of $ {\hat\e} $ instead of minimizers. Notice first that $ \E $ is not in fact upper semicontinuous. To circumvent this issue, we will assume that the corresponding maximizers are obtained for the upper semicontinuous envelope of $ \E $.  In practice, it means that if a subcollection of points in $ \omega_N $ lies on a sphere, the Delaunay triangulation of $ \omega_N $ is chosen so as to maximize the value of $ \E $ on $ \omega_N $. 
By an abuse of notation, we will still write $ \E $ to denote the envelope.

\medskip
\noindent{\bf Verification of monotonicity \eqref{eq:monotonicity}.} Writing $ \omega_N^*(A) $ for maximizers of $ {\hat\e} $ on the compact set $ A $, we have to check
\begin{equation*}
        \limsup_{N\to\infty}  \frac{{\hat\e}(\omega_N^*(A))}{\t(N)}   \leq
         \liminf_{N\to\infty}  \frac{{\hat\e}(\omega_N^*(B))}{\t(N)}      \qquad\text{for}\quad  A\subset B.
\end{equation*}
However, this is obvious: maximization in the right-hand side occurs on the larger set $ B $.

\medskip
\noindent{\bf Verification of \eqref{eq:cube_asymp}.}
Note that $ m_2(\omega_N) \leq C_d N^{-2/d} $ for any configuration $ \omega_N \subset \q_d $, by a volume argument.
It follows that $ {\hat\e}(\omega_N^*(\q_d)) \leq CN^{1-2/d} $. The converse inequality with a different constant follows trivially by taking a cubic lattice  in $ \q_d $.

By Section~\ref{sec:cube}, we have to verify that $ {\hat\e} $ satisfies Definition~\ref{def:local}; in the case of maximization of $ {\hat\e} $, the inequality necessary for locality takes the form
\begin{equation}
    \label{eq:local_maximizing}
    \limsup_{n\to \infty}
    \limsup_{L\to\infty} 
    \frac
    {{\hat\e}(\omega)}
    {L^d\,{\hat\e}\left( \frac1{L} \omega_n^* \right)}
    \geq 1,
\end{equation}
with 
\[
    \omega = \bigcup_{
    \boldsymbol i\in\, L\mathbb Z^d }
    \left(\frac1{L}\omega_n^* +  \frac{\boldsymbol i}L\right).
\]
Indeed, Delaunay edges appearing in the expression $ {\hat\e}(\omega) $ include the Delaunay edges in the same tile $ \frac1{L}\omega_n^* +  \frac{\boldsymbol i}L $, and the edges with endpoints in different tiles. Rescaling a configuration results in a rescaling of the distribution of lengths of Delaunay edges, so the average of the edges in $ \omega $ differs from that in $ \omega_n^* $ only due to the edges with endpoints in different tiles. In view of $ m_2(\omega_n^*) \leq Cn^{-2/d} $, by using $ \gamma\omega_n^* $ in place of $ \omega_n^* $ with $ \gamma = 1 -  Cn^{-2/d} $, it is possible to guarantee that the lengths of edges connecting different tiles are at least $ m_2(\omega_n^*)/L $, and therefore do not decrease the average. The contribution to $ {\hat\e}(\omega) $ made by edges with endpoints in the same tile is diminished by at most a factor of $ (1 -  Cn^{-2/d})^2 $; since the edges connecting different tiles can only increase $ {\hat\e}(\omega) $, equation~\eqref{eq:local_maximizing} follows by taking $ n\to \infty $.

\medskip

The following two properties will be verified in smaller generality than described in Section~\ref{sec:short_range}: we will discuss only the case of $ A $ with $ \lambda_d(A) =0 $.

\medskip

\noindent{\bf Verification of \eqref{eq:short_range}.} 
We will check the short-range property for a collection of cubes with disjoint interiors, but with shared faces. By scale-invariance, it suffices to consider a connected collection of cubes from $ \mathbb Z^d $. Indeed, if $ \Q_1 $, $\Q_2 $ are two such collections positive distance apart, edges longer than $ \sqrt{m_2(\omega_N)} \leq CN^{-1/d} $ do not contribute to the value of $ {\hat\e}(\omega_N) $, $ \omega \subset (\Q_1\cup \Q_2) $, which gives~\eqref{eq:short_range}.

Consider a pair of cubes $ Q_1 $, $ Q_2 $ and a minimizing configuration $ \omega_N^*(Q_1\cup Q_2) $. By the Remark~\ref{rem:one_sided_short_range}, it suffices to show that for suitably chosen $ N_1+N_2 = N $,
\begin{equation*}
    \limsup_{N\to\infty} \frac{{\hat\e}(\omega^*_{N_1}(Q_1)) + {\hat\e}(\omega_{N_2}^*(Q_2))}{{\hat\e}(\omega_N^* (Q_1\cup Q_2))} \geq 1,
\end{equation*}
where we have reversed the inequalities compared to those in the remark, to reflect the use of maximizers of $ {\hat\e} $ in this section. Suppose the above inequality does not hold, so that it is possible to construct a sequence of minimizers $ \omega_n(Q_1\cup Q_2) $,  $ n\in \mathcal N \subset \mathbb N $ for which
\begin{equation*}
    \lim_{n\to\infty} \frac{{\hat\e}(\omega^*_{n_1}(Q_1)) + {\hat\e}(\omega_{n_2}^*(Q_2))}{{\hat\e}(\omega_n^* (Q_1\cup Q_2))}  = \alpha < 1, \qquad n_1 + n_2 =n.
\end{equation*}
This implies in turn, that using the tiling of $ \q_d $ from~\eqref{eq:local_maximizing}, one obtains a configuration $ \omega \subset \q_d $, for which
\[
    \limsup_{n\to \infty}
    \limsup_{L\to\infty} 
    \frac
    {{\hat\e}(\omega)}
    {L^d\,{\hat\e}\left( \frac1{L} \omega_n^* \right)}
    \geq \alpha,
\]
a contradiction to \eqref{eq:cube_asymp}, which has been verified for $ {\hat\e} $ above. This contradiction gives the desired short-range property.

\medskip
\noindent{\bf Verification of \eqref{eq:cube_conts}.} 
Applying the two previous properties to sets that consist of unions of cubes in a lattice gives the formula
\[
    \lim_{N\to\infty} \frac{{\hat\e}(\omega_N^*(A))}{N^{1-2/d}} = C_{\hat\e} (\lambda(A))^{2/d}
\]
for such sets $ A $. The general continuity property of \eqref{eq:cube_uniform} for sets with boundary of $ \lambda_d $-measure zero then follows by the monotonicity and the fact that such sets are Jordan-measurable.

\medskip

The above discussion can be summarized in the following result.  
\begin{theorem}
    Suppose $ d\geq 3 $ and the compact set $ A\subset \mathbb R^d $ satisfies $ \lambda_d(\partial A) =0 $. If $ T_i \subset I_{i,k} $ for $1\leq i \leq N$ and a fixed $ k=k(d) $, there holds
    \[
        \lim_{N\to \infty} \frac{\sup_{\omega_N\subset A} \hat\e(\omega_N)}{N^{1-2/d}} = C_\e(P)\, \lambda_d(A)^{2/d}.
    \]
    If additionally $ \lambda_d(A)>0 $, the maximizers of the upper semicontinuous envelope of $ \hat\e $ converge to the uniform measure on $ A $.
\end{theorem}
The results concerning the non-translation-invariant modifications of the above interaction can be obtained similarly, see Section~\ref{sec:translation_dependent}. As already noted, the variable density version of the Persson-Strang algorithm \cite{Persson2004a} involves modifying the term $ m_2(\omega_N) $ in 
\[
    {\hat\e}(\omega_N) = \sum_{i=1}^N\sum_{j\in T_i} \frac12 \left( (1+P) \cdot m_2(\omega_N) - \|x_j-x_i\|^2 \right)_+,
\]
depending on the location of $ x_i $ in $ A $. It appears that the same principle as that of the proofs in Section~\ref{sec:translation_dependent} (partitioning $ A $ and approximating the interaction with its Riemann sums) allows to obtain an expression for the limiting density in this approach.

\section{\texorpdfstring{$ \Gamma $-convergence of the short-range energies}{Gamma-convergence of the short-range energies}}\label{sec:gamma}
\subsection{\texorpdfstring{Definition and essential properties of $ \Gamma $-convergence}{Definition and essential properties of Gamma-convergence}}
For a compact $ A $ we denoted by  $ \mathcal P(A) $ the space of probability measures supported on $ A $. It is a  compact metrizable space. We shall discuss the properties of short-range interactions on discrete configurations $ \omega_N $, by viewing them as acting on the normalized counting measures of $ \omega_N $. First, recall the notion of $ \Gamma $-convergence.
\begin{definition}[\cite{degiorgiSu1975}]
    \label{def:gcon}
    Let $ X $ be a metric space. Suppose that functionals $ F,\, F_n:X\to \mathbb R,\, n \geq 1, $ satisfy
    \begin{gammalist}
        \item\label{it:gamma_lower} for every sequence $ \{  x_n\}\subset X$ such that $ x_n \to x,\, n\to \infty $, there holds $ \liminf_{n\to\infty} F_n(x_n) \geq F(x) $;

        \item\label{it:recovery_seq} for every $ x\in X $ there exists a sequence $ \{x_n\} \subset X $ converging to it and such that $ \lim_{n\to\infty} F_n(x_n) = F(x) $.  
    \end{gammalist}
    We shall then say that the sequence $ \{F_n\} $ is $ \Gamma $-\textit{converging} to the functional $ F $ on $ X $ with the metric topology; in symbols, $ \glim_{n\to\infty} F_n = F .$ \end{definition}
 The sequence in \ref{it:recovery_seq} is called \textit{recovery sequence}. Usefulness of $ \Gamma $-convergence for energy minimization consists in that together with compactness of $ X $, it guarantees that minimizers of $ F_n $ converge to those of $ F $. Moreover, $ F_n $ needs not to attain its minimizer, but this is the case for $ F $ on compact sets, due to lower semicontinuity.  Namely, the following properties hold.
 \begin{proposition}[\cite{braidesLocal2014}, \cite{dalmasoIntroduction1993}]
     \label{thm:min_to_min}
    If a sequence of functionals $ \{F_n\} $ on a compact metric space $ X $  $ \Gamma $-converges to $ F $, then
    \begin{enumerate}
        \item $ F $ is lower semicontinuous and $ \min F = \lim_{n\to \infty}\inf F_n $
        \item if $ \{x_n\} $  is a sequence of (global) minimizers of $ F_n $, converging to an $ x\in X $, then $ x $ is a (global) minimizer for $ F $.
    \end{enumerate}
    If $ F_n $ is a constant sequence, $ \glim F $ is the lower semicontinuous envelope of $ F $; i.e., the supremum of lower semicontinuous functions bounded by $ F $ above.
\end{proposition}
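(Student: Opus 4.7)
The plan is to establish the three claims in turn, using a standard diagonal argument and the compactness of $X$ as the key analytic tools. I would organize the proof into: (a) lower semicontinuity of the $\Gamma$-limit $F$, (b) the identity $\min F = \lim_{n\to\infty}\inf F_n$, (c) convergence of minimizers, and (d) the lower semicontinuous envelope characterization in the constant case.

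For (a), I would take $x_n \to x$ and show $\liminf F(x_n)\geq F(x)$. Using property~\ref{it:recovery_seq}, for each fixed $n$ pick a recovery sequence $y_n^k \to x_n$ with $F_k(y_n^k)\to F(x_n)$ as $k\to\infty$. Choose an increasing $k(n)$ so rapidly that simultaneously $d(y_n^{k(n)},x_n)<1/n$ and $F_{k(n)}(y_n^{k(n)})\leq F(x_n)+1/n$. Then $y_n^{k(n)} \to x$, and applying property~\ref{it:gamma_lower} along this diagonal gives $F(x)\leq \liminf_n F_{k(n)}(y_n^{k(n)}) \leq \liminf_n F(x_n)$, as required. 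For (b), the inequality $\limsup_n \inf F_n \leq \min F$ is immediate from any recovery sequence for a minimizer $x^\star$ of $F$ (which exists by (a) together with compactness of $X$), since $\inf F_n \leq F_n(y_n) \to F(x^\star)=\min F$. For the matching $\liminf$ bound, select $z_n\in X$ with $F_n(z_n)\leq \inf F_n + 1/n$, extract a convergent subsequence $z_{n_j}\to z$ by compactness, and invoke \ref{it:gamma_lower} to conclude $\min F \leq F(z)\leq \liminf_j F_{n_j}(z_{n_j}) = \liminf_n \inf F_n$.

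Claim (c) then drops out quickly: if $x_n\to x$ are global minimizers of $F_n$, then $F_n(x_n)=\inf F_n$, and combining \ref{it:gamma_lower} with (b) yields $F(x)\leq \liminf_n F_n(x_n) = \lim_n \inf F_n = \min F$, so $x$ minimizes $F$. For (d), set $F_n\equiv G$ and let $G^{\mathrm{sc}}$ denote the lower semicontinuous envelope of $G$, i.e. $G^{\mathrm{sc}}=\sup\{H : H\text{ lsc}, H\leq G\}$. The constant sequence $x_n=x$ is a recovery sequence, so $\glim F_n(x)\leq G(x)$; by (a) this limit is lsc, hence $\glim F_n\leq G^{\mathrm{sc}}$. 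Conversely, for any lsc $H\leq G$ and any recovery sequence $x_n\to x$ realizing $\glim F_n(x)=\lim_n G(x_n)$, lower semicontinuity of $H$ gives $H(x)\leq \liminf_n H(x_n)\leq \liminf_n G(x_n)=\glim F_n(x)$, so $G^{\mathrm{sc}}\leq \glim F_n$.

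The only delicate step is the diagonal extraction in (a); everything else is a direct consequence of the definitions, compactness, and the defining properties \ref{it:gamma_lower}--\ref{it:recovery_seq}. I would double-check that the choice of $k(n)$ can be made to grow without bound (forcing $\liminf_n F_{k(n)}(y_n^{k(n)})\geq \liminf_{k\to\infty} \inf_{j\geq k} F_j(\cdot)$ along the right sequence), which is the main place where sloppiness could undermine the argument.
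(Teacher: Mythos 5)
The paper does not give its own proof of this proposition---it is cited from the references---so I can only assess your argument on its own terms. Your approach is the standard textbook one (diagonal extraction for lower semicontinuity, near-minimizers plus compactness for the value convergence) and it is essentially correct, but there are two places where the details as written are slightly off.

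In part (a), the concern you flag is real but you have not located it precisely. The issue is not whether $k(n)$ can be made to grow without bound (of course it can, e.g.\ insist $k(n) > \max\{n, k(n-1)\}$); it is that property~\ref{it:gamma_lower} is stated for sequences indexed by the same $n$ that indexes $F_n$, whereas you are applying it to $F_{k(n)}$ evaluated along a genuine subsequence. This is legitimate, but it requires the small additional observation that $\Gamma$-convergence of $\{F_n\}$ implies the liminf inequality along subsequences: given $z_n \to x$, define $w_m := z_{n(m)}$ where $n(m)$ is the largest $n$ with $k(n)\leq m$; then $w_m\to x$, so $F(x)\leq\liminf_m F_m(w_m)\leq\liminf_n F_{k(n)}(z_n)$ since a liminf along a subsequence only increases. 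Alternatively, reorganize the diagonal the way it is usually done: pick increasing thresholds $N(j)$ so that $d(y^j_n,x_j)<1/j$ and $F_n(y^j_n)\leq F(x_j)+1/j$ for all $n\geq N(j)$, then define $z_n := y^j_n$ for $N(j)\leq n < N(j+1)$; this yields a sequence indexed by the full $n$ to which property~\ref{it:gamma_lower} applies directly.

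In part (b), the extraction order is reversed and the claimed equality $\liminf_j F_{n_j}(z_{n_j}) = \liminf_n \inf F_n$ is not justified: the subsequence $n_j$ was chosen for compactness of $\{z_n\}$, and along an arbitrary subsequence one only has $\liminf_j \inf F_{n_j} \geq \liminf_n \inf F_n$, possibly strictly. You should first pass to a subsequence $n_j$ along which $\inf F_{n_j}$ converges to $\liminf_n \inf F_n$, then use compactness to extract a further subsequence of $z_{n_j}$ converging to some $z$, and then invoke the liminf inequality (with the same caveat about subsequences as in (a)). With this reordering the chain $\min F \leq F(z) \leq \liminf_n \inf F_n$ closes correctly. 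Parts (c) and (d) are fine as written, given (a) and (b).
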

In our setting, the underlying metric space $ X $ is $ \mathcal P(A) $, the space of probability measures on $ A $; functionals $ \e(\mu, A) $ are defined as in Section~\ref{sec:short_range} when $ \mu $ is a counting measure of some $ \omega_N $,  $ \mu = \frac1n \sum_i \delta_{x_i} $, and equal to $ +\infty $ otherwise, see Theorem~\ref{thm:gamma}.

\begin{remark} 
    The short-range property in Definition~\ref{def:short_range} can be viewed as an instance of $ \Gamma $-convergence as well. Indeed, on the one hand, an analog of \ref{it:gamma_lower} is immediate:
    \[ 
        \e(\omega_N^*(A)) \leq {\e\left(\bigcup_m \omega_{N_m}^*(A\cap Q_m),\ A\cap \Q\right)},
    \]
    where $ \sum_m N_m = N  $. As usual, we omit the reference to the underlying set for minimizers, see~\eqref{eq:omitting_set}.
    On the other, properties (i)--(ii) in Definition~\ref{def:short_range} give
    \[
        \begin{aligned}
            \lim_{N\to\infty} \frac{\e(\omega_N^*(A)\cap \Q, A\cap \Q)}{\t(N)} 
        &= \lim_{N\to\infty} \frac{\sum_{m=1}^M \e(\omega_N^*(A)\cap Q_m, A\cap Q_m)}{\t(N)} \\
        &\geq\lim_{N\to\infty} \frac{\sum_{m=1}^M \e(\omega_{N_m}^*(A\cap Q_m))}{\t(N)}\\
        &= \lim_{N\to\infty} \frac{\e\left(\bigcup_m \omega_{N_m}^*(A\cap Q_m),\ A\cap \Q\right)}{\t(N)}, 
        \end{aligned}
    \]
    assuming the cardinalities are chosen as $ N_m = \#(\omega_N^*(A)\cap Q_m) $.
    This shows the existence of recovery sequence, consisting of piecewise minimizers, similar to \ref{it:recovery_seq}.
\end{remark}

\subsection{\texorpdfstring{$ \Gamma $}{Gamma}-convergence in the short-range case}
\label{subsec:gamma_hypersingular}
Let us derive from variational principles that the limiting measure from Theorem~\ref{thm:ext_dist} is a minimizer of a functional acting on measures of the form $ \pphi\lambda_d $, given by
\[
    \s(\pphi\lambda_d; \h, \ext) := \f(1) \int_A \h(x)\,\pphi(x)^{1+\sigma}\, d\lambda_d(x) + \int_A \xi(x) \, \pphi(x)\, d\lambda_d(x).
\]
This functional coincides with the expression for asymptotics in Theorem~\ref{thm:ext_dist} when $ \pphi=d\mu/d\lambda_d(x) $. It will be shown in the proof of Theorem~\ref{thm:gamma} that it is the $ \Gamma $-limit of discrete functionals 
\begin{equation}
    \label{eq:extend_e}
    \e_N(\mu, A) = 
    \begin{cases}
        \e(\omega_N, A), & \mu \in \mathcal P_N(A),\ \supp(\mu) =\omega_N;\\
        +\infty,                                   & \text{otherwise,}
    \end{cases}
\end{equation}
where we write $ \e $ for an interaction possibly equipped with a weight and an external field.  Throughout the section, we let $ \h $, $ \xi $ be continuous.

The density $  d\mu/d\lambda_d( x) $ is given by
\begin{equation}
    \label{eq:phi_intro}
        \pphi_0(x) = \frac{d\mu}{d\lambda_d}(x) = \left(\frac{L_1-\xi(x)}{\f(1)(1+\sigma)\h(x)}\right)^{1/\sigma}_+
\end{equation}
where the constant $ L_1 $ is chosen so that $ \pphi_0\, d\lambda_d $ is a probability measure.
Let us now apply a variational argument to show that the density $ \pphi_0 $ minimizes $ \s(\pphi d\lambda_d;\h, \ext) $; for brevity, we shall write simply $ \s(\pphi;\h, \ext) $. First, observe that $ \s(\pphi;\h, \ext) $ is convex in $ \pphi $, and must therefore attain a minimum on the set $ \int_{A} \pphi d\lambda_d =1 $. Suppose $ \pphi^* $ is a minimizer, and let $ \delta\pphi $ satisfy $ \int_{A} \delta\pphi d\lambda_d  =0 $. Taking G\^ateaux derivative at $  \pphi^* $ in the direction $ \delta \pphi $ gives
\[
    \begin{aligned}
        d[\s(\pphi^*;\h, \ext); \delta \pphi]
        &= \lim_{t\to 0} \frac1t\left[S\left((\pphi^* + \delta\pphi);\h, \ext\right) -S\left(\pphi^*;\h, \ext\right) \right]  \\  
        &= \lim_{t\to 0} \frac1t
        \left[
            \int_{A} \delta\pphi( x)t\left(\f(1) (1+\sigma)\h(x)\pphi^*( x)^{\sigma} +  \ext( x)\right) \,d\lambda_d( x) + o(t)
        \right]\\
        &= \int_{A} \delta\pphi( x)
        \left[\f(1) (1+\sigma)\h(x)\pphi^*( x)^{\sigma} +  \ext( x)\right] \,d\lambda_d( x).
    \end{aligned}
\]
Since $  \pphi^*  $ is a minimizer, equality $ d[\s(\pphi^*;\h, \ext); \delta \pphi] = 0 $ must hold for every $ \delta\pphi $ such that $ \int_{A} \delta\pphi\, \,d\lambda_d  = 0 $.
This implies that the factor
\[
    f( x) := \f(1) (1+\sigma)\h(x)\pphi^*( x)^{\sigma} +  \ext( x)
\]
is a $ \lambda_d $-a.e.\ constant. Indeed, let $ \delta\pphi = f - \int_{A} f \,d\lambda_d/\lambda_d({A}) $; there holds $ \int_{A} \delta\pphi\,\,d\lambda_d = 0 $, therefore
\[
    \begin{aligned}
        0 &= \int_{A} \delta\pphi\, f \,d\lambda_d\\  
          &= \int_{A} \delta\pphi\, f \, \,d\lambda_d - \int_{A} \frac{\delta\pphi}{\lambda_d({A})} \left(\int_{A} f \,d\lambda_d \right) \,d\lambda_d\\
          &= \int_{A} \left(f - \frac1{\lambda_d({A})}\int_{A} f \,d\lambda_d \right)^2\,\,d\lambda_d,
    \end{aligned}
\]
as desired. Let $ L_1 = \int_{A} f \,d\lambda_d $ so that  $ f = L_1 $ $ \lambda_d $-a.e.; then the definition of $ f $ and \eqref{eq:phi_intro} imply $ \pphi^* = \pphi_0 $ $ \lambda_d $-a.e.

In the following proof we obtain a stronger statement than that in the formulation of Theorem~\ref{thm:gamma}. Namely, we consider an admissible interaction functional with weight and external field, and show that $ \e_N $, obtained by the extension \eqref{eq:extend_e}, $ \Gamma $-converge to $ \s(\pphi\lambda_d;\h,\xi) $.

\begin{proof}[Proof of Theorem~\ref{thm:gamma}.]
    To verify the property \ref{it:gamma_lower} of the definition of $ \Gamma $-convergence, suppose a sequence $ \{\mu_N\}\subset \mathcal P({A}) $ weak$ ^* $ converges to $ \mu\in\mathcal P({A}) $; observe that if     \begin{equation*}
        \liminf_{N\to\infty} \frac1{\t(N)} \e_N(\mu_N; \h, \ext) = +\infty \geq \s(\mu; \h, \ext),
    \end{equation*}
    the inequality in \ref{it:gamma_lower} holds trivially. It therefore suffices to assume that the limit in the last equation is finite. In particular, $ \{\mu_N\} $ must contain a subsequence comprising only elements from $ \mathcal P_N({A}) $, so passing to a subsequence if necessary we now suppose that the sequence of discrete measures $ \mu_N,\, N\geq 2, $ is such that 
    \[
        \mu_N \weakto \mu, \qquad \mu_N\in \mathcal P_N({A}).
    \]
    Since $ \sigma > 0 $, this implies that $ \mu $ must be absolutely continuous with respect to $ \lambda_d $ for the above limit to be finite.

    We first assume that the the density $ d\mu/d\lambda_d $ is continuous. 
    By Theorem~\ref{thm:ext_dist}, there exists a function $ \ext^* $ such that the minimizers of $ \e_N(\cdot\,; \h, \xi^*) $ also converge to $ \mu $; denote them by $ \mu^*_N,\, N\geq 2 $. 
    It also follows from Theorem~\ref{thm:ext_dist} that for the sequence $ \{\mu_N^*\} $, there holds 
    \[ 
        \frac1{\t(N)} \e_N(\mu_N^*; \h, \ext^*) \to \s(\mu\,; \h, \ext^*), \qquad N\to \infty.  
    \]
    Note that both $ \ext $ and $ \ext^* $ are continuous; the latter follows from the continuity of the density of $ \mu $. Using this together with the weak$ ^* $ convergence of the two sequences $ \mu_N,\, \mu_N^* $ to $ \mu $, we can conclude
    \[
        \begin{aligned}
            \liminf_{N\to\infty} \frac1{\t(N)} \e_N(\mu_N; \h, \ext)
            &= \liminf_{N\to\infty} \left( \frac1{\t(N)} \e_N(\mu_N; \h, \ext^*) + \int_{A} (\ext - \ext^*)\d\mu_N \right) \\
            &\geq \lim_{N\to\infty} \frac1{\t(N)} \e_N(\mu^*_N; \h, \ext^*) + \int_{A} (\ext - \ext^*)\d\mu \\
            &=  \s(\mu\,; \h, \ext^*) + \int_{A} (\ext - \ext^*)\d\mu = \s(\mu\,; \h, \ext).
        \end{aligned}
    \]
    This gives property \ref{it:gamma_lower} in the definition of $ \Gamma $-convergence.
    
    To establish property $ \ref{it:recovery_seq} $, first note that taking a sequence of minimizers such as $ \mu_N^* $ to be the recovery sequence immediately implies $ \ref{it:recovery_seq} $  for any $ \mu $ such that $ \s(\mu\,; \h, \ext) < +\infty $. On the other hand, if $ \s(\mu\,; \h, \ext) = +\infty $, then by lower semicontinuity
    \[
        \lim_{N\to\infty} \frac1{\t(N)} \e_N(\mu_N; \h, \ext) = +\infty = \s(\mu\,; \h, \ext)
    \]
    for every sequence $ \{ \mu_N \} $ of probability measures weak$ ^* $ converging to $ \mu $.

    The case of a general $ d\mu/d\lambda_d $ without the continuity assumption follows from the measures with continuous Radon-Nikodym derivatives being dense in $ \mathcal P(A) $, and an approximation argument.

    The above proof applies to the case $ \sigma < 0 $ as well. It should be noted that for such $ \sigma $, $ \s $ is also defined for measures, singular with respect to $ \lambda_d $. Since $ \h $, $ \xi $ are assumed to be continuous, constructing a recovery for such measures involves an approximation argument as well.
\end{proof}
\begin{remark}
    \label{rem:uniqueness}
    The above proof gives in particular that any sequence $ \{\mu_N\} \subset \mathcal P({A}) $ with the optimal asymptotics
    \[ 
        \lim_{N\to\infty}\frac1{\t(N)} \e_N(\mu_N; \h, \ext) = \min_\mu \s(\mu; \h, \ext) 
    \]
    must converge weak$ ^* $ to the measure $ \mu $ from Theorem~\ref{thm:ext_dist}. This follows from the convexity of the functional $ \s(\cdot\,; \h, \ext) $, by which it has a unique minimizer, and property \ref{it:gamma_lower} of the $ \Gamma $-convergence.
\end{remark}

\section{Further directions}
\label{sec:further}
In this section we outline some possible further directions for exploring short-range interactions. Broadly, one may ask: what are some short-range interactions commonly encountered in physics and engineering applications, and what can be proved for them? Are there long-range interactions that can be reduced to the short-range methods?
\medskip
% TODO discuss radial truncation

\noindent{\bf Different homogeneous dependence in the short-range equation.}
The short-range property in Definition~\ref{def:short_range} was given using a sum in the numerator of equation~\eqref{eq:short_range}. It is interesting to consider other dependencies of $ \e(\omega_N^*(A)) $ on the parts of the set $ A $. For example, in a joint work with Petrache, the first two authors studied the problem of optimal unconstrained polarization, that is, finding a configuration $ \omega_N\subset \mathbb R^d $ for which
\[
    \mathcal P_s(\omega_N,A) =  \min_{y\in A} \sum_{i=1}^N \|y-x_i\|^{-s}, \qquad s > d,
\]
is maximal. It turns out, for this functional, an analog short-range equation holds with minimum in place of summation in the numerator of~\eqref{eq:short_range}. One may ask further if there are functionals of interest, in which the dependence is given by another 1-homogeneous function of the values of $ \e $ on subsets of $ A $.

\medskip 

\noindent {\bf Other short-range interactions with scale-invariance.}
One can study functionals of the form
\[
    \e(\omega_N, A) = \sum_{i=1}^N \e_0(\omega_{N,i}, V_i),
\]
where $ \omega_{N,i}= \{ x_j : j\in I_{i,k} \} $ denotes the subset of the $ k $ nearest neighbors to $ x_i\in\omega_N $, and $ V_i $ is the Voronoi cell on $ A $, corresponding to $ x_i $. The functional $ \e_0 $ can be assumed to be $ s $-scale invariant (see Section~\ref{sec:cube} for the definition) and does not depend on the underlying set. Such $ \e $ will then be scale-invariant and have the short-range property. The set-continuity~\eqref{eq:cube_conts} will be satisfied under some continuity assumptions on $ \e_0 $.

\medskip 

\noindent {\bf Short-range interactions without scale-invariance.}
Short-range functionals without scale invariance are extensively used for meshing algorithms, and in physics. As an example, one can consider the meshing algorithm due to Shimada and Gossard~\cite{Shimada1995}, in which interaction between nearby elements of $ \omega_n $ is given by a spline. It is natural to expect that the short-range property should suffice for the uniform distribution of the optimizers of such interactions; the study of asymptotics of the optimal values appears to be more challenging.

Lastly, it would be of interest to study the asymptotics of mixtures of short-range interactions. The most famous example of such is perhaps the Lennard-Jones potential, for which even defining the asymptotic behavior is nontrivial.

\bibliographystyle{acm}
\bibliography{refs}

\vskip\baselineskip

{\footnotesize
    {\sc Center for Constructive Approximation\\ \indent Department of Mathematics, Vanderbilt University, Nashville, TN, 37240}

    {\it Email address:} {\tt doug.hardin@vanderbilt.edu}

    {\it Email address:} {\tt edward.b.saff@vanderbilt.edu}

    \medskip
    {\sc Department of Mathematics, Florida State University, Tallahassee, FL 32306}

    {\it Email address:} {\tt ovlasiuk@fsu.edu}

}

% \newpage
% \section*{Appendix}
% This section could contain some additional details of the proof of Poppy-seed theorem for the truncated kernels.

\end{document}